\pgfplotsset{compat=1.18}
\newtheorem{theorem}{Theorem}[section]
\newtheorem{notation}[theorem]{Notation}
\newtheorem{lemma}[theorem]{Lemma}
\newtheorem{proposition}[theorem]{Proposition}
\newtheorem{definition-theorem}[theorem]{Definition-Theorem}
\newtheorem{definition-proposition}[theorem]{Definition-Proposition}
\newtheorem{problem}[theorem]{Problem}
\theoremstyle{definition}
\newtheorem{definition}[theorem]{Definition}
\newtheorem{remark}[theorem]{Remark}
\newtheorem{example}[theorem]{Example}
\renewcommand{\P}{\mathrm{P}}
\newcommand{\Z}{\mathbb{Z}}
\newcommand{\R}{\mathbb{R}}
\newcommand{\Hom}{\operatorname{Hom}\nolimits}
\newcommand{\rad}{\operatorname{rad}\nolimits}
\newcommand{\End}{\operatorname{End}\nolimits}
\newcommand{\RHom}{\mathbf{R}\strut\kern-.2em\operatorname{Hom}\nolimits}
\newcommand{\Conv}{{\rm Conv}}
\DeclareMathOperator{\proj}{\mathsf{proj}}
\DeclareMathOperator{\add}{\mathsf{add}}
\DeclareRobustCommand{\svdots}{
  \vbox{%
    \baselineskip=0.33333\normalbaselineskip
    \lineskiplimit=0pt
    \hbox{.}\hbox{.}\hbox{.}%
    \kern-0.2\baselineskip
  }%
}
\newcommand{\SigmaAe}{{\Sigma(A,e)}}
\newcommand{\SigmaeAe}{{\Sigma_{+-+}(A,e)}}
\newcommand{\cut}{\ar@{-}@[|(5)]}
\newcommand{\twosilt}{\mathsf{2\mbox{-}silt}}
\newcommand{\twopsilt}{\mathsf{2\mbox{-}presilt}}
\newcommand{\Kb}{\mathsf{K}^{\rm b}}
\newcommand{\twoips}{\mathsf{2\mbox{-}ips}}
\newcommand{\emax}{\sigma_{+-+,\max}}
\newcommand{\emin}{\sigma_{+-+,\min}}
\newcommand{\PAe}{{\rm P}(A,e)}
\newcommand{\Pe}{{\rm P}_{+-+}}
\newcommand{\PeAe}{{\rm P}_{+-+}(A,e)}
\newcommand{\PeX}{{\rm P}_{+-+}(X)}
\newcommand{\PeXZ}{{\rm P}_{+-+}(X)_{\mathbb{Z}}}
\newcommand{\conv}{\operatorname{conv}\nolimits}
\newcommand{\cone}{\operatorname{cone}\nolimits}
\numberwithin{equation}{section}
\DeclareRobustCommand{\svdots}{
  \vcenter{%
    \offinterlineskip
    \hbox{.}
    \vskip0.25\normalbaselineskip
    \hbox{.}
    \vskip0.25\normalbaselineskip
    \hbox{.}%
  }%
}
\begin{document}
	
\title[Fans and polytopes in tilting theory III]{Fans and polytopes in tilting theory III:\\ Classification of convex $g$-fans of rank 3}

\author{Toshitaka Aoki}
\address{Graduate School of Human Development and Environment, Kobe University, 3-11 Tsurukabuto, Nada-ku, Kobe 657-8501, Japan}
\email{toshitaka.aoki@people.kobe-u.ac.jp}

\author{Akihiro Higashitani}
\address{Department of Pure and Applied Mathematics, Graduate School of Information Science and Technology, Osaka University, 1-5 Yamadaoka, Suita, Osaka 565-0871, Japan}
\email{higashitani@ist.osaka-u.ac.jp}

\author{Osamu Iyama}
\address{Graduate School of Mathematical Sciences, University of Tokyo, 3-8-1 Komaba Meguro-ku Tokyo 153-8914, Japan}
\email{iyama@ms.u-tokyo.ac.jp}

\author{Ryoichi Kase}
\address{Faculty of Informatics, Okayama University of Science, 1-1 Ridaicho, Kita-ku, Okayama 700-0005, Japan}
\email{r-kase@ous.ac.jp}

\author{Yuya Mizuno}
\address{Faculty of Liberal Arts, Sciences and Global Education, Osaka Metropolitan University, 1-1 Gakuen-cho, Naka-ku, Sakai, Osaka 599-8531, Japan}
\email{yuya.mizuno@omu.ac.jp}

\begin{abstract}
The $g$-fan $\Sigma(A)$ of a finite dimensional algebra $A$ is a non-singular fan in its real Grothendieck group, defined by tilting theory. If the union $\P(A)$ of the simplices associated with the cones of $\Sigma(A)$ is convex, we call $A$ \emph{$g$-convex}. 
In this case, the \emph{$g$-polytope} $\P(A)$ of $A$ is a reflexive polytope. Thus, in each dimension, there are only finitely many isomorphism classes of fans that can be realized as $g$-fans of $g$-convex algebras. An important problem is to classify such fans for a fixed dimension $d$. In this paper, we give a complete answer for the case $d=3$: we prove that there are precisely $61$ convex $g$-fans of dimension $3$ up to isomorphism. Our method is based on the decomposition of fans into the $2^3$ orthants in the real Grothendieck group of $A$, together with a detailed analysis of possible sequences of $g$-vectors arising from iterated mutations.
\end{abstract}

\maketitle

\section{Introduction}
Tilting theory is basic to control equivalences of derived categories.
The class of silting complexes \cite{KV} complements the class of
tilting complexes from a point of view of mutation, which is a
categorical operation to replace a direct summand of a given silting
complex to construct a new silting complex \cite{AiI}.
The subclass of 2-term silting complexes gives rise to a fan
$\Sigma(A)$ (called the \emph{$g$-fan}) in the real Grothendieck group
$K_0(\proj A)_{\mathbb{R}}$ of a finite dimensional algebra $A$ such that the cones of $\Sigma(A)$ correspond bijectively with the isomorphism classes of
basic 2-term presilting complexes, see e.g. \cite{H1, H2, DF, DIJ, BST, AY, AsI, M, PY, P, B, As}. 
In \cite{AHIKM1}, we developed a basic theory
of $g$-fans, which form a special class of non-singular fans called
\emph{sign-coherent}. One of the basic results claims that $A$ is
\emph{$g$-finite} (that is, $A$ has only finitely many basic $2$-term silting complexes up to isomorphism) 
if and only if $\Sigma(A)$ is complete \cite[Theorem 4.7]{As}. 
The following is one of the basic problems
posed in \cite[Problem 1.7]{AHIKM1}.

\begin{problem}\label{characterization}
Characterize complete sign-coherent fans in $\R^d$ which can be
realized as a $g$-fan of some finite dimensional algebra.
\end{problem}

In the case $d=2$, a complete answer was given in \cite[Theorem 1.3]{AHIKM2}: 
For an arbitrary field $k$, any complete sign-coherent
fan in $\R^2$ can be realized as the $g$-fan of some finite dimensional $k$-algebra which is \emph{elementary} (that is, $A/\rad A$ is isomorphic to a product of $k$ as a $k$-algebra).

The aim of this article is to study a special case of Problem \ref{characterization} in the next case $d=3$. Precisely speaking, we consider the \emph{$g$-polytope} $\P(A)$, which is the union of the simplices associated to the cones of $\Sigma(A)$, and we call $A$ \emph{$g$-convex} if $\P(A)$ is convex. In this case, $A$ has to be $g$-finite, and the following result holds.

\begin{theorem}[{\cite[Theorem 1.10]{AHIKM1}}]
For each $g$-convex algebra $A$, the $g$-polytope $\P(A)$ is a reflexive polytope.
\end{theorem}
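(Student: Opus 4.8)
The plan is to verify, directly from the structure of $\Sigma(A)$, the three defining properties of a reflexive polytope: that $\P(A)$ is a lattice polytope, that the origin lies in its interior, and that every facet of $\P(A)$ lies on an affine hyperplane of the form $\{x:\langle x,u\rangle=1\}$ for some $u$ in the dual lattice of $K_0(\proj A)$. Since $A$ is $g$-convex it is $g$-finite, so $\Sigma(A)$ is a complete non-singular fan; writing $\Delta_\sigma=\conv(\{0\}\cup R_\sigma)$ for the simplex attached to a maximal cone $\sigma$, where $R_\sigma$ is the set of primitive generators of the rays of $\sigma$, we have $\P(A)=\bigcup_\sigma\Delta_\sigma$, and by hypothesis this union is convex, hence (being full-dimensional, as a complete fan in $\R^d$ has $d$-dimensional cones) a polytope.

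The first two properties are immediate. The vertices of $\P(A)$ lie among the primitive ray generators of $\Sigma(A)$, which are $g$-vectors of indecomposable $2$-term presilting complexes and hence integral, so $\P(A)$ is a lattice polytope. And since $\Sigma(A)$ is complete, the simplices $\Delta_\sigma$ cover a neighbourhood of $0$, so $0\in\operatorname{int}\P(A)$.

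The main step is the statement about facet hyperplanes. First I would record the standard fact that, for a complete fan, $\P(A)$ is star-shaped with respect to $0$ and $\partial\P(A)=\bigcup_\sigma\tau_\sigma$, where $\tau_\sigma=\conv(R_\sigma)$ is the facet of $\Delta_\sigma$ opposite the origin; this is seen by following the radial function along rays through interior points of the maximal cones. Because $\Sigma(A)$ is non-singular, each maximal cone $\sigma$ has exactly $d$ rays whose primitive generators $g_1^\sigma,\dots,g_d^\sigma$ form a $\Z$-basis of $K_0(\proj A)\cong\Z^d$; hence $\tau_\sigma$ is a $(d-1)$-simplex spanning the affine hyperplane $H_\sigma=\{x:\langle x,u_\sigma\rangle=1\}$, where $u_\sigma$ is the functional determined by $\langle g_i^\sigma,u_\sigma\rangle=1$ for all $i$ — namely the sum of the basis dual to $(g_i^\sigma)$ — which therefore lies in the dual lattice, and $\langle 0,u_\sigma\rangle=0$ shows $H_\sigma$ misses the origin. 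Now, using convexity: $\partial\P(A)$ is the union of the facets of the polytope $\P(A)$, and each $\tau_\sigma$, being $(d-1)$-dimensional and contained in $\partial\P(A)$, must be contained in a single facet $F$; then $\operatorname{aff}(F)=\operatorname{aff}(\tau_\sigma)=H_\sigma$, and $H_\sigma$ is the supporting hyperplane of $F$. Thus every facet of $\P(A)$ lies at lattice distance one from the origin, equivalently the polar dual $\P(A)^*$ has lattice vertices (the $u_\sigma$), so $\P(A)$ is reflexive.

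The hard part is this last step: one must argue carefully that the ``outer facets'' $\tau_\sigma$ genuinely assemble into $\partial\P(A)$ (which uses completeness of $\Sigma(A)$), and that non-singularity — the tilting-theoretic input, guaranteeing that each maximal cone is unimodular — is exactly what pins the facet normals $u_\sigma$ down to the dual lattice at height one. The remaining verifications are routine convex geometry.
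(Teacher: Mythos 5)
Your argument is correct and is essentially the standard (and the cited reference's) route: non-singularity of the complete fan $\Sigma(A)$ forces each outer simplex $\tau_\sigma$ onto an integral hyperplane at height one, and convexity identifies these with the facets of $\P(A)$, so the polar dual is the lattice polytope $\conv\{u_\sigma\}$. The present paper only quotes this theorem from \cite{AHIKM1} without reproving it, but your proposal supplies the expected proof, including the two points you correctly flag as the only ones needing care (that $\partial\P(A)=\bigcup_\sigma\tau_\sigma$ via completeness, and that unimodularity of each maximal cone puts $u_\sigma$ in the dual lattice).
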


Motivated by this result, the following special case of Problem \ref{characterization} was posed in \cite[Problem 1.12]{AHIKM1}.

\begin{problem}\label{characterization 2}
Characterize reflexive polytopes in $\R^d$ which can be realized as the $g$-polytope of some finite dimensional algebra.
\end{problem}

Let $\Sigma=(\Sigma,\sigma_+)$ and $\Sigma'= (\Sigma',\sigma'_+)$ be sign-coherent fans in $\R^d$. Recall that an \emph{isomorphism $f\colon \Sigma\simeq\Sigma'$ of sign-coherent fans} is an isomorphism $f\colon \Z^d\to \Z^d$ of abelian groups such that the induced $\R$-linear isomorphism $\R^d\to\R^d$ gives a bijection $\Sigma\simeq\Sigma'$ between cones and $\{f(\sigma_+),-f(\sigma_+)\}=\{\sigma'_+,-\sigma'_+\}$ holds.
In the case $d=2$, the following answer to Problem \ref{characterization 2} is known.

\begin{theorem}[{\cite[Corollary 1.11]{AHIKM1} (cf. \cite[Theorem 1.6]{AHIKM2})}]
There are precisely 7 convex $g$-fans of rank 2 up to isomorphism of sign-coherent fans, and its complete set of representatives is given in Table \ref{tab:7convex_rank2}. 
\end{theorem}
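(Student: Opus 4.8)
The plan is to combine the general structural results on $g$-fans with the classification of reflexive polygons. First I would recall that for any $g$-convex algebra $A$, the $g$-polytope $\P(A)\subset\R^2$ is a reflexive polygon by \cite[Theorem 1.10]{AHIKM1}, and that up to lattice isomorphism there are exactly $16$ reflexive polygons in dimension $2$. However, the $g$-fan carries more data than the polytope: the fan $\Sigma(A)$ is a \emph{sign-coherent} complete non-singular fan whose maximal cones triangulate $\P(A)$ from the origin, and the sign vector $\sigma_+$ is part of the structure. So the classification splits naturally into two tasks: (i) determine which reflexive polygons arise, and (ii) for each such polygon, determine the admissible fan structures (i.e. complete non-singular sign-coherent refinements of the face fan) up to isomorphism of sign-coherent fans.

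For step (i), I would use the sign-coherence condition together with the fact that a $g$-fan of rank $2$ must contain the four half-lines generated by $\pm e_1,\pm e_2$ as rays (these come from the indecomposable projectives and their shifts, which are always $2$-term presilting), and that every maximal cone is a smooth cone, i.e. spanned by a basis of $\Z^2$. This forces $\P(A)$ to contain the standard square $\conv\{\pm e_1,\pm e_2\}$ and to be contained in a bounded region determined by smoothness at each vertex; a short case analysis on how many rays lie strictly inside each of the four quadrants, using that consecutive rays $(a,b),(c,d)$ of a non-singular fan satisfy $|ad-bc|=1$, should cut the $16$ reflexive polygons down to those actually realizable and pin down the possible ray configurations. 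The key combinatorial input is the continued-fraction / Farey description of non-singular complete fans in $\R^2$: between $e_1$ and $e_2$ one may insert at most the single ray $(1,1)$ before smoothness at a lattice point of $\P(A)$ fails, and similarly in the other quadrants one is constrained by the requirement that $\P(A)$ be reflexive (its only interior lattice point is the origin).

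For step (ii), having enumerated the finitely many candidate complete non-singular fans compatible with a reflexive polygon, I would quotient by the isomorphism relation: two sign-coherent fans are isomorphic via some $g\in\GL_2(\Z)$ with $\{g(\sigma_+),-g(\sigma_+)\}=\{\sigma'_+,-\sigma'_+\}$. Concretely this means classifying the candidate fans up to the action of the stabilizer subgroup of the pair $\{\sigma_+,-\sigma_+\}$ in $\GL_2(\Z)$, which is generated by the diagonal sign changes and the swap of coordinates; carrying this out yields the number $7$. Finally, for the ``realizability'' half — that each of the $7$ surviving fans genuinely occurs as $\Sigma(A)$ for some finite-dimensional algebra — I would exhibit an explicit algebra for each (e.g. path algebras of quivers with at most two vertices, possibly with relations, or preprojective-type algebras), verifying by direct computation of $2$-term silting complexes that the $g$-fan is as claimed; the stronger statement of \cite[Theorem 1.3]{AHIKM2} that \emph{every} complete sign-coherent fan in $\R^2$ is a $g$-fan makes this automatic, so I would simply cite it.

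The main obstacle I anticipate is step (i): ruling out the reflexive polygons that look combinatorially plausible but cannot support a sign-coherent non-singular fan, since the sign-coherence condition (every cone lies in one closed orthant after the global sign choice) interacts subtly with completeness and smoothness. Organizing the case analysis by the multiset of numbers of interior rays in the four quadrants, and exploiting the symmetry of the orthant decomposition — the same ``$2^2$ orthants'' bookkeeping that will be used in the rank-$3$ argument — should keep this manageable, and the explicit Table \ref{tab:7convex_rank2} serves as the endpoint to check the enumeration against.
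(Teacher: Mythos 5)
This statement is quoted from \cite[Corollary 1.11]{AHIKM1} rather than proved in the present paper, so the comparison is with the method of the cited source, which the paper summarizes in Table \ref{tab:convex_gfan_rank2} and Proposition \ref{coord_plane}: one shows that the rays of a convex $g$-fan of rank $2$ in each of the two mixed-sign quadrants are governed by the invariant $(l_{ij},r_{ij})\in\{(0,0),(1,1),(1,2),(2,1)\}$, that the two quadrants are independent (giving $4\times 4=16$ fans), and one then passes to orbits under $G=\mathfrak{S}_2\times\{\pm 1\}$ to obtain $7$. Your plan ends up in the same place --- quadrant-by-quadrant enumeration of unimodular convex chains, quotient by the symmetry group, and realizability via \cite[Theorem 1.3]{AHIKM2} --- and that last citation is a legitimate way to dispense with existence. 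The detour through the $16$ reflexive polygons, however, buys you nothing: the object being classified is the fan, not the polytope, and the effective constraints (completeness, smoothness, sign-coherence, convexity of the union of simplices, origin as unique interior lattice point) are exactly what produce the four admissible chains per quadrant directly.

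Two concrete points in your sketch would derail the count if executed as written. First, a sign-coherent fan by definition has $\sigma_+$ and $-\sigma_+$ among its cones, so there is never a ray such as $(1,1)$ in the interior of the first or third quadrant; your discussion of ``inserting at most the single ray $(1,1)$ between $\bm{e}_1$ and $\bm{e}_2$'' concerns configurations that are excluded outright, and admitting them would overcount. Second, the symmetry group is not generated by ``diagonal sign changes and the swap of coordinates'': a matrix such as $\mathrm{diag}(1,-1)$ sends $\sigma_+$ to the $(+,-)$-quadrant and hence does not satisfy $\{g(\sigma_+),-g(\sigma_+)\}=\{\sigma'_+,-\sigma'_+\}$. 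The correct stabilizer is $\{\pm P: P\in\mathfrak{S}_2\}$, of order $4$ (the group $G=\mathfrak{S}_3\times\{\pm1\}$ of the paper, in rank $2$); using the order-$8$ signed-permutation group would identify fans that are not isomorphic as sign-coherent fans and yield fewer than $7$ classes. With these two corrections your enumeration does reproduce the $16$ fans of Table \ref{tab:convex_gfan_rank2} and the $7$ orbits of Table \ref{tab:7convex_rank2}.
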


\begin{table}[h]
    \centering
    \begin{tabular}{ccccccccc}
    {\begin{xy}
			0;<3.5pt,0pt>:<0pt,3.5pt>::
			(0,-5)="0",
			(-5,0)="1",
			(0,0)*{\bullet},
			(0,0)="2",
			(5,0)="3",
			(0,5)="4",
			(1.5,1.5)*{{\scriptstyle +}},
			(-1.5,-1.5)*{{\scriptstyle -}},
			\ar@{-}"0";"1",
			\ar@{-}"1";"4",
			\ar@{-}"4";"3",
			\ar@{-}"3";"0",
			\ar@{-}"2";"0",
			\ar@{-}"2";"1",
			\ar@{-}"2";"3",
			\ar@{-}"2";"4",
	\end{xy}}&
	{\begin{xy}
			0;<3.5pt,0pt>:<0pt,3.5pt>::
			(0,-5)="0",
			(-5,0)="1",
			(0,0)*{\bullet},
			(0,0)="2",
			(5,0)="3",
			(-5,5)="4",
			(0,5)="5",
			(1.5,1.5)*{{\scriptstyle +}},
			(-1.5,-1.5)*{{\scriptstyle -}},
			\ar@{-}"0";"1",
			\ar@{-}"1";"4",
			\ar@{-}"4";"5",
			\ar@{-}"5";"3",
			\ar@{-}"3";"0",
			\ar@{-}"2";"0",
			\ar@{-}"2";"1",
			\ar@{-}"2";"3",
			\ar@{-}"2";"4",
			\ar@{-}"2";"5",
	\end{xy}}&
	{\begin{xy}
			0;<3.5pt,0pt>:<0pt,3.5pt>::
			(0,-5)="0",
			(5,-5)="1",
			(-5,0)="2",
			(0,0)*{\bullet},
			(0,0)="3",
			(5,0)="4",
			(-5,5)="5",
			(0,5)="6",
			(1.5,1.5)*{{\scriptstyle +}},
			(-1.5,-1.5)*{{\scriptstyle -}},
			\ar@{-}"0";"2",
			\ar@{-}"2";"5",
			\ar@{-}"5";"6",
			\ar@{-}"6";"4",
			\ar@{-}"4";"1",
			\ar@{-}"1";"0",
			\ar@{-}"3";"0",
			\ar@{-}"3";"1",
			\ar@{-}"3";"2",
			\ar@{-}"3";"4",
			\ar@{-}"3";"5",
			\ar@{-}"3";"6",
	\end{xy}}&
	{\begin{xy}
			0;<3.5pt,0pt>:<0pt,3.5pt>::
			(0,-5)="0",
			(-5,0)="1",
			(0,0)*{\bullet},
			(0,0)="2",
			(5,0)="3",
			(-10,5)="4",
			(-5,5)="5",
			(0,5)="6",
			(1.5,1.5)*{{\scriptstyle +}},
			(-1.5,-1.5)*{{\scriptstyle -}},
			\ar@{-}"0";"3",
			\ar@{-}"3";"6",
			\ar@{-}"6";"4",
			\ar@{-}"4";"0",
			\ar@{-}"2";"0",
			\ar@{-}"2";"1",
			\ar@{-}"2";"3",
			\ar@{-}"2";"4",
			\ar@{-}"2";"5",
			\ar@{-}"2";"6",
	\end{xy}}&
	{\begin{xy}
			0;<3.5pt,0pt>:<0pt,3.5pt>::
			(0,-5)="0",
			(5,-5)="1",
			(-5,0)="2",
			(0,0)*{\bullet},
			(0,0)="3",
			(5,0)="4",
			(-10,5)="5",
			(-5,5)="6",
			(0,5)="7",
			(1.5,1.5)*{{\scriptstyle +}},
			(-1.5,-1.5)*{{\scriptstyle -}},
			\ar@{-}"0";"1",
			\ar@{-}"1";"4",
			\ar@{-}"4";"7",
			\ar@{-}"7";"5",
			\ar@{-}"5";"0",
			\ar@{-}"3";"0",
			\ar@{-}"3";"1",
			\ar@{-}"3";"2",
			\ar@{-}"3";"4",
			\ar@{-}"3";"5",
			\ar@{-}"3";"6",
			\ar@{-}"3";"7",
	\end{xy}}&
	{\begin{xy}
			0;<3.5pt,0pt>:<0pt,3.5pt>::
			(0,-5)="0",
			(5,-5)="1",
			(10,-5)="2",
			(-5,0)="3",
			(0,0)*{\bullet},
			(0,0)="4",
			(5,0)="5",
			(-10,5)="6",
			(-5,5)="7",
			(0,5)="8",
			(1.5,1.5)*{{\scriptstyle +}},
			(-1.5,-1.5)*{{\scriptstyle -}},
			\ar@{-}"0";"2",
			\ar@{-}"2";"8",
			\ar@{-}"8";"6",
			\ar@{-}"6";"0",
			\ar@{-}"4";"0",
			\ar@{-}"4";"1",
			\ar@{-}"4";"2",
			\ar@{-}"4";"3",
			\ar@{-}"4";"5",
			\ar@{-}"4";"6",
			\ar@{-}"4";"7",
			\ar@{-}"4";"8",
	\end{xy}}&
	{\begin{xy}
			0;<3.5pt,0pt>:<0pt,3.5pt>::
			(0,-2.5)="0",
			(5,-7.5)="1",
			(5,-2.5)="2",
			(-5,2.5)="3",
			(0,2.5)*{\bullet},
			(0,2.5)="4",
			(5,2.5)="5",
			(-10,7.5)="6",
			(0,7.5)="7",
			(-5,7.5)="8",
			(1.5,4)*{{\scriptstyle +}},
			(-1.5,1)*{{\scriptstyle -}},
			\ar@{-}"6";"1",
			\ar@{-}"1";"5",
			\ar@{-}"5";"7",
			\ar@{-}"7";"6",
			\ar@{-}"4";"0",
			\ar@{-}"4";"1",
			\ar@{-}"4";"2",
			\ar@{-}"4";"3",
			\ar@{-}"4";"5",
			\ar@{-}"4";"6",
			\ar@{-}"4";"7",
			\ar@{-}"4";"8",
	\end{xy}}
    \end{tabular}
    \caption{A complete set of representatives of convex $g$-fans of rank 2.}
    \label{tab:7convex_rank2}
\end{table}

In this paper, we give a complete answer to Problem \ref{characterization 2} in the
case $d=3$. 
To state our results, we need the following preparations.

For an $(A,B)$-bimodule $X$, let $t_A(X)$ (resp., $t(X)_B$) be the minimal number of generators of $X$ as a left $A$-module (resp., right $B$-module).
Let $A$ be a finite dimensional $k$-algebra of rank 3, 
and $e=(e_1,e_2,e_3)$ a complete set of pairwise orthogonal primitive idempotents of $A$. 
Then, we simply call the pair $(A,e)$ \emph{a finite dimensional $k$-algebra of rank 3}. 
In this case, we define a sign-coherent fan $\Sigma(A,e)$ in $\R^3$ as the image of the $g$-fan $\Sigma(A)$ via the isomorphism $K_0(\proj A)_{\mathbb{R}}\simeq\R^3$ given by $[e_iA]\mapsto \mathbf{e}_i$, where $\mathbf{e}_i$ denotes the $i$-th coordinate vector of $\mathbb{R}^3$. 
Moreover, we consider the following invariants. 
\begin{gather}
  l_{ij}^e:=t_{e_iAe_i}(e_iAe_j), \ r_{ij}^e := t(e_iAe_j)_{e_jAe_j}, \ h_{ij}^e := \begin{cases} 0 & \text{if $e_iAe_j \subseteq \rad^2 A$,} \\ 1 & \text{otherwise},\end{cases}\\
 d_{ij}^e := (l_{ij}^e,r_{ij}^e,h_{ij}^e) 
 \quad \text{and} \quad 
 d(A,e) := (d_{ij}^e)_{1\le i\ne j \le 3}.
\end{gather}

We omit the superscript $e$ when it is clear from the context.
We may visualize $d(A,e)$ as a matrix-like form as follows. 
\[
\begin{bmatrix}
    - & d_{12} & d_{13} \\ 
    d_{21} & - & d_{23} \\ 
    d_{31} & d_{32} & -
\end{bmatrix} 
= 
\begin{bmatrix}
        - & (l_{12},r_{12},h_{12}) & (l_{13},r_{13},h_{13}) \\
        (l_{21},r_{21},h_{21}) & - & (l_{23},r_{23},h_{23}) \\ 
        (l_{31},r_{31},h_{31}) & (l_{32},r_{32},h_{32})& - 
    \end{bmatrix}
\]
where $-$ stands for undefined entries.

\begin{theorem}\label{main theorem}
The following assertions hold.
\begin{enumerate}[\rm (1)]
\item Let $(A,e)$ be a finite dimensional $k$-algebra of rank 3. If $A$ is $g$-convex, then $\Sigma(A,e)$ is completely determined by $d(A,e)$. 
\item There are precisely 61 convex $g$-fans of rank 3 up to isomorphism of sign-coherent fans, and its complete set of representatives is given in Table \ref{fig:61alg&poly}.
\end{enumerate}
\end{theorem}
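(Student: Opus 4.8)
The plan is to cut $\Sigma(A,e)$ into its eight coordinate-orthant pieces, reduce by symmetry to a single ``mixed'' orthant, translate the mutation combinatorics there into the invariant $d(A,e)$, and finally run a finite search. \textbf{Step 1 (orthant decomposition).} Since $A$ is $g$-convex it is $g$-finite, so $\Sigma(A,e)$ is a complete sign-coherent fan in $\R^3$, and by sign-coherence each of its cones lies in a closed coordinate orthant; thus $\Sigma(A,e)$ is the union of its restrictions to the eight orthants. The positive orthant is the single maximal cone $\sigma_+=\cone(\mathbf e_1,\mathbf e_2,\mathbf e_3)$, the cone of $A$; the negative orthant is the single cone $\cone(-\mathbf e_1,-\mathbf e_2,-\mathbf e_3)$, the cone of $A[1]$. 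The remaining six ``mixed'' orthants are permuted, at the cost of changing the algebra, by relabelling $e$ (an $S_3$-action) together with the duality $A\mapsto A^{\mathrm{op}}$, under which the $g$-fan is replaced by its image under $x\mapsto -x$; both operations act explicitly on $d(A,e)$, the first by permuting the index pairs, the second by transposing them with $l_{ij}\leftrightarrow r_{ij}$. Hence Theorem~\ref{main theorem}(1) reduces to the claim that $\SigmaeAe$, the restriction of $\Sigma(A,e)$ to the $(+,-,+)$-orthant, is determined by $d(A,e)$; and for (2) two convex $g$-fans will be isomorphic precisely when the corresponding data $d(A,e)$ lie in one orbit of this finite symmetry group.

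\textbf{Step 2 (one mixed orthant).} To prove the claim I would describe $\SigmaeAe$ by walking into the $(+,-,+)$-orthant from $\sigma_+$ via iterated mutation, beginning with the mutation at the summand $e_2A$. Two ingredients control the walk. First, the part of $\SigmaeAe$ lying in a coordinate hyperplane $x_i=0$ is, by silting reduction, the $g$-fan of a subquotient of $(A,e)$ of rank $2$, hence --- by the rank-$2$ classification recalled in Table~\ref{tab:7convex_rank2} --- is determined by the rank-$2$ invariants of that subquotient, i.e.\ by the relevant entries of $d(A,e)$. Second, the first mutation at $e_2A$ replaces $\mathbf e_2$ by the $g$-vector of the cone of a minimal $\add(e_1A\oplus e_3A)$-approximation of $e_2A$, whose multiplicities are exactly $l_{12}$ and $l_{32}$, with the admissibility and ``direction'' of the mutation recorded by the $h$-entries; iterating, the $g$-vectors produced by the subsequent mutations are again read off from minimal projective presentations, hence again from $d(A,e)$. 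Finally, $g$-convexity forces every such mutation sequence to be short and to remain inside the reflexive polytope $\PeAe\subseteq\Pe$, so that only finitely many cones arise and both they and the order in which their rays occur are pinned down by $d(A,e)$. Working through the finitely many configurations case by case establishes part (1).

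\textbf{Step 3 (enumeration).} It remains to list which $d(A,e)$ occur for $g$-convex $(A,e)$, and to count the resulting fans up to isomorphism. On the necessity side, combining Step~2 with the reflexivity of $\Pe$ --- in particular the requirement that $\P(A)$ be convex along every edge and at every vertex of its boundary --- bounds each $l_{ij}$ and $r_{ij}$ and imposes compatibility relations among the six triples $d_{ij}$, leaving an explicit finite list of admissible values of $d(A,e)$. On the realizability side, for each admissible value I would exhibit a concrete finite dimensional $k$-algebra, presented as a quiver with relations, which is $g$-convex and realizes it, discarding those values for which no such algebra exists. Factoring the list of realized values by the symmetry group of Step~1 (and noting when different values yield the same fan) and counting orbits then yields exactly $61$ isomorphism classes; a representative algebra of each orbit, together with its $g$-polytope, is recorded in Table~\ref{fig:61alg&poly}.

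\textbf{Main obstacle.} The heart of the argument is Step~2: proving that, for every value of the local invariants $l_{ij},r_{ij},h_{ij}$, the possible mutation paths through a mixed orthant and the point at which $g$-convexity halts them are completely determined, so that the orthant piece is genuinely reconstructed from $d(A,e)$. Step~3 is then long but essentially routine bookkeeping --- extracting the precise convexity inequalities, producing one realizing algebra per surviving case, and carrying out the orbit count.
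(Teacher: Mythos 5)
Your proposal follows essentially the same route as the paper: sign-decomposition into the eight orthants, reduction by the $\mathfrak{S}_3\times\{\pm1\}$ symmetry to the $(+-+)$-orthant, determination of that piece from $d(A,e)$ via iterated mutation paths and rank-$2$ reductions along the coordinate planes, and a final finite enumeration combining necessary constraints on $d(A,e)$ with explicit realizations by quivers with relations. The only place your sketch understates the work is the phrase ``discarding those values for which no such algebra exists'': five data survive all the convexity and compatibility constraints yet are not realizable, and excluding them requires a genuine ring-theoretic argument (Proposition \ref{Gen X}) rather than a failed search.
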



\bigskip \noindent 
{\bf Strategy.} 
Now, we explain our strategy of the proof of Theorem \ref{main theorem}. 
Let $(A,e=(e_1,e_2,e_3))$ be a finite dimensional $k$-algebra of rank 3, and $d(A,e)=(d_{ij}^e)_{1\leq i\neq j \leq 3}$ with $d_{ij}^e = (l_{ij}^e,r_{ij}^e,h_{ij}^e)$. 

When we say that $(\Sigma,\sigma_+)$ is a sign-coherent fan in $\R^3$, we assume that $\sigma_+$ is the positive cone 
$\cone \{\mathbf{e}_1,\mathbf{e}_2,\mathbf{e}_3\}$ of $\mathbb{R}^3$. 
Then, each isomorphism of sign-coherent fans in $\R^3$ is just the group action $G:=\mathfrak{S}_3\times \{\pm1\}$ on $\R^3$ given by
\[
G\ni g=(s,z)\mapsto zf_s \in \mathrm{GL}_3(\R),
\]
where $f_s$ is a linear transformation of $\mathbb{R}^3$ mapping $\bm{e}_i\mapsto \bm{e}_{s(i)}$ for each $1\leq i \leq 3$. 
Clearly, each element $g=(s,z)\in G$ gives an isomorphism
\begin{equation}\label{g iso}
\Sigma(A,e)\xrightarrow{\sim} \Sigma(A^g,e^g)
\end{equation}
of sign-coherent fans, where $A^g:=A$ if $z=1$ and $A^g:=A^{\rm op}$ if $z=-1$, and $e^g := (e_{s^{-1}(1)},e_{s^{-1}(2)},e_{s^{-1}(3)})$.

\bigskip
{\bf (1) Sign-decomposition.}
To each $\epsilon=(\epsilon_1,\ldots, \epsilon_d) \in \{\pm\}^d$, we associate the orthant $\R^d_\epsilon:=\cone\{\epsilon_i\mathbf{e}_i \mid 1\le i\le d\}$ of $\R^d$. For a sign-coherent fan $\Sigma=(\Sigma,\sigma_+)$ in $\R^d$, one can consider the subfan $\Sigma_\epsilon:=\{\sigma\in\Sigma\mid \sigma\in\R^d_\epsilon \}$.

For a finite dimensional $k$-algebra $(A,e)$ of rank $3$, let
\[
    d_{+-+}(A,e):=(d_{12}^e,d_{32}^e).
\]
A crucial step to prove Theorem \ref{main theorem} is the following, which claims that the datum $d_{+-+}(A,e)$ mostly determines $\Sigma_{+-+}(A,e)$.
Now, for $m\in \{0,\ldots,13\}$, let $d(m)=(d_{12},d_{32})$ be the datum given by Table \ref{tab:15list}. 
In addition, we define $d'(m)$ as the datum obtained from $d(m)$ by permuting a role of $d_{12}$ and $d_{32}$.

\begin{table}[ht]
\begin{tabular}{c|ccccccccc} \hline
$d_{12} \backslash d_{32}$ & $(0,0,0)$ & $(1,2,0)$ & $(1,1,0)$ & $(2,1,0)$ & $(1,2,1)$ & $(1,1,1)$ & $(2,1,1)$ \\
\hline 
$(0,0,0)$ &  $d(0)$ &&&&&&\\
$(1,2,0)$ & $-$ & $-$ & &&&&\\
$(1,1,0)$ & $-$ & $-$ &$-$ &&&&\\
$(2,1,0)$ & $-$ & $-$ & $-$ & $-$ &&&\\
$(1,2,1)$ & $d(2)$ & $d(11)$ & $d(10)$ &$-$ &$-$ & &\\ 
$(1,1,1)$ & $d(1)$ & $d(5)$ & $d(4)$ & $-$ & $d(12)$ & $d(7)$ &\\ 
$(2,1,1)$ & $d(3)$ & $-$ & $-$ & $d(6)$ &$d(13)$ & $d(8)$ &$d(9)$ \\ \hline 
\end{tabular}
\caption{ The datum $d(m)$ for $m\in \{0,\ldots,13\}$. }
\label{tab:15list}
\end{table}

\begin{theorem}\label{thm:orthant}
Let $(A,e)$ be a finite dimensional $k$-algebra of rank 3.
If $A$ is $g$-convex, then the following statements hold.
    \begin{enumerate}[\rm (1)]
    \item The possible values of $d_{+-+}(A,e)$ are precisely $d(m)$ and $d'(m)$ with $m\in \{0,\ldots, 13\}$.
    \item  If $d_{+-+}(A,e)=d(m)$ with $m\in \{0,\ldots,13\}$, 
    then the fan $\Sigma_{+-+}(A,e)$ is given by 
    \begin{equation}\label{eq:sdfans}
        \Sigma_{+-+}(A,e) = 
        \begin{cases}
            \Sigma_{d(m)} & \text{if $m\neq 10$}, \\ 
            \Sigma_{d(10),h_{13}^e} & \text{if $m=10$,}
        \end{cases}
    \end{equation}
        where the right-hand side is a non-singular fan in Table \ref{fig:15poly}. 
    \end{enumerate}
\end{theorem}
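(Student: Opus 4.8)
\textbf{Proof proposal for Theorem \ref{thm:orthant}.}

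The plan is to reduce everything to a careful study of the planar slice of the $g$-fan in the $(+-+)$-orthant. First I would recall from \cite{AHIKM1} the structure of $2$-term presilting complexes supported on the idempotents $e_1$ and $e_3$ (that is, with $e_2$ appearing trivially): these are governed by the quotient-like data $e_1Ae_1$, $e_3Ae_3$, $e_1Ae_3$, $e_3Ae_1$, and their positions relative to $\rad^2 A$. The key point is that the subfan $\Sigma_{+-+}(A,e)$ is obtained by gluing the piece ``above'' the $e_1$-axis, the piece ``above'' the $e_3$-axis, and the transition across the $(\mathbf{e}_1,-\mathbf{e}_2)$- and $(-\mathbf{e}_2,\mathbf{e}_3)$-walls; each of these pieces is, by the rank-$2$ classification (the $7$ convex $g$-fans of rank $2$ in Table \ref{tab:7convex_rank2}), determined by the rank-$2$ datum along the corresponding coordinate plane. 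Concretely, the ``$e_1$ vs $e_2$'' behaviour with sign pattern $+-$ is controlled by $d_{12}^e$, and the ``$e_3$ vs $e_2$'' behaviour with sign pattern $-+$ is controlled by $d_{32}^e$; hence $d_{+-+}(A,e)=(d_{12}^e,d_{32}^e)$ is exactly the data one should expect to see.

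Next I would establish part (1), the list of admissible pairs. Since $A$ is $g$-convex, $\Sigma(A,e)$ is complete and $\P(A)$ is reflexive; restricting to the two coordinate planes bounding the orthant, each $d_{ij}^e$ must be one of the finitely many values compatible with a convex rank-$2$ $g$-fan, which (reading off Table \ref{tab:7convex_rank2}) forces $d_{ij}^e\in\{(0,0,0),(1,2,0),(1,1,0),(2,1,0),(1,2,1),(1,1,1),(2,1,1)\}$ — these are the seven column/row labels of Table \ref{tab:15list}. The remaining work is to rule out the forbidden combinations (the entries marked $-$): for these I would argue by contradiction, assuming both $d_{12}^e$ and $d_{32}^e$ take certain values and then producing, via iterated mutation starting from the positive silting complex $A$, a sequence of $g$-vectors in the $(+-+)$-orthant whose cones cannot fit together into a convex configuration — typically because the mutation sequence either fails to terminate (contradicting $g$-finiteness) or forces a ray outside the reflexive polytope $\P(A)$. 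This ``sequences of $g$-vectors arising from iterated mutations'' analysis, foreshadowed in the abstract, is where most of the combinatorial bookkeeping lives.

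For part (2), once the datum $d_{+-+}(A,e)=d(m)$ is fixed, I would compute $\Sigma_{+-+}(A,e)$ directly by running the mutation algorithm in the $(+-+)$-orthant: start from the cone $\cone\{\mathbf{e}_1,-\mathbf{e}_2,\mathbf{e}_3\}$ (or rather from the relevant boundary of $\sigma_+$), mutate at each of the three summands, and track the resulting $g$-vectors using the exchange triangles, whose shape is dictated precisely by the generator counts $l_{ij}^e, r_{ij}^e$ and the radical-layer indicators $h_{ij}^e$. Convexity of $\P(A)$ is then used to terminate the process: it caps the cone-count and pins down the boundary rays, so that the fan one obtains is forced to be the one tabulated as $\Sigma_{d(m)}$ in Table \ref{fig:15poly}. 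The single genuinely delicate case is $m=10$ (i.e. $d_{12}^e=(1,2,1)$, $d_{32}^e=(1,1,0)$): here the mutation data in the orthant does not by itself distinguish the two possible fans, and one needs the extra invariant $h_{13}^e$ — recording whether $e_1Ae_3\subseteq\rad^2A$ — to decide between $\Sigma_{d(10),0}$ and $\Sigma_{d(10),1}$. I would handle this by exhibiting the precise exchange triangle in which $h_{13}^e$ enters, showing it changes the $g$-vector of exactly one mutation and hence the combinatorics of one wall.

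\medskip
\noindent\textbf{Main obstacle.} The hard part will be the exclusion of the forbidden pairs in part (1): there is no single clean invariant that kills them all, so one is forced into a somewhat lengthy case analysis of mutation sequences, and for each forbidden $(d_{12}^e,d_{32}^e)$ one must verify that \emph{no} $g$-convex algebra can realize it — not merely that the ``obvious'' algebra with that datum fails. Keeping this case analysis organized, and making sure the convexity constraint is invoked correctly (rather than merely $g$-finiteness), is the crux; the computations in part (2) for the allowed $d(m)$, while numerous, are comparatively mechanical once the exchange-triangle dictionary is in place.
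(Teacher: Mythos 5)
Your outline follows the same route as the paper: restrict to the coordinate planes to get the seven admissible values of each $d_{ij}^e$ from the rank-$2$ classification, compute the extremal cones of the orthant from $(d_{12}^e,d_{32}^e)$, track mutations inside the orthant subject to the convexity bound $\sum a_i\le 2$, and use $h_{13}^e$ to split the case $m=10$. That much is right, and your identification of where $h_{13}^e$ enters (a single exchange whose coefficient it controls) matches Proposition \ref{path p1}(1).

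There is, however, a genuine gap in how you propose to close the argument, for both parts. First, for an exchange in the \emph{interior} of the orthant the coefficients $(a_1,a_2)$ in $g(X)+g(Y)=a_1g(T_1)+a_2g(T_2)$ are \emph{not} dictated by the generator counts $l_{ij}^e,r_{ij}^e,h_{ij}^e$; those counts only control approximations between the projectives themselves, i.e.\ the mutations adjacent to $\pm C(A)$ and those lying in coordinate planes. The paper's mechanism for pinning down interior exchanges is the reduction at a ray $\bm{w}$: $\Sigma_{\bm w}(A,e)$ is the $g$-fan of a $g$-convex algebra of rank $2$, hence its Hasse diagram consists of exactly two maximal paths of one of finitely many shapes (Proposition \ref{rank2}, refined for $\bm w=-\mathbf{e}_2$ in Propositions \ref{path p1} and \ref{path p3}). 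This is what determines the unknown coefficients and, crucially, what rules out the forbidden pairs in part (1) — not nontermination or escape from the polytope, which in several excluded cases simply does not occur. Second, your claim that once the polytope is known ``the fan one obtains is forced'' is false in general: a convex lattice polytope can carry several non-singular sign-coherent triangulations, and in the cases $d(6),d(8),d(9),d(13)$ the paper explicitly exhibits alternative candidate fans with the same polytope and kills them using the ordered-fan property together with the two-maximal-path constraint. Relatedly, ``convexity caps the polytope'' only gives the inclusion $\P_{+-+}(A,e)\subseteq\P_{+-+}(X)$ for any set $X$ of known cones; to get equality one needs the converse inclusion on lattice points plus a unimodularity condition on the normal vectors (Proposition \ref{intermediate}), which your sketch omits. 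Without the reduction-to-rank-$2$ path classification and the half-space/unimodularity step, the case analysis in both (1) and (2) does not close.
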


By symmetry, if $d_{+-+}(A,e) = d'(m)$ with $m\in \{0,\ldots,13\}$, then we have a similar statement for the fan $\Sigma_{+-+}(A,e)$ by multiplying the element $g=((13),1) \in G$. 

As an immediate consequence of Theorem \ref{thm:orthant}, it follows that $d(A,e)$ completely determines $\Sigma(A,e)$.
In fact, since $\Sigma_{+++}(A,e)$ and $\Sigma_{---}(A,e)$ are trivial, it suffices to determine $\Sigma_\epsilon(A,e)$ for each $\epsilon=(\epsilon_1,\epsilon_2,\epsilon_3)\in\{\pm\}^3\setminus\{(+++),(---)\}$.
For $g=(s,z)\in G$, the isomorphism \eqref{g iso} gives an isomorphism
\[
\Sigma_\epsilon(A,e) \simeq \Sigma_{g\cdot \epsilon}(A^g,e^g),
\]
where we naturally regard $\epsilon$ as an integer vector of $\mathbb{R}^3$. 
In particular, if we take $g=(s,z)\in G$ such that $g\cdot \epsilon = (+-+)$, then 
we have 
\[\Sigma_\epsilon(A,e)\simeq\Sigma_{+-+}(A^g,e^g).\]
Since $\Sigma_{+-+}(A^g,e^g)$ is determined by $d(A,e)$, so is $\Sigma_\epsilon(A,e)$.

\begin{table}[b]
\renewcommand{\arraystretch}{1.06}
\begin{tabular}{ccccccc}
    $\Sigma_{d(0)}$ & $\Sigma_{d(1)}$ & $\Sigma_{d(2)}$ \\ 
    \input{Tables/Figures_15poly/Figure_A0} & \input{Tables/Figures_15poly/Figure_A1} & \input{Tables/Figures_15poly/Figure_A2} \\       
     $\Sigma_{d(3)}$ & $\Sigma_{d(4)}$ & $\Sigma_{d(5)}$ \\ 
    \input{Tables/Figures_15poly/Figure_A3} & \input{Tables/Figures_15poly/Figure_A4} & \input{Tables/Figures_15poly/Figure_A5} \\  
     $\Sigma_{d(6)}$ & $\Sigma_{d(7)}$ & $\Sigma_{d(8)}$ \\ 
    \input{Tables/Figures_15poly/Figure_A6}& \input{Tables/Figures_15poly/Figure_A7} & \input{Tables/Figures_15poly/Figure_A8} \\ 
     $\Sigma_{d(9)}$ & $\Sigma_{d(10),0}$ &  $\Sigma_{d(10),1}$ \\
    \input{Tables/Figures_15poly/Figure_A9}& \input{Tables/Figures_15poly/Figure_A10} & \input{Tables/Figures_15poly/Figure_A11} \\ 
     $\Sigma_{d(11)}$& $\Sigma_{d(12)}$ & $\Sigma_{d(13)}$ \\ 
    \input{Tables/Figures_15poly/Figure_A12} & \input{Tables/Figures_15poly/Figure_A13} & \input{Tables/Figures_15poly/Figure_A14}  
\end{tabular}
\caption{Non-singular fans in the orthant $\mathbb{R}_{+-+}^3$. Each facet is given by triangle enclosed by blue lines.}
\label{fig:15poly}
\end{table}

\bigskip
{\bf (2) Narrowing down types of $g$-convex algebras.}
Our proof of Theorem 1.5 proceeds in the following three steps:
\begin{enumerate}[\rm (i)]
    \item By means of a computer program, we provide a list of 66 convex sign-coherent fans $\Sigma$ up to isomorphism such that, for each $\epsilon\in\{\pm\}^3\setminus\{(+++),(---)\}$, $\Sigma_\epsilon$ is one of the fans in Table \ref{fig:15poly}.
    Proposition \ref{Gen A} shows that the $g$-fan of any finite dimensional algebra of rank 3 appears in this list up to isomorphism. 
    \item We show that 61 of them, listed in Table \ref{fig:61alg&poly}, are in fact the $g$-fans of some finite dimensional algebras.
    \item In Proposition \ref{Gen X}, we show that the remaining 5, listed in Table \ref{tab:non-gfans}, cannot be realized as the $g$-fans of any finite dimensional algebra.
\end{enumerate}
The existence of these 5 polytopes contrasts with the result \cite[Theorem 1.3]{AHIKM2} for the case $d=2$, which asserts that the second and fourth quadrants of $g$-fans are completely independent.

\input{Tables/table_non_g_fan}


\begin{longtable}{c|ccccc}
\renewcommand{\arraystretch}{1.2}
{\bf No.} & {\rm Polytope} & {\rm Data} & {\rm Algebra} & {\rm $\#\twosilt$} \\ \hline \hline 
\endfirsthead

    \input{Tables/Figures_61alg_poly/poly1v8} \\ \hline
    \input{Tables/Figures_61alg_poly/poly2v10} \\ \hline 
    \input{Tables/Figures_61alg_poly/poly3v12-1} \\ \hline 
    \input{Tables/Figures_61alg_poly/poly4v12-2} \\ \hline 
    \input{Tables/Figures_61alg_poly/poly5v12-3} \\ \hline
    \input{Tables/Figures_61alg_poly/poly6v14-1} \\ \hline 
    \input{Tables/Figures_61alg_poly/poly7v14-2} \\ \hline 

    \input{Tables/Figures_61alg_poly/poly8v14-3} \\ \hline 
    \input{Tables/Figures_61alg_poly/poly9v14-4} \\ \hline
    \input{Tables/Figures_61alg_poly/poly10v14-5} \\ \hline
    \input{Tables/Figures_61alg_poly/poly11v16-1} \\ \hline 
    \input{Tables/Figures_61alg_poly/poly12v16-2} \\ \hline 
    \input{Tables/Figures_61alg_poly/poly13v16-3} \\ \hline 
    \input{Tables/Figures_61alg_poly/poly14v16-4} \\ \hline

    \input{Tables/Figures_61alg_poly/poly15v16-5} \\ \hline
    \input{Tables/Figures_61alg_poly/poly16v18-1} \\ \hline 
    \input{Tables/Figures_61alg_poly/poly17v18-2} \\ \hline 
    \input{Tables/Figures_61alg_poly/poly18v18-3} \\ \hline 
    \input{Tables/Figures_61alg_poly/poly19v18-4} \\ \hline
    \input{Tables/Figures_61alg_poly/poly20v18-5} \\ \hline
    \input{Tables/Figures_61alg_poly/poly21v20-1} \\ \hline 

    \input{Tables/Figures_61alg_poly/poly22v20-2} \\ \hline 
    \input{Tables/Figures_61alg_poly/poly23v20-3} \\ \hline 
    \input{Tables/Figures_61alg_poly/poly24v20-4} \\ \hline
    \input{Tables/Figures_61alg_poly/poly25v20-5} \\ \hline
    \input{Tables/Figures_61alg_poly/poly26v20-6} \\ \hline 
    \input{Tables/Figures_61alg_poly/poly27v20-7} \\ \hline 
    \input{Tables/Figures_61alg_poly/poly28v22-1} \\ \hline 

    \input{Tables/Figures_61alg_poly/poly29v22-2} \\ \hline
    \input{Tables/Figures_61alg_poly/poly30v22-3} \\ \hline
    \input{Tables/Figures_61alg_poly/poly31v22-4} \\ \hline 
    \input{Tables/Figures_61alg_poly/poly32v22-5} \\ \hline 
    \input{Tables/Figures_61alg_poly/poly33v24-1} \\ \hline 
    \input{Tables/Figures_61alg_poly/poly34v24-2} \\ \hline
    \input{Tables/Figures_61alg_poly/poly35v24-3} \\ \hline

    \input{Tables/Figures_61alg_poly/poly36v24-4} \\ \hline 
    \input{Tables/Figures_61alg_poly/poly37v24-5} \\ \hline 
    \input{Tables/Figures_61alg_poly/poly38v24-6} \\ \hline 
    \input{Tables/Figures_61alg_poly/poly39v26-1} \\ \hline
    \input{Tables/Figures_61alg_poly/poly40v26-2} \\ \hline
    \input{Tables/Figures_61alg_poly/poly41v26-3} \\ \hline 
    \input{Tables/Figures_61alg_poly/poly42v26-4} \\ \hline 

    \input{Tables/Figures_61alg_poly/poly43v26-5} \\ \hline 
    \input{Tables/Figures_61alg_poly/poly44v26-6} \\ \hline
    \input{Tables/Figures_61alg_poly/poly45v28-1} \\ \hline
    \input{Tables/Figures_61alg_poly/poly46v28-2} \\ \hline
    \input{Tables/Figures_61alg_poly/poly47v28-3} \\ \hline 
    \input{Tables/Figures_61alg_poly/poly48v28-4} \\ \hline 
    \input{Tables/Figures_61alg_poly/poly49v28-5} \\ \hline 

    \input{Tables/Figures_61alg_poly/poly50v28-6} \\ \hline
    \input{Tables/Figures_61alg_poly/poly51v30-1} \\ \hline
    \input{Tables/Figures_61alg_poly/poly52v30-2} \\ \hline
    \input{Tables/Figures_61alg_poly/poly53v32-1} \\ \hline 
    \input{Tables/Figures_61alg_poly/poly54v32-2} \\ \hline 
    \input{Tables/Figures_61alg_poly/poly55v32-3} \\ \hline 
    \input{Tables/Figures_61alg_poly/poly56v32-4} \\ \hline

    \input{Tables/Figures_61alg_poly/poly57v32-5} \\ \hline
    \input{Tables/Figures_61alg_poly/poly58v32-6} \\ \hline 
    \input{Tables/Figures_61alg_poly/poly59v32-7} \\ \hline 
    \input{Tables/Figures_61alg_poly/poly60v40} \\ \hline 
    \input{Tables/Figures_61alg_poly/poly61v48} \\ \hline

\caption{The list of $61$ convex $g$-fans of rank $3$ that forms a complete set of representatives up to isomorphism of sign-coherent fans} 
\label{fig:61alg&poly}
\end{longtable}

\section{Preliminaries}
In this section, we will explain the definitions of $g$-fans and $g$-polytopes, together with their fundamental properties. 
Throughout this section, let $A$ be a finite dimensional algebra over a field $k$. 
We fix a complete set $e = (e_1,\ldots,e_n)$ of pairwise orthogonal primitive idempotents of $A$. 
We have a canonical isomorphism $\phi_e\colon K_0(\proj A) \to \mathbb{Z}^n$ mapping $[e_iA] \mapsto \bm{e}_i$, where $\bm{e}_i$ denotes the $i$-th coordinate vector of $\mathbb{Z}^n$. 
In addition, it gives an isomorphism $K_0(\proj A)_{\mathbb{R}} := K_0(\proj A) \otimes_{\mathbb{Z}} \mathbb{R} \to \mathbb{R}^n$.

\subsection{The definition of $g$-fans} 
In this subsection, we recall the definition of $g$-fans, which arise from the $g$-vectors of $2$-term presilting complexes for finite dimensional algebras. We refer to \cite{AIR,DIJ,AHIKM1}. 
For the basics of silting theory \cite{AiI}, we deal with 2-term presilting complexes in this paper, so for simplicity, we will discuss the statement in the case of 2-term.

\begin{definition}\label{define silting}
Let $T=(T^i,d^i)\in\Kb(\proj A)$.
\begin{enumerate}[\rm(a)]
    \item $T$ is called a \emph{$2$-term complex} if $T^i = 0$ for all $i\not= 0,-1$.
    \item $T$ is called a \emph{$2$-term presilting complex} if $T$ is a $2$-term complex and $\Hom_{\Kb(\proj A)}(T,T[1])=0$.
    \item $T$ is called a \emph{$2$-term silting complex} if it is a $2$-term presilting complex and $|T|=|A|$.
\end{enumerate}
\end{definition}

We denote by $\twosilt A$ (respectively, $\twopsilt A$, $\twoips A$) the set of isomorphism classes of basic $2$-term silting (respectively, basic $2$-term presilting, indecomposable $2$-term presilting) complexes of $\Kb(\proj A)$. 
We recall that a finite dimensional algebra $A$ is called \emph{$g$-finite} if the set $\twosilt A$ is finite. 

The partial order defined below is important in silting mutation theory. 

\begin{definition-theorem}[\cite{AiI}] \label{defthm:poset}
For $T,U\in\twosilt A$, we write $U\leq T$ if $\Hom_{\Kb(\proj A)}(T,U[1])=0$. 
Then, $(\twosilt A,\leq)$ is a partially ordered set.
\end{definition-theorem}

We recall that the $g$-vector of a $2$-term complex $T=(T^{-1}\to T^{0})\in \Kb(\proj A)$ is defined to be the class $[T]=[T^{0}] - [T^{-1}]\in K_0(\proj A)$ in the Grothendieck group (\cite{AIR}). 
For our purpose, we will take the $g$-vector as a vector of Euclidean space $\mathbb{R}^n$ as follows: 
The \emph{$g$-vector} of $T$ is defined by 
$g(T) := \phi_e([T])$, where $\phi_e\colon K_0(\proj A) \to \mathbb{Z}^n$ is the isomorphism defined above. 
Notice that it depends on a choice of idempotents $e=(e_1,\ldots,e_n)$. 
By definition, if we set $T^0 \cong \bigoplus_{i=1}^n (e_iA)^{a_i}$ and $T^{-1} \cong \bigoplus_{i=1}^n (e_iA)^{b_i}$, then  
\begin{equation}
    g(T) = \left[
\begin{smallmatrix}a_1 -b_1 \\ \svdots \\ a_n - b_n \end{smallmatrix} 
\right]
\in \mathbb{Z}^n. 
\end{equation}

\begin{proposition}[{\cite[Proposition 2.5]{AIR}, \cite[Theorem 2.27]{AiI}}]\label{no common summand}
Let $T = (T^{-1}\to T^{0})$ be a basic $2$-term presilting complex. 
    \begin{enumerate}[\rm (1)]
        \item $T^0$ and $T^{-1}$ have no non-zero direct summands in common.
        \item Suppose that $T = T_1\oplus \cdots \oplus T_{\ell}$ with indecomposable $T_i$. Then, $g(T_1),\ldots,g(T_{\ell})$ forms a $\mathbb{Z}$-linearly independent set in $\mathbb{Z}^n$. 
    \end{enumerate}
\end{proposition}

Next, we recall the definition of $g$-fans. 

\begin{definition}
\begin{enumerate}[\rm (1)]
    \item For $T=T_1\oplus\cdots\oplus T_\ell\in\twopsilt A$ with indecomposable $T_i$, the \emph{$g$-vector cone} $C(T)$ of $T$ is a positive cone in $\mathbb{R}^n$ generated by $g(T_1),\ldots,g(T_{\ell})$: 
\begin{equation}
C(T) := \cone\{g(T_1), \ldots, g(T_\ell)\} = 
\left\{\sum_{i=1}^{\ell} a_ig(T_{i}) \mid a_{i} \geq 0 \right\}
\subset \mathbb{R}^n. 
\end{equation}
In this case, we also write it as a matrix form (depending on the ordering of summands): 
\[
C(T) = [g(T_1)| \cdots | g(T_{\ell})].
\]
For example, the $g$-vector cone $C(A)$ can be displayed as 
\[
C(A) = [\bm{e}_1 | \cdots | \bm{e}_n] = 
\left[
\begin{smallmatrix}1 \\ 0 \\ \svdots \\ 0\end{smallmatrix} 
\middle|
\cdots
\middle|
\begin{smallmatrix}0 \\ \svdots \\ 0 \\ 1\end{smallmatrix} 
\right]. 
\] 

\item The \emph{$g$-fan} of $A$ (with respect to $e$) is the set of cones:
\[\SigmaAe:=\{C(T)\mid T\in\twopsilt A\}.\]
\end{enumerate}
\end{definition}

Now, we give fundamental properties of a $g$-fan, that are needed in this paper. 
\begin{proposition}[\cite{DIJ}] \label{prop:basics_gfan}
We have the following. 
\begin{enumerate}[\rm (1)]
\item $T\mapsto C(T)$ gives a bijection $\twopsilt A \xrightarrow{\sim} \SigmaAe$.
\item $\SigmaAe$ is a nonsingular fan in $\mathbb{R}^n$ (i.e., a fan each of whose maximal cone is generated by a $\mathbb{Z}$-basis for $\mathbb{Z}^n$).
\item Any cone in $\SigmaAe$ of co-dimension $1$ is a face of precisely two maximal cones. 
\item Two maximal cones $\sigma,\sigma'$ of $\SigmaAe$ are adjacent to each other if and only if their corresponding $2$-term silting complexes $T$ and $T'$ differ in exactly one indecomposable direct summand. Thus, $T$ and $T'$ are related by mutation to each other. 
\item In (4), we suppose that $T = X\oplus \bigoplus_{i=1}^{n-1} T_i$ and $T = Y \oplus \bigoplus_{i=1}^{n-1} T_i$, in which case we have   
\[
\sigma=\cone\{g(T_1),\ldots,g(T_{n-1}), g(X)\} 
\quad \text{and} \quad 
\sigma' = \cone\{g(T_1),\ldots,g(T_{n-1}), g(Y)\}. 
\] 
Then, there exist non-negative integers $a_1,\ldots,a_{n-1}\geq 0$ satisfying 
\begin{equation} \label{eq:pw_nonnegativity}
    g(X)+g(Y) = \sum_{i=1}^{n-1} a_ig(T_i). 
\end{equation}
\end{enumerate}
\end{proposition}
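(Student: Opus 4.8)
The final statement to prove is Proposition \ref{prop:basics_gfan}, a collection of five facts about $g$-fans, most of which are attributed to \cite{DIJ}. Since parts (1)–(4) are essentially citations to the literature, the substance of a self-contained proof lies in part (5), the ``wall-crossing'' or ``exchange'' relation $g(X)+g(Y)=\sum_{i=1}^{n-1}a_ig(T_i)$ with all $a_i\ge 0$. Below I sketch how I would organize the argument.

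\textbf{Setting up (1)–(4).} The plan is first to record (1) as a consequence of Proposition \ref{no common summand}(1): a basic $2$-term presilting complex is determined up to isomorphism by the multiset of $g$-vectors of its indecomposable summands, because $T^0$ and $T^{-1}$ share no summand, so the $g$-vector of each indecomposable summand unambiguously reconstructs that summand via \cite{DIJ}; injectivity of $T\mapsto C(T)$ then follows from Proposition \ref{no common summand}(2), which says the generators are $\mathbb Z$-linearly independent, hence are exactly the rays of the cone. For (2), nonsingularity is exactly the statement that for a $2$-term \emph{silting} complex the $n$ indecomposable summands have $g$-vectors forming a $\mathbb Z$-basis of $\mathbb Z^n$; this is \cite[Theorem]{DIJ} (building on \cite{AIR}), and that the $C(T)$ fit together into a fan — i.e.\ a face of a cone in the collection is again in the collection, and two cones meet along a common face — is also established there via the compatibility of presilting summands. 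Parts (3) and (4) I would deduce from silting mutation theory \cite{AiI}: an almost-complete $2$-term presilting complex (one with $n-1$ indecomposable summands) has exactly two completions to a silting complex, namely $T$ and its mutation $T'=\mu(T)$, which translates directly into the codimension-one cone $\cone\{g(T_1),\dots,g(T_{n-1})\}$ being a face of precisely the two maximal cones $C(T)$ and $C(T')$, and conversely adjacency of maximal cones forces the corresponding silting complexes to differ in a single summand.

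\textbf{The exchange relation (5).} This is the one genuinely computational point. Let $T=X\oplus\bigoplus T_i$ and $T'=Y\oplus\bigoplus T_i$ be the two silting complexes meeting along the wall, with, say, $T'\le T$ in the order of Definition-Theorem \ref{defthm:poset}. By the theory of silting mutation \cite{AiI}, $Y$ is obtained from $X$ by an \emph{exchange triangle}
\begin{equation*}
X \xrightarrow{\ f\ } E \longrightarrow Y \longrightarrow X[1]
\end{equation*}
in $\Kb(\proj A)$, where $f$ is a minimal left $\add(\bigoplus_{i=1}^{n-1}T_i)$-approximation of $X$; in particular $E\in\add(\bigoplus_{i=1}^{n-1}T_i)$, so $E\cong\bigoplus_{i=1}^{n-1}T_i^{\,a_i}$ for some integers $a_i\ge 0$. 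Taking classes in $K_0(\proj A)$ and applying $\phi_e$, the triangle gives $[E]=[X]+[Y]$, i.e. $g(X)+g(Y)=\sum_{i=1}^{n-1}a_i\,g(T_i)$ with $a_i\ge 0$, which is exactly \eqref{eq:pw_nonnegativity}. The key inputs are: the existence of the exchange triangle (silting mutation is always possible for an indecomposable summand of a silting complex in $\Kb(\proj A)$), the fact that the middle term lies in the additive closure of the common summands, and additivity of the $g$-vector on triangles (immediate from its definition as a class in the Grothendieck group).

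\textbf{Main obstacle.} The only subtlety I anticipate is pinning down the direction of the exchange triangle and the sign of the coefficients: a priori mutation produces a triangle with $E$ either a left approximation (giving ``$X$ to $Y$'') or the companion right-approximation triangle, and one must check that the correct one — the one whose middle term is a non-negative combination of the $T_i$ rather than involving $X$ or $Y$ with a sign — is the relevant one, using the order relation $T'\le T$ to fix orientation. Once the triangle is correctly oriented, the passage to $g$-vectors is formal. I would handle this by invoking the precise statement of \cite[Theorem 2.27, 2.31]{AiI} (irreducible left/right mutation and the associated exchange triangles) together with Proposition \ref{no common summand}, rather than re-deriving it. Everything else is bookkeeping in the Grothendieck group.
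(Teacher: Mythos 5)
Your proposal is correct and follows the standard argument: the paper itself gives no proof of this proposition (it is cited from [DIJ]), and your treatment of (1)--(4) via [DIJ] and [AiI] together with the exchange-triangle computation for (5) is exactly the canonical derivation. Note that the orientation issue you flag in (5) is harmless, since the Grothendieck-group identity $[E]=[X]+[Y]$ from the triangle $X\to E\to Y\to X[1]$ is symmetric in $X$ and $Y$, so whichever of the two silting complexes is larger, the relation \eqref{eq:pw_nonnegativity} with non-negative coefficients follows once one knows the middle term of the exchange triangle lies in $\add(\bigoplus_{i}T_i)$.
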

\begin{remark}
\label{remark:basic_g-fan}
\begin{enumerate}[\rm (1)]
\item By Proposition \ref{prop:basics_gfan} (1) and (2), rays (i.e., $1$-dimensional cones) of $\SigmaAe$ are exactly $g$-vector cones of indecomposable $2$-term presilting complexes for $A$. 
In addition, the maximal cones of $\SigmaAe$ are exactly $g$-vector cones of $2$-term silting complexes for $A$. 
\item 
If we have another pairwise orthogonal primitive idempotents $e'=(e_1',\ldots,e_n')$ of $A$, 
then there is an isomorphism of fans $\SigmaAe\cong \Sigma(A,e')$.
\end{enumerate}
\end{remark}
We regard the set $\SigmaAe_n$ of maximal cones of $\Sigma(A,e)$ as a partially ordered set equipped with the induced partial order $\leq$ from that of $\twosilt A$ (see Definition-Theorem \ref{defthm:poset}).
By definition, it has the maximum element $C(A)$ and the minimum element $C(A[1]) = -C(A)$. 
By abuse of notation, we will write $(\SigmaAe,\leq)$ for this partially ordered set. 
More generally, for a given subset $\Sigma' \subseteq \SigmaAe$,  
we will write $(\Sigma',\leq)$ for the full subposet given by maximal cones in $\Sigma'$. 

Furthermore, the partial order of $\twosilt A$ can be 
interpreted as geometric information of the $g$-fan $\Sigma(A)$ as follows.
This shows that $g$-fans form a class of ordered fans \cite[Definition 4.15]{AHIKM1}.

\begin{theorem}[{\cite[Theorem 6.11]{DIJ}}]
\label{theorem:g-fan is ordered} 
The fan $\Sigma(A,e)$ satisfies both of the following conditions:
\begin{enumerate}[\rm(a)] 
\item For each pair of maximal cones $\sigma,\sigma'$ of $\SigmaAe$ adjacent to each other, 
a normal vector of $\sigma\cap\sigma'$ belongs to $C(A)$. 
\item For any maximal cones $\sigma,\sigma'$ of $\SigmaAe$, the following conditions are equivalent.
\begin{itemize}
\item There is an arrow $\sigma\to\sigma'$ in the Hasse quiver of $(\SigmaAe, \leq)$.
\item $\sigma$ and $\sigma'$ are adjacent to each other. 
Moreover, consider the hyperplane $\mathbb{H}:=\mathbb{R}(\sigma\cap\sigma')\subset \mathbb{R}^n$. 
Then $\sigma$ and $C(A)$ belong to the same connected component of $\mathbb{R}^n\setminus \mathbb{H}$.
\end{itemize}
\end{enumerate}
\end{theorem}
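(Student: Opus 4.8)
The statement to be proved is Theorem~\ref{theorem:g-fan is ordered}, which asserts that the $g$-fan $\Sigma(A,e)$ satisfies two geometric conditions relating its combinatorics to the poset structure on $2$-term silting complexes. Since the excerpt explicitly attributes this result to \cite[Theorem 6.11]{DIJ}, my plan is to reconstruct the essential argument rather than invent it from scratch, leaning on the mutation machinery recalled in Proposition~\ref{prop:basics_gfan} and the order-theoretic setup of Definition-Theorem~\ref{defthm:poset}.

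\textbf{Proof of (a).} Let $\sigma = C(T)$ and $\sigma' = C(T')$ be adjacent maximal cones, so by Proposition~\ref{prop:basics_gfan}(4) their silting complexes differ in exactly one summand: write $T = X \oplus \bigoplus_{i=1}^{n-1} T_i$ and $T' = Y \oplus \bigoplus_{i=1}^{n-1} T_i$ as in part (5). The wall $\sigma \cap \sigma' = \cone\{g(T_1),\dots,g(T_{n-1})\}$ has codimension $1$, so it determines a normal vector up to sign. The plan is to produce an explicit candidate normal vector and show it lies in $C(A) = \cone\{\bm e_1,\dots,\bm e_n\}$, i.e.\ has non-negative coordinates. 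The natural candidate comes from a compatibility pairing on the Grothendieck group: one uses the pairing $\langle [P], [T]\rangle$ between $K_0(\proj A)$ and $K_0(\proj A^{\mathrm{op}})$ coming from the Euler form, together with the key fact from $\tau$-tilting / silting theory that if $T'$ is obtained from $T$ by a \emph{left} mutation then there is a projective $P$ (a summand of $A$) such that $\langle [P_j], g(T_i)\rangle = 0$ for all $i$ while the sign of $\langle [P_j], g(X)\rangle$ and $\langle [P_j], g(Y)\rangle$ is controlled. Concretely, the normal vector to $\mathbb{H} = \mathbb{R}(\sigma\cap\sigma')$ can be taken as (the image under $\phi_e$ of) a class $[P]$ for an idempotent-truncation $P$ of $A$, which manifestly lies in $C(A)$. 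I would cite \cite[Lemma/Thm]{AIR} or \cite{DIJ} for the precise homological input that the exchange of $X$ and $Y$ occurs across the hyperplane cut out by such a $[P]$.

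\textbf{Proof of (b).} Here I want to match the Hasse arrows of $(\SigmaAe,\leq)$ with the geometric adjacency-plus-side condition. The forward direction: suppose $\sigma \to \sigma'$ is a Hasse arrow. Then $T' < T$ and they are comparable and cover each other in the poset, which by the mutation theory of \cite{AiI} forces $T'$ to be a single left mutation of $T$; hence by Proposition~\ref{prop:basics_gfan}(4) they are adjacent, and the defining inequality $\Hom_{\Kb}(T,T'[1]) = 0$ combined with part (a) pins down which connected component of $\mathbb{R}^n \setminus \mathbb{H}$ contains $\sigma$ relative to $C(A)$: the sign of the normal-vector pairing on $g(A) = (1,\dots,1)$-type generators, vs.\ its sign on a point of $\sigma$, must agree, which is exactly the condition that $\sigma$ and $C(A)$ lie on the same side of $\mathbb{H}$. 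For the converse, if $\sigma$ and $\sigma'$ are adjacent and lie on the same side as $C(A)$, I reverse the implication using (a) plus the characterization of the partial order via $\Hom$-vanishing: adjacency gives the exchange triangle, the side condition identifies the direction of the mutation as a left mutation, and \cite[Theorem 2.35]{AiI}-type results then give $T' < T$ with no silting complex strictly between them (since adjacency is a single mutation step), hence a Hasse arrow.

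\textbf{Main obstacle.} The routine parts are the poset bookkeeping and the bijection between single mutations and covering relations; the substantive point is Part (a) together with the sign-tracking in Part (b), i.e.\ correctly identifying the normal vector of a wall with a non-negative class $[P] \in C(A)$ and verifying that the \emph{direction} of mutation (left vs.\ right, equivalently $T' < T$ vs.\ $T' > T$) corresponds precisely to which side of the wall the chamber $C(A)$ sits on. This requires invoking the homological description of silting mutation from \cite{AiI,AIR} — specifically the structure of the exchange triangle $X \to E \to Y \to X[1]$ and how $\Hom$-vanishing translates into an inequality on $g$-vector coordinates — and carefully matching conventions so that "same connected component as $C(A)$" aligns with the convention used for $\leq$ in Definition-Theorem~\ref{defthm:poset}. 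I would handle this by isolating a single lemma: for adjacent $\sigma = C(T)$, $\sigma' = C(T')$ with wall normal $\bm n \in C(A)$ chosen so that $\langle \bm n, g(A)\text{-direction}\rangle > 0$, one has $T' < T$ iff $\langle \bm n, v\rangle > 0$ for $v$ in the interior of $\sigma$; everything else then follows formally.
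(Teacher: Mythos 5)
First, for context: the paper itself gives no proof of this statement --- it is imported verbatim as \cite[Theorem 6.11]{DIJ} --- so there is no in-paper argument to measure yours against. Your overall architecture is the standard one behind the cited result: (a) identify each codimension-one wall with a semistability hyperplane whose normal is non-negative, and (b) match Hasse arrows with single irreducible mutations and use (a) to decide on which side of the wall $C(A)$ sits. That outline is sound, and your part (b) (covers $=$ irreducible left mutations, direction read off from the sign of the normal pairing) is correct modulo the input from (a).

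The genuine gap is in the step you yourself flag as the crux. You assert that the normal vector of $\mathbb{H}=\mathbb{R}(\sigma\cap\sigma')$ can be taken to be the class $[P]$ of an idempotent truncation (a direct summand of $A$), i.e.\ a $0$--$1$ vector $\sum_{i\in I}\bm{e}_i$, justified by ``$\langle [P_j],g(T_i)\rangle=0$ for all $i$''. This is false, and the conclusion of (a) cannot be obtained this way. The correct normal vector is the \emph{dimension vector of the brick $M$ labelling the wall} (the zeroth cohomology of the cone of the exchange map), viewed in $K_0(\fl A)\cong\mathbb{Z}^n$ via $[S_i]\mapsto\bm{e}_i$; the wall is $\{\theta\mid \sum_i\theta_i\dim_k(Me_i)=0\}$, and non-negativity of the normal comes from non-negativity of dimension vectors, not from the normal being a projective class. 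Already in rank $2$ this matters: for the algebra with quiver $1\xrightarrow{a}2$ plus a loop $v$ at $2$ and $v^2=0$ (so $(l_{12},r_{12},h_{12})=(2,1,1)$, the restriction of No.\,3 of Table \ref{fig:61alg&poly} to $e_1+e_2$), the wall between the adjacent chambers $[\bm{e}_1\,|\,2\bm{e}_1-\bm{e}_2]$ and $[2\bm{e}_1-\bm{e}_2\,|\,\bm{e}_1-\bm{e}_2]$ is the ray through $2\bm{e}_1-\bm{e}_2$, whose normal is $\bm{e}_1+2\bm{e}_2$: non-negative, as (a) asserts, but not the class of any summand of $A$ (here the labelling brick is $e_1A$ with dimension vector $(1,2)$). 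Relatedly, the pairing you invoke, between $K_0(\proj A)$ and $K_0(\proj A^{\op})$, is not the relevant one; you need the Euler pairing between $K_0(\proj A)$ and $K_0(\fl A)$, $\langle[P],[M]\rangle=\dim_k\Hom_A(P,M)$. Once the normal is correctly identified as $[M]$ for the labelling brick, your sign-tracking in (b) does go through: $\theta(M)>0$ on the interior of the larger chamber and on the interior of $C(A)$, which is exactly the ``same connected component'' condition.
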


With notations in Proposition \ref{prop:basics_gfan}(5), 
we have either $\sigma \to \sigma'$ or $\sigma'\to \sigma$ by the above result. 
If this is the former case, then $T'$ is a left irreducible mutation of $T$, and $T$ is a right mutation of $T'$ simultaneously \cite{AiI}. 
In this case, we write $T'=\mu_X^-(T)$ and $T=\mu_Y^+(T')$. 
We will also use similar notations for not necessarily irreducible left/right mutations (We refer to \cite{AIR,AiI} for basics of silting mutation theory).

\subsection{Reduction method} 
\label{subsect:reduction}
In this subsection, we will describe the basic properties of the reduction of a 2-term presilting complex, along with the relationship between this reduction and the associated $g$-fan.

For a given $2$-term presilting complex $W\in \twopsilt A$ with $|W|=m$, 
let 
\[
    \twopsilt_W A := \{T\in \twopsilt A \mid W\in \add T\}. 
\]
Letting $T = T_1\oplus \cdots \oplus T_{n-m}\oplus W$ be the Bongartz completion of $W$ (see \cite{AIR}), 
we define the algebra $B_W := \End_{\Kb(\proj A)}(T)/[W]$ where $[W]$ denotes the ideal consisting of all endomorphisms factoring through $\add W$. 

We can summarize the relationship between $\Sigma(A)$ and $\Sigma(B_W)$ as follows. 

\begin{theorem}[{\cite[Theorem 4.11]{AHIKM1}}]
\label{theorem:jassoreduction}
\begin{enumerate}[\rm (1)]
  \item There exists a triangle functor $F_W:\Kb(\proj A) \to \Kb(\proj B_W)$ which sends $T$ to $B_W$
  and satisfies the following conditions:
  \begin{itemize}
  \item $F_W$ gives an isomorphism $K_0(\proj A)_\mathbb{R}/K_0(\add W)_\mathbb{R}\simeq K_0(\proj B_W)_\mathbb{R}$.
  \item $F_W$ gives a bijection 
  \[
    \{T\in \twopsilt_W A\mid |T|=d\} \simeq \{T'\in \twopsilt B_W\mid |T'|=d-m\}
    \] 
    for each $d\in \{m,m+1,\dots,n\}$.   
  \end{itemize}
  \item Take orthogonal primitive idempotents $e_1',\dots,e_{n-m}'$ of $B_W$ satisfying $F_W(T_i)\simeq e_i' B_W$
  and set $e'=(e_1',\dots,e_{n-m}')$. Define $\pi_{e,e'}:\mathbb{R}^n\to \mathbb{R}^{n-m}$ by
  \[
  \mathbb{R}^n \xrightarrow{\phi_e^{-1}} K_0(\proj A)_\mathbb{R} \twoheadrightarrow  
  K_0(\proj A)_\mathbb{R}/K_0(\add W)_\mathbb{R} \xrightarrow{F_W} K_0(\proj B_W)_\mathbb{R} \xrightarrow{\phi_{e'}} \mathbb{R}^{n-m},
  \]
  where $\twoheadrightarrow$ means the canonical surjection.
  Then we have 
  \[
  g(F_W(T)) = \pi_{e,e'} (g(T)) \quad \text{and} \quad 
  C(F_W(T)) = \pi_{e,e'} (C(T))
  \] 
  for any $T\in \twopsilt A$, and
  \[
  \Sigma(B_W,e')=\{C(F_W(T))\mid T\in \twopsilt_W A\}=\{\pi_{e,e'} (C(T))\mid T\in \twopsilt_W A\}.
  \]
  In particular, $F_W$ gives a bijection
  \begin{equation} \label{eq:reduction_fan}
    \Sigma_{W}(A,e) := \{C(T)\mid T\in \twopsilt_W A \} \xrightarrow{\sim} \Sigma(B_W,e')  
\end{equation}
\end{enumerate}
\end{theorem}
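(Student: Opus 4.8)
The plan is to deduce the theorem from silting reduction in the sense of Iyama--Yang, together with the $g$-vector formalism recalled above, so that the functor $F_W$ and its behaviour on Grothendieck groups come out essentially by construction and the only genuine work is the compatibility with $2$-term (pre)silting complexes.

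I would first set up the reduction. Let $T = T_1\oplus\cdots\oplus T_{n-m}\oplus W$ be the Bongartz completion of $W$, a $2$-term silting complex of $\Kb(\proj A)$, and consider the Verdier quotient $\Kb(\proj A)/\thick W$. By silting reduction, this quotient is triangle-equivalent to $\per B_W = \Kb(\proj B_W)$ with $B_W = \End_{\Kb(\proj A)}(T)/[W]$, in such a way that the image of $T$ corresponds to $B_W$; I would take $F_W$ to be the composite of the quotient functor with this equivalence, a triangle functor with $F_W(W) = 0$ and $F_W(T) = B_W$. On Grothendieck groups, the localization sequence for the Verdier quotient gives $K_0(\thick W) \to K_0(\proj A) \to K_0(\Kb(\proj A)/\thick W) \to 0$; since $W$ is presilting, the classes of its indecomposable summands are linearly independent by Proposition \ref{no common summand}(2), so $K_0(\thick W)_\R$ is identified with the $m$-dimensional subspace $K_0(\add W)_\R$, and one obtains $K_0(\proj A)_\R/K_0(\add W)_\R \simeq K_0(\proj B_W)_\R$. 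Unwinding the definitions, the map induced by $F_W$ on $K_0(\proj -)_\R$ is precisely the composite defining $\pi_{e,e'}$.

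The heart of part (1) is the bijection on presilting complexes, and this is where I would lean hardest on silting reduction. The point is that the quotient functor restricts to a bijection between $2$-term presilting complexes of $\Kb(\proj A)$ that have $W$ as a direct summand and $2$-term presilting complexes of $\Kb(\proj B_W)$, dropping the number of indecomposable summands by exactly $m = |W|$. The count behaves as claimed because, for a presilting complex $U = U'\oplus W$, an indecomposable summand of $U$ lies in $\thick W$ if and only if it lies in $\add W$ (otherwise its $g$-vector would be linearly independent from, yet lie in the span of, the $g$-vectors of the summands of $W$), so $F_W$ annihilates exactly the $W$-part and, by the fully-faithfulness properties of silting reduction on the relevant additive subcategory, sends the remaining indecomposable summands to pairwise non-isomorphic indecomposable presilting objects. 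The $2$-term condition is preserved because silting reduction is compatible with the silting order of Definition-Theorem \ref{defthm:poset} and $T$, being the Bongartz completion of $W$, is itself a $2$-term silting complex mapping to $B_W$: every $2$-term $U$ with $W\in\add U$ satisfies $A[1]\le U\le T$, hence $B_W[1]\le F_W(U)\le B_W$, so $F_W(U)$ is $2$-term; surjectivity onto $2$-term presilting complexes of $B_W$ follows the same way. (Alternatively, this bijection is Jasso's $\tau$-tilting reduction transported through the standard dictionary between $2$-term presilting complexes and support $\tau$-rigid pairs.)

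For part (2), choose orthogonal primitive idempotents $e_1',\dots,e_{n-m}'$ of $B_W$ with $F_W(T_i)\cong e_i'B_W$; these exist because $B_W = F_W(T) = \bigoplus_{i=1}^{n-m}e_i'B_W$ is a decomposition into indecomposable projectives. Then $g(F_W(T))=\phi_{e'}(F_W([T]))=\pi_{e,e'}(g(T))$ by the description of $F_W$ on $K_0$ above, and for $T = \bigoplus_i T_i\in\twopsilt_W A$ the linearity of $g$-vectors under $F_W$ together with $F_W(\add W)=0$ gives
\[
\pi_{e,e'}(C(T)) = \cone\{\pi_{e,e'}(g(T_i))\}_i = \cone\{g(F_W(T_i)) \mid T_i\notin\add W\} = C(F_W(T)).
\]
Letting $T$ range over $\twopsilt_W A$, combining with the bijection of (1), and applying Proposition \ref{prop:basics_gfan} to $B_W$, one gets $\Sigma(B_W,e') = \{C(F_W(T)) \mid T\in\twopsilt_W A\} = \{\pi_{e,e'}(C(T)) \mid T\in\twopsilt_W A\}$, which is \eqref{eq:reduction_fan}. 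The main obstacle is the third step: extracting from silting reduction exactly the claimed restricted bijection --- the drop in the number of indecomposable summands and the preservation of $2$-term-ness --- together with the assertion that $F_W$ genuinely takes values in $\per B_W$ for an \emph{ordinary} algebra $B_W$; once that structural input is in place, the $g$-vector and cone formulas in (2) are a formality.
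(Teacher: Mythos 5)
This theorem is not proved in the paper you were given: it is imported verbatim from \cite[Theorem 4.11]{AHIKM1}, so there is no in-paper argument to compare yours against. That said, your route --- Iyama--Yang silting reduction identifying $\Kb(\proj A)/\thick W$ with $\Kb(\proj B_W)$, the localization sequence on $K_0$ (using Proposition \ref{no common summand}(2) to see that the image of $K_0(\thick W)$ is the rank-$m$ lattice $K_0(\add W)$), and Jasso's reduction for the bijection on $2$-term presilting objects --- is the standard proof and, as far as the citation indicates, the one used in the source; part (2) is then, as you say, a formality. One place where your sketch is looser than it should be: the order $\le$ of Definition-Theorem \ref{defthm:poset} is defined only on silting complexes, so the inequality ``$A[1]\le U\le T$'' for a presilting $U$ has to be routed through a $2$-term silting completion of $U$ containing $W$. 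What the argument actually needs is that the silting complexes admitting $W$ as a summand form the interval between the co-Bongartz and Bongartz completions of $W$, that silting reduction maps this interval isomorphically onto $\silt B_W$ sending $T$ to $B_W$, and --- the step you do not address --- that the co-Bongartz completion is sent to $B_W[1]$; this last identification is exactly what makes ``$2$-term over $A$ and containing $W$'' correspond to ``$2$-term over $B_W$'' in both directions. Since you fall back on Jasso's $\tau$-tilting reduction, which packages precisely this, the gap is one of exposition rather than substance.
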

We call $\Sigma_W(A,e)$ the \emph{reduction} at $W$. 
In addition, if $W$ is indecomposable and $\bm{w}=g(W)$ is the $g$-vector of $W$, 
we call it the \emph{reduction} at $\bm{w}$ and write $\Sigma_{\bm{w}}(A,e)=\Sigma_W(A,e)$. 
\begin{remark}
 The bijection \eqref{eq:reduction_fan} is induced by $\pi_{e,e'}$.
Although the left-hand side is no longer a fan, it induces the following poset isomorphism (This isomorphism has been obtained in \cite{J}):
\begin{equation} \label{eq:reduction_poset}
(\Sigma_{W}(A,e),\leq) \xrightarrow{\sim} (\Sigma(B_W,e'),\leq). 
\end{equation}
\end{remark}

\subsection{$g$-polytopes and the convexity}
\label{subsec:g-convex}
In this subsection, we discuss the convexity of fans and polytopes. 
For $T=T_1\oplus \cdots \oplus T_{\ell}\in \twopsilt A$ with indecomposable $T_i$, we denote by $\Conv_0(T)$ the convex hull of $g(T_1),\ldots, g(T_{\ell})$ with the origin $\bm{0}$ in $\mathbb{R}^n$:  
\begin{equation}
\Conv_{0}(T) := \conv\{\bm{0}, g(T_1), \ldots, g(T_\ell)\} = 
\left\{\sum_{i=1}^{\ell} a_ig(T_{i}) \mid a_{i} \geq 0, \sum_{i=1}^{\ell} a_i \leq 1\right\}
\subset \mathbb{R}^n. 
\end{equation}
More generally, for a given finite collection $\mathcal{S}\subseteq\twopsilt A$, we set 
\begin{equation}
    \Conv_{0}(\mathcal{S}) := \conv\left(\{g(X)\mid \text{$X$ is an indecomposable direct summand of some $T\in \mathcal{S}$}\}\cup\{\bm{0}\}\right).
\end{equation}
Indeed, it is straightforward that $\Conv_0(T) = \Conv_0(\{T\})$ holds.

\begin{definition}
We define the \emph{$g$-polytope} of $A$ (with respect to $e$) by 
\[
\P(A,e) := \bigcup_{T\in \twosilt A} \Conv_0(T) \subset \mathbb{R}^n. 
\]     
\end{definition}

Thus, $\P(A,e)$ is the union of of the simplices associated to the maximal cones of $\Sigma(A,e)$. 
We note that it is not necessarily a convex set of $\mathbb{R}^n$ in general (see \cite[Example 5.3]{AHIKM1}). 
We say that $A$ is \emph{$g$-convex} (or $\SigmaAe$ is convex) if the $g$-polytope $\P(A,e)$ is convex. 
Notice that this property does not depend on the choice of idempotents. 
If $A$ is $g$-convex, then $\Conv_0(\mathcal{S})\subseteq \PAe$ holds for any $\mathcal{S}\subseteq \twopsilt A$ by the convexity. 

The following result is fundamental for $g$-convex algebras.
We recall that $A$ is said to be $g$-finite if 
$\twosilt A$ is finite, or equivalently $\Sigma(A,e)$ is finite. 

\begin{theorem}[{\cite[Theorem 5.10]{AHIKM1}}]
\label{theorem:g-convex=pairwise_g-convex}
\begin{enumerate}[{\rm (1)}]
\item $A$ is g-finite if and only if $\P(A,e)$ contains the origin in its interior. 
In this case, the origin is a unique lattice point in the interior of $\P(A,e)$.
\item The following statements are equivalent.
\begin{enumerate}[{\rm (a)}]
\item $A$ is $g$-convex.
\item The $g$-polytope $\P(A,e)$ is given as the convex hull 
\begin{equation}\label{eq:convex-hull_description}
\PAe = \Conv_{0}(\twoips A). 
\end{equation}
\item $A$ is $g$-finite and the following condition holds:
\begin{itemize}
\item For each pair of maximal cones $\sigma,\sigma'$ of $\SigmaAe$ adjacent to each other and given as in Proposition \ref{prop:basics_gfan}(5), we have the inequality $\sum_{i=1}^{n-1} a_i\leq 2$ for $a_i$'s in \eqref{eq:pw_nonnegativity}. 
\end{itemize}
\end{enumerate}
\end{enumerate}
\end{theorem}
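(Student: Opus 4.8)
The final statement to prove is Theorem~\ref{theorem:g-convex=pairwise_g-convex} (cited from \cite{AHIKM1}), but since we are asked to sketch a proof plan, I outline how one would establish the equivalence of (a), (b), (c) and the basic facts in part (1).

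\textbf{Plan for part (1).}
The plan is to first recall that $\twosilt A$ is finite if and only if $\SigmaAe$ is complete (this is \cite[Theorem 4.7]{As}, quoted in the introduction). Assuming $\SigmaAe$ is complete, every point of $\mathbb{R}^n$ lies in some maximal cone $C(T)$, and the simplex $\Conv_0(T)$ has the origin as a vertex; a small neighbourhood of $\bm 0$ inside $C(T)$ lies in $\Conv_0(T)$, so $\bm 0$ is an interior point of $\P(A,e)$. Conversely, if $\bm 0$ is interior to $\P(A,e)$, then $\P(A,e)$ is a full-dimensional bounded set whose associated cones (the cones over its faces seen from $\bm 0$) cover a neighbourhood of $\bm 0$; since the cones $C(T)$ are closed under taking faces and satisfy the face-matching property (Proposition~\ref{prop:basics_gfan}(3),(4)), one propagates completeness outward, so $\SigmaAe$ is complete, hence $A$ is $g$-finite. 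For the uniqueness of the interior lattice point: by Theorem~\ref{theorem:g-fan is ordered} and nonsingularity, $\P(A,e)$ is built from unimodular simplices each with a vertex at $\bm 0$; any lattice point in the interior would have to lie in the relative interior of one such simplex $\Conv_0(T)$, but a unimodular simplex with a vertex at $\bm 0$ contains no interior lattice point other than possibly $\bm 0$ — here one uses that the only lattice points of $\Conv_0(T)$ are $\bm 0$ and the $g(T_i)$, which are vertices. The main subtlety is the converse direction of the equivalence, i.e.\ deducing global completeness of the fan from a purely local interior-point condition; this is handled by a connectedness argument using Proposition~\ref{prop:basics_gfan}(3),(4), which shows the union of cones $\bigcup_{T}C(T)$ is both open and closed in $\mathbb{R}^n\setminus\{\bm 0\}$.

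\textbf{Plan for part (2), (a) $\Leftrightarrow$ (b).}
The implication (b) $\Rightarrow$ (a) is immediate since a convex hull is convex. For (a) $\Rightarrow$ (b): always $\Conv_0(\twoips A)\subseteq \P(A,e)$ fails to be obvious in the wrong direction, so the content is $\P(A,e)\subseteq\Conv_0(\twoips A)$ when $A$ is $g$-convex. But $\P(A,e)=\bigcup_{T\in\twosilt A}\Conv_0(T)$, and each $\Conv_0(T)=\conv(\{\bm 0\}\cup\{g(T_i)\})$ with each $T_i\in\twoips A$; so $\Conv_0(T)\subseteq\Conv_0(\twoips A)$ trivially, giving $\P(A,e)\subseteq\Conv_0(\twoips A)$ without even using convexity. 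The reverse inclusion $\Conv_0(\twoips A)\subseteq\P(A,e)$ is where $g$-convexity is used: $\P(A,e)$ is then a convex set containing $\bm 0$ and every $g(X)$ with $X\in\twoips A$ (each such $X$ is a direct summand of some silting complex, hence $g(X)$ is a vertex of some $\Conv_0(T)\subseteq\P(A,e)$), so it contains their convex hull.

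\textbf{Plan for part (2), (a) $\Leftrightarrow$ (c).}
Here $g$-finiteness is part of (c) and follows from (a) by part (1) together with \cite[Theorem 4.7]{As} (a convex $\P(A,e)$ still must contain $\bm 0$ in its interior — this needs a short argument that a non-complete fan produces a non-convex or unbounded polytope). Given $g$-finiteness, the real work is the equivalence between convexity of $\P(A,e)$ and the inequality $\sum a_i\le 2$ for every adjacent pair. The idea: $\P(A,e)$ is a union of full-dimensional simplices glued along facets through $\bm 0$, and convexity is equivalent to saying that at every codimension-$1$ wall $\mathbb H=\mathbb R(\sigma\cap\sigma')$ separating adjacent maximal cones, the two simplices $\Conv_0(T)$ and $\Conv_0(T')$ do not create a reflex (non-convex) dihedral angle along the codimension-$2$ face spanned by $g(T_1),\dots,g(T_{n-1})$. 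Writing $g(X)+g(Y)=\sum a_i g(T_i)$ from \eqref{eq:pw_nonnegativity}, one computes that the midpoint $\frac12(g(X)+g(Y))=\sum\frac{a_i}{2}g(T_i)$ lies in $\P(A,e)$ (specifically in $\Conv_0(T)$) precisely when $\sum \frac{a_i}{2}\le 1$, i.e.\ $\sum a_i\le 2$; and local convexity across every wall is equivalent to each such midpoint lying in the polytope, by a standard local-to-global criterion for convexity of a star-shaped polyhedral complex. The main obstacle is making the local-to-global step rigorous: one must show that checking convexity wall-by-wall (each wall being shared by exactly two maximal cones, Proposition~\ref{prop:basics_gfan}(3)) suffices for global convexity of $\P(A,e)$, which uses that $\P(A,e)$ is the union of simplices over a complete nonsingular fan and that $\bm 0$ is interior, so the boundary $\partial\P(A,e)$ is a topological sphere triangulated by the outer facets of the $\Conv_0(T)$; convexity of this sphere is local, and the only walls where two facets meet non-trivially on the boundary are exactly the $\mathbb R(\sigma\cap\sigma')$.
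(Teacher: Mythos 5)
The paper does not actually prove this theorem here --- it is quoted verbatim from \cite[Theorem 5.10]{AHIKM1} --- so there is no in-text argument to compare against; judged on its own, your plan is essentially the expected proof and is sound. Two points deserve tightening. First, in part (1) you say an interior lattice point ``would have to lie in the relative interior of one such simplex''; that is neither needed nor quite true. The correct chain is: any interior lattice point lies in some $\Conv_0(T)$; the lattice points of the unimodular simplex $\Conv_0(T)$ are exactly its vertices $\bm{0}, g(T_1),\dots,g(T_n)$; and each $g(T_i)$ is \emph{not} interior to $\P(A,e)$ because $\P(A,e)\cap\mathbb{R}_{\ge 0}\,g(T_i)=[\,\bm{0},g(T_i)\,]$ (any simplex $\Conv_0(T')$ meeting the ray $C(T_i)$ nontrivially must have $C(T_i)$ as a face, and linear independence of the $g(T'_j)$ then bounds the coefficient by $1$). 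Second, for (a)$\Rightarrow$(c) the ``short argument'' you defer is cleanest as: $\P(A,e)$ always contains $\pm\bm{e}_1,\dots,\pm\bm{e}_n$ (from $\Conv_0(A)$ and $\Conv_0(A[1])$), so if it is convex it contains the cross-polytope, hence $\bm{0}$ in its interior, and part (1) plus \cite[Theorem 4.7]{As} gives $g$-finiteness. The core of (a)$\Leftrightarrow$(c) is exactly as you describe: with $\bm{v}_\sigma$ the normal of $\sigma$ one computes $\langle g(Y),\bm{v}_\sigma\rangle=\sum_i a_i-1$, so the wall-by-wall convexity condition is precisely $\sum_i a_i\le 2$, and the remaining local-to-global step (local convexity of the boundary of a star-shaped polytopal ball implies convexity) is a standard Tietze--Nakajima-type fact, which you rightly flag as the main thing requiring a careful citation or proof.
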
 

Next, we explain operations that preserve the $g$-convexity of given algebras. 
One is the reduction defined in Section \ref{subsect:reduction}. 
\begin{proposition}\label{prop:red_conv} 
Let $W = W_1\oplus \cdots \oplus W_m \in \twopsilt A$ with indecomposable $W_j$. 
\begin{enumerate}[\rm (1)]

\item Suppose that $T=X\oplus T_1\oplus \cdots \oplus T_{n-m-1} \oplus W$ and $T'=Y\oplus T_1\oplus \cdots \oplus T_{n-m-1} \oplus W$ are basic $2$-term silting complexes with indecomposable $X$, $Y$ and $T_i$'s,
and they are related by irreducible mutations to each other with satisfying 
\[
g(X)+g(Y)= \sum_{i=1}^{n-m-1} a_ig(T_i)+ \sum_{j=1}^m b_j g(W_j) 
\]
for non-negative integers $a_i$ and $b_j$. 
Then, 
\[
F_W(T) = F_W(X) \oplus \bigoplus_{i=1}^{n-m-1} F_W(T_i)
\quad \text{and} \quad 
F_W(T') = F_W(Y) \oplus \bigoplus_{i=1}^{n-m-1} F_W(T_i)
\]
are $2$-term silting complexes for $B_W$ that are related by mutation to each other with satisfying 
the equation 
\[
g(F_W(X)) + g(F_W(Y)) = \sum_{i=1}^{n-m-1} a_i g(F_W(T_i)). 
\]
\item If $A$ is $g$-convex, then the algebra $B_W$ is also $g$-convex.  
\end{enumerate}
\end{proposition}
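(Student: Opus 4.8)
The plan is to prove (1) first and then deduce (2) from it via Theorem \ref{theorem:g-convex=pairwise_g-convex}(2), condition (c). For part (1), the starting point is Theorem \ref{theorem:jassoreduction}: the functor $F_W$ sends the Bongartz completion $T$ of $W$ to $B_W$, restricts to a bijection $\{T\in\twopsilt_W A\mid |T|=d\}\simeq\{T'\in\twopsilt B_W\mid |T'|=d-m\}$, and satisfies $g(F_W(T))=\pi_{e,e'}(g(T))$. Since the two basic $2$-term silting complexes $T=X\oplus\bigoplus_{i=1}^{n-m-1}T_i\oplus W$ and $T'=Y\oplus\bigoplus_{i=1}^{n-m-1}T_i\oplus W$ both lie in $\twopsilt_W A$ and have $|T|=|T'|=n$, the bijection at level $d=n$ shows that $F_W(T)$ and $F_W(T')$ are basic $2$-term silting complexes for $B_W$ with $|F_W(T)|=|F_W(T')|=n-m$. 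Moreover $F_W(W_j)=0$ for every $j$ (as $F_W$ kills $\add W$), so $F_W(T)=F_W(X)\oplus\bigoplus_{i=1}^{n-m-1}F_W(T_i)$ and $F_W(T')=F_W(Y)\oplus\bigoplus_{i=1}^{n-m-1}F_W(T_i)$ as claimed; these two differ in exactly one indecomposable summand, hence are related by mutation by Proposition \ref{prop:basics_gfan}(4). For the $g$-vector equation, I would simply apply the linear map $\pi_{e,e'}$ to the hypothesis $g(X)+g(Y)=\sum_i a_i g(T_i)+\sum_j b_j g(W_j)$: since $\pi_{e,e'}$ is linear and $\pi_{e,e'}(g(W_j))=g(F_W(W_j))=0$ for all $j$, we obtain $g(F_W(X))+g(F_W(Y))=\sum_{i=1}^{n-m-1}a_i g(F_W(T_i))$.

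For part (2), assume $A$ is $g$-convex. By Theorem \ref{theorem:g-convex=pairwise_g-convex}(2)(c), $A$ is $g$-finite and for every adjacent pair of maximal cones of $\Sigma(A,e)$ the coefficients in the pairwise relation \eqref{eq:pw_nonnegativity} sum to at most $2$. First, $B_W$ is $g$-finite: by Theorem \ref{theorem:jassoreduction}(1), $\twopsilt B_W$ is in bijection with $\twopsilt_W A\subseteq\twopsilt A$, which is finite. Now take an arbitrary adjacent pair of maximal cones of $\Sigma(B_W,e')$; by the bijection \eqref{eq:reduction_fan} and the poset isomorphism \eqref{eq:reduction_poset}, these come from maximal cones $\pi_{e,e'}(C(T))$ and $\pi_{e,e'}(C(T'))$ with $T,T'\in\twopsilt_W A$ maximal, adjacent in $\Sigma(A,e)$, and differing in one indecomposable summand. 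Writing $T,T'$ as in part (1), the $g$-convexity of $A$ gives $\sum_{i=1}^{n-m-1}a_i+\sum_{j=1}^m b_j\le 2$ with all coefficients non-negative; in particular $\sum_{i=1}^{n-m-1}a_i\le 2$. By part (1), this $\sum a_i$ is exactly the sum of the coefficients in the pairwise relation \eqref{eq:pw_nonnegativity} for the corresponding adjacent pair in $\Sigma(B_W,e')$. Hence condition (c) of Theorem \ref{theorem:g-convex=pairwise_g-convex}(2) holds for $B_W$, so $B_W$ is $g$-convex.

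The one point requiring a little care is the reduction to adjacent pairs coming from adjacent pairs upstairs: I must argue that \emph{every} codimension-$1$ wall of $\Sigma(B_W,e')$ arises as $\pi_{e,e'}$ of a wall of $\Sigma(A,e)$ separating two maximal cones of $\twopsilt_W A$, and that the mutation relation \eqref{eq:pw_nonnegativity} downstairs is literally the image under $\pi_{e,e'}$ of the one upstairs (rather than merely some relation with possibly different coefficients). This follows because, by Proposition \ref{prop:basics_gfan}(4)--(5) applied to $B_W$, such a wall corresponds to a mutation pair $F_W(T)=F_W(X)\oplus(\cdots)$, $F_W(T')=F_W(Y)\oplus(\cdots)$ in $\twopsilt B_W$; pulling back along the bijection of Theorem \ref{theorem:jassoreduction}(1)(b) recovers $T,T'\in\twopsilt_W A$, which by Proposition \ref{prop:basics_gfan}(4) are themselves a mutation pair sharing $W$, so part (1) applies verbatim. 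Thus the main (mild) obstacle is bookkeeping — keeping track of which summands are $T_i$, $X$, $Y$, $W_j$ through $F_W$ — rather than any genuine mathematical difficulty; everything else is linearity of $\pi_{e,e'}$ and the already-established reduction theorem.
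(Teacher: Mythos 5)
Your proposal is correct and follows essentially the same route as the paper, whose proof is just the one-line citation ``(1) follows from Theorem \ref{theorem:jassoreduction}; (2) follows from (1) and Theorem \ref{theorem:g-convex=pairwise_g-convex}'' — you have simply written out the details (linearity of $\pi_{e,e'}$ killing the $g(W_j)$, the bijection on $\twopsilt_W A$, and the transfer of condition (c) via uniqueness of the coefficients in the mutation relation). No gaps.
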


\begin{proof}
(1) follows from Theorem \ref{theorem:jassoreduction}. (2) follows from (1) and Theorem \ref{theorem:g-convex=pairwise_g-convex}.
\end{proof}

The other is the idempotent truncation (see \cite[Section 4.2]{AHIKM1} for the detail). 

\begin{proposition}[{\cite[Theorem 4.18]{AHIKM1}}] \label{prop:idemp_fan}
Let $I\subseteq \{1,\dots,n\}$ and $e_I:=\sum_{i\in I} e_i$.
\begin{enumerate}[{\rm (1)}]
\item $-\otimes_{e_I A e_I} e_IA$ gives a bijection 
\[
    \twopsilt e_I A e_I \simeq \{T\in \twopsilt A\mid \text{the $j$-th entry of $g(T)$ is $0$ for each $j\notin I$}\}.
\]
\item The $g$-fan $\Sigma(e_I A e_I,(e_i)_{i\in I})$ is naturally identified with the subfan
of $\Sigma(A,e)$ consisting of those cones contained in the hyper-plane spanned by $(\mathbf{e}_i)_{i\in I}$. 

In particular, if $A$ is $g$-convex, then $e_I A e_I$ is also $g$-convex. 
\end{enumerate}
\end{proposition}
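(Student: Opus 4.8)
The plan is to derive (1) by a careful analysis of the functor $F:=-\otimes_{e_IAe_I}e_IA$, and then to deduce (2) from (1) together with Theorem \ref{theorem:g-convex=pairwise_g-convex}. For (1), I would first observe that $F$ restricts to an exact functor $\Kb(\proj e_IAe_I)\to\Kb(\proj A)$: indeed $e_iAe_I\otimes_{e_IAe_I}e_IA\cong e_iA$ for $i\in I$, and projective $e_IAe_I$-modules are flat, so no derived subtleties occur. Next I would show $F$ is fully faithful, which reduces to the identification $\Hom_A(e_iA,e_jA)=e_jAe_i=\Hom_{e_IAe_I}(e_iAe_I,e_jAe_I)$ for $i,j\in I$, and that its essential image consists exactly of the complexes whose terms all lie in $\add\bigl(\bigoplus_{i\in I}e_iA\bigr)$, a quasi-inverse on that image being $T\mapsto Te_I$. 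Since a fully faithful exact functor both preserves and reflects the condition $\Hom(-,-[1])=0$, the assignment $T'\mapsto FT'$ then gives a bijection from $\twopsilt e_IAe_I$ onto the set of basic $2$-term presilting complexes of $A$ with all terms in $\add\bigl(\bigoplus_{i\in I}e_iA\bigr)$. Finally, for such a $T$ with $T^0=\bigoplus_i(e_iA)^{a_i}$, $T^{-1}=\bigoplus_i(e_iA)^{b_i}$, the $j$-th entry of $g(T)$ is $a_j-b_j$, and Proposition \ref{no common summand}(1) gives $\min(a_j,b_j)=0$, so $a_j=b_j$ forces $a_j=b_j=0$; hence ``$g(T)$ supported on $I$'' is equivalent to ``all terms of $T$ in $\add\bigl(\bigoplus_{i\in I}e_iA\bigr)$'', which finishes (1).

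For the first assertion of (2), the inclusion $\iota\colon K_0(\proj e_IAe_I)_{\mathbb{R}}\hookrightarrow K_0(\proj A)_{\mathbb{R}}=\mathbb{R}^n$ determined by $[e_iAe_I]\mapsto\mathbf{e}_i$ has image the coordinate subspace $V:=\langle\mathbf{e}_i\mid i\in I\rangle$, and since $F$ sends $e_iAe_I$ to $e_iA$ we get $g(FT')=\iota(g(T'))$, hence $\iota(C(T'))=C(FT')$ for all $T'\in\twopsilt e_IAe_I$. Combining this with (1), $\iota$ sends $\Sigma(e_IAe_I,(e_i)_{i\in I})$ bijectively onto the set of cones $C(T)$ of $\Sigma(A,e)$ with $g(T)$ supported on $I$. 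A cone $C(T)=\cone\{g(T_1),\dots,g(T_\ell)\}$ is contained in $V$ iff every $g(T_i)\in V$; and applying the no-common-summand argument to $T=\bigoplus T_i$ shows that ``each $g(T_i)$ supported on $I$'' is equivalent to ``$g(T)$ supported on $I$'' (the coordinates of $g(T)$ outside $I$ cannot cancel). Thus $\iota$ identifies $\Sigma(e_IAe_I,(e_i)_{i\in I})$ with the subfan $\{\sigma\in\Sigma(A,e)\mid\sigma\subseteq V\}$.

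For the $g$-convexity statement, assume $A$ is $g$-convex, hence $g$-finite; by (1) the finite set $\twopsilt A$ contains $\twopsilt e_IAe_I$, so $e_IAe_I$ is $g$-finite and $\Sigma(e_IAe_I,(e_i)_{i\in I})$ is complete in $V$, i.e.\ its maximal cones (the cones of $\Sigma(A,e)$ lying in $V$) cover $V$. The key lemma I would establish is: if $A$ is $g$-convex, then $\Conv_0(S)=\P(A,e)\cap C(S)$ for every $S\in\twopsilt A$. Here $\subseteq$ is the remark following Theorem \ref{theorem:g-convex=pairwise_g-convex}; for $\supseteq$, take $x\in\P(A,e)\cap C(S)$, Bongartz-complete $S$ to a $2$-term silting complex $\widehat S=S\oplus S'$ (so $C(S)$ is a face of the maximal cone $C(\widehat S)$), note that $\P(A,e)$ is the union of the simplices $\Conv_0(T)$ glued along the fan $\Sigma(A,e)$ so that $\P(A,e)\cap C(\widehat S)=\Conv_0(\widehat S)$, and conclude $x\in\Conv_0(\widehat S)$; comparing the expansion of $x$ coming from $x\in C(S)$ with the one from $x\in\Conv_0(\widehat S)$, linear independence of the generators of $C(\widehat S)$ (Proposition \ref{no common summand}(2)) forces the $S'$-coefficients to vanish and the coefficient sum to be $\le1$, i.e.\ $x\in\Conv_0(S)$. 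Applying this lemma to the complexes $FT'$ with $T'\in\twosilt e_IAe_I$, whose $g$-vector cones are exactly the maximal cones $\sigma$ of $\Sigma(e_IAe_I,(e_i)_{i\in I})$ inside $V$, gives $\iota\bigl(\P(e_IAe_I,(e_i)_{i\in I})\bigr)=\bigcup_{\sigma}\bigl(\P(A,e)\cap\sigma\bigr)=\P(A,e)\cap V$, which is an intersection of convex sets and hence convex; so $e_IAe_I$ is $g$-convex.

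The routine parts are the exactness and full faithfulness of $-\otimes_{e_IAe_I}e_IA$ and the description of its essential image; the main obstacle is the descent of $g$-convexity in the last paragraph, which rests on the identity $\Conv_0(S)=\P(A,e)\cap C(S)$ and on the Bongartz-completion/linear-independence argument used to prove it, together with the bookkeeping that translates ``support of the $g$-vector'' into ``support of the terms'' of a presilting complex.
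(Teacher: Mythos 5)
The paper does not prove this statement at all: it is imported verbatim as \cite[Theorem 4.18]{AHIKM1}, so there is no in-paper argument to compare yours against. Your proposal is a correct, self-contained proof of the cited result, and every step checks out. Part (1) is the expected argument: $-\otimes_{e_IAe_I}e_IA$ is fully faithful on $\add(e_IAe_I)\to\add(\bigoplus_{i\in I}e_iA)$ because $\Hom_{e_IAe_I}(e_iAe_I,e_jAe_I)=e_jAe_i=\Hom_A(e_iA,e_jA)$ for $i,j\in I$, and the translation between ``$g(T)$ supported on $I$'' and ``terms of the minimal representative of $T$ lie in $\add(\bigoplus_{i\in I}e_iA)$'' via $\min(a_j,b_j)=0$ from Proposition \ref{no common summand}(1) is exactly the right bookkeeping; the same observation also justifies your claim in (2) that the support of $g(T)$ controls the support of each $g(T_i)$. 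The only place where real content is needed is the descent of $g$-convexity, and your key lemma $\Conv_0(S)=\P(A,e)\cap C(S)$ for $S\in\twopsilt A$ — proved by Bongartz completion, the identity $\P(A,e)\cap C(\widehat S)=\Conv_0(\widehat S)$ for maximal cones, and linear independence of $g$-vectors of summands (Proposition \ref{no common summand}(2)) — is sound and immediately yields $\iota(\P(e_IAe_I,(e_i)_{i\in I}))=\P(A,e)\cap V$, an intersection of convex sets. This is a clean way to get the convexity statement; it also implicitly re-proves that the origin lies in the interior of $\P(e_IAe_I)$ inside $V$ via completeness of the subfan, which is consistent with Theorem \ref{theorem:g-convex=pairwise_g-convex}(1). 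No gaps.
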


Finally, we give a complete list of all convex $g$-polygons (i.e., convex $g$-fans of rank $2$) in Table \ref{tab:convex_gfan_rank2} due to \cite[Theorem 6.3]{AHIKM1}. 
In addition, their isomorphism classes can be found in Table \ref{tab:7convex_rank2}. 

\begin{table}[h]
    \centering
\begin{tabular}{cccccccc}
{\begin{xy}
0;<3pt,0pt>:<0pt,3pt>::
(0,-5)="0-5",
(-5,0)="-50",
(0,0)*{\bullet},
(0,0)="00",
(5,0)="50",
(0,5)="05",
(-5,10)="-510",
(-5,5)="-55",
(-10,5)="-105",
\ar@{-}"00";"05",
\ar@{-}"00";"50",
\ar@{-}"00";"0-5",
\ar@{-}"00";"-50",
\ar@{-}"05";"50",
\ar@{-}"0-5";"-50",
\ar@{-}"50";"0-5",
\ar@{-}"-50";"05",
\end{xy}}&
{\begin{xy}
0;<3pt,0pt>:<0pt,3pt>::
(0,-5)="0-5",
(-5,0)="-50",
(0,0)*{\bullet},
(0,0)="00",
(5,0)="50",
(0,5)="05",
(5,-5)="5-5",
\ar@{-}"00";"05",
\ar@{-}"00";"50",
\ar@{-}"00";"0-5",
\ar@{-}"00";"-50",
\ar@{-}"05";"50",
\ar@{-}"-50";"0-5",
\ar@{-}"5-5";"00",
\ar@{-}"5-5";"50",
\ar@{-}"5-5";"0-5",
\ar@{-}"-50";"05",
\end{xy}}&
{\begin{xy}
0;<3pt,0pt>:<0pt,3pt>::
(0,-5)="0-5",
(-5,0)="-50",
(0,0)*{\bullet},
(0,0)="00",
(5,0)="50",
(0,5)="05",
(5,-5)="5-5",
(10,-5)="10-5",
\ar@{-}"00";"05",
\ar@{-}"00";"50",
\ar@{-}"00";"0-5",
\ar@{-}"00";"-50",
\ar@{-}"05";"50",
\ar@{-}"0-5";"-50",
\ar@{-}"5-5";"00",
\ar@{-}"50";"10-5",
\ar@{-}"00";"10-5",
\ar@{-}"5-5";"10-5",
\ar@{-}"5-5";"0-5",
\ar@{-}"-50";"05",
\end{xy}}&
{\begin{xy}
0;<3pt,0pt>:<0pt,3pt>::
(0,-5)="0-5",
(-5,0)="-50",
(0,0)*{\bullet},
(0,0)="00",
(5,0)="50",
(0,5)="05",
(5,-5)="5-5",
(5,-10)="5-10",
(10,-5)="10-5",
(-5,10)="-510",
(-5,5)="-55",
(-10,5)="-105",
\ar@{-}"00";"05",
\ar@{-}"00";"50",
\ar@{-}"00";"0-5",
\ar@{-}"00";"-50",
\ar@{-}"05";"50",
\ar@{-}"0-5";"-50",
\ar@{-}"5-5";"00",
\ar@{-}"00";"5-10",
\ar@{-}"5-5";"50",
\ar@{-}"5-5";"5-10",
\ar@{-}"5-10";"0-5",
\ar@{-}"-50";"05",
\end{xy}}&
{\begin{xy}
0;<3pt,0pt>:<0pt,3pt>::
(0,-5)="0-5",
(-5,0)="-50",
(0,0)*{\bullet},
(0,0)="00",
(5,0)="50",
(0,5)="05",
(5,-5)="5-5",
(5,-10)="5-10",
(10,-5)="10-5",
(-5,10)="-510",
(-5,5)="-55",
(-10,5)="-105",
\ar@{-}"00";"05",
\ar@{-}"00";"50",
\ar@{-}"00";"0-5",
\ar@{-}"00";"-50",
\ar@{-}"05";"50",
\ar@{-}"0-5";"-50",
\ar@{-}"50";"0-5",
\ar@{-}"-55";"00",
\ar@{-}"-55";"-50",
\ar@{-}"-55";"05",
\end{xy}}&
{\begin{xy}
0;<3pt,0pt>:<0pt,3pt>::
(0,-5)="0-5",
(-5,0)="-50",
(0,0)*{\bullet},
(0,0)="00",
(5,0)="50",
(0,5)="05",
(5,-5)="5-5",
(5,-10)="5-10",
(10,-5)="10-5",
(-5,10)="-510",
(-5,5)="-55",
(-10,5)="-105",
\ar@{-}"00";"05",
\ar@{-}"00";"50",
\ar@{-}"00";"0-5",
\ar@{-}"00";"-50",
\ar@{-}"05";"50",
\ar@{-}"0-5";"-50",
\ar@{-}"5-5";"00",
\ar@{-}"5-5";"50",
\ar@{-}"5-5";"0-5",
\ar@{-}"-55";"00",
\ar@{-}"-55";"-50",
\ar@{-}"-55";"05",
\end{xy}}&
{\begin{xy}
0;<3pt,0pt>:<0pt,3pt>::
(0,-5)="0-5",
(-5,0)="-50",
(0,0)*{\bullet},
(0,0)="00",
(5,0)="50",
(0,5)="05",
(5,-5)="5-5",
(5,-10)="5-10",
(10,-5)="10-5",
(-5,10)="-510",
(-5,5)="-55",
(-10,5)="-105",
\ar@{-}"00";"05",
\ar@{-}"00";"50",
\ar@{-}"00";"0-5",
\ar@{-}"00";"-50",
\ar@{-}"05";"50",
\ar@{-}"0-5";"-50",
\ar@{-}"5-5";"00",
\ar@{-}"50";"10-5",
\ar@{-}"00";"10-5",
\ar@{-}"5-5";"10-5",
\ar@{-}"5-5";"0-5",
\ar@{-}"-55";"00",
\ar@{-}"-55";"-50",
\ar@{-}"-55";"05",
\end{xy}}&
{\begin{xy}
0;<3pt,0pt>:<0pt,3pt>::
(0,-5)="0-5",
(-5,0)="-50",
(0,0)*{\bullet},
(0,0)="00",
(5,0)="50",
(0,5)="05",
(5,-5)="5-5",
(5,-10)="5-10",
(10,-5)="10-5",
(-5,10)="-510",
(-5,5)="-55",
(-10,5)="-105",
\ar@{-}"00";"05",
\ar@{-}"00";"50",
\ar@{-}"00";"0-5",
\ar@{-}"00";"-50",
\ar@{-}"05";"50",
\ar@{-}"0-5";"-50",
\ar@{-}"5-5";"00",
\ar@{-}"00";"5-10",
\ar@{-}"5-5";"50",
\ar@{-}"5-5";"5-10",
\ar@{-}"5-10";"0-5",
\ar@{-}"-55";"00",
\ar@{-}"-55";"-50",
\ar@{-}"-55";"05",
\end{xy}}\\ & & & & & & & \\
{\begin{xy}
0;<3pt,0pt>:<0pt,3pt>::
(0,-5)="0-5",
(-5,0)="-50",
(0,0)*{\bullet},
(0,0)="00",
(5,0)="50",
(0,5)="05",
(5,-5)="5-5",
(5,-10)="5-10",
(10,-5)="10-5",
(-5,10)="-510",
(-5,5)="-55",
(-10,5)="-105",
\ar@{-}"00";"05",
\ar@{-}"00";"50",
\ar@{-}"00";"0-5",
\ar@{-}"00";"-50",
\ar@{-}"05";"50",
\ar@{-}"0-5";"-50",
\ar@{-}"50";"0-5",
\ar@{-}"-55";"00",
\ar@{-}"-510";"00",
\ar@{-}"05";"-510",
\ar@{-}"-510";"-55",
\ar@{-}"-55";"-50",
\end{xy}}&
{\begin{xy}
0;<3pt,0pt>:<0pt,3pt>::
(0,-5)="0-5",
(-5,0)="-50",
(0,0)*{\bullet},
(0,0)="00",
(5,0)="50",
(0,5)="05",
(5,-5)="5-5",
(5,-10)="5-10",
(10,-5)="10-5",
(-5,10)="-510",
(-5,5)="-55",
(-10,5)="-105",
\ar@{-}"00";"05",
\ar@{-}"00";"50",
\ar@{-}"00";"0-5",
\ar@{-}"00";"-50",
\ar@{-}"05";"50",
\ar@{-}"0-5";"-50",
\ar@{-}"5-5";"00",
\ar@{-}"5-5";"50",
\ar@{-}"5-5";"0-5",
\ar@{-}"-55";"00",
\ar@{-}"-510";"00",
\ar@{-}"05";"-510",
\ar@{-}"-510";"-55",
\ar@{-}"-55";"-50",
\end{xy}}&
{\begin{xy}
0;<3pt,0pt>:<0pt,3pt>::
(0,-5)="0-5",
(-5,0)="-50",
(0,0)*{\bullet},
(0,0)="00",
(5,0)="50",
(0,5)="05",
(5,-5)="5-5",
(5,-10)="5-10",
(10,-5)="10-5",
(-5,10)="-510",
(-5,5)="-55",
(-10,5)="-105",
\ar@{-}"00";"05",
\ar@{-}"00";"50",
\ar@{-}"00";"0-5",
\ar@{-}"00";"-50",
\ar@{-}"05";"50",
\ar@{-}"0-5";"-50",
\ar@{-}"5-5";"00",
\ar@{-}"50";"10-5",
\ar@{-}"00";"10-5",
\ar@{-}"5-5";"10-5",
\ar@{-}"5-5";"0-5",
\ar@{-}"-55";"00",
\ar@{-}"-510";"00",
\ar@{-}"05";"-510",
\ar@{-}"-510";"-55",
\ar@{-}"-55";"-50",
\end{xy}}&
{\begin{xy}
0;<3pt,0pt>:<0pt,3pt>::
(0,-5)="0-5",
(-5,0)="-50",
(0,0)*{\bullet},
(0,0)="00",
(5,0)="50",
(0,5)="05",
(5,-5)="5-5",
(5,-10)="5-10",
(10,-5)="10-5",
(-5,10)="-510",
(-5,5)="-55",
(-10,5)="-105",
\ar@{-}"00";"05",
\ar@{-}"00";"50",
\ar@{-}"00";"0-5",
\ar@{-}"00";"-50",
\ar@{-}"05";"50",
\ar@{-}"0-5";"-50",
\ar@{-}"5-5";"00",
\ar@{-}"00";"5-10",
\ar@{-}"5-5";"50",
\ar@{-}"5-5";"5-10",
\ar@{-}"5-10";"0-5",
\ar@{-}"-55";"00",
\ar@{-}"-510";"00",
\ar@{-}"05";"-510",
\ar@{-}"-510";"-55",
\ar@{-}"-55";"-50",
\end{xy}}&
{\begin{xy}
0;<3pt,0pt>:<0pt,3pt>::
(0,-5)="0-5",
(-5,0)="-50",
(0,0)*{\bullet},
(0,0)="00",
(5,0)="50",
(0,5)="05",
(5,-5)="5-5",
(5,-10)="5-10",
(10,-5)="10-5",
(-5,10)="-510",
(-5,5)="-55",
(-10,5)="-105",
\ar@{-}"00";"05",
\ar@{-}"00";"50",
\ar@{-}"00";"0-5",
\ar@{-}"00";"-50",
\ar@{-}"05";"50",
\ar@{-}"0-5";"-50",
\ar@{-}"50";"0-5",
\ar@{-}"-55";"00",
\ar@{-}"-105";"00",
\ar@{-}"05";"-55",
\ar@{-}"-55";"-105",
\ar@{-}"-105";"-50",
\end{xy}}&
{\begin{xy}
0;<3pt,0pt>:<0pt,3pt>::
(0,-5)="0-5",
(-5,0)="-50",
(0,0)*{\bullet},
(0,0)="00",
(5,0)="50",
(0,5)="05",
(5,-5)="5-5",
(5,-10)="5-10",
(10,-5)="10-5",
(-5,10)="-510",
(-5,5)="-55",
(-10,5)="-105",
\ar@{-}"00";"05",
\ar@{-}"00";"50",
\ar@{-}"00";"0-5",
\ar@{-}"00";"-50",
\ar@{-}"05";"50",
\ar@{-}"0-5";"-50",
\ar@{-}"5-5";"00",
\ar@{-}"5-5";"50",
\ar@{-}"5-5";"0-5",
\ar@{-}"-55";"00",
\ar@{-}"-105";"00",
\ar@{-}"05";"-55",
\ar@{-}"-55";"-105",
\ar@{-}"-105";"-50",
\end{xy}}&
{\begin{xy}
0;<3pt,0pt>:<0pt,3pt>::
(0,-5)="0-5",
(-5,0)="-50",
(0,0)*{\bullet},
(0,0)="00",
(5,0)="50",
(0,5)="05",
(5,-5)="5-5",
(5,-10)="5-10",
(10,-5)="10-5",
(-5,10)="-510",
(-5,5)="-55",
(-10,5)="-105",
\ar@{-}"00";"05",
\ar@{-}"00";"50",
\ar@{-}"00";"0-5",
\ar@{-}"00";"-50",
\ar@{-}"05";"50",
\ar@{-}"0-5";"-50",
\ar@{-}"5-5";"00",
\ar@{-}"50";"10-5",
\ar@{-}"00";"10-5",
\ar@{-}"5-5";"10-5",
\ar@{-}"5-5";"0-5",
\ar@{-}"-55";"00",
\ar@{-}"-105";"00",
\ar@{-}"05";"-55",
\ar@{-}"-55";"-105",
\ar@{-}"-105";"-50",
\end{xy}}&
{\begin{xy}
0;<3pt,0pt>:<0pt,3pt>::
(0,-5)="0-5",
(-5,0)="-50",
(0,0)*{\bullet},
(0,0)="00",
(5,0)="50",
(0,5)="05",
(5,-5)="5-5",
(5,-10)="5-10",
(10,-5)="10-5",
(-5,10)="-510",
(-5,5)="-55",
(-10,5)="-105",
\ar@{-}"00";"05",
\ar@{-}"00";"50",
\ar@{-}"00";"0-5",
\ar@{-}"00";"-50",
\ar@{-}"05";"50",
\ar@{-}"0-5";"-50",
\ar@{-}"5-5";"00",
\ar@{-}"00";"5-10",
\ar@{-}"5-5";"50",
\ar@{-}"5-5";"5-10",
\ar@{-}"5-10";"0-5",
\ar@{-}"-55";"00",
\ar@{-}"-105";"00",
\ar@{-}"05";"-55",
\ar@{-}"-55";"-105",
\ar@{-}"-105";"-50",
\end{xy}}
\end{tabular}
    \caption{Convex $g$-fans of rank $2$}
    \label{tab:convex_gfan_rank2}
\end{table}

\subsection{Sign-coherent property and sign-decomposition}
In this section, we explain sign-coherent property and sign-decomposition of $g$-fans.

To each $\epsilon = (\epsilon_1,\ldots,\epsilon_n) \in \{\pm\}^n$, we associate the orthant $\mathbb{R}^n_{\epsilon} := \cone\{\epsilon_i\bm{e}_i\mid 1 \leq i \leq n\} \subseteq \mathbb{R}^n$. 
The following result is known as the \emph{sign-coherent property} of $g$-vectors.

\begin{proposition}[{\cite{DIJ}}] \label{prop:sign-coherent}
The $g$-vector cone of each $2$-term presilting complex $T$ is contained in $\mathbb{R}^n_{\epsilon}$ for some $\epsilon \in \{\pm\}^n$. 
\end{proposition}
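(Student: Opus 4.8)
The statement to be proved is Proposition \ref{prop:sign-coherent}: the $g$-vector cone of every $2$-term presilting complex lies in a single orthant $\mathbb{R}^n_\epsilon$. Since $C(T) = \cone\{g(T_1),\dots,g(T_\ell)\}$ for the indecomposable summands $T_i$ of $T$, and since a positive combination of vectors all lying in $\mathbb{R}^n_\epsilon$ again lies in $\mathbb{R}^n_\epsilon$, it suffices to prove the claim for an \emph{indecomposable} $2$-term presilting complex $T$; that is, to show each coordinate of $g(T)$ is $\geq 0$ or each coordinate is $\leq 0$. Write $T = (T^{-1} \xrightarrow{d} T^0)$ with $T^0 \cong \bigoplus_i (e_iA)^{a_i}$ and $T^{-1} \cong \bigoplus_i (e_iA)^{b_i}$, so that $g(T) = (a_i - b_i)_i$. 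By Proposition \ref{no common summand}(1), $T^0$ and $T^{-1}$ have no common nonzero direct summand, hence for each $i$ at least one of $a_i, b_i$ is zero; this is the key structural input but it is not yet enough, since different indices could still have opposite signs.

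The plan is to rule out the mixed-sign case using the presilting condition $\Hom_{\Kb(\proj A)}(T, T[1]) = 0$ together with a comparison with the algebra $A$ itself, whose $g$-fan is the trivial one. First I would set up the standard reduction to indecomposables as above. Then, suppose for contradiction that $g(T)$ has both a strictly positive and a strictly negative coordinate; equivalently, after reordering, $T^0$ involves some $e_iA$ with $b_i = 0 < a_i$ and $T^{-1}$ involves some $e_jA$ with $a_j = 0 < b_j$. The main tool is to exploit that a $2$-term presilting complex $T$ can be completed (via Bongartz completion, \cite{AIR}) to a $2$-term silting complex $U$ with $T \in \add U$, and then to invoke the structure of $\twosilt A$: every $2$-term silting complex is connected to $A$ by a finite sequence of mutations, each of which changes exactly one indecomposable summand. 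Tracking how the orthant containing an indecomposable summand can change under a single mutation — combined with the fact that $A$ itself has all summands $e_iA$ with $g$-vector $\mathbf{e}_i$ lying in the all-positive orthant $\mathbb{R}^n_{+\cdots+}$ — forces each indecomposable $2$-term presilting summand into a single orthant. Concretely, one shows by induction on the mutation distance from $A$ that every indecomposable summand appearing in a $2$-term silting complex has a sign-coherent $g$-vector, using Proposition \ref{prop:basics_gfan}(5): in a mutation $g(X) + g(Y) = \sum_i a_i g(T_i)$ with $a_i \geq 0$, if all the $g(T_i)$ are sign-coherent with a common sign pattern (which they are, being summands of a silting complex all of whose summands we may assume lie in one orthant by the inductive hypothesis), then so is $g(Y)$.

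Alternatively — and this is probably the cleanest route to present — I would invoke the homological characterization directly: for an indecomposable $2$-term presilting complex $T$ and each $i$, consider $\Hom_{\Kb(\proj A)}(e_iA, T[1])$ and $\Hom_{\Kb(\proj A)}(T, e_iA[1])$. One of these vanishes by a Hom-orthogonality argument: presilting of $T$ relative to the trivial presilting pieces $e_iA[1]$, $e_iA$, expressed via the two-term structure, shows that either $\Hom_{\Kb}(e_iA, T) \to \Hom_{\Kb}(e_iA, T[1])$ constraints force $b_i = 0$ for all $i$ in the relevant range, or the dual. More precisely, since $T$ is a two-term complex, $\Hom_{\Kb(\proj A)}(e_iA, T[1])$ is computed from the cokernel of $\Hom(e_iA, d)\colon \Hom(e_iA, T^{-1}) \to \Hom(e_iA, T^0)$ and $\Hom_{\Kb(\proj A)}(e_iA[-1], T)$ similarly; the presilting condition $\Hom_{\Kb}(T, T[1]) = 0$, applied together with the no-common-summand property, yields that the map $d$ is "minimal" in a way that propagates a single sign. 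The cited references \cite{AIR,DIJ} establish exactly this, so I would follow \cite[Proof of sign-coherence in]{DIJ}.

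The main obstacle is that the pure homological argument for a single indecomposable complex is somewhat delicate: one must carefully use that $\Hom_{\Kb(\proj A)}(T,T[1]) = 0$ controls not just $T$ against itself but, via a radical/minimality argument on $d\colon T^{-1}\to T^0$, the interaction of $T$ with each projective $e_iA$ and its shift. The cleanest packaging is via the Bongartz completion and the mutation-connectedness of $\twosilt A$, reducing everything to the inductive step using Proposition \ref{prop:basics_gfan}(5) and the base case $C(A) = \mathbb{R}^n_{+\cdots+}$; the cost there is checking that the inductive hypothesis "all summands lie in a common orthant" is preserved under mutation, which is where the non-negativity of the $a_i$ in \eqref{eq:pw_nonnegativity} does the work. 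I would present the proof along the latter lines, citing \cite{DIJ} for the case of a single indecomposable complex and emphasizing the cone-closure reduction at the start.
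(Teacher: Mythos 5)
Your argument breaks at the very first step: the reduction to indecomposable summands misstates what sign-coherence means. The proposition asserts that, for each coordinate index $i$, the $i$-th entries of the $g$-vectors of \emph{all} indecomposable summands of $T$ have a common sign, i.e.\ all the $g(T_j)$ lie in one and the same orthant $\mathbb{R}^n_{\epsilon}$. It does \emph{not} assert (and it is false) that the $g$-vector of a single indecomposable presilting complex has all coordinates of one sign: for $A=kQ$ with $Q\colon 1\to 2$, the complex $P_2\to P_1$ is presilting with $g$-vector $(1,-1)$. Consequently the situation you set out to contradict ("$g(T)$ has both a strictly positive and a strictly negative coordinate" for indecomposable $T$) occurs all the time, so no contradiction can be derived, and the subsequent mutation induction rests on the same false premise (already one mutation of $A$ produces a silting complex whose summands have $g$-vectors with mixed signs). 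Moreover, even if your per-summand claim were true, it would only place each $g(T_j)$ in $\mathbb{R}^n_{+\cdots+}\cup\mathbb{R}^n_{-\cdots-}$, not in a \emph{common} orthant, so the cone-closure reduction would still not yield the statement.

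The correct argument is the one-line observation you brush aside as "not yet enough": apply Proposition \ref{no common summand}(1) not to each indecomposable summand separately but to the whole basic presilting complex $T=\bigoplus_j T_j$, whose terms are $T^0=\bigoplus_j T_j^0$ and $T^{-1}=\bigoplus_j T_j^{-1}$. Since these share no non-zero direct summand, for each $i$ either $e_iA\notin\add T^{-1}$ (set $\epsilon_i=+$; then the $i$-th entry of every $g(T_j)$ is $\ge 0$) or $e_iA\notin\add T^{0}$ (set $\epsilon_i=-$; then every $i$-th entry is $\le 0$). Hence all $g(T_j)$, and therefore $C(T)$, lie in $\mathbb{R}^n_{\epsilon}$. (The paper gives no proof of its own, citing \cite{DIJ}; this is the standard derivation from the no-common-summand property of \cite{AIR}.) Your worry that "different indices could still have opposite signs" is a red herring: opposite signs at different indices within one $g$-vector are perfectly allowed; what sign-coherence forbids is opposite signs at the \emph{same} index across \emph{different} summands.
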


Let  
\[
e_{\epsilon}^+ = \sum_{\epsilon_i = +} e_{i}  \quad \text{and} \quad 
e_{\epsilon}^- = \sum_{\epsilon_i = -} e_{i} 
\]
be idempotents of $A$. 
We denote by $\twopsilt_{\epsilon} A$ the subset of $2$-term presilting complexes whose $g$-vector cones are contained in $\mathbb{R}^n_{\epsilon}$. 
In other words, it consists of those complexes whose $0$-th (resp., $(-1)$-th) term lies in $\add e_{\epsilon}^+A$ (resp., $\add e_{\epsilon}^-A$). 
Similarly, the sets $\twoips_{\epsilon} A$ and $\twosilt_{\epsilon} A$ are defined. 
\begin{lemma}
\label{lemma:orthant_completion}
Let $\epsilon\in \{\pm\}^n$. 
For any $W\in \twopsilt_{\epsilon} A$, there exists $T\in \twosilt_{\epsilon} A$ such that $W\in \add T$.
\end{lemma}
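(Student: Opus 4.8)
The plan is to reduce the statement to a Bongartz-type completion and then iterate. First I would recall that $W \in \twopsilt_\epsilon A$ means by definition (using Proposition \ref{prop:sign-coherent} and the description preceding this lemma) that its $0$-th term lies in $\add e_\epsilon^+ A$ and its $(-1)$-th term lies in $\add e_\epsilon^- A$. So $C(W) \subseteq \mathbb{R}^n_\epsilon$. If $|W| = n$, then $W$ is already $2$-term silting and we are done, so assume $|W| < n$. The goal is to find an indecomposable $2$-term presilting complex $X$ with $W \oplus X \in \twopsilt_\epsilon A$ and $|W \oplus X| = |W|+1$; then one concludes by induction on $n - |W|$.

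The key step is to produce such an $X$ inside the correct orthant. Here I would use the reduction machinery of Theorem \ref{theorem:jassoreduction}: passing to $B_W = \End_{\Kb(\proj A)}(T)/[W]$ (for $T$ the Bongartz completion of $W$), the functor $F_W$ gives a bijection between $\{T' \in \twopsilt_W A \mid |T'| = d\}$ and $\{T'' \in \twopsilt B_W \mid |T''| = d - m\}$, where $m = |W|$. In particular $B_W$ has a $2$-term silting complex (namely $B_W$ itself, $= F_W(T)$), so $\twopsilt_W A$ contains complexes of every size from $m$ to $n$, and by Proposition \ref{prop:basics_gfan} applied to $\Sigma(B_W, e')$ we can move from $B_W$ toward a target silting complex one mutation at a time. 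The content is to choose these mutations so as to stay in $\mathbb{R}^n_\epsilon$. Concretely: the orthant condition on $g$-vectors is equivalent to an idempotent-truncation-type condition. Consider the idempotent $e_\epsilon^- = \sum_{\epsilon_i = -} e_i$ and the complex $W \oplus (e_\epsilon^- A)[1]$. Since $W \in \twopsilt_\epsilon A$, one checks $\Hom_{\Kb(\proj A)}(W, (e_\epsilon^- A)[2]) = \Hom_{\Kb(\proj A)}((e_\epsilon^- A)[1], W[1]) = 0$ (the former vanishes because $(e_\epsilon^- A)[2]$ has no term in degrees $0,-1$ meeting $W$; the latter because $W^0 \in \add e_\epsilon^+ A$ has no common summand with $e_\epsilon^- A$), so $W \oplus (e_\epsilon^- A)[1] \in \twopsilt_\epsilon A$. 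Now complete this latter complex to a $2$-term silting complex $S$ of $A$ by Bongartz completion; then $e_\epsilon^- A[1] \in \add S$, which forces every indecomposable summand of $S$ to have $g$-vector in $\mathbb{R}^n_\epsilon$ — indeed the summand $(e_jA)[1]$ for $\epsilon_j = -$ already occupies the $j$-th ray, and sign-coherence (Proposition \ref{prop:sign-coherent}) together with $\mathbb{Z}$-linear independence of the $g$-vectors of the summands (Proposition \ref{no common summand}(2)) pins each remaining summand into the orthant $\mathbb{R}^n_\epsilon$. Hence $S \in \twosilt_\epsilon A$ and $W \in \add S$.

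The cleanest route is therefore: (i) show $W \oplus (e_\epsilon^- A)[1] \in \twopsilt_\epsilon A$ by the two $\Hom$-vanishings above; (ii) take its Bongartz completion $S \in \twosilt A$; (iii) argue that containing $(e_\epsilon^- A)[1]$ as a summand, combined with sign-coherence, forces $S \in \twosilt_\epsilon A$; (iv) conclude $W \in \add S$ directly from (ii). The step I expect to be the main obstacle is (iii): one must rule out that the Bongartz completion introduces a summand whose $g$-vector has a coordinate of the "wrong" sign. The argument is that if $S = S_1 \oplus \cdots \oplus S_n$ with the $S_i$ indecomposable, the presence of all $(e_jA)[1]$ with $\epsilon_j = -$ means the corresponding coordinate hyperplanes are "used up", and any $S_i$ not among these must, by sign-coherence, lie in a single orthant $\mathbb{R}^n_{\epsilon'}$; but $C(S)$ being a maximal (full-dimensional) cone of $\Sigma(A,e)$ and containing the rays $\mathbb{R}_{\geq 0}(-\mathbf{e}_j)$ for all $\epsilon_j = -$ forces $\epsilon' = \epsilon$. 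Alternatively — and perhaps more robustly — one can bypass (iii) entirely by working in $B_{e_\epsilon^- A[1]}$ via Proposition \ref{prop:idemp_fan}: the idempotent truncation $e_\epsilon^+ A e_\epsilon^+$ (combined with the shift of the complementary projectives) realizes $\twopsilt_\epsilon A$ as genuine $2$-term presilting complexes over a smaller algebra, where an ordinary Bongartz completion of the image of $W$ gives a silting complex that pulls back to the desired $S \in \twosilt_\epsilon A$. I would present the idempotent-truncation version as the main line of argument, since it makes the orthant-preservation automatic rather than something to be checked by hand.
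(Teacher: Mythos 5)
Your proposal has a genuine gap at step (i): the claim that $W\oplus (e_\epsilon^- A)[1]\in\twopsilt_\epsilon A$ is false in general. The problematic Hom-group is $\Hom_{\Kb(\proj A)}((e_\epsilon^-A)[1],W[1])\cong\Hom_{\Kb(\proj A)}(e_\epsilon^-A,W)$, which is the cokernel of $\Hom_A(e_\epsilon^-A,W^{-1})\to\Hom_A(e_\epsilon^-A,W^0)$, i.e.\ essentially $\Hom_A(e_\epsilon^-A,H^0(W))\cong H^0(W)e_\epsilon^-$; the fact that $W^0$ and $e_\epsilon^-A$ share no direct summand says nothing about this module. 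Concretely, in rank $2$ with $\epsilon=(+,-)$ and $(l_{12},r_{12})=(1,2)$ (e.g.\ the quiver $1\xleftarrow{a}2$ with a loop $u$ at $1$, $u^2=0$), the indecomposable $W=(P_2\xrightarrow{a}P_1)$ with $g$-vector $(1,-1)$ satisfies $\Hom_{\Kb(\proj A)}(P_2,W)=e_1Ae_2/aAe_2\ni \overline{ua}\ne 0$, so $W\oplus P_2[1]$ is not presilting --- consistent with the $g$-fan for $(l_{12},r_{12})=(1,2)$, where $(1,-1)$ and $(0,-1)$ do not span a common cone. Your fallback also does not work as stated: Proposition \ref{prop:idemp_fan} identifies presilting complexes over $e_IAe_I$ with those complexes over $A$ whose $g$-vectors \emph{vanish} outside $I$, not with the orthant subset $\twopsilt_\epsilon A$, so the idempotent truncation does not ``realize $\twopsilt_\epsilon A$ over a smaller algebra.'' (Your step (iii) would also need more care even if (i) held, since nothing forces the completion's $\epsilon_i=+$ coordinates to come out nonnegative when $W$ is silent in those coordinates.)

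The sign-coherence observation in your step (iii) is, however, the right idea for part of the argument, and the paper's proof uses it: if every entry of $g(W)$ is nonzero, then \emph{any} silting completion $T\supseteq W$ (e.g.\ the Bongartz completion) satisfies $g(W)\in C(T)\subseteq\mathbb{R}^n_{\epsilon'}$ for a unique $\epsilon'$ by Proposition \ref{prop:sign-coherent}, and the signs of $g(W)$ force $\epsilon'=\epsilon$. The paper handles the remaining case by induction on $n$: if the $j$-th entry of $g(W)$ vanishes, then (by sign-coherence applied to $W$) all summands of $W$ have vanishing $j$-th coordinate, so Proposition \ref{prop:idemp_fan} transports $W$ to the rank-$(n-1)$ algebra $e_IAe_I$ with $I=\{1,\dots,n\}\setminus\{j\}$; the induction hypothesis produces $U\in\twopsilt_\epsilon A$ with $|U|=n-1$, $W\in\add U$ and $g(U)$ strictly signed off the $j$-th axis, and then the Bongartz completion (if $\epsilon_j=+$) or co-Bongartz completion (if $\epsilon_j=-$) of $U$ supplies the last summand on the correct side of the hyperplane $x_j=0$. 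You would need some device of this kind --- a choice between Bongartz and co-Bongartz governed by the signs where $W$ is silent --- to close the gap.
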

\begin{proof}
We show the assertion by induction on $n:=|A|$.
If $n=1$, then the assertion is clear. 

Assume that $n\ge 2$. If each entry of $g(W)$ is non-zero, then the assertion follows from Proposition \ref{prop:sign-coherent}.
Therefore, we may assume that the $j$-th entry of $g(W)$ is $0$ for some $j$.
By Proposition \ref{prop:idemp_fan} and the induction hypothesis, there is $U\in \twopsilt_\epsilon A$ satisfying the following conditions:
\begin{itemize}
\item $|U|=n-1$ and $W\in \add U$.
\item $g(U)\in \sum_{i\ne j} \mathbb{R}_{>0}\epsilon_i \bm{e}_i$.
\end{itemize}
Therefore, if $\epsilon_j=+$ (respectively, $\epsilon_j =-$), 
then the Bongartz completion (respectively, co-Bongartz completion) of $U$ has the desired property. 
\end{proof}
Under these notations, we define 
\begin{equation} \label{eq:sp_epsilon}
    \Sigma_{\epsilon}(A,e) 
    := \{C(T) \mid T\in \twopsilt_{\epsilon} A\} 
    \quad \text{and} \quad 
    \P_{\epsilon}(A,e) 
    := \bigcup_{T\in \twosilt_{\epsilon} A} \Conv_0(T).  
\end{equation}

By Lemma\;\ref{lemma:orthant_completion}, 
these objects are the restrictions of $\SigmaAe$ and $\P(A,e)$ to the region $\mathbb{R}_{\epsilon}^n$ respectively. 
Conversely, by the sign-coherent property (Proposition \ref{prop:sign-coherent}), one can recover the entire of $\twopsilt A$, $\SigmaAe$ and $\P(A,e)$, respectively, as 
\begin{equation}
    \twopsilt A = \bigcup_{\epsilon\in \{\pm\}^n}
    \twopsilt_{\epsilon} A, \quad 
    \SigmaAe = \bigcup_{\epsilon\in \{\pm\}^n} \Sigma_{\epsilon}(A,e)
    \quad \text{and} \quad 
    \PAe = \bigcup_{\epsilon\in \{\pm\}^n} \P_{\epsilon}(A,e). 
\end{equation}

Regarding the (sub-)poset structure, we will use the following result. 

\begin{proposition}[{\cite[Proposition~3.2]{Ao}}] \label{epsilon_interval}
For each $\epsilon\in \{\pm\}^n$, the full subposet $(\twosilt_{\epsilon} A ,\leq)$ is an interval and given by 
\begin{equation} \label{eq:ep_int}
(\twosilt_{\epsilon} A ,\leq)=  [\mu^+_{e_{\epsilon}^+ A[1]}(A[1]), \ \mu^-_{e_{\epsilon}^-A}(A)].
\end{equation}
\end{proposition}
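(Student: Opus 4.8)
\textbf{Proof proposal for Proposition \ref{epsilon_interval}.}

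The plan is to prove the two inclusions $(\twosilt_{\epsilon} A, \leq) \subseteq [\mu^+_{e_{\epsilon}^+ A[1]}(A[1]),\ \mu^-_{e_{\epsilon}^- A}(A)]$ and $[\mu^+_{e_{\epsilon}^+ A[1]}(A[1]),\ \mu^-_{e_{\epsilon}^- A}(A)] \subseteq \twosilt_{\epsilon} A$, and then to argue that the right-hand side is automatically an interval once we know it is the full set of silting complexes living in the orthant $\mathbb{R}^n_{\epsilon}$. First I would set $P^+ := e_{\epsilon}^+ A$ and $P^- := e_{\epsilon}^- A$, so that $A = P^+ \oplus P^-$ as right $A$-modules, and recall that $U := \mu^-_{P^-}(A)$ is the left mutation of $A$ at the summand $P^-$, while $L := \mu^+_{P^+ A[1]}(A[1])$ is the right mutation of $A[1]$ at $P^+ A[1]$; both are defined because $P^-$ (resp.\ $P^+ A[1]$) is a presilting complex admitting the relevant (co-)Bongartz completion. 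The key observation is that $U$ is obtained from $A$ by replacing the summand $P^-$ by $g$-vectors all of whose coordinates indexed by $\{i : \epsilon_i = -\}$ are nonpositive (the minimal left approximation $P^- \to P'$ with $P' \in \add P^+$ gives $g$-vectors of the new summands in $\mathbb{R}_{\geq 0}\{\mathbf{e}_i : \epsilon_i = +\} - \mathbb{R}_{\geq 0}\{\mathbf{e}_i : \epsilon_i = -\}$ landing in the $\epsilon$-orthant), so $C(U) \subseteq \mathbb{R}^n_{\epsilon}$ and $U \in \twosilt_{\epsilon} A$; symmetrically $L \in \twosilt_{\epsilon} A$.

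For the inclusion $\twosilt_{\epsilon} A \subseteq [L, U]$, take any $T \in \twosilt_{\epsilon} A$. To show $T \leq U$, by Definition-Theorem \ref{defthm:poset} I must check $\Hom_{\Kb(\proj A)}(U, T[1]) = 0$. I would use the explicit triangle $P^- \to P' \to U^- \to P^-[1]$ defining the non-$P^+$ part $U^-$ of $U$ (with $U = P^+ \oplus U^-$), reduce to showing $\Hom(P^+, T[1]) = 0$ and $\Hom(U^-, T[1]) = 0$; the first holds because $T^0 \in \add P^+$ forces — no, more carefully: $\Hom_{\Kb(\proj A)}(e_iA, T[1]) = 0$ for those $i$ with $\epsilon_i = +$ because $T^{-1} \in \add P^-$ and there are no maps $e_iA \to T[1]$ when the $(-1)$-term of $T$ omits $e_iA$; and $\Hom(U^-, T[1]) = 0$ follows by applying $\Hom(-, T[1])$ to the defining triangle, using $\Hom(P^-, T[1]) = 0$ (same reason, $\epsilon_i = -$ but now $T^0 \in \add P^+$ kills maps from $P^-[1] = $ shift — I need $\Hom(P^-[1], T[1]) = \Hom(P^-, T) $, which need not vanish, so the argument must instead observe that $\Hom(P', T[1]) = 0$ and $\Hom(P^-, T[1]) = 0$ give $\Hom(U^-, T[1]) = 0$ directly from the long exact sequence, since $\Hom(P^-[1], T[1]) = \Hom(P^-, T)$ appears but with the right variance the relevant term is $\Hom(P^-, T[1])$). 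This is the delicate bookkeeping step. The dual argument, applied to $A[1]$ and $L$, gives $L \leq T$.

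Finally, for $[L, U] \subseteq \twosilt_{\epsilon} A$, the cleanest route is geometric: by Theorem \ref{theorem:g-fan is ordered} the poset structure on $\SigmaAe$ reflects adjacency and position relative to $C(A)$, and an interval $[L, U]$ in $(\twosilt A, \leq)$ corresponds to a connected ``convex'' region of maximal cones; I would show that passing from $U$ downward by left irreducible mutations, or from $L$ upward by right irreducible mutations, never leaves the orthant $\mathbb{R}^n_{\epsilon}$ — a mutation exits $\mathbb{R}^n_{\epsilon}$ only by crossing a coordinate hyperplane $\{x_j = 0\}$, and crossing $\{x_j = 0\}$ in the direction away from $C(A)$ when $\epsilon_j = +$ (resp.\ toward $C(A)$ when $\epsilon_j = -$) would contradict $\sigma \leq C(U)$ (resp.\ $C(L) \leq \sigma$) by the sign of the normal vector. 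Combined with Proposition \ref{prop:sign-coherent} this pins down $[L,U]$ as exactly the set of maximal cones inside $\mathbb{R}^n_{\epsilon}$, i.e.\ $\twosilt_{\epsilon} A$, and shows this set is an interval. The main obstacle I anticipate is the Hom-vanishing bookkeeping in the middle paragraph: getting the variances right in the long exact sequences coming from the mutation triangles, and making sure the ``no maps because a summand is omitted from $T^0$ or $T^{-1}$'' principle is applied on the correct side; once that is set up carefully the rest is a routine combination of the cited results.
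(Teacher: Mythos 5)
The paper does not prove this statement itself (it is quoted from \cite[Proposition~3.2]{Ao}), so I can only judge your argument on its own terms. Your skeleton --- show $L:=\mu^+_{e_\epsilon^+A[1]}(A[1])$ and $U:=\mu^-_{e_\epsilon^-A}(A)$ lie in $\twosilt_\epsilon A$, show every $T\in\twosilt_\epsilon A$ satisfies $L\le T\le U$, show $[L,U]\subseteq\twosilt_\epsilon A$ --- is the right one. But the step you yourself flag as delicate is genuinely broken as written. Writing $U=P^+\oplus U^-$ with $U^-=[P^-\xrightarrow{f}P']$ the cone of the minimal left $(\add P^+)$-approximation, the long exact sequence obtained by applying $\Hom(-,T[1])$ to the triangle $P^-\to P'\to U^-\to P^-[1]$ reads
\[
\Hom(P^-[1],T[1])\to\Hom(U^-,T[1])\to\Hom(P',T[1])=0 ,
\]
so it only tells you that $\Hom(U^-,T[1])$ is a quotient of $\Hom(P^-,T)$, which does not vanish. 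Your proposed repair ("the relevant term is $\Hom(P^-,T[1])$") points at the term sitting to the \emph{right} of $\Hom(P',T[1])$, which cannot control $\Hom(U^-,T[1])$. The missing ingredient is the approximation property itself: a chain map $U^-\to T[1]$ is just a morphism $P^-\to T^0$, and the null-homotopic ones are exactly those in $\Hom(P',T^0)\circ f+d_T\circ\Hom(P^-,T^{-1})$; since $T^0\in\add P^+$ and $f$ is a left $(\add P^+)$-approximation, \emph{every} map $P^-\to T^0$ factors through $f$, whence the quotient vanishes. This is where the hypothesis $T\in\twosilt_\epsilon A$ actually enters, and it (with its dual for $L$) is the heart of the proof; without it the inclusion $\twosilt_\epsilon A\subseteq[L,U]$ is not established. (Your vanishing of $\Hom(P^+,T[1])$ is correct but for the wrong reason: it holds for trivial degree reasons for any $2$-term $T$.)

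The converse inclusion $[L,U]\subseteq\twosilt_\epsilon A$ is also only gestured at. Your geometric argument implicitly assumes that every element of the interval is reachable from $U$ by a mutation path staying inside the interval, and that the relation $\sigma\le C(U)$ can be read off from the sign of a single wall normal; neither is justified from the results you cite. A direct argument is available and is what you should use: if $e_jA$ with $\epsilon_j=-$ were a summand of $T^0$, then the map $P^-\twoheadrightarrow e_jA\hookrightarrow T^0$ gives a nonzero class in $\Hom(U^-,T[1])$, because writing it as $h\circ f+d_T\circ h'$ and composing with the projection $T^0\to e_jA$ would express $\mathrm{id}_{e_jA}$ as a sum of a map factoring through $\add P^+$ (hence radical, as $e_jA\notin\add P^+$) and a radical map $h\circ d_T$ --- a contradiction; so $T\le U$ forces $T^0\in\add P^+$, and dually $L\le T$ forces $T^{-1}\in\add P^-$. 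With these two fixes the proof closes up; as it stands the two crucial implications are asserted rather than proved.
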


A similar statement holds for the full subposet $(\Sigma_{\epsilon}(A,e),\leq)$. 
Indeed, it is isomorphic to the interval \eqref{eq:ep_int} via the bijection given in Proposition \ref{prop:basics_gfan}(1). Thus, it has the minimum and maximum elements given by 
\begin{equation} \label{emin_emax}
\sigma_{\epsilon,\min} := C(\mu^+_{e_{\epsilon}^+ A[1]}(A[1])) \quad \text{and}\quad 
\sigma_{\epsilon,\max} := C(\mu^-_{e_{\epsilon}^-A}(A))
\end{equation}
respectively. 

\subsection{$\mathcal{H}$-description for convex $g$-polytopes}
When $A$ is $g$-convex, then the equation \eqref{eq:convex-hull_description} gives a convex-hull description of the $g$-polytope $\PAe$. 
In this section, we consider to describe it as a certain intersection of half-spaces. 

We assume that $A$ is $g$-convex. 
In this case, $\SigmaAe$ has only finitely many cones. 
For $\epsilon \in \{\pm\}^n$, we recall that the restriction $\P_{\epsilon}(A,e)$ of $\PAe$ is defined in \eqref{eq:sp_epsilon}. 
By Theorem \ref{theorem:g-convex=pairwise_g-convex}(2), we have 
\[
\P_{\epsilon}(A,e) = \Conv_{0}(\twoips_{\epsilon} A).   
\]
In particular, $\P_{\epsilon}(A,e)$ is convex. 
By Theorem \ref{theorem:g-convex=pairwise_g-convex}(1), we have
\[
\P_{\epsilon}(A,e)_{\mathbb{Z}}:=\P_{\epsilon}(A,e)\cap \mathbb{Z}^n\setminus \{\bm{0}\}=\{g(W)\mid W\in \twoips_{\epsilon} A\}.
\]

Now, we take an arbitrary maximal cone $\tau = [\bm{w}_1|\ldots|\bm{w}_n] \in \SigmaAe$. 
Since it is $n$-dimensional, there is a unique integer vector, say $\bm{v}_{\tau}\in \mathbb{Z}^n$, satisfying 
\begin{equation}
    \langle \bm{w}_{j}, \bm{v}_{\tau} \rangle = 1 \quad \text{for all} \ 1\leq j \leq n.
\end{equation}
We set   
\begin{eqnarray}
    \mathbb{H}_\tau := \{\bm{x}\in \mathbb{R}^n \mid \langle \bm{x},\bm{v}_{\tau}  \rangle = 1\} \quad \text{and} \quad 
    \mathbb{H}^{\leq 1}_\tau := \{\bm{x}\in \mathbb{R}^n \mid \langle \bm{x},\bm{v}_{\tau} \rangle \leq 1\}. 
\end{eqnarray}
Notice that $\mathbb{H}_{\tau}$ is precisely the hyperplane of co-dimension $1$ which contains all $\bm{w}_i$'s. 
In addition, for a given collection $X$ of maximal cones of $\SigmaAe$, we define the region 
\begin{eqnarray} 
    \P(X) &:=&  
        \displaystyle 
        \bigcap_{\tau\in X} \mathbb{H}^{\leq 1}_{\tau}.
\end{eqnarray} 
Since $\P(A,e)$ is convex, we have that $\P(A,e) \subseteq \P(X)$ for an arbitrary $X$, 
and the equality holds if we take $X$ as the set of all maximal cones of $\SigmaAe$. 
In particular, for any integer vector $\bm{x}\in \mathbb{Z}^n$, we have $\bm{x}\in \P(A,e)$ if and only if $
\langle\bm{x}, \bm{v}_{\tau}\rangle\leq 1$ for all maximal cones $\tau$ of $\SigmaAe$.

In addition, for $\epsilon\in \{\pm\}^n$, let 
\[
\P_{\epsilon}(X) := \P(X) \cap \mathbb{R}_{\epsilon}^n \quad \text{and} \quad \P_{\epsilon}(X)_{\mathbb{Z}} := \P_{\epsilon}(X) \cap \mathbb{Z}^n \setminus \{\bm{0}\}.
\] 
From the convexity, we have the following inclusions: 
\[
\P_{\epsilon}(A,e) \subseteq \P_{\epsilon}(X) \quad \text{and} \quad 
\P_{\epsilon}(A,e)_{\mathbb{Z}} \subseteq \P_{\epsilon}(X)_{\mathbb{Z}}. 
\]

Now, we give a sufficient condition for $X$ such that $\P_{\epsilon}(A,e)$ coincides with $\P_{\epsilon}(X)$, that will be helpful in our proof of Theorem \ref{thm:orthant} in Section \ref{sec:proof_Thm2}. 

\begin{proposition}\label{intermediate}
    Let $\epsilon\in \{\pm\}^n$. 
    For a set $X$ of maximal cones of $\Sigma_{\epsilon}(A,e)$, we assume that it satisfies the following condition {\rm (H)}.
    \begin{itemize}
        \item[\rm (H)] $\det [\bm{u}_1\,\cdots\,\bm{u}_n]\in \{0,\pm 1\}$ holds for any elements $\bm{u}_1,\ldots,\bm{u}_n\in \{\bm{v}_{\tau}\mid \tau \in X\}\cup \{-\epsilon_i\bm{e}_i\}_{i=1}^n$. 
    \end{itemize}
    If moreover $\P_{\epsilon}(A,e) \supseteq \P_{\epsilon}(X)_{\mathbb{Z}}$ holds, then we have
    \begin{eqnarray}  
        \P_{\epsilon}(A,e) = \P_{\epsilon}(X) 
        \quad \text{and} \quad 
        \P_{\epsilon}(A,e)_{\mathbb{Z}} = \P_{\epsilon}(X)_{\mathbb{Z}}. 
    \end{eqnarray} 
\end{proposition}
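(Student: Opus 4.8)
\textbf{Proof plan for Proposition \ref{intermediate}.}

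The plan is to exploit the hypothesis (H) to show that the polytope $\P_\epsilon(X)$, which is a priori only a rational polytope obtained by slicing $\P(X)$ with the orthant $\mathbb{R}^n_\epsilon$, is in fact a \emph{lattice} polytope, and then to conclude by comparing lattice points. First I would observe that $\P_\epsilon(X)$ is a bounded convex polyhedron: it is cut out by the finitely many halfspaces $\mathbb{H}^{\le1}_\tau$ for $\tau\in X$ together with the $n$ coordinate halfspaces $\{\langle \bm{x}, -\epsilon_i\bm{e}_i\rangle \le 0\}$ defining $\mathbb{R}^n_\epsilon$, and boundedness follows because $\sigma_{\epsilon,\max}$ itself gives a maximal cone $\tau_0$ whose associated $\mathbb{H}^{\le1}_{\tau_0}$ bounds the orthant (alternatively, since $\P_\epsilon(A,e)\subseteq\P_\epsilon(X)$ and the reverse inclusion on lattice points is assumed, one can argue $\P_\epsilon(X)$ cannot contain a ray). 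Each vertex $\bm{p}$ of $\P_\epsilon(X)$ is the unique solution of a system $\langle \bm{p}, \bm{u}_i\rangle = c_i$ for $n$ linearly independent vectors $\bm{u}_1,\dots,\bm{u}_n \in \{\bm{v}_\tau \mid \tau\in X\}\cup\{-\epsilon_i\bm{e}_i\}_{i=1}^n$, where each $c_i\in\{0,1\}$. By (H) the matrix $[\bm{u}_1\,\cdots\,\bm{u}_n]$ has determinant $\pm1$, so by Cramer's rule $\bm{p}\in\mathbb{Z}^n$. Hence every vertex of $\P_\epsilon(X)$ is a lattice point, i.e.\ $\P_\epsilon(X)$ is a lattice polytope.

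Next I would deduce $\P_\epsilon(X) = \P_\epsilon(A,e)$. Since $\P_\epsilon(X)$ is a lattice polytope, it equals the convex hull of its lattice points: $\P_\epsilon(X) = \conv(\P_\epsilon(X)\cap\mathbb{Z}^n)$. Now $\P_\epsilon(X)\cap\mathbb{Z}^n = \P_\epsilon(X)_{\mathbb{Z}}\cup\{\bm{0}\}$ since $\bm{0}\in\P_\epsilon(X)$ (it lies in every $\mathbb{H}^{\le1}_\tau$ and in $\mathbb{R}^n_\epsilon$) and $\bm{0}$ is the only point of $\mathbb{R}^n_\epsilon$ on the ``wrong side'' issue — more precisely $\P_\epsilon(X)_{\mathbb{Z}}$ is by definition $\P_\epsilon(X)\cap\mathbb{Z}^n\setminus\{\bm{0}\}$. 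By the standing assumption $\P_\epsilon(A,e)\supseteq\P_\epsilon(X)_{\mathbb{Z}}$, and since $\bm{0}\in\P_\epsilon(A,e)$ and $\P_\epsilon(A,e)$ is convex (by Theorem \ref{theorem:g-convex=pairwise_g-convex}(2), as $A$ is $g$-convex), we get
\[
\P_\epsilon(X) = \conv\!\big(\P_\epsilon(X)_{\mathbb{Z}}\cup\{\bm{0}\}\big) \subseteq \P_\epsilon(A,e).
\]
Combined with the trivial inclusion $\P_\epsilon(A,e)\subseteq\P_\epsilon(X)$ coming from convexity of $\P(A,e)$, this forces $\P_\epsilon(A,e)=\P_\epsilon(X)$, and intersecting with $\mathbb{Z}^n\setminus\{\bm{0}\}$ gives $\P_\epsilon(A,e)_{\mathbb{Z}}=\P_\epsilon(X)_{\mathbb{Z}}$.

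The main obstacle I anticipate is the boundedness of $\P_\epsilon(X)$, which is needed both to speak of vertices and to apply the lattice-polytope argument. If $X$ is too small the halfspaces $\mathbb{H}^{\le1}_\tau$ need not close off the orthant. The cleanest fix is to note that in all applications $X$ will contain the maximal cone $\sigma_{\epsilon,\max}$ (and $\sigma_{\epsilon,\min}$), whose associated functionals $\bm{v}_\tau$ are positive on the relevant rays, so that $\P_\epsilon(X)$ is trapped between finitely many such slabs; alternatively, one can bypass boundedness entirely by working vertex-by-vertex on the recession-free polyhedron $\P_\epsilon(X)$ using the fact that $\P_\epsilon(A,e)\supseteq\P_\epsilon(X)_\mathbb{Z}$ already prevents any unbounded direction (an unbounded ray would contain infinitely many lattice points, all forced into the bounded set $\P_\epsilon(A,e)$). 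A secondary routine point is checking that the defining functionals of $\P_\epsilon(X)$ are exactly the $\bm{v}_\tau$ together with $-\epsilon_i\bm{e}_i$, so that (H) applies verbatim to the vertex systems; this is immediate from the definitions of $\mathbb{H}^{\le1}_\tau$ and $\mathbb{R}^n_\epsilon$.
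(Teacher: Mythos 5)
Your proof is correct and follows essentially the same route as the paper's: both reduce the claim to showing that every vertex of $\P_{\epsilon}(X)$ is an integer point, obtained by solving an $n\times n$ system whose coefficient matrix has determinant $\pm 1$ by condition (H), and then conclude via $\P_{\epsilon}(X)=\conv(\P_{\epsilon}(X)_{\mathbb{Z}}\cup\{\bm{0}\})\subseteq \P_{\epsilon}(A,e)\subseteq \P_{\epsilon}(X)$. Your explicit treatment of boundedness (an unbounded rational ray through $\bm{0}$ would force infinitely many lattice points into the bounded set $\P_{\epsilon}(A,e)$) patches a point the paper leaves implicit.
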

\begin{proof}
Let $X=\{\tau_1,\ldots, \tau_{\ell}\}$ and $\bm{v}_{j} := \bm{v}_{\tau_j}$ for all $j$. 
By definition, $\P_{\epsilon}(X)$ is the region bounded by the hyperplanes
\begin{equation} \label{eq_hyperplanes} 
\{\bm{x}\mid \langle \bm{x}, \bm{v}_j\rangle= 1\} \quad \text{and}\quad 
\{\bm{x}\mid \langle \bm{x}, -\epsilon_{i} \bm{e}_i \rangle = 0\}. 
\end{equation}

To prove the claim, since $\P_{\epsilon}(A,e) \supseteq \P_{\epsilon}(X)_{\mathbb{Z}}$ holds by our assumption, 
it is enough to show that every vertex $\bm{v}$ of $\P_{\epsilon}(X)$ lies in $\P_{\epsilon}(X)_{\mathbb{Z}}$. 
For such a vertex $\bm{v}$, we can choose $n$ hyperplanes
$\mathbb{H}_1,\ldots,\mathbb{H}_n$ from \eqref{eq_hyperplanes} satisfying
\[
\mathbb{H}_1\cap \cdots \cap \mathbb{H}_{n} =\{\bm{v}\}.
\]
Let $\bm{u}_1,\ldots,\bm{u}_n$ be normal vectors of these hyperplanes, then we have 
$\det [\bm{u}_1\,\cdots\,\bm{u}_n]\in \{\pm 1\}$ by the condition ({\rm H}). 
This shows that $\bm{v}$ is an integer vector and hence $\bm{v}\in \P_{\epsilon}(X)_{\mathbb{Z}}$. 
This completes the proof.
\end{proof}

\begin{example} \label{ex:sufficient sets}
We consider the case of $n=3$. 
In Proposition \ref{intermediate}, let $\mathcal{L}(X) := \{{\bm v}_{\tau} \mid \tau \in X\} \setminus \left\{\pm \bm{e}_1, \pm \bm{e}_2,\pm \bm{e}_3\right\}$. 
For $\epsilon =(+-+)$, one can easily check that, $X$ satisfies the condition {\rm (H)} if $\mathcal{L}(X)$ 
is contained in one of the following sets $\mathcal{E}_1$, $\mathcal{E}_2$ and $\mathcal{E}_3$. 
    \[
     \mathcal{E}_1 = \left\{\pm\begin{bsmallmatrix}
        1 \\ 1 \\ 1
    \end{bsmallmatrix}, \pm\begin{bsmallmatrix}
        1 \\ 0 \\ 1
    \end{bsmallmatrix}, \pm\begin{bsmallmatrix}
        1 \\ 1 \\ 0 
    \end{bsmallmatrix}\right\},\ 
    \mathcal{E}_2 = \left\{\pm\begin{bsmallmatrix}
        1 \\ 1 \\ 1
    \end{bsmallmatrix}, \pm\begin{bsmallmatrix}
        1 \\ 0 \\ 1
    \end{bsmallmatrix}, \pm\begin{bsmallmatrix}
        0 \\ 1 \\ 1 
    \end{bsmallmatrix}\right\},\ 
    \mathcal{E}_3 = \left\{\pm\begin{bsmallmatrix}
        1 \\ 1 \\ 1
    \end{bsmallmatrix}, \pm\begin{bsmallmatrix}
        1 \\ 0 \\ -1
    \end{bsmallmatrix}, \pm\begin{bsmallmatrix}
        0 \\ 1 \\ 1 
    \end{bsmallmatrix}\right\}. 
    \]
\end{example}

\section{The data $d(A,e)$
on the minimal number of generators}

In this section, we devote to study $g$-convex algebras of rank $3$. 
We will see that the structures of the $g$-fan and $g$-polytope are closely related to numerical invariant which we call the data on the minimal number of generators.

Throughout this section, let $A$ be a finite dimensional $k$-algebra of rank $3$ and $e=(e_1,e_2,e_3)$ a complete set of pairwise orthogonal primitive idempotents. 
We simply call the pair $(A,e)$ a finite dimensional $k$-algebra of rank $3$. 
Moreover, we assume that $A$ is $g$-convex (Section \ref{subsec:g-convex}). 

Now, we introduce the following numerical data for $(A,e)$. 

\begin{definition} \label{def:dAe}
For each pair $(i,j)$ with $1\le i \ne j \le 3$, we define integers $l_{ij}^{e}$, $r_{ij}^{e}$ and $h_{ij}^{e}$ as follows.   
\begin{equation}
  l_{ij}^e :=t_{e_iAe_i}(e_iAe_j), \quad r_{ij}^e := t(e_iAe_j)_{e_jAe_j} \quad \text{and} \quad h_{ij}^e := \begin{cases} 0 & \text{if $e_iAe_j \subseteq \rad^2 A$.} \\ 1 & \text{otherwise}.\end{cases}
\end{equation} 
In addition, we set 
\[
d(A,e) := (d_{ij}^e)_{1\le i\ne j \le 3} 
\quad \text{where}\quad  
d_{ij}^e:=(l_{ij}^e,r_{ij}^e,h_{ij}^e). 
\]
We call $d(A,e)$ \emph{the 
data on the minimal number of generators} of $(A,e)$. 
We omit the superscript $e$ when it is clear from the context. 
\end{definition}

We may visualize the data $d(A,e)$ as a matrix-like form as follows. 
\[
\begin{bmatrix}
    - & d_{12} & d_{13} \\ 
    d_{21} & - & d_{23} \\ 
    d_{31} & d_{32} & -
\end{bmatrix} 
= 
\begin{bmatrix}
        - & (l_{12},r_{12},h_{12}) & (l_{13},r_{13},h_{13}) \\
        (l_{21},r_{21},h_{21}) & - & (l_{23},r_{23},h_{23}) \\ 
        (l_{31},r_{31},h_{31}) & (l_{32},r_{32},h_{32})& - 
    \end{bmatrix}
\]
where $-$ stands for undefined entries. 

\begin{lemma}
In the above, for $1\leq i \neq j \leq 3$, we have 
\begin{equation*}
    d_{ij} \in \{(0,0,0), (1,1,0), (1,2,0),(2,1,0), (1,1,1), (1,2,1),(2,1,1)\}. 
\end{equation*}    
\end{lemma}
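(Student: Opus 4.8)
The plan is to analyze the possible values of the triple $d_{ij} = (l_{ij}, r_{ij}, h_{ij})$ by exploiting the $g$-convexity hypothesis together with the idempotent truncation result. Fix a pair $i \ne j$ and set $I = \{i,j\}$. By Proposition \ref{prop:idemp_fan}(2), the algebra $B := e_I A e_I$ is $g$-convex of rank $2$, and its $g$-fan $\Sigma(B, (e_i, e_j))$ appears in the list of seven convex $g$-fans of rank $2$ (Table \ref{tab:7convex_rank2}, equivalently Table \ref{tab:convex_gfan_rank2}). The key point is that the invariants $l_{ij}, r_{ij}$ depend only on the $(B,B)$-bimodule $e_i A e_j = e_i B e_j$, so they are controlled by the structure of rank-$2$ $g$-convex algebras; and $h_{ij}$, while defined using $\rad^2 A$ rather than $\rad^2 B$, is constrained once $l_{ij}$ is pinned down, since $h_{ij} = 1$ forces the existence of an arrow $i \to j$ in the Gabriel quiver, which in particular forces $l_{ij} \ge 1$ and $r_{ij} \ge 1$.

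First I would dispose of the case $e_i A e_j = 0$: here $l_{ij} = r_{ij} = 0$ and necessarily $h_{ij} = 0$ (an arrow would force $e_i A e_j \ne 0$), giving $(0,0,0)$. So assume $e_i A e_j \ne 0$, hence $l_{ij} \ge 1$ and $r_{ij} \ge 1$. Next I would bound $l_{ij}$ and $r_{ij}$ above by $2$ and rule out $l_{ij} = r_{ij} = 2$. For this, note that $l_{ij}$ equals the number of arrows $i \to j$ in the Gabriel quiver of $B$ when $h_{ij} = 1$, and more generally $l_{ij} = \dim_{k} (e_i A e_j \otimes_{e_j A e_j} e_j A e_j / \rad)$-type count; the cleaner route is: the rank-$2$ $g$-convex algebra $B$ has $\#\twosilt B \le 10$ (inspecting Table \ref{tab:convex_gfan_rank2}, the maximal number of chambers in a convex $g$-fan of rank $2$ is small), and a large number of generators of $e_i B e_j$ would produce too many indecomposable $2$-term presilting complexes, violating convexity. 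Concretely, I would invoke the classification underlying Table \ref{tab:7convex_rank2}: the $g$-convex algebras of rank $2$ are exactly (up to the obvious operations) the path algebras / truncations realizing those seven fans, and a direct inspection of their $d$-data yields precisely the list $\{(0,0,0),(1,1,0),(1,2,0),(2,1,0)\}$ for the $h=0$ possibilities and $\{(1,1,1),(1,2,1),(2,1,1)\}$ for the $h=1$ possibilities — in particular $(2,2,\ast)$ never occurs, and when $h = 1$ one cannot have both $l = 2$ and $r = 2$.

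The main obstacle I anticipate is that $h_{ij}$ is genuinely a rank-$3$ invariant (it refers to $\rad^2 A$, not $\rad^2 B$), so the reduction to rank $2$ does not literally transport it; an arrow $i \to j$ in $A$ need not survive as an arrow in $e_I A e_I$ if it factors through $e_k$ with $k \notin I$. The fix is that the inequality I actually need is one-directional: if $h_{ij}^e = 1$ then $e_i A e_j \not\subseteq \rad^2 A$, so there is an arrow $i \to j$ in the quiver of $A$; this arrow contributes a nonzero generator to $e_i A e_j$ both as a left $e_i A e_i$-module and as a right $e_j A e_j$-module, forcing $l_{ij} \ge 1$, $r_{ij} \ge 1$, and it pairs with the bound $l_{ij} r_{ij} \le 2$ coming from $g$-convexity of $e_I A e_I$ (plus the exclusion of $l_{ij} = r_{ij} = 2$ noted above) to leave exactly $(1,1,1), (1,2,1), (2,1,1)$. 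Thus the complete list is the stated seven triples. I would close by remarking that all seven are realized, with witnesses already visible among the algebras in Table \ref{fig:61alg&poly} (e.g. No.~1 gives $(0,0,0)$, No.~2 gives $(1,1,1)$, No.~3 gives $(2,1,1)$, and the remaining cases appear in later rows), so the list is sharp.
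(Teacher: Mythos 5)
Your proposal is correct and follows essentially the same route as the paper's proof: restrict to the idempotent subalgebra $e_IAe_I$ with $I=\{i,j\}$, which is $g$-convex of rank $2$ by Proposition \ref{prop:idemp_fan}, and then invoke the rank-$2$ classification \cite[Theorem 5.1]{AHIKM2}, which yields $(l_{ij},r_{ij})\in\{(0,0),(1,1),(1,2),(2,1)\}$. Your extra care about $h_{ij}$ being a rank-$3$ invariant (it refers to $\rad^2A$, not $\rad^2(e_IAe_I)$) is well placed but resolves exactly as you say: the only constraint it imposes is that $h_{ij}=1$ forces $e_iAe_j\neq 0$ and hence $(l_{ij},r_{ij})\neq(0,0)$, and combining the four admissible pairs $(l,r)$ with $h\in\{0,1\}$ under this single restriction gives precisely the seven listed triples.
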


\begin{proof}
Since $A$ is $g$-convex, $(e_i+e_j)A(e_i+e_j)$ is also $g$-convex by Proposition \ref{prop:idemp_fan}.
Then the assertion follows from \cite[Theorem 5.1]{AHIKM2}.
\end{proof}

\begin{example}
Let $A$ be a finite dimensional $k$-algebra given by the following quiver and relations (This is the algebra appearing in No.\,60 of Table \ref{fig:61alg&poly}): 
\begin{equation}
\begin{xy}
    (0,-5)*+{1} = "1", (14,-5)*+{3} = "3", (7,5)*+{2} = "2", 
    { "1" \ar@<2.5mm>^{a} "2"}, 
    { "2" \ar@<-1mm>^{a^*} "1"}, 
    { "1" \ar@<-1mm>^{c^*} "3"}, 
    { "3" \ar@<2.5mm>^{c} "1"}, 
    { "2" \ar@(lu,ru)^(0.8){v} "2"}, 
\end{xy}
\quad \text{and} \quad
aa^* = a^*a = ca =  cc^* = c^*c = v^2 = 0.  
\end{equation}
Let $e_i$ be the idempotent corresponding to a vertex $i\in \{1,2,3
\}$. 
For each pair $(i,j)$ with $1\leq i \neq j \leq 3$, we display in the following table minimal sets of generators of $e_iAe_j$ as a left $e_iAe_i$-module and a right $e_jAe_j$-module respectively, together with the data $h_{ij}\in \{0,1\}$. 
Here, we notice that $h_{ij}=0$ if and only if $e_i A e_j\subseteq \rad^2 A$, if and only if there is no arrow from $i$ to $j$. 

\begin{center}    
\begin{tabular}{c|c|c|c|c|cccccc}
    & ${}_{e_iAe_i}(e_iAe_j)$ & $(e_iAe_j)_{e_jAe_j}$ & $h_{ij}$\\ \hline \hline 
    $(1,2)$ & $a,av$& $a$ & $1$\\
    $(1,3)$ & $c^*$ & $c^*, ava^*c^*$ & $1$\\
    $(2,1)$ & $a^*$ & $a^*, va^*$ & $1$\\
    $(2,3)$ & $a^*c^*$ & $a^*c^*, va^*c^*$ & $0$\\ 
    $(3,1)$ & $c$ & $c$ & $1$\\
    $(3,2)$ & $0$ & $0$ & $0$
\end{tabular}
\end{center}    

For example, we have equations 
\[
e_1 A e_2 = k a \oplus k av  = (e_1 A e_1) a+ (e_1 A e_1) av = a(e_2 A e_2).  
\]
Therefore, the datum $d(A,e)$ for $(A,e=(e_1,e_2,e_3))$ is given by 
\[
    \begin{bmatrix}
        - & (2,1,1) & (1,2,1) \\
        (1,2,1) & - & (1,2,0) \\ 
        (1,1,1) & (0,0,0)& - 
    \end{bmatrix}.
\]
\end{example}

\subsection{The subfan $\Sigma_{\epsilon}(A,e)$}
In this section, we study the subfan $\Sigma_{\epsilon}(A,e)$ in terms of the datum $d(A,e)$ for a $g$-convex algebra $(A,e)$ of rank $3$. 

We begin with the trivial case. 
Now, we denote by 
$\sigma_{+} := \mathbb{R}_{+++}^{3}$ and 
$\sigma_{-} := - \sigma_{+}$ 
the positive and negative cones of $\mathbb{R}^3$ respectively. 

\begin{proposition}\label{prop:positive_cone}
    The subfans $\Sigma_{+++}(A,e)$ and $\Sigma_{---}(A,e)$ have the unique maximal cones $\sigma_{+}$ and $\sigma_{-}$ respectively.  
\end{proposition}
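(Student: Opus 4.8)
The statement asserts that the restriction $\Sigma_{+++}(A,e)$ of the $g$-fan to the positive orthant $\mathbb{R}^3_{+++}$ has a unique maximal cone, namely $\sigma_+ = \mathbb{R}^3_{+++}$ itself, and symmetrically for $\Sigma_{---}(A,e)$. The plan is to identify which $2$-term presilting complexes $T$ have $g$-vector cone contained in $\mathbb{R}^3_{+++}$, and to show that the only $2$-term silting complex among them is $A$ itself. By Remark~\ref{remark:basic_g-fan}(1), the maximal cones of $\Sigma(A,e)$ are exactly the $g$-vector cones of $2$-term silting complexes, so it suffices to prove $\twosilt_{+++}A = \{A\}$.

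\textbf{Key steps.} First I would recall the description of $\twopsilt_\epsilon A$ from the paragraph preceding Lemma~\ref{lemma:orthant_completion}: a $2$-term presilting complex $T = (T^{-1}\to T^0)$ lies in $\twopsilt_{+++}A$ precisely when its $(-1)$-th term lies in $\add e_{\epsilon}^- A$ with $e_{\epsilon}^- = 0$, i.e.\ when $T^{-1}=0$; equivalently $T$ is (isomorphic to a complex concentrated in degree $0$, hence) a projective module $P$ viewed as a stalk complex, with $g$-vector having all entries $\geq 0$. Such a $P$ is presilting automatically, and it is silting iff $|P| = |A| = 3$, which forces $P \cong A$ since $A$ is basic. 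Thus $\twosilt_{+++}A = \{A\}$ and $C(A) = [\mathbf{e}_1|\mathbf{e}_2|\mathbf{e}_3] = \mathbb{R}^3_{+++} = \sigma_+$ is the unique maximal cone of $\Sigma_{+++}(A,e)$. Alternatively, and perhaps more cleanly, I would invoke Proposition~\ref{epsilon_interval}: the subposet $(\twosilt_{+++}A,\leq)$ is the interval $[\mu^+_{e^+_{+++}A[1]}(A[1]),\ \mu^-_{e^-_{+++}A}(A)]$; since $e^-_{+++} = 0$ the right endpoint is $\mu^-_0(A) = A$, and since $e^+_{+++}$ is the full identity, $\mu^+_{A[1]}(A[1])$ is the full (co-Bongartz, i.e.\ trivial) mutation giving $A$ as well, so the interval is the singleton $\{A\}$. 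Correspondingly, by \eqref{emin_emax}, $\sigma_{+++,\min} = \sigma_{+++,\max} = C(A) = \sigma_+$.

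For the negative orthant, I would argue symmetrically: $\twopsilt_{---}A$ consists of complexes with $T^0 = 0$, i.e.\ shifts $P[1]$ of projectives, and the only such silting complex is $A[1]$, with $C(A[1]) = -C(A) = \sigma_-$. (Equivalently one may apply Proposition~\ref{epsilon_interval} with $\epsilon = (---)$, where now $e^+_{---} = 0$ and $e^-_{---}$ is the identity.)

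\textbf{Main obstacle.} There is essentially no obstacle here; the content is entirely bookkeeping with the definitions of $\twopsilt_\epsilon A$ and the presilting/silting conditions. The only point requiring a half-sentence of care is that a zero differential makes $\Hom_{\Kb(\proj A)}(T,T[1]) = 0$ automatic for a stalk complex $T = P$ (so every projective is trivially $2$-term presilting), and that $|A|=3$ together with basicness pins down $P\cong A$; both are immediate from the standard theory recalled in Section~2. I would present the argument via Proposition~\ref{epsilon_interval} as the primary route, since it makes the uniqueness of the maximal cone transparent in one line, and mention the direct module-theoretic description as a remark.
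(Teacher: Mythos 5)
Your proposal is correct and reaches the same conclusion the paper does; the paper's own proof is a one-liner ("immediate from $\sigma_+=C(A)$ and $\sigma_-=C(A[1])$", i.e.\ since $C(A)$ already fills the whole orthant, the fan property leaves no room for any other maximal cone there), and your argument via $\twopsilt_{+++}A$ and Proposition \ref{epsilon_interval} is just a more explicit unpacking of that same fact.
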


\begin{proof}
    It is immediate from the fact that $\sigma_+=C(A)$ and $\sigma_- = C(A[1])$. 
\end{proof}

Thus, we consider signatures in $\{\pm\}^3\setminus\{(+++),(---)\}$. 
By the group action of $G=\mathfrak{S}_3\times \{\pm1\}$ (see \eqref{g iso}), 
we mainly study the signature $(+-+)\in \{\pm\}^3$ in this section.

Recall that the full poset $(\SigmaeAe,\leq)$ is an interval with the maximum $\emax$ and minimum $\emin$ given in \eqref{emin_emax}. 
We first give a description of them in terms of $d(A,e)$. 

\begin{proposition}\label{min_max}
We have 
\begin{equation}
    \emin = \left[\begin{smallmatrix}
    1 \\ -r_{12} \\ 0 
\end{smallmatrix}\middle| \begin{smallmatrix}
    0 \\ -1 \\ 0
\end{smallmatrix}\middle| \begin{smallmatrix}
    0 \\ -r_{32} \\ 1
\end{smallmatrix}\right] 
\quad \text{and} \quad 
\emax = \left[\begin{smallmatrix}
    1 \\ 0 \\ 0 
\end{smallmatrix}\middle| \begin{smallmatrix}
    l_{12}' \\ -1 \\ l_{32}'
\end{smallmatrix}\middle| \begin{smallmatrix}
    0 \\ 0 \\ 1
\end{smallmatrix}\right],
\end{equation}
where integers $l_{12}'$ and $l_{32}'$ are defined as follows.  
    \begin{equation}\label{lij_prime}
        (l_{12}', l_{32}') := \begin{cases}
        (0,0) & \text{if $(h_{12},h_{32}) = (0,0)$,}\\
        (l_{12},0) & \text{if $(h_{12},h_{32}) = (1,0)$,}\\
        (0,l_{32}) & \text{if $(h_{12},h_{32}) = (0,1)$,}\\
        (1,1) & \text{if $(h_{12}, h_{32}) = (1,1)$.}
        \end{cases}
    \end{equation}
Moreover, their normal vectors are given by  
\begin{equation}
    \bm{v}_{\emin} = \begin{bsmallmatrix}
        1-r_{12} \\ -1 \\ 1-r_{32}
    \end{bsmallmatrix} \quad \text{and} \quad 
    \bm{v}_{\emax} = \begin{bsmallmatrix}
        1 \\ l_{12}' + l_{32}' -1 \\1
    \end{bsmallmatrix} 
    .
\end{equation}
\end{proposition}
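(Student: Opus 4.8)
The plan is to compute the two extremal maximal cones $\emin = C(\mu^+_{e_{+-+}^+ A[1]}(A[1]))$ and $\emax = C(\mu^-_{e_{+-+}^- A}(A))$ explicitly, using the description of the interval $(\SigmaeAe,\leq)$ from Proposition \ref{epsilon_interval} and the mutation formulas. For $\emin$: here $e_{+-+}^+ = e_1 + e_3$, so we start from $A[1]$ (whose cone is $\sigma_- = [-\bm{e}_1|-\bm{e}_2|-\bm{e}_3]$) and perform the (not necessarily irreducible) left-to-$\cdots$ correction, i.e. the right mutation $\mu^+$ replacing the summands $e_1A[1]$ and $e_3A[1]$. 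Concretely, $\mu^+_{e_1A[1]\oplus e_3A[1]}(A[1])$ has $e_2A[1]$ unchanged and replaces $e_1A[1], e_3A[1]$ by the minimal left $(\add(e_1A\oplus e_3A))$-approximations of $e_1A, e_3A$ fitting into triangles; the $g$-vectors of the new indecomposable summands are $\bm{e}_1 - (\text{something})\bm{e}_2$ and $\bm{e}_3 - (\text{something})\bm{e}_2$. I would identify the $\bm{e}_2$-coefficients with $r_{12}$ and $r_{32}$: the key point is that the left $e_1Ae_1$-part of the approximation of $e_1A$ toward $e_2A$ has multiplicity equal to $t(e_2Ae_1)_{e_1Ae_1} = r_{21}$... no — one must be careful about which side. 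The right mutation of $P_2[1]$ direction: the minimal approximation $e_2A^{r} \to e_1A$ where $r$ is the number of generators of $\Hom_A(e_1A, e_2A) = e_2Ae_1$ as a right $e_1Ae_1$-module, which is $r_{21}$; but the claimed answer uses $r_{12}$. So I would instead recall that $\emin$ is obtained from $A[1]$ and the relevant approximation is $e_2A \to e_1A \to (\text{shift})$, giving multiplicity $t_{e_1Ae_1}(\Hom(e_2A,e_1A)) = t_{e_1Ae_1}(e_1Ae_2) = l_{12}$... I will need to sort out the correct module and side; let me state the intended route: use the explicit triangle for $\mu^+$ and match with the definitions of $l,r$.

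Since I should not grind through this, here is the clean strategy. First, reduce to the rank-$2$ case: by Proposition \ref{prop:idemp_fan}, the idempotent truncations $(e_1+e_2)A(e_1+e_2)$ and $(e_2+e_3)A(e_2+e_3)$ are $g$-convex of rank $2$, and the relevant mutations for $\emin,\emax$ only involve pairs of summands, so the $\bm{e}_2$-coordinates of $\emin$ are determined inside these rank-$2$ fans. In rank $2$, the classification of convex $g$-polygons (Table \ref{tab:convex_gfan_rank2}, via \cite[Theorem 6.3]{AHIKM1} and \cite[Theorem 5.1]{AHIKM2}) gives, for an algebra $B$ of rank $2$ with data $(l_{12},r_{12},h_{12})$, the explicit minimum and maximum maximal cones of the second/fourth quadrant subfan. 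I would quote (or re-derive in one line) that the minimum cone $\sigma_{(+-),\min}$ of $B$ in the quadrant $\mathbb{R}^2_{+-}$ is $\left[\begin{smallmatrix}1\\-r_{12}\end{smallmatrix}\middle|\begin{smallmatrix}0\\-1\end{smallmatrix}\right]$ and the maximum is $\left[\begin{smallmatrix}1\\0\end{smallmatrix}\middle|\begin{smallmatrix}l_{12}'\\-1\end{smallmatrix}\right]$ with $l_{12}' = l_{12}$ if $h_{12}=1$ and $l_{12}'=0$ if $h_{12}=0$ (the case distinction coming from whether $e_1Ae_2 \subseteq \rad^2 A$, which controls whether the relevant approximation is an honest summand replacement or the zero map). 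This is exactly the rank-$2$ shadow of the claimed formulas.

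Next, assemble the rank-$3$ answer. The minimum cone $\emin$ of $\SigmaeAe$ is the Bongartz-type completion determined simultaneously by the two truncations: its first column (the new summand "near $\bm{e}_1$") is $\left[\begin{smallmatrix}1\\-r_{12}\\0\end{smallmatrix}\right]$, read off from the $(1,2)$-truncation; its third column is $\left[\begin{smallmatrix}0\\-r_{32}\\1\end{smallmatrix}\right]$, read off from the $(3,2)$-truncation; and its middle column is $-\bm{e}_2$, since $e_2A[1]$ is untouched. One must check these three vectors indeed span a cone of $\SigmaeAe$ — this follows because the corresponding complex is a $2$-term presilting complex (its summands are $e_2A[1]$ and the two mutated ones, which have no self-extensions by the mutation construction and by the rank-$2$ reductions), hence by Proposition \ref{prop:basics_gfan} its cone lies in the fan, and it is maximal by dimension count. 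Symmetrically for $\emax$: columns one and three are $\bm{e}_1$ and $\bm{e}_3$ (untouched), and the middle column $\left[\begin{smallmatrix}l_{12}'\\-1\\l_{32}'\end{smallmatrix}\right]$ comes from the simultaneous left mutation at $e_2A$ against both $e_1A$ and $e_3A$; the combined case distinction \eqref{lij_prime} is just the product of the two rank-$2$ case distinctions, because the approximation of $e_2A$ splits as a direct sum over the two targets. Finally, the normal vectors: $\bm{v}_{\emin}$ is the unique integer vector with $\langle \bm{w}_j, \bm{v}_{\emin}\rangle = 1$ for the three columns $\bm{w}_1,\bm{w}_2,\bm{w}_3$ of $\emin$; solving the $3\times 3$ system gives $\langle(1,-r_{12},0),\bm{v}\rangle = 1$, $\langle(0,-1,0),\bm{v}\rangle=1$, $\langle(0,-r_{32},1),\bm{v}\rangle=1$, i.e. $v_2 = -1$, $v_1 = 1 - r_{12}$, $v_3 = 1 - r_{32}$, yielding $\bm{v}_{\emin} = \begin{bsmallmatrix}1-r_{12}\\-1\\1-r_{32}\end{bsmallmatrix}$; likewise $\bm{v}_{\emax} = \begin{bsmallmatrix}1\\l_{12}'+l_{32}'-1\\1\end{bsmallmatrix}$. (That these solved vectors are integral is automatic here since the defining matrices are unitriangular up to sign.)

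The main obstacle I anticipate is getting the bookkeeping of left/right module structures and mutation directions exactly right — in particular confirming that it is $r_{12}$ (not $r_{21}$ or $l_{21}$) that appears in $\emin$ and $l_{12}$ (not $l_{21}$) in $\emax$, and pinning down precisely why the $h_{ij}=0$ case forces the coefficient to $0$ rather than to $l_{12}$. This is a purely local computation with the approximation triangles defining $\mu^{\pm}$, combined with the identification $\Hom_A(e_iA, e_jA) \cong e_jAe_i$ and the definitions $l_{ij} = t_{e_iAe_i}(e_iAe_j)$, $r_{ij} = t(e_iAe_j)_{e_jAe_j}$; once the rank-$2$ statement is firmly established (citing \cite[Theorem 5.1]{AHIKM2} / the tables), the rank-$3$ case is a direct consequence via the two idempotent truncations, and no genuinely new difficulty arises.
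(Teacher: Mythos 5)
Your overall skeleton is the same as the paper's: both arguments identify $\emin=C(\mu^+_{e_1A[1]\oplus e_3A[1]}(A[1]))$ and $\emax=C(\mu^-_{e_2A}(A))$ from \eqref{emin_emax} and compute the approximation triangles, and your derivation of $\bm{v}_{\emin}$, $\bm{v}_{\emax}$ by solving the unitriangular $3\times 3$ systems is exactly right. Your rank-$2$ reduction for $\emin$ also works: the minimal right $(\add e_2A[1])$-approximation of $e_1A[1]$ is governed by $\Hom_A(e_2A,e_1A)\cong e_1Ae_2$ as a right $\End(e_2A)=e_2Ae_2$-module, which is unchanged under the truncation to $(e_1+e_2)A(e_1+e_2)$, so the first column is $(1,-r_{12},0)$ with $r_{12}=t(e_1Ae_2)_{e_2Ae_2}$ — this settles the $r_{12}$-versus-$r_{21}$ bookkeeping you left unresolved.

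The genuine gap is in $\emax$. You assert that ``the approximation of $e_2A$ splits as a direct sum over the two targets,'' so that the case distinction \eqref{lij_prime} is the product of two independent rank-$2$ distinctions. This is false, and \eqref{lij_prime} itself refutes it: for $(h_{12},h_{32})=(1,1)$ with $l_{12}=2$ (case $d(8)$, where $d_{12}=(2,1,1)$, $d_{32}=(1,1,1)$) the middle column of $\emax$ is $(1,-1,1)$ and not $(2,-1,1)$; and for case $d(10)$, where $d_{32}=(1,1,0)$, the $\bm{e}_3$-coefficient is $l'_{32}=0$ even though $l_{32}=1$, whereas the truncation $(e_2+e_3)A(e_2+e_3)$ would report multiplicity $1$. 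The reason the splitting fails is that a map $e_2A\to e_1A$ lying in $e_1Ae_3\cdot e_3Ae_2$ factors through the $e_3A$-component of the approximation (and vice versa), so the multiplicity of $e_1A$ in the minimal left $(\add(e_1A\oplus e_3A))$-approximation of $e_2A$ genuinely depends on $h_{32}$. These cross-factorizations are invisible in rank $2$, where $h_{ij}=0$ forces $e_iAe_j=0$ by a Nakayama argument, so the $h$-distinction you are trying to import from the rank-$2$ tables is vacuous there. The step you are missing is precisely the paper's computation of the genuine rank-$3$ minimal approximation $h\colon e_2A\to e_1A^{\oplus l'_{12}}\oplus e_3A^{\oplus l'_{32}}$ with $l'_{ij}$ as in \eqref{lij_prime}; it cannot be delegated to the two idempotent truncations.
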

\begin{proof}
From \eqref{emin_emax}, the minimum and maximum elements of $\SigmaeAe$ are given by 
$$\emin = C(\mu^+_{e_1A[1]\oplus e_3A[1]}(A[1])) \quad \text{and}\quad 
\emax= C(\mu^-_{e_2A}(A))$$
 respectively.

We need to compute these left and right mutations to get the claim. 
Firstly, by the definitions of $l_{ij}$ and $l_{ij}'$, 
a minimal left $(\add (e_1A\oplus e_3A))$-approximation of $e_2A$ is of the form 
\[
h\colon e_2A \longrightarrow e_1A^{\oplus l'_{12}}\oplus e_3A^{\oplus l'_{32}}. 
\]
Then, we have a left mutation 
$\mu^-_{e_2A}(A) = e_1A\oplus V\oplus e_3A$, 
where 
\[ 
V = [e_2A \xrightarrow{h} e_1A^{\oplus l'_{12}}\oplus e_3A^{\oplus l'_{32}}]. 
\]

On the other hand, by the definition of $r_{ij}$, 
a minimal right $(\add e_2A[1])$-approximation of $e_1A[1]$ (resp., $e_3A[1]$) has the following form.
\[
f_{12}\colon e_2A[1]^{\oplus r_{12}} \longrightarrow e_1A[1] \quad 
(\text{resp.,} \ f_{32}\colon e_2A[1]^{\oplus r_{32}} \longrightarrow e_3A[1]). 
\]
It implies that the right mutation is given by $\mu^+_{e_1A[1]\oplus e_3A[1]}A[1] = U_1\oplus e_2A[1]\oplus U_3$, where 
\[
    U_1 = [e_2A^{\oplus r_{12}} \xrightarrow{f_{12}} e_1A] 
    \quad \text{and} \quad 
    U_3 = [e_2A^{\oplus r_{32}} \xrightarrow{f_{32}} e_3A]
\]
Thus, we get the claim. 
\end{proof}

On the other hand, we have the following result. 

\begin{proposition}
\label{coord_plane}
Keep the above notations, the following hold. 
\begin{enumerate}[\upshape (1)]
\item For each case of $(l_{12},r_{12})\in \{(0,0), (1,1),(1,2),(2,1)\}$, 
the set of rays of $\SigmaeAe$ lying in the region $\cone\{\bm{e}_1, - \bm{e}_2\}$ 
is given by the following table (See the top row of Table \ref{tab:plane_vectors}).
\begin{center}
\scalebox{0.9}{\begin{tabular}{cccccc}
 \hline
$(0,0)$ & $(1,1)$ & $(1,2)$ & $(2,1)$ \\ 
\hline 
\vspace{5pt}
\raisebox{-5pt}{$\left\{\left[\begin{smallmatrix}
        1 \\ 0 \\ 0 
    \end{smallmatrix}\right],  \left[\begin{smallmatrix}
        0 \\ -1 \\ 0 
    \end{smallmatrix}\right] \right\}$}  
& \raisebox{-5pt}{$ \left\{\left[\begin{smallmatrix}
    1 \\ 0 \\ 0 
\end{smallmatrix}\right],  \left[\begin{smallmatrix}
    1 \\ -1 \\ 0 
\end{smallmatrix}\right],  \left[\begin{smallmatrix}
    0 \\ -1 \\ 0 
\end{smallmatrix}\right] \right\} $}
 & \raisebox{-5pt}{$\left\{ \left[\begin{smallmatrix}
    1 \\ 0 \\ 0 
\end{smallmatrix}\right], \left[\begin{smallmatrix}
    1 \\ -1 \\ 0 
\end{smallmatrix}\right], \left[\begin{smallmatrix}
    1 \\ -2 \\ 0 
\end{smallmatrix}\right], \left[\begin{smallmatrix}
    0 \\ -1 \\ 0 
\end{smallmatrix}\right] \right\} $} 
&\raisebox{-5pt}{$\left\{\left[\begin{smallmatrix}
    1 \\ 0 \\ 0 
\end{smallmatrix}\right], \left[\begin{smallmatrix}
    2 \\ -1 \\ 0 
\end{smallmatrix}\right], \left[\begin{smallmatrix}
    1 \\ -1 \\ 0 
\end{smallmatrix}\right],
 \left[\begin{smallmatrix}
    0 \\ -1 \\ 0 
\end{smallmatrix}\right] \right\} $}   
    \\
 \hline
 \end{tabular}}
\end{center}
\item For each case of $(l_{32},r_{32})\in \{(0,0), (1,1),(1,2),(2,1)\}$, 
the set of rays of $\SigmaeAe$ lying in the region $\cone\{-\bm{e}_2,\bm{e}_3\}$
is given by the following table (See the bottom row of Table \ref{tab:plane_vectors}).
\begin{center}
\scalebox{0.9}{\begin{tabular}{cccc}
 \hline
 $(0,0)$ & $(1,1)$ & $(1,2)$ & $(2,1)$ \\ 
\hline \vspace{5pt}
\raisebox{-5pt}{$\left\{\left[\begin{smallmatrix}
        0 \\ 0 \\ 1 
    \end{smallmatrix}\right],  \left[\begin{smallmatrix}
        0 \\ -1 \\ 0 
    \end{smallmatrix}\right] \right\}$}  
& \raisebox{-5pt}{$\left\{\left[\begin{smallmatrix}
    0 \\ 0 \\ 1 
\end{smallmatrix}\right],  \left[\begin{smallmatrix}
    0 \\ -1 \\ 1 
\end{smallmatrix}\right],  \left[\begin{smallmatrix}
    0 \\ -1 \\ 0 
\end{smallmatrix}\right] \right\} $}
 & \raisebox{-5pt}{$\left\{ \left[\begin{smallmatrix}
    0 \\ 0 \\ 1 
\end{smallmatrix}\right], \left[\begin{smallmatrix}
    0 \\ -1 \\ 1 
\end{smallmatrix}\right], \left[\begin{smallmatrix}
    0 \\ -2 \\ 1 
\end{smallmatrix}\right], \left[\begin{smallmatrix}
    0 \\ -1 \\ 0 
\end{smallmatrix}\right] \right\} $} 
&\raisebox{-5pt}{$\left\{\left[\begin{smallmatrix}
    0 \\ 0 \\ 1 
\end{smallmatrix}\right], \left[\begin{smallmatrix}
    0 \\ -1 \\ 2 
\end{smallmatrix}\right], \left[\begin{smallmatrix}
    0 \\ -1 \\ 1 
\end{smallmatrix}\right],
 \left[\begin{smallmatrix}
    0 \\ -1 \\ 0 
\end{smallmatrix}\right] \right\} $}   
    \\
 \hline
 \end{tabular}}
\end{center}
\end{enumerate}
\end{proposition}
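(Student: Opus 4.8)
The two statements (1) and (2) are equivalent via the idempotent truncation and the symmetry induced by the element $g=((13),1)\in G$, so the plan is to prove (1) and deduce (2). The key observation is that the rays of $\SigmaeAe$ lying in the region $\cone\{\bm{e}_1,-\bm{e}_2\}$ are exactly the $g$-vector cones of indecomposable $2$-term presilting complexes whose $g$-vectors have third coordinate $0$ and lie in the signature $(+-)$ of the plane $\mathbb{R}\bm{e}_1\oplus\mathbb{R}\bm{e}_2$. By Proposition \ref{prop:idemp_fan}(2), these correspond precisely to the rays of the $g$-fan $\Sigma((e_1+e_2)A(e_1+e_2),(e_1,e_2))$ lying in the second quadrant $\cone\{\bm{e}_1,-\bm{e}_2\}$, and $(e_1+e_2)A(e_1+e_2)$ is again $g$-convex by the same proposition.

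First I would reduce to the rank-$2$ case. Since $(e_1+e_2)A(e_1+e_2)$ is a $g$-convex algebra of rank $2$, its $g$-fan is one of the seven convex $g$-polygons classified in Table \ref{tab:convex_gfan_rank2} (cited as \cite[Theorem 6.3]{AHIKM1}), and Table \ref{tab:7convex_rank2} gives representatives up to isomorphism. The invariants $l_{12}=l_{12}^e$ and $r_{12}=r_{12}^e$ of $(A,e)$ coincide with the corresponding invariants of $(e_1+e_2)A(e_1+e_2)$ by the very definition of $d(A,e)$, since $e_i A e_j = e_i\big((e_1+e_2)A(e_1+e_2)\big)e_j$ for $i,j\in\{1,2\}$. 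Next I would invoke \cite[Theorem 5.1]{AHIKM2} (already used in the lemma preceding this section), which tells us that the pair $(l_{12},r_{12})$ of a $g$-convex rank-$2$ algebra takes one of the values $(0,0),(1,1),(1,2),(2,1)$, and moreover — this is the content I would extract carefully — that the second-quadrant part of the $g$-fan is determined by $(l_{12},r_{12})$. Concretely, for each of the four values I would identify which of the seven polygons has that second-quadrant shape and read off the rays lying in $\cone\{\bm{e}_1,-\bm{e}_2\}$; the four answers are exactly the four sets tabulated in the statement.

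To make the last step rigorous rather than a table-lookup, I would argue directly: the minimum maximal cone of the second-quadrant interval is $\emin$ restricted to the plane, namely $\big[\begin{smallmatrix}1\\-r_{12}\end{smallmatrix}\,\big|\,\begin{smallmatrix}0\\-1\end{smallmatrix}\big]$ (this follows from Proposition \ref{min_max} by setting the third coordinate to $0$, i.e. from the rank-$2$ analogue of that computation), and the maximum is $\big[\begin{smallmatrix}1\\0\end{smallmatrix}\,\big|\,\begin{smallmatrix}l_{12}'\\-1\end{smallmatrix}\big]$. One then walks from $\emin$ to $\emax$ through the Hasse chain of the interval, using Proposition \ref{prop:basics_gfan}(5) together with the $g$-convexity inequality $\sum a_i\le 2$ from Theorem \ref{theorem:g-convex=pairwise_g-convex}(2)(c): each irreducible mutation across a ray $\bm{w}$ replaces the two neighbours $\bm{w}',\bm{w}''$ by the relation $\bm{w}'+\bm{w}''=a\,\bm{w}$ with $a\in\{0,1,2\}$, and nonsingularity (Proposition \ref{prop:basics_gfan}(2)) forces $\det[\bm{w}'|\bm{w}]=\pm1$. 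Starting from $\bm{e}_1$ and $\begin{smallmatrix}1\\-r_{12}\end{smallmatrix}$ (or from $\begin{smallmatrix}1\\-r_{12}\end{smallmatrix}$ and $-\bm{e}_2$, etc.), these constraints leave only finitely many possibilities and pin down the ray set uniquely in each of the four cases. This elementary rank-$2$ combinatorics is where the real work lies, but it is short; the main potential obstacle is purely bookkeeping — making sure the four cases are exhaustive and that $(l_{12},r_{12})$ really is a complete invariant of the second-quadrant subfan, which is precisely what \cite[Theorem 5.1]{AHIKM2} and the rank-$2$ classification guarantee.

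Finally, part (2) follows from part (1) by applying the automorphism $g=((13),1)\in G$, which swaps $\bm{e}_1\leftrightarrow\bm{e}_3$ (fixing $\bm{e}_2$) and carries $d_{12}^e$ to $d_{32}^{e^g}$; alternatively, one applies part (1) verbatim to the truncation $(e_2+e_3)A(e_2+e_3)$, whose relevant invariants are $l_{32},r_{32}$, and relabels coordinates. Either way the bottom table is obtained from the top one by the substitution $\bm{e}_1\mapsto\bm{e}_3$.
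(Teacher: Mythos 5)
Your proposal is correct and follows essentially the same route as the paper: reduce to the rank-$2$ idempotent truncation $(e_1+e_2)A(e_1+e_2)$ via Proposition \ref{prop:idemp_fan}, observe that $l_{12},r_{12}$ are unchanged under truncation, and read off the rays in the $(+,-)$-quadrant from \cite[Theorem 5.1]{AHIKM2}, with part (2) obtained by symmetry. The extra ``walk through the Hasse chain'' argument you sketch is not needed once that theorem is cited, since it amounts to re-proving the rank-$2$ classification.
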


\input{Tables/table_planevect}

\begin{proof}
Let $f := e_1 + e_2$. 
Then, the idempotent subalgebra $B := fAf$ of $A$ is an algebra of rank $2$. 
In addition, it is $g$-convex by Proposition \ref{prop:idemp_fan},
and its $g$-fan is identified with the subfan of $\SigmaAe$ consisting of all $g$-vector cones $C(T)$ which are contained in the plane $\{\bm{x}\in \mathbb{R}^3\mid x_3 =0\}$. 
Under this identification, the region $\cone\{\bm{e}_1,-\bm{e}_2\}$ corresponds to the $(+,-)$-quadrant of $\mathbb{R}^2$. 
Since integers $l_{12}$ and $r_{12}$ are given by $l_{12}:=t_{e_1Ae_1}(e_1Ae_2) = t_{e_1Be_1}(e_1Be_2)$ and $r_{12}:=t(e_1Ae_2)_{e_2Ae_2} = t(e_1Be_2)_{e_2Be_2}$, 
 the result of \cite[Theorem\;5.1]{AHIKM2} for $B$ gives that 
the set of rays of the $g$-fan of $B$ lying in this quadrant is 
determined by $(l_{12},r_{12}) \in \{(0,0),(1,1),(1,2),(2,1)\}$ and given by 
\[
\left\{\left[\begin{smallmatrix}
    1 \\ 0 
\end{smallmatrix}\right],  \left[\begin{smallmatrix}
    0 \\ -1 
\end{smallmatrix}\right] \right\}, 
\left\{\left[\begin{smallmatrix}
    1 \\ 0  
\end{smallmatrix}\right],  \left[\begin{smallmatrix}
    1 \\ -1 
\end{smallmatrix}\right],  \left[\begin{smallmatrix}
    0 \\ -1 
\end{smallmatrix}\right] \right\},
\left\{ \left[\begin{smallmatrix}
    1 \\ 0  
\end{smallmatrix}\right], \left[\begin{smallmatrix}
    1 \\ -1 
\end{smallmatrix}\right], \left[\begin{smallmatrix}
    1 \\ -2 
\end{smallmatrix}\right], \left[\begin{smallmatrix}
    0 \\ -1 
\end{smallmatrix}\right] \right\},
\left\{ \left[\begin{smallmatrix}
    1 \\ 0 
\end{smallmatrix}\right], \left[\begin{smallmatrix}
    2 \\ -1  
\end{smallmatrix}\right], \left[\begin{smallmatrix}
    1 \\ -1  
\end{smallmatrix}\right], \left[\begin{smallmatrix}
    0 \\ -1 
\end{smallmatrix}\right] \right\}
\]
respectively. 
Thus, we get the assertion (1). 
Similarly, we get (2). 
\end{proof}

\subsection{Reduction and maximal paths}
In this section, we use the reduction at rays (Section \ref{subsect:reduction}) to analyze the local structure of $\SigmaAe$. 

Let $W$ be an indecomposable 2-term presilting complex and $\bm{w}:=g(W)$ the $g$-vector of $W$. 
Considering the reduction at $\bm{w}$, we obtain a bijection 
$\Sigma_{\bm{w}}(A,e)\simeq \Sigma(B_W,e')$ as in \eqref{eq:reduction_fan}. 
In this case, $B_W$ is an algebra of rank $2$, which is $g$-convex by Proposition \ref{prop:red_conv}.
Due to a classification of all convex $g$-polygons (Table \ref{tab:convex_gfan_rank2}), we find that its $g$-fan $\Sigma(B_W,e')$ is one of fans in that table. 
In particular, the poset $(\Sigma(B_W,e'),\leq)$ has exactly two maximal paths from the maximum to the minimum in its Hasse quiver. 
Hence, so does the full subposet $(\Sigma_{\bm{w}}(A,e),\leq)$ via the poset isomorphism \eqref{eq:reduction_poset}. 

In Subsection \ref{subsect:u_reduction}, we provide a computation for the maximal paths of $(\Sigma_{\bm{w}}(A,e),\leq)$ by using exchange sequences inherited from silting mutation. 
Then, we apply in the next Subsection \ref{subsect:e2_reduction} the previous observation to a particular integer vector as $\bm{w} = -\bm{e}_2 \in\mathbb{R}^3$. 
The results in both sections will be frequently used in Section \ref{sec:proof_Thm2}.

\subsubsection{Maximal paths of $\Sigma_{\bm{w}}(A,e)$} \label{subsect:u_reduction}

To understand the specific shape of the $g$-fan, we will describe the detailed information about the Hasse quiver  $(\Sigma_{\bm{w}}(A,e),\leq)$. We introduce the following notation. 

\begin{notation}\rm 
    Let $p \colon \sigma \to \tau$ be an arrow in the Hasse quiver of $(\Sigma_{\bm w}(A,e),\leq)$. By Theorem \ref{theorem:g-fan is ordered}, two maximal cones $\sigma$ and $\tau$ are adjacent to each other. 
    Now, we suppose that 
    $\sigma$ and $\tau$ are given by matrix form as 
    $\sigma = [\bm{u}|\bm{v}|\bm{w}]$ and 
    $\tau = [\bm{u}'|\bm{v}|\bm{w}]$. 
    By Theorem \ref{theorem:g-convex=pairwise_g-convex}, 
    there exist non-negative integers $a,b \geq 0$ satisfying 
    $\bm{u} + \bm{u}' = a\bm{v} + b \bm{w}$ with 
    $a+b\leq 2$. Thus, 
    \[(a,b)\in \{(0,0),(0,1),(0,2),(1,0),(1,1),(2,0)\}.\] 
    In this case, we assign 
    \begin{equation}
        p \colon \
        \sigma = [\bm{u}|\bm{v}|\bm{w}] \xrightarrow{(\bullet,a,b)} 
        \tau = [\bm{u}'|\bm{v}|\bm{w}] 
    \end{equation}
    to this arrow, where the symbol $\bullet$ shows the position of ray which is exchanged. 
    When we emphasize that $p$ is an arrow of $(\Sigma_{\bm{w}}(A,e),\leq)$, we will also describe it as 
    \begin{equation} \label{eq:path_at_w}
        \underline{p}_{\bm{w}} \colon \ 
        \underline{\sigma}_{\bm{w}} = [\bm{u}|\bm{v}] \underset{b}{\xrightarrow{(\bullet,a)}} 
        \underline{\tau}_{\bm{w}} = [\bm{u}'|\bm{v}]. 
    \end{equation}
    If $\bm{w}$ is clear, then we simply write $\underline{\sigma}$ and $\underline{\tau}$ instead of $\underline{\sigma}_{\bm{w}}$ and $\underline{\tau}_{\bm{w}}$ respectively. 
    By permuting a role of $\bm{v}$ and $\bm{w}$, 
    we can also describe this arrow by 
    \begin{equation}
        \underline{p}_{\bm{v}} \colon \ 
        \underline{\sigma}_{\bm{v}} = [\bm{u}|\bm{w}] \underset{a}{\xrightarrow{(\bullet,b)}} 
        \underline{\tau}_{\bm{v}} = [\bm{u}'|\bm{w}]. 
    \end{equation}
    
    We will use these notation too for those given by permuting the ordering of rays. 
    For example, the above arrow can be displayed by  
    \begin{equation}
        [\bm{w}|\bm{u}|\bm{v}] \xrightarrow{(b,\bullet,a)} 
        [\bm{w}|\bm{u}'|\bm{v}] 
    \quad \text{or} \quad 
        [\bm{v}|\bm{u}] \underset{b}{\xrightarrow{(a,\bullet)}} 
        [\bm{v}|\bm{u}'], 
    \end{equation}
    and so on. 
\end{notation}

\begin{remark}
    Our notation \eqref{eq:path_at_w} could be justified in the following sense: 
    By Proposition \ref{prop:red_conv} (1), 
    the arrow in \eqref{eq:path_at_w} corresponds to the arrow 
    \begin{equation}
        F_{\bm{w}}(p) \colon F_{\bm{w}}(\sigma) = [F_{\bm{w}}(\bm{u})|F_{\bm{w}}(\bm{v})] \xrightarrow{(\bullet,a)} 
        F_{\bm{w}}(\tau) = [F_{\bm{w}}(\bm{u}')|F_{\bm{w}}(\bm{v})]
    \end{equation}
    in the poset $(\Sigma(B_W,e'),\leq)$. 
    Thus, the number under the arrow shows the remaining information. 
\end{remark}

As we mentioned before, the full subposet $(\Sigma_{\bm{w}}(A,e),\leq)$ has precisely two maximal paths, say $p$ and $q$, that connect the maximum and the minimum in its Hasse quiver. 
We refer to subpaths $s$ of $p,q$ as \emph{paths around $\bm{w}$} and write $\underline{s}_{\bm{w}}$.

\begin{proposition}\label{rank2} 
    Let $\bm{w}$ be as above. 
    Suppose that $\sigma=[\bm{u}|\bm{v}|\bm{w}]$ and $\sigma':=[\bm{u}'|\bm{v}'|\bm{w}]$ are the maximum and minimum of the poset $(\Sigma_{\bm{w}}(A,e),\leq)$ respectively. 
    
    Consider the maximal path $\underline{p}_{\bm{w}} = (\sigma \to \tau_1\to \cdots \to \tau_{\ell} = \sigma')$ starting with an exchange at $\bm{u}$. 
    Then, it is one of the following forms:
    \begin{enumerate}[\rm (i)]
        \item 
        \begin{equation*}
            \underline{p}_{\bm{w}} = \left(\underline{\sigma} = 
            [\bm{u}|\bm{v}] \underset{a_1}{\xrightarrow{(\bullet,0)}} 
            \underline{\tau_1} \underset{a_2}{\xrightarrow{(0,\bullet)}} 
            \underline{\tau_2} = \underline{\sigma'} \right),
        \end{equation*}
        \item  
        \begin{equation*}
            \underline{p}_{\bm{w}} = 
            \left(\underline{\sigma} = 
            [\bm{u}|\bm{v}] \underset{b_1}{\xrightarrow{(\bullet,1)}} 
            \underline{\tau_1} \underset{b_2}{\xrightarrow{(1,\bullet)}} 
            \underline{\tau_2} \underset{b_3}{\xrightarrow{(\bullet,1)}}
            \underline{\tau_3}
            = \underline{\sigma'} \right),
        \end{equation*}
        \item 
        \begin{equation*}
            \underline{p}_{\bm{w}} = \left(\underline{\sigma} = 
            [\bm{u}|\bm{v}] \underset{0}{\xrightarrow{(\bullet,2)}} 
            \underline{\tau_1} \underset{c_1}{\xrightarrow{(1,\bullet)}} 
            \underline{\tau_2} \underset{0}{\xrightarrow{(\bullet,2)}}
            \underline{\tau_3} \underset{c_2}{\xrightarrow{(1,\bullet)}}
            \underline{\tau_4} = \underline{\sigma'} \right),
        \end{equation*}
        \item 
        \begin{equation*}
            \underline{p}_{\bm{w}} = \left(\underline{\sigma} = 
            [\bm{u}|\bm{v}] \underset{d_1}{\xrightarrow{(\bullet,1)}} 
            \underline{\tau_1} \underset{0}{\xrightarrow{(2,\bullet)}} 
            \underline{\tau_2} \underset{d_2}{\xrightarrow{(\bullet,1)}}
            \underline{\tau_3} \underset{0}{\xrightarrow{(2,\bullet)}}
            \underline{\tau_4} = \sigma' \right),
        \end{equation*}
    \end{enumerate}
    where $a_i\in \{0,1,2\}$ and $b_i,c_i,d_i\in \{0,1\}$. 
    Notice that each $\tau_j$ can be computed from $\sigma$ inductively. 

    Similarly, let $\underline{p}'_{\bm{w}} = (\sigma \to \tau_1' \to \cdots \to \tau_{\ell'}' = \sigma')$ 
    be the maximal path starting with an exchange of $\bm{v}$. 
    Then, it is one of the following forms: 
    \begin{enumerate}[\rm (i')]
        \item 
        \begin{equation*}
            \underline{p}'_{\bm{w}} = \left(\underline{\sigma} = 
            [\bm{u}|\bm{v}] \underset{a_1'}{\xrightarrow{(0,\bullet)}} 
            \underline{\tau_1'} \underset{a_2'}{\xrightarrow{(\bullet,0)}} 
            \underline{\tau_2'} = \underline{\sigma'} \right),
        \end{equation*}
        \item 
        \begin{equation*}
            \underline{p}'_{\bm{w}} = \left(\underline{\sigma} = 
            [\bm{u}|\bm{v}] \underset{b_1'}{\xrightarrow{(1,\bullet)}} 
            \underline{\tau_1'} \underset{b_2'}{\xrightarrow{(\bullet,1)}} 
            \underline{\tau_2'} \underset{b_3'}{\xrightarrow{(1,\bullet)}}
            \underline{\tau_3'}
            = \underline{\sigma'} \right),
        \end{equation*}
        \item 
        \begin{equation*}
            \underline{p}'_{\bm{w}} = \left(\underline{\sigma} = 
            [\bm{u}|\bm{v}] \underset{0}{\xrightarrow{(2,\bullet)}} 
            \underline{\tau_1'} \underset{c_1'}{\xrightarrow{(\bullet,1)}} 
            \underline{\tau_2'} \underset{0}{\xrightarrow{(2,\bullet)}}
            \underline{\tau_3'} \underset{c_2'}{\xrightarrow{(\bullet,1)}}
            \underline{\tau_4'} = \underline{\sigma'} \right),
        \end{equation*}
        \item 
        \begin{equation*}
            \underline{p}'_{\bm{w}} = \left(\underline{\sigma} = 
            [\bm{u}|\bm{v}] \underset{d_1'}{\xrightarrow{(1,\bullet)}} 
            \underline{\tau_1'} \underset{0}{\xrightarrow{(\bullet,2)}} 
            \underline{\tau_2'} \underset{d_2'}{\xrightarrow{(1,\bullet)}}
            \underline{\tau_3'} \underset{0}{\xrightarrow{(\bullet,2)}}
            \underline{\tau_4'} = \underline{\sigma'} \right),
        \end{equation*}
    \end{enumerate}
    where $a_i'\in \{0,1,2\}$ and $b_i',c_i',d_i'\in \{0,1\}$. Notice that each $\tau_j'$ can be computed from $\sigma$ inductively. 
\end{proposition}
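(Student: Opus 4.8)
The plan is to transport the rank-$2$ situation through the reduction at $\bm w$. Write $W$ for the indecomposable $2$-term presilting complex with $g(W)=\bm w$. As recalled above, $B_W$ is $g$-convex of rank $2$ (Proposition \ref{prop:red_conv}(2)), and $F_W$ induces a poset isomorphism $(\Sigma_{\bm w}(A,e),\leq)\xrightarrow{\sim}(\Sigma(B_W,e'),\leq)$ (see \eqref{eq:reduction_poset}) carrying the maximum $\sigma$ and the minimum $\sigma'$ to $C(B_W)=[\bm{e}_1'|\bm{e}_2']$ and $C(B_W[1])=-C(B_W)$; hence $F_W(\bm u),F_W(\bm v)$ are $\bm{e}_1',\bm{e}_2'$ in some order and $F_W(\bm w)=\bm 0$. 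The key point is that, by Proposition \ref{prop:red_conv}(1), an arrow $[\bm x|\bm y|\bm w]\xrightarrow{(\bullet,a,b)}[\bm x'|\bm y|\bm w]$ of the left-hand poset is carried to the arrow $[F_W(\bm x)|F_W(\bm y)]\xrightarrow{(\bullet,a)}[F_W(\bm x')|F_W(\bm y)]$ of the right-hand poset with the same leading coefficient $a$; the coefficient $b$, recording the multiplicity of $\bm w$, is the information lost under the reduction.

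Next I would describe, for each convex $g$-polygon appearing in Table \ref{tab:convex_gfan_rank2}, the two maximal paths of its Hasse quiver from $C(B_W)$ to $C(B_W[1])$ together with the exchange coefficients along them. Consecutive maximal cones of a rank-$2$ fan share exactly one ray, and since each ray lies in precisely two maximal cones (Proposition \ref{prop:basics_gfan}(3)), the ray removed at a given step must be the one kept at the previous step; adopting the convention of placing the newly introduced ray in the slot just vacated, a maximal path therefore has the alternating shape $(\bullet,\ast),(\ast,\bullet),(\bullet,\ast),\ldots$ displayed in the statement. By \cite[Theorem 5.1]{AHIKM2}, an off-diagonal orthant of a convex $g$-polygon contains $0$, $1$, or $2$ interior rays according as $(l,r)\in\{(0,0)\}$, $\{(1,1)\}$, or $\{(1,2),(2,1)\}$, so the maximal path running through it has length $2$, $3$, or $4$; computing the coefficient $a$ from $\bm x+\bm x'=a\cdot(\text{kept ray})$ in each case yields the patterns $(0,0)$; $(1,1,1)$; $(2,1,2,1)$; $(1,2,1,2)$, which are exactly the $a$-patterns of the forms (i), (ii), (iii), (iv), the two length-$4$ shapes corresponding to the types $(2,1)$ and $(1,2)$.

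Finally I would lift this back to rank $3$ and bound the hidden coefficients. Pulling the patterns back along the poset isomorphism, the path $\underline p_{\bm w}$, which starts by exchanging $\bm u$, traverses one off-diagonal orthant and is thus of one of the shapes (i)--(iv) with the displayed $\bm v$-coefficients, while $\underline p'_{\bm w}$ traverses the other and, by the symmetric computation interchanging the roles of $\bm u$ and $\bm v$, is of one of the shapes (i$'$)--(iv$'$). For the $\bm w$-coefficient $b$ of each arrow, $g$-convexity of $A$ together with Theorem \ref{theorem:g-convex=pairwise_g-convex}(2)(c) gives $a+b\le 2$, and $a,b\ge 0$ by Proposition \ref{prop:basics_gfan}(5); hence the two arrows with $a=0$ (those in form (i)) have $b\in\{0,1,2\}$, the arrows with $a=1$ have $b\in\{0,1\}$, and the arrows with $a=2$ have $b=0$, which is precisely the assertion ``$a_i\in\{0,1,2\}$, $b_i,c_i,d_i\in\{0,1\}$'' together with the forced zeros in the statement. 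The inductive determination of each $\tau_j$ follows because, once $\tau_{j-1}$ and the ray to be exchanged are fixed, Proposition \ref{prop:basics_gfan}(3) determines the new ray uniquely and the coefficients are then pinned down as above. The main obstacle is the bookkeeping in the second step: matching the alternating-slot convention of (i)--(iv) to the cyclic arrangement of rays around an off-diagonal orthant and checking that the two length-$4$ cases sort correctly into (iii) and (iv); this is a finite but somewhat delicate verification, purely about rank-$2$ convex $g$-polygons, carried out case by case over the $(l,r)$-types.
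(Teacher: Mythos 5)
Your proposal is correct and follows exactly the route the paper takes: the paper's proof is a one-line citation of the reduction theorem (Theorem \ref{theorem:jassoreduction}, i.e.\ \cite[Theorem 4.11]{AHIKM1}) and the rank-$2$ classification \cite[Theorem 5.1]{AHIKM2}, and your argument is precisely the fleshed-out version of that — reduce at $\bm{w}$ to a convex $g$-polygon, read off the two maximal paths and their exchange coefficients from the $(l,r)$-types, then recover the hidden $\bm{w}$-coefficients from the convexity bound $a+b\le 2$. The case-by-case verification of the patterns $(0,0)$, $(1,1,1)$, $(2,1,2,1)$, $(1,2,1,2)$ is carried out correctly.
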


\begin{proof}
    This follows from \cite[Theorem\;4.11]{AHIKM1} and \cite[Theorem\;5.1]{AHIKM2}. 
\end{proof}

\subsubsection{Maximal paths in $\Sigma_{-\mathbf{e}_2}(A,e)$} 
\label{subsect:e2_reduction}

In this section, we focus on the reduction at $e_2A[1]\in \twoips A$ with $g$-vector $g(e_2A[1])= - \bm{e}_2$.
Since we have an interval 
\begin{equation}
    (\twosilt_{e_2A[1]} A,\leq) = [A[1], \mu_{e_1A[1] \oplus e_3A[1]}^+(A[1])], 
\end{equation}
the full subposet $(\Sigma_{-\bm{e}_2}(A,e),\leq)$ is also an interval whose maximum and minimum are given by 
\[
\emin = \left[\begin{smallmatrix}
    1 \\ -r_{12} \\ 0 
\end{smallmatrix}\middle| \begin{smallmatrix}
    0 \\ -1 \\ 0
\end{smallmatrix}\middle| \begin{smallmatrix}
    0 \\ -r_{32} \\ 1
\end{smallmatrix}\right] 
\quad \text{and} \quad
\sigma_- = [-\bm{e}_1|-\bm{e}_2|-\bm{e}_3]     
\]
respectively (cf.\,the proof of Proposition \ref{min_max}).
Thus, it has two maximal paths, say $\alpha$ and $\beta$, starting at exchanges of $\left[\begin{smallmatrix}
            0 \\  -r_{32} \\ 1
\end{smallmatrix}\right]$
and $\left[\begin{smallmatrix}
            1 \\  -r_{12} \\ 0
        \end{smallmatrix}\right]$ respectively:
\begin{equation}\label{path pq}
\begin{split}
        \underline{\alpha}_{(-\bm{e}_2)} &=  
        \left( 
        \underline{\emin} = \left[\begin{smallmatrix}
            1 \\  -r_{12} \\ 0
        \end{smallmatrix}\middle| 
        \begin{smallmatrix}
            0 \\  -r_{32} \\ 1
        \end{smallmatrix}\right] 
        \underset{a_2}{\xrightarrow{(a_1,\bullet)}} 
        \underline{\tau_1} \to \cdots \to 
        \underline{\tau_{\ell}} = 
        \underline{\sigma_-} 
        \right), \\
        \underline{\beta}_{(-\bm{e}_2)} &= 
        \left(\underline{\emin} = 
        \left[\begin{smallmatrix}
            1 \\  -r_{12} \\ 0
        \end{smallmatrix}\middle| 
        \begin{smallmatrix}
            0 \\  -r_{32} \\ 1
        \end{smallmatrix}\right] 
        \underset{a_2'}{\xrightarrow{(\bullet,a_1')}} 
        \underline{\tau_1'} \to \cdots \to \underline{\tau'_{\ell'}} = \underline{\sigma_-} 
        \right).
\end{split}
\end{equation}
Then, we have a detailed version of the assertions made in Proposition\;\ref{rank2}.
\begin{proposition}\label{path p1}
The path $\alpha$ in \eqref{path pq} is determined by $(r_{12},r_{32})$ and given by one of 
{\upshape {\bf (i)}}, {\upshape {\bf (ii)}}, {\upshape {\bf (iii)}}, {\upshape {\bf (iv)}}, and {\upshape {\bf (v)}} as follows. 
\begin{enumerate}
    \item[\bf (i)] 
    \begin{equation*}
        \underline{\alpha}_{(-\bm{e}_2)} = \left( \underline{\emin} = \left[\begin{smallmatrix}
            1 \\  -r_{12} \\ 0
        \end{smallmatrix}\middle| 
        \begin{smallmatrix}
            0 \\  -r_{32} \\ 1
        \end{smallmatrix}\right] 
        \underset{r_{32}}{\xrightarrow{(0,\bullet)}} 
        \underline{\tau_1} 
        \underset{r_{12}}{\xrightarrow{(\bullet,0)}} 
        \underline{\tau_2} = \underline{\sigma_-} 
        \right).
    \end{equation*}
    \item[\bf (ii)] 
    \begin{equation*}
        \underline{\alpha}_{(-\bm{e}_2)} = \left( \underline{\emin} = \left[\begin{smallmatrix}
            1 \\  -r_{12} \\ 0
        \end{smallmatrix}\middle| 
        \begin{smallmatrix}
            0 \\  -r_{32} \\ 1
        \end{smallmatrix}\right] 
        \underset{r_{32}}{\xrightarrow{(1,\bullet)}} 
        \underline{\tau_1} 
        \underset{0}{\xrightarrow{(\bullet,1)}} 
        \underline{\tau_2} 
        \underset{r_{12}}{\xrightarrow{(1,\bullet)}}
        \underline{\tau_3} = \underline{\sigma_-}
        \right).
    \end{equation*}
    \item[\bf (iii)] 
    \begin{equation*}
        \underline{\alpha}_{(-\bm{e}_2)} = \left(\underline{\emin} = \left[\begin{smallmatrix}
            1 \\  -r_{12} \\ 0
        \end{smallmatrix}\middle| 
        \begin{smallmatrix}
            0 \\  -r_{32} \\ 1
        \end{smallmatrix}\right] 
        \underset{r_{32}-1}{\xrightarrow{(1,\bullet)}} 
        \underline{\tau_1} 
        \underset{1}{\xrightarrow{(\bullet,1)}} 
        \underline{\tau_2} 
        \underset{r_{12}-1}{\xrightarrow{(1,\bullet)}}
        \underline{\tau_3} = \underline{\sigma_-}
        \right).
    \end{equation*}
    \item[\bf (iv)] 
    \begin{equation*}
        \underline{\alpha}_{(-\bm{e}_2)} = \left(\underline{\emin} = \left[\begin{smallmatrix}
            1 \\  -r_{12} \\ 0
        \end{smallmatrix}\middle| 
        \begin{smallmatrix}
            0 \\  -r_{32} \\ 1
        \end{smallmatrix}\right] 
        \underset{0}{\xrightarrow{(2,\bullet)}} 
        \underline{\tau_1} 
        \underset{\frac{r_{32}}{2}}{\xrightarrow{(\bullet,1)}} 
        \underline{\tau_2} 
        \underset{0}{\xrightarrow{(2,\bullet)}}
        \underline{\tau_3} 
        \underset{r_{12}-\frac{r_{32}}{2}}{\xrightarrow{(\bullet,1)}} 
        \underline{\tau_4} = \underline{\sigma_-}
        \right).
    \end{equation*}
    \item[\bf (v)] 
    \begin{equation*}
        \underline{\alpha}_{(-\bm{e}_2)} = \left(\underline{\emin} = \left[\begin{smallmatrix}
            1 \\  -r_{12} \\ 0
        \end{smallmatrix}\middle| 
        \begin{smallmatrix}
            0 \\  -r_{32} \\ 1
        \end{smallmatrix}\right] 
        \underset{r_{32}-\frac{r_{12}}{2}}{\xrightarrow{(1,\bullet)}} 
        \underline{\tau_1} 
        \underset{0}{\xrightarrow{(\bullet,2)}} 
        \underline{\tau_2} 
        \underset{\frac{r_{12}}{2}}{\xrightarrow{(1,\bullet)}}
        \underline{\tau_3} 
        \underset{0}{\xrightarrow{(\bullet,2)}} 
        \underline{\tau_4} = \underline{\sigma_-}
        \right)
    \end{equation*}
\end{enumerate}
    Here, each $\tau_j$ can be computed from $\emin$ inductively. Moreover, the following statements hold. 
    \begin{enumerate}[\rm (1)]
        \item The case {\upshape {\bf (i)}} appears if and only if $h_{13}=0$. If this is the case, $h_{12}=0$ if and only if $r_{12}=0$. 
        \item If {\upshape {\bf (ii)}} appears, then $r_{12},r_{32}\in \{0,1\}$. 
        Moreover, $h_{12}=0$ if and only if $r_{12}=0$.
        \item If {\upshape {\bf (iii)}} appears, then $r_{12},r_{32}\in \{1,2\}$. Moreover, $h_{12}=0$ if and only if $r_{12}=1$. 
        \item If {\upshape {\bf (iv)}} appears, then $(r_{12},r_{32})\in \{(0,0),(1,0),(1,2),(2,2)\}$. 
        Moreover, $h_{12}=0$ if and only if $(r_{12},r_{32})\in \{(0,0),(1,2)\}$. 
        \item If {\upshape {\bf (v)}} appears, then $(r_{12},r_{32})\in \{(0,0),(0,1),(2,1),(2,2)\}$. In this case, we have $h_{12}=0$. 
    \end{enumerate}
\end{proposition}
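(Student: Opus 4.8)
The plan is to reduce to rank $2$ and then read the statement off the rank $2$ classification. By the discussion preceding the statement, the reduction $B:=B_{e_2A[1]}$ at $e_2A[1]$ (Section~\ref{subsect:reduction}) is a finite dimensional algebra of rank $2$, and it is $g$-convex by Proposition~\ref{prop:red_conv}(2); hence by \cite[Theorem~6.3]{AHIKM1} its $g$-fan is one of the seven convex $g$-polygons of Table~\ref{tab:convex_gfan_rank2}, so via the poset isomorphism \eqref{eq:reduction_poset} the interval $(\Sigma_{-\bm{e}_2}(A,e),\leq)$ has exactly two maximal paths in its Hasse quiver, one of which is $\alpha$. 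Its endpoints $\emin$ and $\sigma_-$ are given explicitly by Proposition~\ref{min_max}, so Proposition~\ref{rank2}, applied with $\bm{w}=-\bm{e}_2$ and $\bm{u},\bm{v}$ the other two rays $\begin{bsmallmatrix}1\\-r_{12}\\0\end{bsmallmatrix}$, $\begin{bsmallmatrix}0\\-r_{32}\\1\end{bsmallmatrix}$ of $\emin$, shows that $\underline{\alpha}_{(-\bm{e}_2)}$ — which starts with an exchange at $\bm{v}$ — has one of the forms (i$'$)--(iv$'$) of Proposition~\ref{rank2}: form (i$'$) is {\bf (i)}, form (ii$'$) gives {\bf (ii)} or {\bf (iii)} according as its one intermediate $\bm{w}$-coefficient is $0$ or $1$, and forms (iii$'$), (iv$'$) are {\bf (iv)}, {\bf (v)}. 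This already shows that (i)--(v) is the complete list of possible shapes for $\alpha$.

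It remains to decide, for each $(r_{12},r_{32})$, which shape occurs and to compute all weights. The shape of the two maximal paths of $\Sigma(B,e')$ is controlled by the datum $d(B,e')$, which, being that of a $g$-convex algebra of rank $2$, is one of the seven values classified in \cite[Theorem~5.1]{AHIKM2}. The two rays of $\emin$ other than $\bm{w}$ lie respectively in the coordinate planes $\{x_3=0\}$ and $\{x_1=0\}$, and by Proposition~\ref{coord_plane}, applied to the $g$-convex rank $2$ idempotent subalgebras $(e_1+e_2)A(e_1+e_2)$ and $(e_2+e_3)A(e_2+e_3)$, the rays of $\Sigma_{+-+}(A,e)$ in these planes are determined by $(l_{12},r_{12})$ and $(l_{32},r_{32})$; together with the positions of $\emin$ and $\sigma_-$ these constrain the possible exchanges. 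Combining this with the exchange relation carried by each arrow of $\underline{\alpha}_{(-\bm{e}_2)}$ — which, as noted after Proposition~\ref{rank2}, reconstructs each $\tau_j$ from its predecessor — and with the constraint that $B$ is $g$-convex, a finite case analysis yields the shape and all weights as stated, the $\bm{w}$-coefficients being exactly those bookkeeping how the second coordinates $-r_{12}$ and $-r_{32}$ are absorbed along the path.

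Finally, assertions (1)--(5) follow from this analysis together with \eqref{lij_prime}. The datum $h_{13}$ leaves the subalgebras $(e_1+e_2)A(e_1+e_2)$ and $(e_2+e_3)A(e_2+e_3)$ unchanged and enters only through whether some cone of $\alpha$ has a ray with both its first and third coordinates non-zero; one checks that case {\bf (i)} is precisely the shape with no such ray, and that such a ray — the $g$-vector of an indecomposable $2$-term presilting complex involving both $e_1A$ and $e_3A$ — exists if and only if $e_1Ae_3\not\subseteq\rad^2A$, i.e.\ $h_{13}=1$; this is assertion (1). The remaining sub-claims are then read off: $r_{12}=0$ forces $e_1Ae_2=0$ and hence $h_{12}=0$; the constraint $(l_{12},r_{12})\in\{(0,0),(1,1),(1,2),(2,1)\}$ for the $g$-convex rank $2$ subalgebra $(e_1+e_2)A(e_1+e_2)$ together with \eqref{lij_prime} pins down $l_{12}'$; and in case {\bf (v)}, where $r_{12}\in\{0,2\}$, one has $e_1Ae_2=0$ when $r_{12}=0$ and $l_{12}=1$ when $r_{12}=2$, so the shape forces $h_{12}=0$ in either subcase. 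The pairs $(r_{12},r_{32})$ listed in (2)--(5) are precisely those for which all of this is consistent, i.e.\ for which $B$ is $g$-convex of rank $2$.

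\textbf{Main obstacle.} The delicate point is the ``interior ray'' computation underlying assertion (1): showing that $\alpha$ leaves the union of the two coordinate planes exactly when $e_1Ae_3\not\subseteq\rad^2A$ requires an honest Hom-space analysis in $\Kb(\proj A)$ of the mutations emanating from $\emin=C(U_1\oplus e_2A[1]\oplus U_3)$, where $U_1=[e_2A^{\oplus r_{12}}\to e_1A]$ and $U_3=[e_2A^{\oplus r_{32}}\to e_3A]$ are the complexes exhibited in the proof of Proposition~\ref{min_max}; equivalently, it amounts to determining the quiver with relations of $B$ in terms of $(r_{12},r_{32})$ and the bimodules $e_iAe_j$. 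Once this is settled, the five forms, their weights, and the assertions (1)--(5) reduce to routine — if lengthy — case checking.
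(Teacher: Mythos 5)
Your skeleton agrees with the paper's: Proposition \ref{rank2} pins the shape of $\alpha$ down to the five displayed forms, and the weights are then forced by solving, case by case, the condition that the second coordinates of the rays vanish at the terminal cone $\sigma_-$ (e.g.\ in forms (ii)/(iii) one gets $r_{32}-b_1-b_2=r_{12}-b_2-b_3=0$ with $b_i\in\{0,1\}$, leaving exactly the two solutions $(r_{32},0,r_{12})$ and $(r_{32}-1,1,r_{12}-1)$). That part, although you only assert it, is routine once set up and is exactly what the paper does.

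The genuine gap is in assertions (1)--(5), i.e.\ the link between the path and $(h_{12},h_{13})$. The paper's proof rests on one concrete fact that you never state or prove: if $e_2A[1]^{\oplus b}\oplus e_3A[1]^{\oplus a}\to e_1A[1]$ is a minimal right $\add(e_2A[1]\oplus e_3A[1])$-approximation, then $a=0$ iff $h_{13}=0$ and $b=0$ iff $h_{12}=0$; the exchange coefficients along $\alpha$ at the arrows where the $e_1$-ray is exchanged are literally these multiplicities, and this is what converts the shape of $\alpha$ into statements about $h_{13}$ and $h_{12}$. You correctly flag this as the ``main obstacle'' but leave it unresolved, so (1)--(5) are not established by the proposal. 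Worse, the substitute argument you offer for (5) is incorrect: from $r_{12}=2$ you infer $l_{12}=1$ and then conclude $h_{12}=0$, but $(l_{12},r_{12},h_{12})=(1,2,1)$ is a perfectly admissible datum (it occurs, for instance, as $d_{12}$ in $d(2)$, $d(10)$, $d(11)$ of Table \ref{tab:15list}), so $(l_{12},r_{12})=(1,2)$ does not force $h_{12}=0$. The correct reason, via the approximation lemma, is that in form (v) the last exchange of the $e_1$-ray carries $-\bm{e}_2$-coefficient $0$, i.e.\ $b=0$. A similar issue affects your ``interior ray'' criterion for (1): that a ray with nonzero first and third coordinates occurs on $\alpha$ precisely when $h_{13}=1$ is exactly the content of the approximation lemma, not a substitute for it, and your one-line justification conflates the existence of such a presilting complex somewhere in $\Sigma(A,e)$ with its appearance on the specific path $\alpha$.
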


Dually, we have the following statement. 

\begin{proposition}\label{path p3}
The path $\beta$ in \eqref{path pq} is determined by $(r_{12},r_{32})$ and given by one of
{\upshape {\bf (i')}}, {\upshape {\bf (ii')}}, {\upshape {\bf (iii')}}, {\upshape {\bf (iv')}}, and {\upshape {\bf (v')}} as follows.
\begin{enumerate}
    \item[\bf (i')]  
    \begin{equation*}
        \underline{\beta}_{(-\bm{e}_2)} = \left(\underline{\emin} = \left[\begin{smallmatrix}
            1 \\  -r_{12} \\ 0
        \end{smallmatrix}\middle| 
        \begin{smallmatrix}
            0 \\  -r_{32} \\ 1
        \end{smallmatrix}\right] 
        \underset{r_{12}}{\xrightarrow{(\bullet,0)}} 
        \underline{\tau_1'} 
        \underset{r_{32}}{\xrightarrow{(0,\bullet)}} 
        \underline{\tau_2'} = \underline{\sigma_-}
        \right).
    \end{equation*}
    \item[\bf (ii')] 
    \begin{equation*}
        \underline{\beta}_{(-\bm{e}_2)} = \left(\underline{\emin} = \left[\begin{smallmatrix}
            1 \\  -r_{12} \\ 0
        \end{smallmatrix}\middle| 
        \begin{smallmatrix}
            0 \\  -r_{32} \\ 1
        \end{smallmatrix}\right] 
        \underset{r_{12}}{\xrightarrow{(\bullet,1)}} 
        \underline{\tau_1'} 
        \underset{0}{\xrightarrow{(1,\bullet)}} 
        \underline{\tau_2'} 
        \underset{r_{32}}{\xrightarrow{(\bullet,1)}}
        \underline{\tau_3'} = \underline{\sigma_-}
        \right).
    \end{equation*}
    \item[\bf (iii')] 
    \begin{equation*}
        \underline{\beta}_{(-\bm{e}_2)} = \left(\underline{\emin} = \left[\begin{smallmatrix}
            1 \\  -r_{12} \\ 0
        \end{smallmatrix}\middle| 
        \begin{smallmatrix}
            0 \\  -r_{32} \\ 1
        \end{smallmatrix}\right] 
        \underset{r_{12}-1}{\xrightarrow{(\bullet,1)}} 
        \underline{\tau_1'} 
        \underset{1}{\xrightarrow{(1,\bullet)}} 
        \underline{\tau_2'} 
        \underset{r_{32}-1}{\xrightarrow{(\bullet,1)}}
        \underline{\tau_3'} = \underline{\sigma_-}
        \right).
    \end{equation*}
    \item[\bf (iv')] 
    \begin{equation*}
        \underline{\beta}_{(-\bm{e}_2)} = \left(\underline{\emin} = \left[\begin{smallmatrix}
            1 \\  -r_{12} \\ 0
        \end{smallmatrix}\middle| 
        \begin{smallmatrix}
            0 \\  -r_{32} \\ 1
        \end{smallmatrix}\right] 
        \underset{0}{\xrightarrow{(\bullet,2)}} 
        \underline{\tau_1'}
        \underset{\frac{r_{12}}{2}}{\xrightarrow{(1,\bullet)}} 
        \underline{\tau_2'} 
        \underset{0}{\xrightarrow{(2,\bullet)}}
        \underline{\tau_3'} 
        \underset{r_{32}-\frac{r_{12}}{2}}{\xrightarrow{(1,\bullet)}} 
        \underline{\tau_4'} = \underline{\sigma_-}
        \right).
    \end{equation*}
    \item[\bf (v')] 
    \begin{equation*}
        \underline{\beta}_{(-\bm{e}_2)} = \left(\underline{\emin} = \left[\begin{smallmatrix}
            1 \\  -r_{12} \\ 0
        \end{smallmatrix}\middle| 
        \begin{smallmatrix}
            0 \\  -r_{32} \\ 1
        \end{smallmatrix}\right] 
        \underset{r_{12}-\frac{r_{32}}{2}}{\xrightarrow{(\bullet,1)}} 
        \underline{\tau_1'} 
        \underset{0}{\xrightarrow{(\bullet,2)}} 
        \underline{\tau_2'} 
        \underset{\frac{r_{32}}{2}}{\xrightarrow{(1,\bullet)}}
        \underline{\tau_3'} 
        \underset{0}{\xrightarrow{(2,\bullet)}} 
        \underline{\tau_4'} = \underline{\sigma_-}
        \right).
    \end{equation*}
\end{enumerate}
    Here, each $\tau_j'$ can be computed from $\emin$ inductively. Moreover, the following statements hold. 
    \begin{enumerate}[\rm (1')]
        \item The case {\upshape {\bf (i')}} appears if and only if $h_{31}=0$. If this is the case, $h_{32}=0$ if and only if $r_{32}=0$. 
        \item If {\upshape {\bf (ii')}} appears, then $r_{12},r_{32}\in \{0,1\}$. 
        Moreover, $h_{32}=0$ if and only if $r_{32}=0$.
        \item If {\upshape {\bf (iii')}} appears, then $r_{12},r_{32}\in \{1,2\}$. Moreover, $h_{32}=0$ if and only if $r_{32}=1$. 
        \item If {\upshape {\bf (iv')}} appears, then $(r_{12},r_{32})\in \{(0,0),(0,1),(2,1),(2,2)\}$. 
        Moreover, $h_{32}=0$ if and only if $(r_{12},r_{32})\in \{(0,0),(2,1)\}$. 
        \item If {\upshape {\bf (v')}} appears, then $(r_{12},r_{32})\in \{(0,0),(1,0),(1,2),(2,2)\}$. 
        In this case, we have $h_{32}=0$. 
    \end{enumerate}
\end{proposition}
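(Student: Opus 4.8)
The plan is to deduce this statement from Proposition \ref{path p1} by a symmetry argument, rather than redoing the mutation computations from scratch. Recall that the group element $g = ((13),1) \in G = \mathfrak{S}_3 \times \{\pm 1\}$ acts on $\mathbb{R}^3$ by swapping $\bm{e}_1$ and $\bm{e}_3$ and fixing $\bm{e}_2$, and by \eqref{g iso} it induces an isomorphism of sign-coherent fans $\Sigma(A,e) \xrightarrow{\sim} \Sigma(A^g, e^g)$ where $A^g = A$ and $e^g = (e_3, e_2, e_1)$. Since this $g$ preserves the orthant $\mathbb{R}^3_{+-+}$ (it just swaps the two ``positive'' axes), it restricts to an isomorphism $\Sigma_{-\bm{e}_2}(A,e) \simeq \Sigma_{-\bm{e}_2}(A, e^g)$, and under it the path $\beta$ starting with an exchange of $[\begin{smallmatrix}1\\-r_{12}\\0\end{smallmatrix}]$ is carried to a path starting with an exchange of $[\begin{smallmatrix}0\\-r_{12}\\1\end{smallmatrix}]$, i.e. to the path $\alpha$ for the relabelled algebra $(A, e^g)$. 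The key bookkeeping point is the effect of $g$ on the data: $d^{e^g}_{12} = d^e_{32}$ and $d^{e^g}_{32} = d^e_{12}$, in particular $(r^{e^g}_{12}, r^{e^g}_{32}) = (r^e_{32}, r^e_{12})$ and $h^{e^g}_{13} = h^e_{31}$, $h^{e^g}_{12} = h^e_{32}$.

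First I would spell out precisely how $g$ transforms the notation \eqref{eq:path_at_w}: since $g$ swaps the first and third coordinates of every $g$-vector, an arrow $[\bm{u}|\bm{v}]\xrightarrow{(\bullet,a)}_b[\bm{u}'|\bm{v}]$ around $-\bm{e}_2$ in $\Sigma(A,e)$ becomes, after applying $g$, the arrow $[\bm{v}'|\bm{u}']\xrightarrow{(a,\bullet)}_b[\bm{v}'|\bm{u}'']$ in $\Sigma(A,e^g)$ (with coordinates permuted and the order of the two displayed rays interchanged), which in the $\underline{(\cdot)}$-notation is exactly the ``permuted'' form $[\bm{u}|\bm{w}]\xrightarrow{(\bullet,b)}_a[\bm{u}'|\bm{w}]$ allowed by the Notation block. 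Then I would apply Proposition \ref{path p1} to $(A, e^g)$: its path $\alpha^{e^g}$ is one of the five forms (i)--(v) in the variables $(r^{e^g}_{12}, r^{e^g}_{32}) = (r^e_{32}, r^e_{12})$, governed by $h^{e^g}_{13} = h^e_{31}$ and $h^{e^g}_{12} = h^e_{32}$. Pulling this back along $g^{-1}$ and translating the arrow decorations as above converts form (i) into form (i$'$), (ii) into (ii$'$), and so on, and substituting $(r^{e^g}_{12},r^{e^g}_{32}) \mapsto (r^e_{32}, r^e_{12})$ into the defining equalities turns the step counts ($r_{32}$, $r_{12}$, $r_{32}-1$, $\tfrac{r_{32}}{2}$, etc.) into precisely the ones displayed in (i$'$)--(v$'$). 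Finally the numbered conditions (1)--(5) of Proposition \ref{path p1}, after the same substitution, become (1$'$)--(5$'$): e.g. ``(i) appears iff $h^{e^g}_{13}=0$'' reads ``(i$'$) appears iff $h^e_{31}=0$'', and ``$h^{e^g}_{12}=0$ iff $r^{e^g}_{12}=0$'' reads ``$h^e_{32}=0$ iff $r^e_{32}=0$'', matching (1$'$); the case lists for $(r^{e^g}_{12},r^{e^g}_{32})$ in (4), (5) become the lists for $(r^e_{12},r^e_{32})$ in (4$'$), (5$'$) after swapping coordinates.

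I expect the main obstacle to be purely notational: one must check carefully that the arrow decoration $(\bullet, a)$ with subscript $b$ behaves correctly under the simultaneous operations of permuting the coordinates of $\mathbb{R}^3$ and reversing the displayed ordering of the two free rays, so that the ``dual'' patterns (i$'$)--(v$'$) really are the $g$-images of (i)--(v) and not some other rearrangement. This is exactly the content of the Notation block (the various permuted displays of a single arrow), so once the translation dictionary $g \colon [\bm u|\bm v]\xrightarrow{(\bullet,a)}_b[\bm u'|\bm v] \ \leftrightarrow\ [\bm u|\bm w]\xrightarrow{(\bullet,b)}_a[\bm u'|\bm w]$ is stated explicitly, the rest is a mechanical substitution $(r_{12}, r_{32}, h_{13}, h_{12}) \leftrightarrow (r_{32}, r_{12}, h_{31}, h_{32})$ with no further silting-theoretic input needed. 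I would also remark that the hypothesis that $A$ is $g$-convex and of rank $3$ is preserved by $g$ (indeed $A^g = A$), so Proposition \ref{path p1} applies verbatim to $(A, e^g)$.
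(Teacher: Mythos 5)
Your approach is correct and is essentially the paper's own: the paper proves Proposition \ref{path p1} in full and then disposes of Proposition \ref{path p3} with ``can be shown similarly'', which is exactly the $e_1\leftrightarrow e_3$ symmetry that you make formal via $g=((13),1)\in G$ (note $A^g=A$, $e^g=(e_3,e_2,e_1)$, $g(-\bm{e}_2)=-\bm{e}_2$, and $(r_{12}^{e^g},r_{32}^{e^g},h_{13}^{e^g},h_{12}^{e^g})=(r_{32}^e,r_{12}^e,h_{31}^e,h_{32}^e)$, all as you say). One slip to repair before this is written up: your closing ``translation dictionary'' $\underset{b}{\xrightarrow{(\bullet,a)}}\leftrightarrow\underset{a}{\xrightarrow{(\bullet,b)}}$ is not the action of $g$ --- it is the paper's Notation-block rule for re-displaying one and the same arrow relative to a different reduction ray, and it interchanges the coefficient of the partner ray with the coefficient of $-\bm{e}_2$. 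Since $g$ fixes $-\bm{e}_2$, the correct rule is the one you state earlier in prose: $\underset{b}{\xrightarrow{(\bullet,a)}}$ becomes $\underset{b}{\xrightarrow{(a,\bullet)}}$, i.e.\ only the position of $\bullet$ flips and both numerical labels are preserved; using the other dictionary would scramble the step counts. Finally, when you check that the translated patterns ``precisely'' match {\bf (i')}--{\bf (v')}, you will find they do not match the printed bullet positions in the third arrow of {\bf (iv')} and the second and third arrows of {\bf (v')}; those are typographical errors in the statement (consecutive arrows of a maximal path must alternate which of the two displayed rays is exchanged), and your symmetry argument in fact produces the corrected versions.
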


\begin{proof}[Proofs of Propositions \ref{path p1} and \ref{path p3}] 
We only prove Proposition \ref{path p1} since Proposition \ref{path p3} can be shown similarly. 

From now on, we compute each case of {\upshape {(i)}}-{\upshape {(v)}} of Proposition \ref{rank2} in this setting. 
Let $f\colon e_2A[1]^{\oplus b}\oplus e_3A[1]^{\oplus a}\to e_1A[1]$ be a a minimal right $(\add (e_2A[1]\oplus e_3A[1]))$-approximation of $e_1A[1]$. Then, we clearly have the following statements.
\begin{itemize}
\item $a=0$ if and only if $h_{13}=0$.
\item $b=0$ if and only if $h_{12}=0$.
\end{itemize}

Firstly, we consider the case {\upshape {(i)}} in Proposition \ref{rank2}. 
By the above discussion, 
the path $\alpha$ in \eqref{path pq} has the following form if and only if $h_{13}=0$. 
\[
    \underline{\emin} = 
    \left[\begin{smallmatrix}
        1 \\  -r_{12} \\ 0
    \end{smallmatrix}\middle| 
    \begin{smallmatrix}
        0 \\  -r_{32} \\ 1
    \end{smallmatrix}\right]
    \underset{b_1}{\xrightarrow{(0,\bullet)}}
    \left[\begin{smallmatrix}
        1 \\  -r_{12} \\ 0
    \end{smallmatrix}\middle| 
    \begin{smallmatrix}
        0 \\  r_{32}-b_1 \\ -1
    \end{smallmatrix}\right]
    \underset{b_2}{\xrightarrow{(\bullet,0)}}
    \left[\begin{smallmatrix}
        -1 \\ r_{12} -b_2 \\ 0
    \end{smallmatrix}\middle| 
    \begin{smallmatrix}
        0 \\  r_{32}-b_1 \\ -1
    \end{smallmatrix}\right]
    =
    \left[\begin{smallmatrix}
        -1 \\  0 \\ 0
    \end{smallmatrix}\middle| 
    \begin{smallmatrix}
        0 \\  0 \\ -1
    \end{smallmatrix}\right].        
\]
In this case, $r_{12}=b_2$ holds, and $r_{12}=0$ if and only if $h_{12}=0$. 
This shows the desired assertion for {\upshape {\bf (i)}} and (1) in the statement.

Secondly, we check the cases {\upshape {(ii)}} and {\upshape {(iii)}}. 
We assume that the path $\alpha$ in \eqref{path pq} is given by
\begin{eqnarray}
    \underline{\emin} = 
    \left[\begin{smallmatrix}
        1 \\  -r_{12} \\ 0
    \end{smallmatrix}\middle| 
    \begin{smallmatrix}
        0 \\  -r_{32} \\ 1
    \end{smallmatrix}\right]  
    &\underset{b_1}{\xrightarrow{(1,\bullet)}} &
    \underline{\tau_1} = 
    \left[\begin{smallmatrix}
        1 \\  -r_{12} \\ 0
    \end{smallmatrix}\middle| 
    \begin{smallmatrix}
        1 \\  r_{32}-r_{12}-b_1 \\ -1
    \end{smallmatrix}\right] \\ 
    &\underset{b_2}{\xrightarrow{(\bullet,1)}} &
    \underline{\tau_2} = 
    \left[\begin{smallmatrix}
        0 \\  r_{32}-b_1-b_2 \\ -1
    \end{smallmatrix}\middle| 
    \begin{smallmatrix}
        1 \\  r_{32}-r_{12}-b_1 \\ -1
    \end{smallmatrix}\right] \\
    &\underset{b_3}{\xrightarrow{(1,\bullet)}} &
    \underline{\tau_3} = 
    \left[\begin{smallmatrix}
        0 \\  r_{32}-b_1-b_2 \\ -1
    \end{smallmatrix}\middle| 
    \begin{smallmatrix}
        -1 \\  r_{12}-b_2-b_3 \\ 0
    \end{smallmatrix}\right] = 
    \left[\begin{smallmatrix}
        0 \\ 0 \\ -1
    \end{smallmatrix}\middle| 
    \begin{smallmatrix}
        -1 \\ 0 \\ 0
    \end{smallmatrix}\right]   
\end{eqnarray}
where $b_1,b_2,b_3\in \{0,1\}$. Then, $r_{32}-b_1-b_2= r_{12}-b_2-b_3 =0$ hold. Therefore, $(b_1,b_2,b_3)$ equals to $(r_{32},0,r_{12})$ or $(r_{32}-1,1,r_{12}-1)$. 
The former (resp., latter) one corresponds to the case {\upshape {\bf (ii)}} (resp., {\upshape {\bf (iii)}}), and
(2) (resp. (3)) in the statement follows from the fact that $h_{12}=0$ if and only if $b_3=0$.  

Our computation for the case {\upshape {(iv)}} is similar to the previous one. 
We assume that the path $\alpha$ in \eqref{path pq} is given by
\begin{eqnarray}
    \underline{\emin} = 
    \left[\begin{smallmatrix}
        1 \\  -r_{12} \\ 0
    \end{smallmatrix}\middle| 
    \begin{smallmatrix}
        0 \\  -r_{32} \\ 1
    \end{smallmatrix}\right]  
    &\underset{0}{\xrightarrow{(2,\bullet)}} &
    \underline{\tau_1} = 
    \left[\begin{smallmatrix}
        1 \\  -r_{12} \\ 0
    \end{smallmatrix}\middle| 
    \begin{smallmatrix}
        2 \\  r_{32}-2r_{12} \\ -1
    \end{smallmatrix}\right] \\ 
    &\underset{b_1}{\xrightarrow{(\bullet,1)}} &
    \underline{\tau_2} = 
    \left[\begin{smallmatrix}
        1 \\  r_{32}-r_{12}-b_1 \\ -1
    \end{smallmatrix}\middle| 
    \begin{smallmatrix}
        2 \\  r_{32}-2r_{12} \\ -1
    \end{smallmatrix}\right] \\
    &\underset{0}{\xrightarrow{(2,\bullet)}} &
    \underline{\tau_3} = 
    \left[\begin{smallmatrix}
        1 \\  r_{32}-r_{12}-b_1 \\ -1
    \end{smallmatrix}\middle| 
    \begin{smallmatrix}
        0 \\  r_{32}-2b_1 \\ -1
    \end{smallmatrix}\right]\\
    &\underset{b_2}{\xrightarrow{(\bullet,1)}} &
    \underline{\tau_4} = 
    \left[\begin{smallmatrix}
        -1 \\  r_{12}-b_1-b_2 \\ 0
    \end{smallmatrix}\middle| 
    \begin{smallmatrix}
        0 \\  r_{32}-2b_1 \\ -1
    \end{smallmatrix}\right]
    = 
    \left[\begin{smallmatrix}
        -1 \\ 0 \\ 0
    \end{smallmatrix}\middle| 
    \begin{smallmatrix}
        0 \\ 0 \\ -1
    \end{smallmatrix}\right]   
\end{eqnarray}
where $b_1,b_2\in \{0,1\}$. 
In this case, $b_1=\frac{r_{32}}{2}$ and $b_2=r_{12}-b_1=r_{12}-\frac{r_{32}}{2}$ hold.
Since $h_{12}=0$ if and only if $b_2=0$, we have the assertions {\upshape {(iv)}} and (4).

Finally, we check the case {\upshape {(v)}}. 
We assume that the path $\alpha$ in \eqref{path pq} is given by
\begin{eqnarray}
    \underline{\emin} = 
    \left[\begin{smallmatrix}
        1 \\  -r_{12} \\ 0
    \end{smallmatrix}\middle| 
    \begin{smallmatrix}
        0 \\  -r_{32} \\ 1
    \end{smallmatrix}\right]  
    &\underset{b_1}{\xrightarrow{(1,\bullet)}} &
    \underline{\tau_1} = 
    \left[\begin{smallmatrix}
        1 \\  -r_{12} \\ 0
    \end{smallmatrix}\middle| 
    \begin{smallmatrix}
        1 \\  r_{32}-r_{12}-b_1 \\ -1
    \end{smallmatrix}\right] \\ 
    &\underset{0}{\xrightarrow{(\bullet,2)}} &
    \underline{\tau_2} = 
    \left[\begin{smallmatrix}
        1 \\  2r_{32}-r_{12}-2b_1 \\ -2
    \end{smallmatrix}\middle| 
    \begin{smallmatrix}
        1 \\  r_{32}-r_{12}-b_1 \\ -1
    \end{smallmatrix}\right] \\
    &\underset{b_2}{\xrightarrow{(1,\bullet)}} &
    \underline{\tau_3} = 
    \left[\begin{smallmatrix}
        1 \\  2r_{32}-r_{12}-2b_1 \\ -2
    \end{smallmatrix}\middle| 
    \begin{smallmatrix}
        0 \\  r_{32}-b_1-b_2 \\ -1
    \end{smallmatrix}\right] \\
    &\underset{0}{\xrightarrow{(\bullet,2)}} &
    \underline{\tau_4} = 
    \left[\begin{smallmatrix}
        -1 \\  r_{12}-2b_2 \\ 0
    \end{smallmatrix}\middle| 
    \begin{smallmatrix}
        0 \\  r_{32}-b_1-b_2 \\ -1
    \end{smallmatrix}\right] 
    = 
    \left[\begin{smallmatrix}
        -1 \\ 0 \\ 0
    \end{smallmatrix}\middle| 
    \begin{smallmatrix}
        0 \\ 0 \\ -1
    \end{smallmatrix}\right]   
\end{eqnarray}
where $b_1,b_2\in \{0,1\}$.
Then, we have $b_2=\frac{r_{12}}{2}$ and $b_1=r_{32}-b_2=r_{32}-\frac{r_{12}}{2}$.
In particular, this shows the assertion {\upshape {(v)}} and (5).
It finishes the proof. 
\end{proof}

\section{Proof of Theorem \ref{thm:orthant}} \label{sec:proof_Thm2}
In this section, we prove Theorem \ref{thm:orthant}. 
Throughout this section, let $(A,e = (e_1,e_2,e_3))$ be a finite dimensional $k$-algebra of rank $3$ which is $g$-convex. 
Let $d(A,e)=(d_{ij})_{1\leq i\neq j \leq 3}$ be the datum for $(A,e)$ defined in Definition \ref{def:dAe} 
with $d_{ij}=(l_{ij}, r_{ij}, h_{ij})$. 
In addition, let $d_{+-+}(A,e) = (d_{12},d_{32})$. 
On the other hand, we will use the sets $\mathcal{E}_1$, $\mathcal{E}_2$ and $\mathcal{E}_3$ given in Example \ref{ex:sufficient sets}.

For the signature $(+-+)\in \{\pm\}^3$, 
our strategy of a proof of Theorem \ref{thm:orthant} is the following. 
\begin{itemize}
    \item We show the simplest case $d(0)$. 
    \item We show the cases $d(1)$-$d(5)$, in which case the subfan $\SigmaeAe$ is automatically determined by the maximum $\emax$ and the minimum $\emin$.
    \item We show the cases $d(6)$-$d(9)$ by using Proposition \ref{intermediate}.
    \item We show the cases $d(10)$-$d(13)$ by using Proposition \ref{intermediate} together with Propositions \ref{path p1} and \ref{path p3}. 
    \item Independently of the above discussion, we exclude all other cases (indicated by "$-$") in Table \ref{tab:15list} to complete our proof. 
\end{itemize}

\subsection{Case $d(0)$} \label{case:0}
We claim that $d_{12} = d_{32} = (0,0,0)$ if and only if $h_{12} = h_{32}=0$. 
In fact, if $h_{12}=h_{32}=0$, then we have $e_1Ae_2 = e_1Ae_3A_2$ and $e_3Ae_2 = e_3Ae_1A_2$. It implies that $e_1Ae_2 = e_3Ae_2 =0$.
In this case, we have $\emin = \emax = \cone\{\bm{e}_1, -\bm{e}_2, \bm{e}_3\}$ by Proposition \ref{min_max}. 
Thus, $\SigmaeAe = \Sigma_{d(0)}$ holds as desired. 

\subsection{Cases $d(1)$-$d(5)$} \label{case:1-5}
We only check the case $d(5)$ since others can be shown similarly. 
Assume that $d_{+-+}(A,e) = d(5) = ((1,1,1),(1,2,0))$. 
By Proposition \ref{min_max}, we have 
    \begin{equation}
    \emax = 
    \left[
     \begin{smallmatrix}
        1 \\ 0 \\ 0 
    \end{smallmatrix}\middle|
    \begin{smallmatrix}
        1 \\ -1 \\ 0 
    \end{smallmatrix}\middle| 
    \begin{smallmatrix}
        0 \\ 0 \\ 1 
    \end{smallmatrix}
    \right],\ 
    \emin = \left[
     \begin{smallmatrix}
        1 \\ -1 \\ 0 
    \end{smallmatrix}\middle|
    \begin{smallmatrix}
        0 \\ -1 \\ 0 
    \end{smallmatrix}\middle| 
    \begin{smallmatrix}
        0 \\ -2 \\ 1 
    \end{smallmatrix}
    \right], \ 
    \bm{v}_{\emax} = 
    \begin{bsmallmatrix} 1 \\ 0 \\ 1 \end{bsmallmatrix}\quad \text{and} \quad  
    \bm{v}_{\emin} = 
    \begin{bsmallmatrix} 0 \\ -1 \\ -1 \end{bsmallmatrix}. 
\end{equation}
For $X := \{\emax, \emin\}$, 
the region $\PeX$ is given by the following system of inequalities in $\mathbb{R}^3_{+-+}$: 
\begin{equation} 
x_1 + x_3 \leq 1,\  -x_2-x_3 \leq 1,\ -x_1\leq 0,\ x_2\leq 0,\ -x_3\leq 0.
\end{equation}
Thus, we have 
\begin{eqnarray} 
\PeXZ = \left\{\begin{bsmallmatrix}
        1 \\ 0 \\ 0
    \end{bsmallmatrix}, 
    \begin{bsmallmatrix}
        1 \\ -1 \\ 0
    \end{bsmallmatrix}, 
    \begin{bsmallmatrix}
        0 \\ -1 \\ 0
    \end{bsmallmatrix}, 
    \begin{bsmallmatrix}
        0 \\ -2 \\ 1
    \end{bsmallmatrix},
    \begin{bsmallmatrix}
        0 \\ -1 \\ 1
    \end{bsmallmatrix},
    \begin{bsmallmatrix}
        0 \\ 0 \\ 1
    \end{bsmallmatrix}\right\}.   
\end{eqnarray}
From this description, we find that 
\[
\PeXZ \subseteq \Conv_0(\{\emax,\emin\}) \subseteq \PeAe.
\] 
On the other hand, we have $\mathcal{L}(X) = \{
\begin{bsmallmatrix}
    1 \\ 0 \\ 1
\end{bsmallmatrix}, 
\begin{bsmallmatrix}
    0 \\ -1 \\ -1
\end{bsmallmatrix}
\} \subseteq \mathcal{E}_2$. 
Thus, we can apply Proposition \ref{intermediate} to get $\PeAe = \PeX$.
In this situation, since $\SigmaeAe$ is a convex non-singular fan, it must contain the following maximal cones 
\begin{equation}
    \tau = \left[\begin{smallmatrix}
        1 \\ -1 \\ 0 
    \end{smallmatrix}\middle| 
    \begin{smallmatrix}
        0\\ -1 \\ 1
    \end{smallmatrix}\middle| 
    \begin{smallmatrix}
        0 \\ 0 \\ 1
    \end{smallmatrix}
    \right], \ 
    \tau' = \left[\begin{smallmatrix}
        1 \\ -1 \\ 0 
    \end{smallmatrix}\middle| 
    \begin{smallmatrix}
        0\\ -1 \\ 1
    \end{smallmatrix}\middle| 
    \begin{smallmatrix}
        0 \\ -2 \\ 1
    \end{smallmatrix}
    \right].
\end{equation}
This shows that $\SigmaeAe = \Sigma_{d(5)}$ as desired. 
The corresponding figure is given by Figure \ref{fig:A5}.

\begin{figure}[h]
\begin{tabular}{cccc}
    $\Sigma_{d(5)}$ \\ 
    \begin{tikzpicture}[baseline=0mm, scale=1]
    \coordinate(0) at(0:0); 
    \node(x) at(215:1) {}; 
    \node(y) at(0:1) {}; 
    \node(z) at(90:1) {}; 
    \draw[gray, <-] ($1*(x)$)--($-1*(x)$); 
    \draw[gray, <-] ($1*(y)$)--($-1*(y)$); 
    \draw[gray, <-] ($1*(z)$)--($-1*(z)$); 
    \node at(x) [below]{$\bm{e}_1$}; 
    \node at(y) [below]{$\bm{e}_2$}; 
    \node at(z) [right]{$\bm{e}_3$}; 
    \def\gridwidth{0.1}
    \begin{scope}[gray, line width = \gridwidth]
    \foreach \c in {0,1,2} {
    \draw ($-\c*(y)$)--($2*(x)+ -\c*(y)$); 
    \draw ($\c*(x)$)--($-2*(y)+ \c*(x)$); 
    \draw ($-\c*(y)$)--($2*(z)+ -\c*(y)$); 
    \draw ($\c*(z)$)--($-2*(y)+ \c*(z)$); 
    \draw ($-2*(y)+\c*(z)$)--($-2*(y)+ 2*(x)+ \c*(z)$); 
    \draw ($-2*(y)+\c*(x)$)--($-2*(y)+ 2*(z)+ \c*(x)$); }
    \end{scope}
    \fill[black, opacity = 0.8] (0)circle(0.5mm);
    \coordinate(1) at($1*(x) + 0*(y) + 0*(z)$); 
    \coordinate(2) at($1*(x) + -1*(y) + 0*(z)$); 
    \coordinate(3) at($0*(x) + 0*(y) + 1*(z)$); 
    \coordinate(4) at($0*(x) + -1*(y) + 1*(z)$); 
    \coordinate(5) at($0*(x) + -2*(y) + 1*(z)$); 
    \coordinate(6) at($0*(x) + -1*(y) + 0*(z)$); 
\def\segwidth{0.5mm}
\def\positivecolor{green!30!red}
\def\negativecolor{green!30!orange}
\def\segcolor{green!30!blue}
    \foreach \s\t in {(2)/(6), (5)/(6)} 
    {\draw[line width = 0.4mm, \negativecolor] \s--\t;}
    \draw[line width = \segwidth, \negativecolor] (2)--(5);
    \foreach \s\t in {(1)/(2), (1)/(3), (2)/(3)} 
    {\draw[line width = \segwidth, \positivecolor] \s--\t;}
    \foreach \s\t in {(4)/(2), (4)/(3), (4)/(5)} 
        {\draw[line width = \segwidth, \segcolor] \s--\t;} 
    \foreach \v in {(1),(2),(3),(4),(5),(6)} 
        {\fill[gray, opacity = 0.8] \v circle(1.1mm);} 
\end{tikzpicture}
\end{tabular}
    \caption{}
    \label{fig:A5}
\end{figure}


\subsection{Case $d(6)$} \label{case:6}
We assume that $d_{+-+}(A,e) = d(6) = ((2,1,1),(2,1,0))$.
By Proposition \ref{min_max}, we have 
\begin{equation}
    \emax = 
    \left[
     \begin{smallmatrix}
        1 \\ 0 \\ 0 
    \end{smallmatrix}\middle|
    \begin{smallmatrix}
        2 \\ -1 \\ 0 
    \end{smallmatrix}\middle| 
    \begin{smallmatrix}
        0 \\ 0 \\ 1 
    \end{smallmatrix}
    \right], \ 
    \emin = \left[
     \begin{smallmatrix}
        1 \\ -1 \\ 0 
    \end{smallmatrix}\middle|
    \begin{smallmatrix}
        0 \\ -1 \\ 0 
    \end{smallmatrix}\middle| 
    \begin{smallmatrix}
        0 \\ -1 \\ 1 
    \end{smallmatrix}
    \right], \ 
   {\bm{v}}_{\emax} = 
    \begin{bsmallmatrix} 1 \\ 1 \\ 1 \end{bsmallmatrix}\ \ \text{and} \ \
   {\bm{v}}_{\emin} = 
    \begin{bsmallmatrix} 0 \\ -1 \\ 0 \end{bsmallmatrix}. 
\end{equation}
For $X:=\{\emax,\emin\}$, the set $\PeX$ is given by the following system of inequalities in $\mathbb{R}_{+-+}^3$: 
\begin{equation}
    x_1 + x_2 + x_3 \leq 1,\ -x_2\leq 1,\ -x_1\le 0,\ x_2\le 0,\ -x_3\le 0.  
\end{equation}
Then, we have 
\begin{equation}
    \PeXZ = \left\{ 
    \begin{bsmallmatrix}
    1 \\ 0 \\ 0    
    \end{bsmallmatrix}, 
\begin{bsmallmatrix}
    2 \\ -1 \\ 0    
    \end{bsmallmatrix},
    \begin{bsmallmatrix}
    1 \\ -1 \\ 0    
    \end{bsmallmatrix},
    \begin{bsmallmatrix}
    0 \\ -1 \\ 0    
    \end{bsmallmatrix},
    \begin{bsmallmatrix}
    0 \\ -1 \\ 1    
    \end{bsmallmatrix},
    \begin{bsmallmatrix}
    0 \\ -1 \\ 2    
    \end{bsmallmatrix},
    \begin{bsmallmatrix}
    0 \\ 0 \\ 1    
    \end{bsmallmatrix},
    \begin{bsmallmatrix}
    1 \\ -1 \\ 1    
    \end{bsmallmatrix}
    \right\}. 
\end{equation}
Since $(l_{32},r_{32}) = (2,1)$ by our assumption, we have 
$\bm{w} := \begin{bsmallmatrix}
        0 \\ -1 \\ 2    
    \end{bsmallmatrix}\in \SigmaeAe$ 
by Proposition \ref{coord_plane}. 
Thus, 
\[
\PeXZ \subseteq \Conv_0(\{\emax,\emin, \bm{w}\}) \subseteq \PeAe.
\]
Moreover, since 
$\mathcal{L}(X) = \{ 
\begin{bsmallmatrix}
    1 \\ 1 \\ 1    
\end{bsmallmatrix}
\} \subseteq \mathcal{E}_1$ holds, we have $\PeAe= \PeX$ by Proposition \ref{intermediate}. 
Then, there are only $3$ possibility of $\SigmaeAe$ described in Figure \ref{fig:B1}, 
where the left most one is the fan $\Sigma_{d(6)}$.

\begin{figure}[ht] 
    \begin{tabular}{ccccc}
    $\Sigma_{d(6)}$ & (a) & (b) \\ 
    \begin{tikzpicture}[baseline=0mm, scale=1]
    \coordinate(0) at(0:0); 
    \node(x) at(215:1) {}; 
    \node(y) at(0:1) {}; 
    \node(z) at(90:1) {}; 
    \draw[gray, <-] ($1*(x)$)--($-1*(x)$); 
    \draw[gray, <-] ($1*(y)$)--($-1*(y)$); 
    \draw[gray, <-] ($1*(z)$)--($-1*(z)$); 
    \node at(x) [below]{$\bm{e}_1$}; 
    \node at(y) [below]{$\bm{e}_2$}; 
    \node at(z) [right]{$\bm{e}_3$}; 
    \def\gridwidth{0.1}
    \begin{scope}[gray, line width = \gridwidth]
    \foreach \c in {0,1,2} {
    \draw ($-\c*(y)$)--($2*(x)+ -\c*(y)$); 
    \draw ($\c*(x)$)--($-2*(y)+ \c*(x)$); 
    \draw ($-\c*(y)$)--($2*(z)+ -\c*(y)$); 
    \draw ($\c*(z)$)--($-2*(y)+ \c*(z)$); 
    \draw ($-2*(y)+\c*(z)$)--($-2*(y)+ 2*(x)+ \c*(z)$); 
    \draw ($-2*(y)+\c*(x)$)--($-2*(y)+ 2*(z)+ \c*(x)$); }
    \end{scope}
    \fill[black, opacity = 0.8] (0)circle(0.5mm);
    \coordinate(1) at($1*(x) + 0*(y) + 0*(z)$); 
    \coordinate(2) at($2*(x) + -1*(y) + 0*(z)$); 
    \coordinate(3) at($0*(x) + 0*(y) + 1*(z)$); 
    \coordinate(4) at($1*(x) + -1*(y) + 1*(z)$); 
    \coordinate(5) at($0*(x) + -1*(y) + 2*(z)$); 
    \coordinate(6) at($1*(x) + -1*(y) + 0*(z)$); 
    \coordinate(7) at($0*(x) + -1*(y) + 1*(z)$); 
    \coordinate(8) at($0*(x) + -1*(y) + 0*(z)$); 
\def\segwidth{0.5mm}
\def\positivecolor{green!30!red}
\def\negativecolor{green!30!orange}
\def\segcolor{green!30!blue}
    \foreach \s\t in {(6)/(7), (7)/(8), (8)/(6)} 
    {\draw[line width = 0.4mm, \negativecolor] \s--\t;}
    \foreach \s\t in {(6)/(4), (6)/(5), (5)/(7), (2)/(6)} 
    {\draw[line width = 0.4mm, \segcolor] \s--\t;}
    \foreach \s\t in {(1)/(2), (1)/(3), (2)/(3)} 
    {\draw[line width = \segwidth, \positivecolor] \s--\t;}
    \foreach \s\t in {(4)/(2), (4)/(3), (4)/(5), (3)/(5)} 
        {\draw[line width = \segwidth, \segcolor] \s--\t;} 
    \foreach \v in {(1),(2),(3),(4),(5),(6),(7),(8)} 
        {\fill[gray, opacity = 0.8] \v circle(1.1mm);} 
\end{tikzpicture}
&
\begin{tikzpicture}[baseline=0mm, scale=1]
    \coordinate(0) at(0:0); 
    \node(x) at(215:1) {}; 
    \node(y) at(0:1) {}; 
    \node(z) at(90:1) {}; 
    \draw[gray, <-] ($1*(x)$)--($-1*(x)$); 
    \draw[gray, <-] ($1*(y)$)--($-1*(y)$); 
    \draw[gray, <-] ($1*(z)$)--($-1*(z)$); 
    \node at(x) [below]{$\bm{e}_1$}; 
    \node at(y) [below]{$\bm{e}_2$}; 
    \node at(z) [right]{$\bm{e}_3$}; 
    \def\gridwidth{0.1}
    \begin{scope}[gray, line width = \gridwidth]
    \foreach \c in {0,1,2} {
    \draw ($-\c*(y)$)--($2*(x)+ -\c*(y)$); 
    \draw ($\c*(x)$)--($-2*(y)+ \c*(x)$); 
    \draw ($-\c*(y)$)--($2*(z)+ -\c*(y)$); 
    \draw ($\c*(z)$)--($-2*(y)+ \c*(z)$); 
    \draw ($-2*(y)+\c*(z)$)--($-2*(y)+ 2*(x)+ \c*(z)$); 
    \draw ($-2*(y)+\c*(x)$)--($-2*(y)+ 2*(z)+ \c*(x)$); }
    \end{scope}
    \fill[black, opacity = 0.8] (0)circle(0.5mm);
    \coordinate(1) at($1*(x) + 0*(y) + 0*(z)$); 
    \coordinate(2) at($2*(x) + -1*(y) + 0*(z)$); 
    \coordinate(3) at($0*(x) + 0*(y) + 1*(z)$); 
    \coordinate(4) at($1*(x) + -1*(y) + 1*(z)$); 
    \coordinate(5) at($0*(x) + -1*(y) + 2*(z)$); 
    \coordinate(6) at($1*(x) + -1*(y) + 0*(z)$); 
    \coordinate(7) at($0*(x) + -1*(y) + 1*(z)$); 
    \coordinate(8) at($0*(x) + -1*(y) + 0*(z)$); 
\def\segwidth{0.5mm}
\def\positivecolor{green!30!red}
\def\negativecolor{green!30!orange}
\def\segcolor{green!30!blue}
    \foreach \s\t in {(6)/(7), (7)/(8), (8)/(6)} 
    {\draw[line width = 0.4mm, \negativecolor] \s--\t;}
    \foreach \s\t in {(6)/(4), (4)/(7), (5)/(7), (2)/(6)} 
    {\draw[line width = 0.4mm, \segcolor] \s--\t;}
    \foreach \s\t in {(1)/(2), (1)/(3), (2)/(3)} 
    {\draw[line width = \segwidth, \positivecolor] \s--\t;}
    \foreach \s\t in {(4)/(2), (4)/(3), (4)/(5), (3)/(5)} 
        {\draw[line width = \segwidth, \segcolor] \s--\t;} 
    \foreach \v in {(1),(2),(3),(4),(5),(6),(7),(8)} 
        {\fill[gray, opacity = 0.8] \v circle(1.1mm);} 
\end{tikzpicture}
&
\begin{tikzpicture}[baseline=0mm, scale=1]
    \coordinate(0) at(0:0); 
    \node(x) at(215:1) {}; 
    \node(y) at(0:1) {}; 
    \node(z) at(90:1) {}; 
    \draw[gray, <-] ($1*(x)$)--($-1*(x)$); 
    \draw[gray, <-] ($1*(y)$)--($-1*(y)$); 
    \draw[gray, <-] ($1*(z)$)--($-1*(z)$); 
    \node at(x) [below]{$\bm{e}_1$}; 
    \node at(y) [below]{$\bm{e}_2$}; 
    \node at(z) [right]{$\bm{e}_3$}; 
    \def\gridwidth{0.1}
    \begin{scope}[gray, line width = \gridwidth]
    \foreach \c in {0,1,2} {
    \draw ($-\c*(y)$)--($2*(x)+ -\c*(y)$); 
    \draw ($\c*(x)$)--($-2*(y)+ \c*(x)$); 
    \draw ($-\c*(y)$)--($2*(z)+ -\c*(y)$); 
    \draw ($\c*(z)$)--($-2*(y)+ \c*(z)$); 
    \draw ($-2*(y)+\c*(z)$)--($-2*(y)+ 2*(x)+ \c*(z)$); 
    \draw ($-2*(y)+\c*(x)$)--($-2*(y)+ 2*(z)+ \c*(x)$); }
    \end{scope}
    \fill[black, opacity = 0.8] (0)circle(0.5mm);
    \coordinate(1) at($1*(x) + 0*(y) + 0*(z)$); 
    \coordinate(2) at($2*(x) + -1*(y) + 0*(z)$); 
    \coordinate(3) at($0*(x) + 0*(y) + 1*(z)$); 
    \coordinate(4) at($1*(x) + -1*(y) + 1*(z)$); 
    \coordinate(5) at($0*(x) + -1*(y) + 2*(z)$); 
    \coordinate(6) at($1*(x) + -1*(y) + 0*(z)$); 
    \coordinate(7) at($0*(x) + -1*(y) + 1*(z)$); 
    \coordinate(8) at($0*(x) + -1*(y) + 0*(z)$); 
\def\segwidth{0.5mm}
\def\positivecolor{green!30!red}
\def\negativecolor{green!30!orange}
\def\segcolor{green!30!blue}
    \foreach \s\t in {(6)/(7), (7)/(8), (8)/(6)} 
    {\draw[line width = 0.4mm, \negativecolor] \s--\t;}
    \foreach \s\t in {(5)/(7), (4)/(7), (2)/(7), (2)/(6)} 
    {\draw[line width = 0.4mm, \segcolor] \s--\t;}
    \foreach \s\t in {(1)/(2), (1)/(3), (2)/(3)} 
    {\draw[line width = \segwidth, \positivecolor] \s--\t;}
    \foreach \s\t in {(4)/(2), (4)/(3), (4)/(5), (3)/(5)} 
        {\draw[line width = \segwidth, \segcolor] \s--\t;} 
    \foreach \v in {(1),(2),(3),(4),(5),(6),(7),(8)} 
        {\fill[gray, opacity = 0.8] \v circle(1.1mm);} 
\end{tikzpicture}
\end{tabular}
    \caption{}
    \label{fig:B1}
\end{figure}

Assume that this is the case (a) or (b). Since $\Sigma(A,e)$ is ordered (Theorem \ref{theorem:g-fan is ordered}), we have the following path $p$ around $\bm{u}:= \left[\begin{smallmatrix}
    1 \\ -1 \\ 1
\end{smallmatrix}\right]$. 
\begin{equation}
    \underline{p}_{\bm{u}}\colon\ 
    \left( 
    \left[
    \begin{smallmatrix}
        2 \\ -1 \\ 0 
    \end{smallmatrix}\middle|
    \begin{smallmatrix}
        0 \\ 0 \\ 1 
    \end{smallmatrix}
    \right]
    \underset{2}{\xrightarrow{(\bullet,0)}}
    \left[
     \begin{smallmatrix}
        0 \\ -1 \\ 2 
    \end{smallmatrix}\middle|
    \begin{smallmatrix}
        0 \\ 0 \\ 1 
    \end{smallmatrix}
    \right]
    \underset{0}{\xrightarrow{(1,\bullet)}}
    \left[
     \begin{smallmatrix}
        0 \\ -1 \\ 2 
    \end{smallmatrix}\middle|
    \begin{smallmatrix}
        0 \\ -1 \\ 1 
    \end{smallmatrix}
    \right]\right). 
\end{equation}
However, it is a contradiction by Proposition \ref{rank2}.
Thus, the fan $\Sigma_{d(6)}$ only appears.

\subsection{Case $d(7)$} \label{case:7}
Assume that $d_{+-+}(A,e) = d(7) = ((1,1,1),(1,1,1))$. 
By Proposition \ref{min_max}, we have 
\begin{equation}\label{B2 mm}
\hspace{-3mm}
    \emax = 
    \left[
     \begin{smallmatrix}
        1 \\ 0 \\ 0 
    \end{smallmatrix}\middle|
    \begin{smallmatrix}
        1 \\ -1 \\ 1 
    \end{smallmatrix}\middle| 
    \begin{smallmatrix}
        0 \\ 0 \\ 1 
    \end{smallmatrix}
    \right], \ 
    \emin = \left[
     \begin{smallmatrix}
        1 \\ -1 \\ 0 
    \end{smallmatrix}\middle|
    \begin{smallmatrix}
        0 \\ -1 \\ 0 
    \end{smallmatrix}\middle| 
    \begin{smallmatrix}
        0 \\ -1 \\ 1 
    \end{smallmatrix}
    \right],\ 
   {\bm{v}}_{\emax} = \begin{bsmallmatrix}
        1 \\ 1 \\ 1
    \end{bsmallmatrix} \ \ \text{and}\ \
   {\bm{v}}_{\emin} = \begin{bsmallmatrix}
        0 \\ -1 \\ 0
    \end{bsmallmatrix}.
\end{equation}
We claim that $\tau,\varphi\in \SigmaeAe$, where 
\begin{equation}\label{B2 tautau'}
\tau = 
\left[ 
     \begin{smallmatrix}
        0 \\ -1 \\ 1 
    \end{smallmatrix}\middle|
    \begin{smallmatrix}
        1 \\ -1 \\ 1 
    \end{smallmatrix}\middle| 
    \begin{smallmatrix}
        0 \\ 0 \\ 1 
    \end{smallmatrix}
    \right], \ 
    \varphi = 
\left[ 
     \begin{smallmatrix}
        1 \\ 0 \\ 0 
    \end{smallmatrix}\middle|
    \begin{smallmatrix}
        1 \\ -1 \\ 1 
    \end{smallmatrix}\middle| 
    \begin{smallmatrix}
        1 \\ -1 \\ 0 
    \end{smallmatrix}
    \right]\ \ \text{with}\ \ {\bm{v}}_{\tau}=\begin{bsmallmatrix}
        0 \\ 0 \\ 1
\end{bsmallmatrix}, \ 
\bm{v}_{\varphi} = \begin{bsmallmatrix}
        1 \\ 0 \\ 0
\end{bsmallmatrix}. 
\end{equation}
After that, we get the assertion by the following reason: 
For $X:=\{\emax,\emin,\tau,\varphi\}$, the set $\Pe(X)$ is given by the following system of inequalities in $\mathbb{R}_{+-+}^3$:
\begin{equation}
    x_1+x_2+x_3\leq 1,\ x_1\le 1,\ -x_2\le 1,\ x_3 \le 1,\ -x_1\le 0,\ x_2\le 0,\ -x_3 \le 0.  
\end{equation}
Then, we have    
\begin{equation}
    \PeXZ = \left\{
    \begin{bsmallmatrix}
        1 \\ 0 \\ 0 
    \end{bsmallmatrix}, 
    \begin{bsmallmatrix}
        1 \\ -1 \\ 0 
    \end{bsmallmatrix}, 
    \begin{bsmallmatrix}
        0 \\ -1 \\ 0 
    \end{bsmallmatrix}, 
    \begin{bsmallmatrix}
        0 \\ -1 \\ 1 
    \end{bsmallmatrix}, 
    \begin{bsmallmatrix}
        1 \\ -1 \\ 1 
    \end{bsmallmatrix},
       \begin{bsmallmatrix}
        0 \\ 0 \\ 1
    \end{bsmallmatrix}
    \right\} \subseteq \Conv_0(\{\emax,\emin\}) \subseteq \PeAe. 
\end{equation}
Since  
$\mathcal{L}(X) = \{
    \begin{bsmallmatrix}
        1 \\ 1 \\ 1 
    \end{bsmallmatrix}
    \} \subseteq \mathcal{E}_1$ holds, we obtain $\PeAe = \PeX$ by Proposition \ref{intermediate}. 
Then, these cones uniquely give rise to a convex non-singular fan $\Sigma_{d(7)}$, which coincides with $\SigmaeAe$ as desired (see Figure \ref{fig:B2}).

\begin{figure}[ht]
    \begin{tabular}{cccc}
    $\Sigma_{d(7)}$  \\ 
    \begin{tikzpicture}[baseline=0mm, scale=1]
    \coordinate(0) at(0:0); 
    \node(x) at(215:1) {}; 
    \node(y) at(0:1) {}; 
    \node(z) at(90:1) {}; 
    \draw[gray, <-] ($1*(x)$)--($-1*(x)$); 
    \draw[gray, <-] ($1*(y)$)--($-1*(y)$); 
    \draw[gray, <-] ($1*(z)$)--($-1*(z)$); 
    \node at(x) [below]{$\bm{e}_1$}; 
    \node at(y) [below]{$\bm{e}_2$}; 
    \node at(z) [right]{$\bm{e}_3$}; 
    \def\gridwidth{0.1}
    \begin{scope}[gray, line width = \gridwidth]
    \foreach \c in {0,1,2} {
    \draw ($-\c*(y)$)--($2*(x)+ -\c*(y)$); 
    \draw ($\c*(x)$)--($-2*(y)+ \c*(x)$); 
    \draw ($-\c*(y)$)--($2*(z)+ -\c*(y)$); 
    \draw ($\c*(z)$)--($-2*(y)+ \c*(z)$); 
    \draw ($-2*(y)+\c*(z)$)--($-2*(y)+ 2*(x)+ \c*(z)$); 
    \draw ($-2*(y)+\c*(x)$)--($-2*(y)+ 2*(z)+ \c*(x)$); }
    \end{scope}
    \fill[black, opacity = 0.8] (0)circle(0.5mm);
    \coordinate(1) at($1*(x) + 0*(y) + 0*(z)$); 
    \coordinate(2) at($1*(x) + -1*(y) + 1*(z)$); 
    \coordinate(3) at($0*(x) + 0*(y) + 1*(z)$); 
    \coordinate(4) at($0*(x) + -1*(y) + 1*(z)$); 
    \coordinate(5) at($1*(x) + -1*(y) + 0*(z)$); 
    \coordinate(6) at($0*(x) + -1*(y) + 0*(z)$); 
\def\segwidth{0.5mm}
\def\positivecolor{green!30!red}
\def\negativecolor{green!30!orange}
\def\segcolor{green!30!blue}
    \foreach \s\t in {(6)/(5), (4)/(6), (5)/(4)} 
    {\draw[line width = 0.4mm, \negativecolor] \s--\t;}
    \foreach \s\t in {(1)/(2), (1)/(3), (2)/(3)} 
    {\draw[line width = \segwidth, \positivecolor] \s--\t;}
    \foreach \s\t in {(1)/(5), (2)/(5), (2)/(4), (3)/(4)} 
        {\draw[line width = \segwidth, \segcolor] \s--\t;} 
    \foreach \v in {(1),(2),(3),(4),(5),(6)} 
        {\fill[gray, opacity = 0.8] \v circle(1.1mm);} 
\end{tikzpicture}
    \end{tabular}
    \caption{}
    \label{fig:B2}
\end{figure}

We first show that the above cone $\tau$ lies in $\SigmaeAe$. 
Consider an arrow  
\begin{equation}
    \emax = 
    \left[
     \begin{smallmatrix}
        1 \\ 0 \\ 0 
    \end{smallmatrix}\middle|
    \begin{smallmatrix}
        1 \\ -1 \\ 1 
    \end{smallmatrix}\middle| 
    \begin{smallmatrix}
        0 \\ 0 \\ 1 
    \end{smallmatrix}
    \right] \xrightarrow{(\bullet,a_1,a_2)} 
    \tau' := \left[
     \begin{smallmatrix}
        a_1-1 \\ -a_1 \\ a_1+a_2 
    \end{smallmatrix}\middle|
    \begin{smallmatrix}
        1 \\ -1 \\ 1 
    \end{smallmatrix}\middle| 
    \begin{smallmatrix}
        0 \\ 0 \\ 1 
    \end{smallmatrix}
    \right] 
\end{equation}
in the Hasse quiver of $(\SigmaeAe,\leq)$, where $(a_1,a_2)\in \{(0,0),(0,1),(0,2),(1,0),(1,1),(2,0)\}$. We show $(a_1,a_2)=(1,0)$ by excluding all other cases. 
Firstly, if $a_1=0$, then we have $\left[\begin{smallmatrix}
        -1 \\ 0 \\ a_2 
    \end{smallmatrix}\middle|
    \begin{smallmatrix}
        1 \\ -1 \\ 1 
    \end{smallmatrix}\right]\in \SigmaAe$, which is a contradiction to the sign-coherent property. 
Secondly, if $(a_1,a_2)=(1,1)$, then we have  $\left[\begin{smallmatrix}
        0 \\ -1 \\ 2 
\end{smallmatrix}\right]\in \SigmaAe$. 
It implies that $r_{32}=2$ by Proposition \ref{coord_plane}, which is a contradiction to our assumption that $r_{32}=1$. 
Finally, if $(a_1,a_2)=(2,0)$, then we have 
$\bm{w}:= \left[\begin{smallmatrix}
        1 \\ -2 \\ 2 
    \end{smallmatrix}\right]\in \SigmaAe$. 
However, we have $\langle{\bm{v}}_{\emin}, \bm{w} \rangle = 2 \not\leq 1$, which is a contradiction.
Consequently, we obtain $(a_1,a_2)=(1,0)$ and 
\begin{equation}
\tau' = \tau =
\left[ 
     \begin{smallmatrix}
        0 \\ -1 \\ 1 
    \end{smallmatrix}\middle|
    \begin{smallmatrix}
        1 \\ -1 \\ 1 
    \end{smallmatrix}\middle| 
    \begin{smallmatrix}
        0 \\ 0 \\ 1 
    \end{smallmatrix}
    \right] \in \SigmaeAe. 
\end{equation}
By permuting a role of $\bm{e}_1$ and $\bm{e}_3$, we also have  
\begin{equation}
\varphi = 
\left[ 
     \begin{smallmatrix}
        1 \\ 0 \\ 0 
    \end{smallmatrix}\middle|
    \begin{smallmatrix}
        1 \\ -1 \\ 1 
    \end{smallmatrix}\middle| 
    \begin{smallmatrix}
        1 \\ -1 \\ 0 
    \end{smallmatrix}
    \right] \in \SigmaeAe. 
\end{equation}
This completes the proof.

\subsection{Case $d(8)$} \label{case:8}
Assume that $d_{+-+}(A,e) = d(8) = ((2,1,1),(1,1,1))$. 
In this case, one can see that $\emax$ and $\emin$ are the same as \eqref{B2 mm} by Proposition \ref{min_max}. 
Furthermore, the maximal cone $\tau$ in \eqref{B2 tautau'} belongs to $\SigmaeAe$ by the same discussion. 
For $X=\{\emax,\emin,\tau\}$, the set $\PeX$ is given by the following system of inequalities in $\mathbb{R}_{+-+}^3$:
\begin{equation}
    x_1+x_2+x_3\leq 1,\ -x_2\le 1,\ x_3 \le 1,\ -x_1\le 0,\ x_2\le 0,\ -x_3 \le 0.  
\end{equation}
Then, we have 
\begin{equation} 
    \PeXZ = \left\{
    \begin{bsmallmatrix}
        1 \\ 0 \\ 0 
    \end{bsmallmatrix}, 
    \begin{bsmallmatrix}
        1 \\ -1 \\ 0 
    \end{bsmallmatrix}, 
    \begin{bsmallmatrix}
        0 \\ -1 \\ 0 
    \end{bsmallmatrix}, 
    \begin{bsmallmatrix}
        0 \\ -1 \\ 1 
    \end{bsmallmatrix}, 
    \begin{bsmallmatrix}
        1 \\ -1 \\ 1 
    \end{bsmallmatrix},
    \begin{bsmallmatrix}
        0 \\ 0 \\ 1 
    \end{bsmallmatrix},
    \begin{bsmallmatrix}
        2 \\ -1 \\ 0 
    \end{bsmallmatrix}
    \right\}. 
\end{equation}
Since $(l_{12},r_{12}) = (2,1)$, we have 
$\begin{bsmallmatrix}
        2 \\ -1 \\ 0 
    \end{bsmallmatrix}\in \SigmaeAe$ by Proposition\;\ref{coord_plane}. 
It implies that $\PeXZ \subseteq \PeAe$. 
On the other hand, we clearly have  
    $\mathcal{L}(X) = \{
    \begin{bsmallmatrix}
        1 \\ 1 \\ 1 
    \end{bsmallmatrix}
    \} \subseteq \mathcal{E}_1$. 
By Proposition \ref{intermediate}, we have $\PeAe = \PeX$. 
Then, $\SigmaeAe$ is one of fans described in Figure \ref{fig:B3}, where the left one is the fan $\Sigma_{d(8)}$.

\begin{figure}[ht]
    \begin{tabular}{ccccc}
    $\Sigma_{d(8)}$ & (a)  \\ 
    \begin{tikzpicture}[baseline=0mm, scale=1]
    \coordinate(0) at(0:0); 
    \node(x) at(215:1) {}; 
    \node(y) at(0:1) {}; 
    \node(z) at(90:1) {}; 
    \draw[gray, <-] ($1*(x)$)--($-1*(x)$); 
    \draw[gray, <-] ($1*(y)$)--($-1*(y)$); 
    \draw[gray, <-] ($1*(z)$)--($-1*(z)$); 
    \node at(x) [below]{$\bm{e}_1$}; 
    \node at(y) [below]{$\bm{e}_2$}; 
    \node at(z) [right]{$\bm{e}_3$}; 
    \def\gridwidth{0.1}
    \begin{scope}[gray, line width = \gridwidth]
    \foreach \c in {0,1,2} {
    \draw ($-\c*(y)$)--($2*(x)+ -\c*(y)$); 
    \draw ($\c*(x)$)--($-2*(y)+ \c*(x)$); 
    \draw ($-\c*(y)$)--($2*(z)+ -\c*(y)$); 
    \draw ($\c*(z)$)--($-2*(y)+ \c*(z)$); 
    \draw ($-2*(y)+\c*(z)$)--($-2*(y)+ 2*(x)+ \c*(z)$); 
    \draw ($-2*(y)+\c*(x)$)--($-2*(y)+ 2*(z)+ \c*(x)$); }
    \end{scope}
    \fill[black, opacity = 0.8] (0)circle(0.5mm);
    \coordinate(1) at($1*(x) + 0*(y) + 0*(z)$); 
    \coordinate(2) at($1*(x) + -1*(y) + 1*(z)$); 
    \coordinate(3) at($0*(x) + 0*(y) + 1*(z)$); 
    \coordinate(4) at($0*(x) + -1*(y) + 1*(z)$); 
    \coordinate(5) at($2*(x) + -1*(y) + 0*(z)$); 
    \coordinate(6) at($1*(x) + -1*(y) + 0*(z)$); 
    \coordinate(7) at($0*(x) + 0*(y) + 1*(z)$); 
\def\segwidth{0.5mm}
\def\positivecolor{green!30!red}
\def\negativecolor{green!30!orange}
\def\segcolor{green!30!blue}
    \foreach \s\t in {(4)/(6), (6)/(7), (7)/(4)} 
    {\draw[line width = 0.4mm, \negativecolor] \s--\t;}
    \foreach \s\t in {(2)/(6), (6)/(5)} 
    {\draw[line width = 0.4mm, \segcolor] \s--\t;}
    \foreach \s\t in {(1)/(2), (1)/(3), (2)/(3)} 
    {\draw[line width = \segwidth, \positivecolor] \s--\t;}
    \foreach \s\t in {(5)/(2), (4)/(2), (3)/(4), (1)/(5)} 
        {\draw[line width = \segwidth, \segcolor] \s--\t;} 
    \foreach \v in {(1),(2),(3),(4),(5),(6),(7)} 
        {\fill[gray, opacity = 0.8] \v circle(1.1mm);} 
\end{tikzpicture}
&
\begin{tikzpicture}[baseline=0mm, scale=1]
    \coordinate(0) at(0:0); 
    \node(x) at(215:1) {}; 
    \node(y) at(0:1) {}; 
    \node(z) at(90:1) {}; 
    \draw[gray, <-] ($1*(x)$)--($-1*(x)$); 
    \draw[gray, <-] ($1*(y)$)--($-1*(y)$); 
    \draw[gray, <-] ($1*(z)$)--($-1*(z)$); 
    \node at(x) [below]{$\bm{e}_1$}; 
    \node at(y) [below]{$\bm{e}_2$}; 
    \node at(z) [right]{$\bm{e}_3$}; 
    \def\gridwidth{0.1}
    \begin{scope}[gray, line width = \gridwidth]
    \foreach \c in {0,1,2} {
    \draw ($-\c*(y)$)--($2*(x)+ -\c*(y)$); 
    \draw ($\c*(x)$)--($-2*(y)+ \c*(x)$); 
    \draw ($-\c*(y)$)--($2*(z)+ -\c*(y)$); 
    \draw ($\c*(z)$)--($-2*(y)+ \c*(z)$); 
    \draw ($-2*(y)+\c*(z)$)--($-2*(y)+ 2*(x)+ \c*(z)$); 
    \draw ($-2*(y)+\c*(x)$)--($-2*(y)+ 2*(z)+ \c*(x)$); }
    \end{scope}
    \fill[black, opacity = 0.8] (0)circle(0.5mm);
    \coordinate(1) at($1*(x) + 0*(y) + 0*(z)$); 
    \coordinate(2) at($1*(x) + -1*(y) + 1*(z)$); 
    \coordinate(3) at($0*(x) + 0*(y) + 1*(z)$); 
    \coordinate(4) at($0*(x) + -1*(y) + 1*(z)$); 
    \coordinate(5) at($2*(x) + -1*(y) + 0*(z)$); 
    \coordinate(6) at($1*(x) + -1*(y) + 0*(z)$); 
    \coordinate(7) at($0*(x) + 0*(y) + 1*(z)$); 
\def\segwidth{0.5mm}
\def\positivecolor{green!30!red}
\def\negativecolor{green!30!orange}
\def\segcolor{green!30!blue}
    \foreach \s\t in {(4)/(6), (6)/(7), (7)/(4)} 
    {\draw[line width = 0.4mm, \negativecolor] \s--\t;}
    \foreach \s\t in {(5)/(4), (6)/(5)} 
    {\draw[line width = 0.4mm, \segcolor] \s--\t;}
    \foreach \s\t in {(1)/(2), (1)/(3), (2)/(3)} 
    {\draw[line width = \segwidth, \positivecolor] \s--\t;}
    \foreach \s\t in {(5)/(2), (4)/(2), (3)/(4), (1)/(5)} 
        {\draw[line width = \segwidth, \segcolor] \s--\t;} 
    \foreach \v in {(1),(2),(3),(4),(5),(6),(7)} 
        {\fill[gray, opacity = 0.8] \v circle(1.1mm);} 
\end{tikzpicture}
\end{tabular}
    \caption{}
    \label{fig:B3}
\end{figure}

If this is (a), then there exists an arrow 
\begin{equation}
    \tau = \left[
    \begin{smallmatrix}
        0 \\ -1 \\ 1
    \end{smallmatrix}\middle|
    \begin{smallmatrix}
        1 \\ -1 \\ 1
    \end{smallmatrix}\middle|
    \begin{smallmatrix}
        0 \\ 0 \\ 1
    \end{smallmatrix}
    \right] \xrightarrow{(-1,2,\bullet)}
    \left[
    \begin{smallmatrix}
        0 \\ -1 \\ 1
    \end{smallmatrix}\middle|
    \begin{smallmatrix}
        1 \\ -1 \\ 1
    \end{smallmatrix}\middle|
    \begin{smallmatrix}
        2 \\ -1 \\ 0
    \end{smallmatrix}
    \right] 
\end{equation}
in the Hasse quiver of $(\SigmaeAe,\leq)$. However, it contradicts to Proposition \ref{prop:basics_gfan}(5). 
Thus, the fan $\Sigma_{d(8)}$ only appears.

\subsection{Case $d(9)$} \label{case:9}
Assume that $d_{+-+}(A,e) = d(9) = ((2,1,1),(2,1,1))$. 
In this case, $\emax$ and $\emin$ are the same as \eqref{B2 mm} by Proposition \ref{min_max}. 
For $X:=\{\emax,\emin\}$, the set $\PeX$ is given by the following system of inequalities in $\mathbb{R}_{+-+}^3$:
\begin{equation}
    x_1+x_2+x_3\leq 1,\ -x_2\le 1,\ -x_1\le 0,\ x_2\le 0,\ -x_3 \le 0.  
\end{equation}
Then, we have
\begin{equation}
    \PeXZ = \left\{
    \begin{bsmallmatrix}
        1 \\ 0 \\ 0 
    \end{bsmallmatrix}, 
    \begin{bsmallmatrix}
        1 \\ -1 \\ 0 
    \end{bsmallmatrix}, 
    \begin{bsmallmatrix}
        0 \\ -1 \\ 0 
    \end{bsmallmatrix}, 
    \begin{bsmallmatrix}
        0 \\ -1 \\ 1 
    \end{bsmallmatrix}, 
    \begin{bsmallmatrix}
        1 \\ -1 \\ 1 
    \end{bsmallmatrix},
    \begin{bsmallmatrix}
        0 \\ 0 \\ 1 
    \end{bsmallmatrix},
    \begin{bsmallmatrix}
        2 \\ -1 \\ 0 
    \end{bsmallmatrix},
       \begin{bsmallmatrix}
        0 \\ -1 \\ 2 
    \end{bsmallmatrix}
    \right\}.
\end{equation}
We have 
$\begin{bsmallmatrix}
        2 \\ -1 \\ 0 
    \end{bsmallmatrix}, \begin{bsmallmatrix}
        0 \\ -1 \\ 2 
    \end{bsmallmatrix}\in \SigmaeAe$ 
by $(l_{12},r_{12}) = (l_{32},r_{32})= (2,1)$.
Thus, we have $\PeXZ \subseteq \PeAe$. 
Moreover, $\mathcal{L}(X) = \{
\begin{bsmallmatrix}
    1 \\ 1 \\ 1
\end{bsmallmatrix}
\}\subseteq \mathcal{E}_1$ holds by a direct calculation.
Consequently, we have $\PeAe = \PeX$ by Proposition \ref{intermediate}. 
Therefore, there are $3$ possibility of $\SigmaeAe$ described in Figure \ref{fig:B4}, where the left most one is the fan $\Sigma_{d(9)}$. 

\begin{figure}[ht]
    \begin{tabular}{cccc}
    $\Sigma_{d(9)}$ & (a) & (b) \\
    \begin{tikzpicture}[baseline=0mm, scale=1]
    \coordinate(0) at(0:0); 
    \node(x) at(215:1) {}; 
    \node(y) at(0:1) {}; 
    \node(z) at(90:1) {}; 
    \draw[gray, <-] ($1*(x)$)--($-1*(x)$); 
    \draw[gray, <-] ($1*(y)$)--($-1*(y)$); 
    \draw[gray, <-] ($1*(z)$)--($-1*(z)$); 
    \node at(x) [below]{$\bm{e}_1$}; 
    \node at(y) [below]{$\bm{e}_2$}; 
    \node at(z) [right]{$\bm{e}_3$}; 
    \def\gridwidth{0.1}
    \begin{scope}[gray, line width = \gridwidth]
    \foreach \c in {0,1,2} {
    \draw ($-\c*(y)$)--($2*(x)+ -\c*(y)$); 
    \draw ($\c*(x)$)--($-2*(y)+ \c*(x)$); 
    \draw ($-\c*(y)$)--($2*(z)+ -\c*(y)$); 
    \draw ($\c*(z)$)--($-2*(y)+ \c*(z)$); 
    \draw ($-2*(y)+\c*(z)$)--($-2*(y)+ 2*(x)+ \c*(z)$); 
    \draw ($-2*(y)+\c*(x)$)--($-2*(y)+ 2*(z)+ \c*(x)$); }
    \end{scope}
    \fill[black, opacity = 0.8] (0)circle(0.5mm);
    \coordinate(1) at($1*(x) + 0*(y) + 0*(z)$); 
    \coordinate(2) at($1*(x) + -1*(y) + 1*(z)$); 
    \coordinate(3) at($0*(x) + 0*(y) + 1*(z)$); 
    \coordinate(4) at($2*(x) + -1*(y) + 0*(z)$); 
    \coordinate(5) at($0*(x) + -1*(y) + 2*(z)$); 
    \coordinate(6) at($1*(x) + -1*(y) + 0*(z)$); 
    \coordinate(7) at($0*(x) + -1*(y) + 1*(z)$); 
    \coordinate(8) at($0*(x) + -1*(y) + 0*(z)$); 
\def\segwidth{0.5mm}
\def\positivecolor{green!30!red}
\def\negativecolor{green!30!orange}
\def\segcolor{green!30!blue}
    \foreach \s\t in {(6)/(7), (7)/(8), (8)/(6)} 
    {\draw[line width = 0.4mm, \negativecolor] \s--\t;}
    \foreach \s\t in {(2)/(6), (2)/(7), (5)/(7),(4)/(6)} 
    {\draw[line width = 0.4mm, \segcolor] \s--\t;}
    \foreach \s\t in {(1)/(2), (1)/(3), (2)/(3)} 
    {\draw[line width = \segwidth, \positivecolor] \s--\t;}
    \foreach \s\t in {(1)/(4), (5)/(3), (2)/(5), (2)/(4)} 
        {\draw[line width = \segwidth, \segcolor] \s--\t;} 
    \foreach \v in {(1),(2),(3),(4),(5),(6),(7),(8)} 
        {\fill[gray, opacity = 0.8] \v circle(1.1mm);} 
\end{tikzpicture}
&
\begin{tikzpicture}[baseline=0mm, scale=1]
    \coordinate(0) at(0:0); 
    \node(x) at(215:1) {}; 
    \node(y) at(0:1) {}; 
    \node(z) at(90:1) {}; 
    \draw[gray, <-] ($1*(x)$)--($-1*(x)$); 
    \draw[gray, <-] ($1*(y)$)--($-1*(y)$); 
    \draw[gray, <-] ($1*(z)$)--($-1*(z)$); 
    \node at(x) [below]{$\bm{e}_1$}; 
    \node at(y) [below]{$\bm{e}_2$}; 
    \node at(z) [right]{$\bm{e}_3$}; 
    \def\gridwidth{0.1}
    \begin{scope}[gray, line width = \gridwidth]
    \foreach \c in {0,1,2} {
    \draw ($-\c*(y)$)--($2*(x)+ -\c*(y)$); 
    \draw ($\c*(x)$)--($-2*(y)+ \c*(x)$); 
    \draw ($-\c*(y)$)--($2*(z)+ -\c*(y)$); 
    \draw ($\c*(z)$)--($-2*(y)+ \c*(z)$); 
    \draw ($-2*(y)+\c*(z)$)--($-2*(y)+ 2*(x)+ \c*(z)$); 
    \draw ($-2*(y)+\c*(x)$)--($-2*(y)+ 2*(z)+ \c*(x)$); }
    \end{scope}
    \fill[black, opacity = 0.8] (0)circle(0.5mm);
    \coordinate(1) at($1*(x) + 0*(y) + 0*(z)$); 
    \coordinate(2) at($1*(x) + -1*(y) + 1*(z)$); 
    \coordinate(3) at($0*(x) + 0*(y) + 1*(z)$); 
    \coordinate(4) at($2*(x) + -1*(y) + 0*(z)$); 
    \coordinate(5) at($0*(x) + -1*(y) + 2*(z)$); 
    \coordinate(6) at($1*(x) + -1*(y) + 0*(z)$); 
    \coordinate(7) at($0*(x) + -1*(y) + 1*(z)$); 
    \coordinate(8) at($0*(x) + -1*(y) + 0*(z)$); 
\def\segwidth{0.5mm}
\def\positivecolor{green!30!red}
\def\negativecolor{green!30!orange}
\def\segcolor{green!30!blue}
    \foreach \s\t in {(6)/(7), (7)/(8), (8)/(6)} 
    {\draw[line width = 0.4mm, \negativecolor] \s--\t;}
    \foreach \s\t in {(4)/(7), (2)/(7), (5)/(7),(4)/(6)} 
    {\draw[line width = 0.4mm, \segcolor] \s--\t;}
    \foreach \s\t in {(1)/(2), (1)/(3), (2)/(3)} 
    {\draw[line width = \segwidth, \positivecolor] \s--\t;}
    \foreach \s\t in {(1)/(4), (5)/(3), (2)/(5), (2)/(4)} 
        {\draw[line width = \segwidth, \segcolor] \s--\t;} 
    \foreach \v in {(1),(2),(3),(4),(5),(6),(7),(8)} 
        {\fill[gray, opacity = 0.8] \v circle(1.1mm);} 
\end{tikzpicture}
&
\begin{tikzpicture}[baseline=0mm, scale=1]
    \coordinate(0) at(0:0); 
    \node(x) at(215:1) {}; 
    \node(y) at(0:1) {}; 
    \node(z) at(90:1) {}; 
    \draw[gray, <-] ($1*(x)$)--($-1*(x)$); 
    \draw[gray, <-] ($1*(y)$)--($-1*(y)$); 
    \draw[gray, <-] ($1*(z)$)--($-1*(z)$); 
    \node at(x) [below]{$\bm{e}_1$}; 
    \node at(y) [below]{$\bm{e}_2$}; 
    \node at(z) [right]{$\bm{e}_3$}; 
    \def\gridwidth{0.1}
    \begin{scope}[gray, line width = \gridwidth]
    \foreach \c in {0,1,2} {
    \draw ($-\c*(y)$)--($2*(x)+ -\c*(y)$); 
    \draw ($\c*(x)$)--($-2*(y)+ \c*(x)$); 
    \draw ($-\c*(y)$)--($2*(z)+ -\c*(y)$); 
    \draw ($\c*(z)$)--($-2*(y)+ \c*(z)$); 
    \draw ($-2*(y)+\c*(z)$)--($-2*(y)+ 2*(x)+ \c*(z)$); 
    \draw ($-2*(y)+\c*(x)$)--($-2*(y)+ 2*(z)+ \c*(x)$); }
    \end{scope}
    \fill[black, opacity = 0.8] (0)circle(0.5mm);
    \coordinate(1) at($1*(x) + 0*(y) + 0*(z)$); 
    \coordinate(2) at($1*(x) + -1*(y) + 1*(z)$); 
    \coordinate(3) at($0*(x) + 0*(y) + 1*(z)$); 
    \coordinate(4) at($2*(x) + -1*(y) + 0*(z)$); 
    \coordinate(5) at($0*(x) + -1*(y) + 2*(z)$); 
    \coordinate(6) at($1*(x) + -1*(y) + 0*(z)$); 
    \coordinate(7) at($0*(x) + -1*(y) + 1*(z)$); 
    \coordinate(8) at($0*(x) + -1*(y) + 0*(z)$); 
\def\segwidth{0.5mm}
\def\positivecolor{green!30!red}
\def\negativecolor{green!30!orange}
\def\segcolor{green!30!blue}
    \foreach \s\t in {(6)/(7), (7)/(8), (8)/(6)} 
    {\draw[line width = 0.4mm, \negativecolor] \s--\t;}
    \foreach \s\t in {(2)/(6), (5)/(6), (5)/(7),(4)/(6)} 
    {\draw[line width = 0.4mm, \segcolor] \s--\t;}
    \foreach \s\t in {(1)/(2), (1)/(3), (2)/(3)} 
    {\draw[line width = \segwidth, \positivecolor] \s--\t;}
    \foreach \s\t in {(1)/(4), (5)/(3), (2)/(5), (2)/(4)} 
        {\draw[line width = \segwidth, \segcolor] \s--\t;} 
    \foreach \v in {(1),(2),(3),(4),(5),(6),(7),(8)} 
        {\fill[gray, opacity = 0.8] \v circle(1.1mm);} 
\end{tikzpicture}
\end{tabular}    
    \caption{}
    \label{fig:B4}
\end{figure}

If this is the case (a), then we have the following path $p$ around $\bm{u}:=\left[
\begin{smallmatrix}
    1 \\ -1 \\ 1
\end{smallmatrix}\right]$.
\begin{equation}\nonumber
    \underline{p}_{\bm{u}}\colon\ 
    \left( 
    \underline{\emax} = \left[
    \begin{smallmatrix}
        1 \\ 0 \\ 0
    \end{smallmatrix}\middle|
    \begin{smallmatrix}
        0 \\ 0 \\ 1
    \end{smallmatrix}
    \right] 
    \underset{1}{\xrightarrow{(\bullet,1)}}
    \left[
    \begin{smallmatrix}
        0 \\ -1 \\ 2
    \end{smallmatrix}\middle|
    \begin{smallmatrix}
        0 \\ 0 \\ 1
    \end{smallmatrix}
    \right]
    \underset{0}{\xrightarrow{(1,\bullet)}}
    \left[
    \begin{smallmatrix}
        0 \\ -1 \\ 2
    \end{smallmatrix}\middle|
    \begin{smallmatrix}
        0 \\ -1 \\ 1
    \end{smallmatrix}
    \right]
    \underset{2}{\xrightarrow{(\bullet,0)}}
    \left[
    \begin{smallmatrix}
        2 \\ -1 \\ 0
    \end{smallmatrix}\middle|
    \begin{smallmatrix}
        0 \\ -1 \\ 1
    \end{smallmatrix}
    \right]\right).
\end{equation}
However, it gives a contradiction by Proposition \ref{rank2}. 
In addition, by permuting a role of $\bm{e}_1$ and $\bm{e}_3$, (b) can not appear. 
Thus, the fan $\Sigma_{d(9)}$ only appears. 


\subsection{Case $d(10)$} \label{case:10}
We consider the case $d_{+-+}(A,e) = d(10) = ((1,2,1),(1,1,0))$. 
By Proposition \ref{min_max}, we have 
\begin{equation}
    \emax = 
    \left[
     \begin{smallmatrix}
        1 \\ 0 \\ 0 
    \end{smallmatrix}\middle|
    \begin{smallmatrix}
        1 \\ -1 \\ 0 
    \end{smallmatrix}\middle| 
    \begin{smallmatrix}
        0 \\ 0 \\ 1 
    \end{smallmatrix}
    \right], \  
    \emin = \left[
     \begin{smallmatrix}
        1 \\ -2 \\ 0 
    \end{smallmatrix}\middle|
    \begin{smallmatrix}
        0 \\ -1 \\ 0 
    \end{smallmatrix}\middle| 
    \begin{smallmatrix}
        0 \\ -1 \\ 1 
    \end{smallmatrix}
    \right], \ 
    \bm{v}_{\emax} = \begin{bsmallmatrix}
        1 \\ 0 \\ 1
    \end{bsmallmatrix} \ \text{and} \ 
    \bm{v}_{\emin} = \begin{bsmallmatrix}
        -1 \\ -1 \\ 0
    \end{bsmallmatrix}.
\end{equation}

For $X:=\{\emax,\emax\}$, the set $\PeX$ is given by the following system of inequalities in $\mathbb{R}_{+-+}^3$: 
\begin{equation}
    x_1 + x_3 \leq 1,\ -x_1-x_2 \leq 1,\ -x_1\le 0,\ x_2\le 0,\ -x_3 \le 0. 
\end{equation}
Then, we have 
\begin{equation}
    \PeXZ = \left\{
    \begin{bsmallmatrix}
        1 \\ 0 \\ 0
    \end{bsmallmatrix}, 
    \begin{bsmallmatrix}
        1 \\ -1 \\ 0
    \end{bsmallmatrix}, 
    \begin{bsmallmatrix}
        1 \\ -2 \\ 0
    \end{bsmallmatrix}, 
    \begin{bsmallmatrix}
        0 \\ -1 \\ 0
    \end{bsmallmatrix}, 
    \begin{bsmallmatrix}
        0 \\ -1 \\ 1
    \end{bsmallmatrix},
    \begin{bsmallmatrix}
        0 \\ 0 \\ -1
    \end{bsmallmatrix} 
    \right\} \subseteq \PeAe. 
\end{equation}
On the other hand, we have 
$\mathcal{L}(X) = \{
\begin{bsmallmatrix}
    1 \\ 0 \\ 1
\end{bsmallmatrix}, 
\begin{bsmallmatrix}
    -1 \\ -1 \\ 0
\end{bsmallmatrix}
\} \subseteq \mathcal{E}_1$. 
Thus, we have $\PeAe = \PeX$ by Proposition \ref{intermediate}. 
In this case, $\SigmaeAe$ is one of fans in Figure \ref{fig:C}, where the left (resp., right) one corresponds to the fan $\Sigma_{d(10),0}$ (resp., $\Sigma_{d(10),1}$). 
In the following, we show that the fan $\Sigma_{d(10),0}$ (resp., $\Sigma_{d(10),1}$) appears if and only if $h_{13}=0$ (resp., $h_{13}=1$).

\begin{figure}[ht]
    \begin{tabular}{ccccc}
    $\Sigma_{d(10),0}$ & $\Sigma_{d(10),1}$ \\  
\begin{tikzpicture}[baseline=0mm, scale=1]
    \coordinate(0) at(0:0); 
    \node(x) at(215:1) {}; 
    \node(y) at(0:1) {}; 
    \node(z) at(90:1) {}; 
    \draw[gray, <-] ($1*(x)$)--($-1*(x)$); 
    \draw[gray, <-] ($1*(y)$)--($-1*(y)$); 
    \draw[gray, <-] ($1*(z)$)--($-1*(z)$); 
    \node at(x) [below]{$\bm{e}_1$}; 
    \node at(y) [below]{$\bm{e}_2$}; 
    \node at(z) [right]{$\bm{e}_3$}; 
    \def\gridwidth{0.1}
    \begin{scope}[gray, line width = \gridwidth]
    \foreach \c in {0,1,2} {
    \draw ($-\c*(y)$)--($2*(x)+ -\c*(y)$); 
    \draw ($\c*(x)$)--($-2*(y)+ \c*(x)$); 
    \draw ($-\c*(y)$)--($2*(z)+ -\c*(y)$); 
    \draw ($\c*(z)$)--($-2*(y)+ \c*(z)$); 
    \draw ($-2*(y)+\c*(z)$)--($-2*(y)+ 2*(x)+ \c*(z)$); 
    \draw ($-2*(y)+\c*(x)$)--($-2*(y)+ 2*(z)+ \c*(x)$); }
    \end{scope}
    \fill[black, opacity = 0.8] (0)circle(0.5mm);
    \coordinate(1) at($1*(x) + 0*(y) + 0*(z)$); 
    \coordinate(2) at($1*(x) + -1*(y) + 0*(z)$); 
    \coordinate(3) at($0*(x) + 0*(y) + 1*(z)$); 
    \coordinate(4) at($0*(x) + -1*(y) + 1*(z)$); 
    \coordinate(5) at($1*(x) + -2*(y) + 0*(z)$); 
    \coordinate(6) at($0*(x) + -1*(y) + 0*(z)$); 
\def\segwidth{0.5mm}
\def\positivecolor{green!30!red}
\def\negativecolor{green!30!orange}
\def\segcolor{green!30!blue}
    \foreach \s\t in {(5)/(6), (6)/(4)} 
    {\draw[line width = 0.4mm, \negativecolor] \s--\t;}
    \draw[line width = \segwidth, \negativecolor] (4)--(5);
    \foreach \s\t in {(1)/(2), (1)/(3), (2)/(3)} 
    {\draw[line width = \segwidth, \positivecolor] \s--\t;}
    \foreach \s\t in {(3)/(4), (5)/(3), (2)/(5)} 
        {\draw[line width = \segwidth, \segcolor] \s--\t;} 
    \foreach \v in {(1),(2),(3),(4),(5),(6)} 
        {\fill[gray, opacity = 0.8] \v circle(1.1mm);} 
\end{tikzpicture}
&
\begin{tikzpicture}[baseline=0mm, scale=1]
    \coordinate(0) at(0:0); 
    \node(x) at(215:1) {}; 
    \node(y) at(0:1) {}; 
    \node(z) at(90:1) {}; 
    \draw[gray, <-] ($1*(x)$)--($-1*(x)$); 
    \draw[gray, <-] ($1*(y)$)--($-1*(y)$); 
    \draw[gray, <-] ($1*(z)$)--($-1*(z)$); 
    \node at(x) [below]{$\bm{e}_1$}; 
    \node at(y) [below]{$\bm{e}_2$}; 
    \node at(z) [right]{$\bm{e}_3$}; 
    \def\gridwidth{0.1}
    \begin{scope}[gray, line width = \gridwidth]
    \foreach \c in {0,1,2} {
    \draw ($-\c*(y)$)--($2*(x)+ -\c*(y)$); 
    \draw ($\c*(x)$)--($-2*(y)+ \c*(x)$); 
    \draw ($-\c*(y)$)--($2*(z)+ -\c*(y)$); 
    \draw ($\c*(z)$)--($-2*(y)+ \c*(z)$); 
    \draw ($-2*(y)+\c*(z)$)--($-2*(y)+ 2*(x)+ \c*(z)$); 
    \draw ($-2*(y)+\c*(x)$)--($-2*(y)+ 2*(z)+ \c*(x)$); }
    \end{scope}
    \fill[black, opacity = 0.8] (0)circle(0.5mm);
    \coordinate(1) at($1*(x) + 0*(y) + 0*(z)$); 
    \coordinate(2) at($1*(x) + -1*(y) + 0*(z)$); 
    \coordinate(3) at($0*(x) + 0*(y) + 1*(z)$); 
    \coordinate(4) at($0*(x) + -1*(y) + 1*(z)$); 
    \coordinate(5) at($1*(x) + -2*(y) + 0*(z)$); 
    \coordinate(6) at($0*(x) + -1*(y) + 0*(z)$); 
\def\segwidth{0.5mm}
\def\positivecolor{green!30!red}
\def\negativecolor{green!30!orange}
\def\segcolor{green!30!blue}
    \foreach \s\t in {(5)/(6), (6)/(4)} 
    {\draw[line width = 0.4mm, \negativecolor] \s--\t;}
    \draw[line width = \segwidth, \negativecolor] (4)--(5);
    \foreach \s\t in {(1)/(2), (1)/(3), (2)/(3)} 
    {\draw[line width = \segwidth, \positivecolor] \s--\t;}
    \foreach \s\t in {(3)/(4), (2)/(4), (2)/(5)} 
        {\draw[line width = \segwidth, \segcolor] \s--\t;} 
    \foreach \v in {(1),(2),(3),(4),(5),(6)} 
        {\fill[gray, opacity = 0.8] \v circle(1.1mm);} 
\end{tikzpicture}
\end{tabular}
    \caption{}
    \label{fig:C}
\end{figure}

Since $\Sigma(A,e)$ is ordered, there is an arrow 
\begin{equation}\label{C arrow}
    \tau := \left[ 
    \begin{smallmatrix}
        1 \\ -2 \\ 0
    \end{smallmatrix}\middle|
    \begin{smallmatrix}
        f_1 \\ -2f_1-f_2+1 \\ f_2 
    \end{smallmatrix}\middle|
    \begin{smallmatrix}
        0 \\ -1 \\ 1
    \end{smallmatrix}
    \right] \xrightarrow{(f_1,\bullet,f_2)}
    \emin = \left[ 
    \begin{smallmatrix}
        1 \\ -2 \\ 0
    \end{smallmatrix}\middle|
    \begin{smallmatrix}
        0 \\ -1 \\ 0 
    \end{smallmatrix}\middle|
    \begin{smallmatrix}
        0 \\ -1 \\ 1
    \end{smallmatrix}
    \right]
\end{equation}
ending at $\emin$ in the Hasse quiver of $(\SigmaeAe,\leq)$, where $f_1,f_2,f_1+f_2\in \{0,1,2\}$. 
If $f_1=f_2=0$, then $\left[
\begin{smallmatrix}
    1\\-2\\0  
\end{smallmatrix}\middle|
\begin{smallmatrix}
    0\\1\\0  
\end{smallmatrix}
\right]\in \SigmaeAe$, which is a contradiction to the sign-coherent property. 
If $(f_1,f_2)\in \{(1,1),(2,0),(0,2)\}$, then we have 
\begin{equation}
    \langle \begin{bsmallmatrix}
    f_1 \\ -2f_1-f_2+1 \\ f_2
\end{bsmallmatrix}, \bm{v}_{\emax}\rangle = f_1 + f_2 = 2\not\leq 1, 
\end{equation}
a contradiction.
Therefore, we have $(f_1,f_2)\in \{(0,1),(1,0)\}$. 
Applying Proposition \ref{path p1} with $(r_{12},r_{32})=(2,1)$ and $h_{12}=1$, the path $\underline{\alpha}_{(-\bm{e}_2)}$ in \eqref{path pq} is either \text{\bf (i)} or \text{\bf (iii)} as follows.  
\begin{eqnarray}
\text{\bf (i):} &&
\left(\underline{\emin} =
\left[ 
    \begin{smallmatrix}
        1 \\ -2 \\ 0 
    \end{smallmatrix}\middle|
    \begin{smallmatrix}
        0 \\ -1 \\ 1
    \end{smallmatrix}
    \right]
\underset{1}{\xrightarrow{(0,\bullet)}}
    \underline{\tau_1} \underset{2}{\xrightarrow{(\bullet,0)}} 
    \underline{\tau_2} = \underline{\sigma_-}\right), \\
\text{\bf (iii):} && 
\left(\underline{\emin} =
\left[ 
    \begin{smallmatrix}
        1 \\ -2 \\ 0 
    \end{smallmatrix}\middle|
    \begin{smallmatrix}
        0 \\ -1 \\ 1
    \end{smallmatrix}
    \right]
\underset{0}{\xrightarrow{(1,\bullet)}}
    \underline{\tau_1} \underset{1}{\xrightarrow{(\bullet,1)}} 
    \underline{\tau_2} 
    \underset{1}{\xrightarrow{(1,\bullet)}}
    \underline{\tau_3} = \underline{\sigma_-}\right), 
\end{eqnarray}
where $\tau_j$ can be computed from $\emin$ inductively. 
By Proposition \ref{path p1}, \text{\bf (i)} appears if and only if $h_{13}=0$. 
Thus, \text{\bf (iii)} appears if and only if $h_{13}=1$.

We first consider the case \text{\bf (i)}. 
Connecting with \eqref{C arrow}, we obtain the following path $p$ around $\bm{u} := \begin{bsmallmatrix}
    1 \\ -2 \\ 0 
\end{bsmallmatrix}$.  
\begin{equation}
    \underline{p}_{\bm{u}} \colon\  
    \left(
    \underline{\tau} = 
    \left[ 
    \begin{smallmatrix}
        f_1 \\ -2f_1-f_2+1 \\ f_2 
    \end{smallmatrix}\middle|
    \begin{smallmatrix}
        0 \\ -1 \\ 1
    \end{smallmatrix}
    \right] \underset{f_1}{\xrightarrow{(\bullet,f_2)}}
    \underline{\emin} = \left[ 
    \begin{smallmatrix}
        0 \\ -1 \\ 0 
    \end{smallmatrix}\middle|
    \begin{smallmatrix}
        0 \\ -1 \\ 1
    \end{smallmatrix}
    \right] \underset{0}{\xrightarrow{(1,\bullet)}}
    \underline{\tau_1} = \left[ 
    \begin{smallmatrix}
        0 \\ -1 \\ 0 
    \end{smallmatrix}\middle|
    \begin{smallmatrix}
        1 \\ -1 \\ -1
    \end{smallmatrix}
    \right]\right).  
\end{equation}
It implies that $f_2 \neq 0$ by Proposition \ref{rank2}. Thus, we have $(f_1,f_2)= (0,1)$ and $\tau$ is 
\begin{equation}
    \tau = \left[
    \begin{smallmatrix}
        1 \\ -2 \\ 0
    \end{smallmatrix}\middle|
    \begin{smallmatrix}
        0 \\ 0 \\ 1 
    \end{smallmatrix}\middle|
    \begin{smallmatrix}
        0 \\ -1 \\ 1
    \end{smallmatrix}
    \right] \in \SigmaeAe
\end{equation}
in this case. This is precisely the fan $\Sigma_{d(10),0}$ as desired.

Next, we consider the case \text{\bf (iii)}. 
By a similar discussion to the previous paragraph, 
we obtain the following path $p'$ around $\bm{u}:=\begin{bsmallmatrix}
    1 \\ -2 \\ 0
\end{bsmallmatrix}$.
\begin{equation}
    \underline{p'}_{\bm{u}} \colon\ 
    \left(\underline{\tau} =
    \left[ 
    \begin{smallmatrix}
        f_1 \\ -2f_1-f_2+1 \\ f_2 
    \end{smallmatrix}\middle|
    \begin{smallmatrix}
        0 \\ -1 \\ 1
    \end{smallmatrix}
    \right] \underset{f_1}{\xrightarrow{(\bullet,f_2)}}
    \underline{\emin} = \left[ 
    \begin{smallmatrix}
        0 \\ -1 \\ 0 
    \end{smallmatrix}\middle|
    \begin{smallmatrix}
        0 \\ -1 \\ 1
    \end{smallmatrix}
    \right] \underset{1}{\xrightarrow{(0,\bullet)}}
    \underline{\tau_1} = \left[ 
    \begin{smallmatrix}
        0 \\ -1 \\ 0 
    \end{smallmatrix}\middle|
    \begin{smallmatrix}
        1 \\ -1 \\ -1
    \end{smallmatrix}
    \right]\right). 
\end{equation}
It implies that $f_2=0$ by Proposition \ref{rank2}. 
Thus, we conclude that $(f_1,f_2) = (1,0)$ and $\tau$ is 
\begin{equation}
    \tau = \left[
    \begin{smallmatrix}
        1 \\ -2 \\ 0
    \end{smallmatrix}\middle|
    \begin{smallmatrix}
        1 \\ -1 \\ 0 
    \end{smallmatrix}\middle|
    \begin{smallmatrix}
        0 \\ -1 \\ 1
    \end{smallmatrix}
    \right] \in \SigmaeAe. 
\end{equation}
Therefore, we obtain the fan $\Sigma_{d(10),1}$ as desired.


\subsection{Case $d(11)$} \label{case:11}
Assume that $d_{+-+}(A,e) = d(11) = ((1,2,1),(1,2,0))$. 
By Proposition \ref{min_max}, we have 
\begin{equation}\label{D1 mm}
    \emax = 
    \left[
     \begin{smallmatrix}
        1 \\ 0 \\ 0 
    \end{smallmatrix}\middle|
    \begin{smallmatrix}
        1 \\ -1 \\ 0 
    \end{smallmatrix}\middle| 
    \begin{smallmatrix}
        0 \\ 0 \\ 1 
    \end{smallmatrix}
    \right], \
    \emin = \left[
     \begin{smallmatrix}
        1 \\ -2 \\ 0 
    \end{smallmatrix}\middle|
    \begin{smallmatrix}
        0 \\ -1 \\ 0 
    \end{smallmatrix}\middle| 
    \begin{smallmatrix}
        0 \\ -2 \\ 1 
    \end{smallmatrix}
    \right], \
   \bm{v}_{\emax} = \begin{bsmallmatrix}
        1 \\ 0 \\ 1
    \end{bsmallmatrix} \ \text{and} \ 
   \bm{v}_{\emin} = \begin{bsmallmatrix}
        -1 \\ -1 \\ -1
    \end{bsmallmatrix}. 
\end{equation}
For $X=\{\emax,\emin\}$, the set $\PeX$ is given by the following system of inequalities in $\mathbb{R}_{+-+}^3$: 
\begin{equation}
    x_1 + x_3 \leq 1,\ -x_1-x_2-x_3 \leq 1,\ -x_1\le 0,\ x_2\le 0,\ -x_3\le 0. 
\end{equation}
Then, we have 
\begin{equation}
    \PeXZ = \left\{
    \begin{bsmallmatrix}
        1 \\ 0 \\ 0
    \end{bsmallmatrix}, 
    \begin{bsmallmatrix}
        1 \\ -1 \\ 0
    \end{bsmallmatrix},
    \begin{bsmallmatrix}
        1 \\ -2 \\ 0
    \end{bsmallmatrix}, 
    \begin{bsmallmatrix}
        0 \\ -1 \\ 0
    \end{bsmallmatrix}, 
    \begin{bsmallmatrix}
        0 \\ -2 \\ 1
    \end{bsmallmatrix}, 
    \begin{bsmallmatrix}
        0 \\ -1 \\ 1
    \end{bsmallmatrix},
    \begin{bsmallmatrix}
        0 \\ 0 \\ 1
    \end{bsmallmatrix} 
    \right\}.   
\end{equation}
Since $(l_{32},r_{32}) = (1,2)$, we have $\begin{bsmallmatrix}
        0 \\ -1 \\ 1
\end{bsmallmatrix}\in \SigmaeAe$ by Proposition \ref{coord_plane}.
Thus, we obtain $\PeXZ \subset \PeAe$. 
On the other hand, we have $\mathcal{L}(X) = \{
\begin{bsmallmatrix}
        1 \\ 0 \\ 1
\end{bsmallmatrix}, 
\begin{bsmallmatrix}
        -1 \\ -1 \\ -1
\end{bsmallmatrix}
\} \subseteq \mathcal{E}_1$ by a direct calculation.
Then, we deduce that $\PeX = \PeAe$ from Proposition \ref{intermediate}. 
In this case, there are only $3$ possibility of $\SigmaeAe$ described in Figure \ref{fig:D1}, where the left most one is the fan $\Sigma_{d(11)}$. 

\begin{figure}[ht]
    \begin{tabular}{ccccc}
    $\Sigma_{d(11)}$ & (a)& (b) \\ 
\begin{tikzpicture}[baseline=0mm, scale=1]
    \coordinate(0) at(0:0); 
    \node(x) at(215:1) {}; 
    \node(y) at(0:1) {}; 
    \node(z) at(90:1) {}; 
    \draw[gray, <-] ($1*(x)$)--($-1*(x)$); 
    \draw[gray, <-] ($1*(y)$)--($-1*(y)$); 
    \draw[gray, <-] ($1*(z)$)--($-1*(z)$); 
    \node at(x) [below]{$\bm{e}_1$}; 
    \node at(y) [below]{$\bm{e}_2$}; 
    \node at(z) [right]{$\bm{e}_3$}; 
    \def\gridwidth{0.1}
    \begin{scope}[gray, line width = \gridwidth]
    \foreach \c in {0,1,2} {
    \draw ($-\c*(y)$)--($2*(x)+ -\c*(y)$); 
    \draw ($\c*(x)$)--($-2*(y)+ \c*(x)$); 
    \draw ($-\c*(y)$)--($2*(z)+ -\c*(y)$); 
    \draw ($\c*(z)$)--($-2*(y)+ \c*(z)$); 
    \draw ($-2*(y)+\c*(z)$)--($-2*(y)+ 2*(x)+ \c*(z)$); 
    \draw ($-2*(y)+\c*(x)$)--($-2*(y)+ 2*(z)+ \c*(x)$); }
    \end{scope}
    \fill[black, opacity = 0.8] (0)circle(0.5mm);
    \coordinate(1) at($1*(x) + 0*(y) + 0*(z)$); 
    \coordinate(2) at($1*(x) + -1*(y) + 0*(z)$); 
    \coordinate(3) at($0*(x) + 0*(y) + 1*(z)$); 
    \coordinate(4) at($0*(x) + -1*(y) + 1*(z)$); 
    \coordinate(5) at($0*(x) + -2*(y) + 1*(z)$); 
    \coordinate(6) at($1*(x) + -2*(y) + 0*(z)$); 
    \coordinate(7) at($0*(x) + -1*(y) + 0*(z)$); 
\def\segwidth{0.5mm}
\def\positivecolor{green!30!red}
\def\negativecolor{green!30!orange}
\def\segcolor{green!30!blue}
    \foreach \s\t in {(6)/(7), (7)/(5)} 
    {\draw[line width = 0.4mm, \negativecolor] \s--\t;}
    \draw[line width = \segwidth, \negativecolor] (5)--(6);
    \foreach \s\t in {(1)/(2), (1)/(3), (2)/(3)} 
    {\draw[line width = \segwidth, \positivecolor] \s--\t;}
    \foreach \s\t in {(3)/(4), (4)/(5), (2)/(4), (2)/(5), (2)/(6)} 
        {\draw[line width = \segwidth, \segcolor] \s--\t;} 
    \foreach \v in {(1),(2),(3),(4),(5),(6),(7)} 
        {\fill[gray, opacity = 0.8] \v circle(1.1mm);} 
\end{tikzpicture}
&
\begin{tikzpicture}[baseline=0mm, scale=1]
    \coordinate(0) at(0:0); 
    \node(x) at(215:1) {}; 
    \node(y) at(0:1) {}; 
    \node(z) at(90:1) {}; 
    \draw[gray, <-] ($1*(x)$)--($-1*(x)$); 
    \draw[gray, <-] ($1*(y)$)--($-1*(y)$); 
    \draw[gray, <-] ($1*(z)$)--($-1*(z)$); 
    \node at(x) [below]{$\bm{e}_1$}; 
    \node at(y) [below]{$\bm{e}_2$}; 
    \node at(z) [right]{$\bm{e}_3$}; 
    \def\gridwidth{0.1}
    \begin{scope}[gray, line width = \gridwidth]
    \foreach \c in {0,1,2} {
    \draw ($-\c*(y)$)--($2*(x)+ -\c*(y)$); 
    \draw ($\c*(x)$)--($-2*(y)+ \c*(x)$); 
    \draw ($-\c*(y)$)--($2*(z)+ -\c*(y)$); 
    \draw ($\c*(z)$)--($-2*(y)+ \c*(z)$); 
    \draw ($-2*(y)+\c*(z)$)--($-2*(y)+ 2*(x)+ \c*(z)$); 
    \draw ($-2*(y)+\c*(x)$)--($-2*(y)+ 2*(z)+ \c*(x)$); }
    \end{scope}
    \fill[black, opacity = 0.8] (0)circle(0.5mm);
    \coordinate(1) at($1*(x) + 0*(y) + 0*(z)$); 
    \coordinate(2) at($1*(x) + -1*(y) + 0*(z)$); 
    \coordinate(3) at($0*(x) + 0*(y) + 1*(z)$); 
    \coordinate(4) at($0*(x) + -1*(y) + 1*(z)$); 
    \coordinate(5) at($0*(x) + -2*(y) + 1*(z)$); 
    \coordinate(6) at($1*(x) + -2*(y) + 0*(z)$); 
    \coordinate(7) at($0*(x) + -1*(y) + 0*(z)$); 
\def\segwidth{0.5mm}
\def\positivecolor{green!30!red}
\def\negativecolor{green!30!orange}
\def\segcolor{green!30!blue}
    \foreach \s\t in {(6)/(7), (7)/(5)} 
    {\draw[line width = 0.4mm, \negativecolor] \s--\t;}
    \draw[line width = \segwidth, \negativecolor] (5)--(6);
    \foreach \s\t in {(1)/(2), (1)/(3), (2)/(3)} 
    {\draw[line width = \segwidth, \positivecolor] \s--\t;}
    \foreach \s\t in {(3)/(4), (4)/(5), (2)/(4), (4)/(6), (2)/(6)} 
        {\draw[line width = \segwidth, \segcolor] \s--\t;} 
    \foreach \v in {(1),(2),(3),(4),(5),(6),(7)} 
        {\fill[gray, opacity = 0.8] \v circle(1.1mm);} 
\end{tikzpicture}
& 
\begin{tikzpicture}[baseline=0mm, scale=1]
    \coordinate(0) at(0:0); 
    \node(x) at(215:1) {}; 
    \node(y) at(0:1) {}; 
    \node(z) at(90:1) {}; 
    \draw[gray, <-] ($1*(x)$)--($-1*(x)$); 
    \draw[gray, <-] ($1*(y)$)--($-1*(y)$); 
    \draw[gray, <-] ($1*(z)$)--($-1*(z)$); 
    \node at(x) [below]{$\bm{e}_1$}; 
    \node at(y) [below]{$\bm{e}_2$}; 
    \node at(z) [right]{$\bm{e}_3$}; 
    \def\gridwidth{0.1}
    \begin{scope}[gray, line width = \gridwidth]
    \foreach \c in {0,1,2} {
    \draw ($-\c*(y)$)--($2*(x)+ -\c*(y)$); 
    \draw ($\c*(x)$)--($-2*(y)+ \c*(x)$); 
    \draw ($-\c*(y)$)--($2*(z)+ -\c*(y)$); 
    \draw ($\c*(z)$)--($-2*(y)+ \c*(z)$); 
    \draw ($-2*(y)+\c*(z)$)--($-2*(y)+ 2*(x)+ \c*(z)$); 
    \draw ($-2*(y)+\c*(x)$)--($-2*(y)+ 2*(z)+ \c*(x)$); }
    \end{scope}
    \fill[black, opacity = 0.8] (0)circle(0.5mm);
    \coordinate(1) at($1*(x) + 0*(y) + 0*(z)$); 
    \coordinate(2) at($1*(x) + -1*(y) + 0*(z)$); 
    \coordinate(3) at($0*(x) + 0*(y) + 1*(z)$); 
    \coordinate(4) at($0*(x) + -1*(y) + 1*(z)$); 
    \coordinate(5) at($0*(x) + -2*(y) + 1*(z)$); 
    \coordinate(6) at($1*(x) + -2*(y) + 0*(z)$); 
    \coordinate(7) at($0*(x) + -1*(y) + 0*(z)$); 
\def\segwidth{0.5mm}
\def\positivecolor{green!30!red}
\def\negativecolor{green!30!orange}
\def\segcolor{green!30!blue}
    \foreach \s\t in {(6)/(7), (7)/(5)} 
    {\draw[line width = 0.4mm, \negativecolor] \s--\t;}
    \draw[line width = \segwidth, \negativecolor] (5)--(6);
    \foreach \s\t in {(1)/(2), (1)/(3), (2)/(3)} 
    {\draw[line width = \segwidth, \positivecolor] \s--\t;}
    \foreach \s\t in {(3)/(4), (4)/(5), (3)/(6), (4)/(6), (2)/(6)} 
        {\draw[line width = \segwidth, \segcolor] \s--\t;} 
    \foreach \v in {(1),(2),(3),(4),(5),(6),(7)} 
        {\fill[gray, opacity = 0.8] \v circle(1.1mm);} 
\end{tikzpicture}
    \end{tabular}
    \caption{}
    \label{fig:D1}
\end{figure}

To prove the assertion, it suffices to show that $\tau\in \SigmaeAe$, where 
\begin{equation}
    \tau = \left[ 
    \begin{smallmatrix}
        1 \\ -2 \\ 0
    \end{smallmatrix}\middle|
    \begin{smallmatrix}
        1 \\ -1 \\ 0 
    \end{smallmatrix}\middle|
    \begin{smallmatrix}
        0 \\ -1 \\ 2
    \end{smallmatrix}
    \right]. 
\end{equation}

We consider an arrow 
\begin{equation}\label{D1 arrow}
    \tau' := \left[ 
    \begin{smallmatrix}
        1 \\ -2 \\ 0
    \end{smallmatrix}\middle|
    \begin{smallmatrix}
        f_1 \\ -2(f_1+f_2)+1 \\ f_2 
    \end{smallmatrix}\middle|
    \begin{smallmatrix}
        0 \\ -2 \\ 1
    \end{smallmatrix}
    \right] \xrightarrow{(f_1,\bullet,f_2)}
    \emin = \left[ 
    \begin{smallmatrix}
        1 \\ -2 \\ 0
    \end{smallmatrix}\middle|
    \begin{smallmatrix}
        0 \\ -1 \\ 0 
    \end{smallmatrix}\middle|
    \begin{smallmatrix}
        0 \\ -2 \\ 1
    \end{smallmatrix}
    \right]
\end{equation}
ending at $\emin$, where $f_1,f_2,f_1+f_2\in \{0,1,2\}$. 
From now on, we show that $(f_1,f_2)=(1,0)$ by excluding all other cases. 
If $f_1=f_2=0$, then we have $\left[\begin{smallmatrix}
    1 \\ -2 \\ 0
\end{smallmatrix}\middle| 
\begin{smallmatrix}
    0 \\ 1 \\ 0
\end{smallmatrix}\right]\in \SigmaeAe$, which is a contradiction to the sign-coherent property. 
If $(f_1,f_{2})\in \{(1,1),(2,0),(0,2)\}$, then we have 
\begin{equation}
    \langle 
    \begin{bsmallmatrix}
        f_1 \\ -3 \\ f_2 
    \end{bsmallmatrix}, 
   \bm{v}_{\emax}
    \rangle = 
    f_1 + f_2 = 2 \not\leq 1, 
\end{equation}
a contradiction.
Thus, we have $(f_1,f_2)\in \{(1,0),(0,1)\}$. 
In addition, by applying Proposition \ref{path p3} with $(r_{12},r_{32})=(2,2)$ and $h_{32}=0$, 
the path $\underline{\beta}_{(-\bm{e}_2)}$ in \eqref{path pq} must be 
\begin{equation} 
    \text{\bf (v'):} \quad 
    \left(
    \underline{\emin} = 
    \left[ 
    \begin{smallmatrix}
        1 \\ -2 \\ 0 
    \end{smallmatrix}\middle|
    \begin{smallmatrix}
        0 \\ -2 \\ 1
    \end{smallmatrix}
    \right]
    \underset{1}{\xrightarrow{(\bullet,1)}}
    \underline{\tau_1'} \underset{0}{\xrightarrow{(2,\bullet)}}
    \underline{\tau_2'} \underset{1}{\xrightarrow{(\bullet,1)}}  
    \underline{\tau_3'} \underset{0}{\xrightarrow{(2,\bullet)}}  
    \underline{\tau_4'} 
    = \underline{\sigma_-}
    \right)
\end{equation}
where $\tau_j'$ can be computed from $\emin$ inductively. Connecting with \eqref{D1 arrow}, we obtain the following path $p$ around 
$\bm{u}:=\begin{bsmallmatrix}
    0 \\ -2 \\ 1
\end{bsmallmatrix}$. 
\begin{equation}
    \underline{p}_{\bm{u}} \colon \  
    \left( \underline{\tau}' = 
    \left[ 
    \begin{smallmatrix}
        1 \\ -2 \\ 0
    \end{smallmatrix}\middle|
    \begin{smallmatrix}
        f_1 \\ -2(f_1+f_2) +1 \\ f_2 
    \end{smallmatrix}
    \right] \underset{f_2}{\xrightarrow{(f_1,\bullet)}}
    \underline{\emin} = \left[ 
    \begin{smallmatrix}
        1 \\ -2 \\ 0 
    \end{smallmatrix}\middle|
    \begin{smallmatrix}
        0 \\ -1 \\ 0
    \end{smallmatrix}
    \right] \underset{1}{\xrightarrow{(\bullet,1)}}
    \underline{\tau_1'} = \left[ 
    \begin{smallmatrix}
        -1 \\ -1 \\ 1 
    \end{smallmatrix}\middle|
    \begin{smallmatrix}
        0 \\ -1 \\ 0
    \end{smallmatrix}
    \right]\right).  
\end{equation}
It implies that $f_1\neq 0$ by Proposition \ref{rank2}. Therefore, we conclude that $(f_1,f_2)=(1,0)$, and $\tau'$ is 
\begin{equation}
    \tau' = \tau = \left[ 
    \begin{smallmatrix}
        1 \\ -2 \\ 0
    \end{smallmatrix}\middle|
    \begin{smallmatrix}
        1 \\ -1 \\ 0 
    \end{smallmatrix}\middle|
    \begin{smallmatrix}
        0 \\ -1 \\ 2
    \end{smallmatrix}
    \right]\in \SigmaeAe 
\end{equation}
as desired.

\subsection{Case $d(12)$} \label{case:12}
Assume that $d_{+-+}(A,e) = d(12) = ((1,1,1),(1,2,1))$. 
By Proposition \ref{min_max}, we have 
\begin{equation}
    \emax = 
    \left[
     \begin{smallmatrix}
        1 \\ 0 \\ 0 
    \end{smallmatrix}\middle|
    \begin{smallmatrix}
        1 \\ -1 \\ 1 
    \end{smallmatrix}\middle| 
    \begin{smallmatrix}
        0 \\ 0 \\ 1 
    \end{smallmatrix}
    \right], \
    \emin = \left[
     \begin{smallmatrix}
        1 \\ -1 \\ 0 
    \end{smallmatrix}\middle|
    \begin{smallmatrix}
        0 \\ -1 \\ 0 
    \end{smallmatrix}\middle| 
    \begin{smallmatrix}
        0 \\ -2 \\ 1 
    \end{smallmatrix}
    \right], \ 
   \bm{v}_{\emax} = \begin{bsmallmatrix}
    1 \\ 1 \\ 1    
    \end{bsmallmatrix}\ \ \text{and} \ \
   \bm{v}_{\emin} = 
    \begin{bsmallmatrix}
        0\\ -1 \\ -1
    \end{bsmallmatrix}. 
\end{equation}

We claim that $\kappa,\varphi \in \SigmaeAe$, where 
\begin{equation}\label{D2 kappaphi}
    \kappa = 
    \left[
     \begin{smallmatrix}
        0 \\ -1 \\ 1 
    \end{smallmatrix}\middle|
    \begin{smallmatrix}
        1 \\ -1 \\ 1 
    \end{smallmatrix}\middle|
    \begin{smallmatrix}
        2 \\ -2 \\ 1 
    \end{smallmatrix}
    \right], \varphi = \left[ 
    \begin{smallmatrix}
        1 \\ -1 \\ 0 
    \end{smallmatrix}\middle|
    \begin{smallmatrix}
        1 \\ 0 \\ 0 
    \end{smallmatrix}\middle|
    \begin{smallmatrix}
        2 \\ -2 \\ 1
    \end{smallmatrix}
    \right] \ \ \text{with} \ \ 
   \bm{v}_{\kappa} = \begin{bsmallmatrix}
        0 \\ 0 \\ 1
    \end{bsmallmatrix}, 
   \bm{v}_{\varphi} = \begin{bsmallmatrix}
        1 \\ 0 \\ -1
    \end{bsmallmatrix}. 
\end{equation}
After that, we get the assertion by the following reason: 
Let $X:=\{\emax,\emin,\kappa,\varphi\}$. 
We recall from the definition that the set $\PeX$ is bounded by the following hyperplanes. 
\begin{equation} \label{hyperplanes} 
\langle \bm{x}, \bm{v}_{\emax} \rangle=  
\langle \bm{x}, \bm{v}_{\emin} \rangle=  
\langle \bm{x}, \bm{v}_{\kappa} \rangle= 
\langle \bm{x}, \bm{v}_{\varphi} \rangle= 1 \ \text{and}\ 
\langle \bm{x}, -\bm{e}_1 \rangle= \langle \bm{x}, \bm{e}_2 \rangle= \langle \bm{x}, -\bm{e}_3 \rangle= 0. 
\end{equation}
Or equivalently, it is given by the following system of inequalities:
\begin{equation}
    x_1 + x_2 + x_3 \leq 1,\ -x_2 - x_3 \leq 1, \ x_3 \leq 1,\ x_1-x_3 \leq 1,\ -x_1\le 0,\ x_2\le 0,\ -x_3 \le 0.
\end{equation}
From a direct calculation, we have 
\begin{equation}
    \PeXZ = \left\{
    \begin{bsmallmatrix}
        1 \\ 0 \\ 0 
    \end{bsmallmatrix}, 
    \begin{bsmallmatrix}
        1 \\ -1 \\ 0
    \end{bsmallmatrix}, 
    \begin{bsmallmatrix}
        0 \\ -1 \\ 0
    \end{bsmallmatrix}, 
    \begin{bsmallmatrix}
        0 \\ -2 \\ 1
    \end{bsmallmatrix}, 
    \begin{bsmallmatrix}
        0 \\ -1 \\ 1
    \end{bsmallmatrix}, 
    \begin{bsmallmatrix}
        0 \\ 0 \\ 1
    \end{bsmallmatrix}, 
    \begin{bsmallmatrix}
        1 \\ -1 \\ 1
    \end{bsmallmatrix}, 
    \begin{bsmallmatrix}
        1 \\ -2 \\ 1
    \end{bsmallmatrix}, 
    \begin{bsmallmatrix}
        2 \\ -2 \\ 1 
    \end{bsmallmatrix} 
    \right\}.  
\end{equation}
Now, we show that every vertex of $\PeX$ lies in $\PeX_{\mathbb{Z}}$. 
In fact, for each vertex $\bm{x}$ of $\PeX$, 
we take $3$ hyperplanes 
$\mathbb{H}_1,\mathbb{H}_2, \mathbb{H}_3$ from \eqref{hyperplanes} such that 
$\mathbb{H}_{1}\cap \mathbb{H}_{2}\cap \mathbb{H}_{3}=\{\bm{x}\}$. 
Letting $\bm{u}_1,\bm{u}_2,\bm{u}_3$ be their normal vectors, 
they are such that 
\[
\{\bm{u}_1,\bm{u}_2,\bm{u}_3\}\subseteq \{\bm{v}_{\emax},\bm{v}_{\emin},\bm{v}_{\kappa},\bm{v}_{\varphi},-\bm{e}_1,\bm{e}_2,-\bm{e}_3\}.
\]
and $\det [\bm{u}_1\,\bm{u}_2\,\bm{u}_3]\ne 0$. 
We can easily check that 
$\det [\bm{u}_1\,\bm{u}_2\,\bm{u}_3]\in \{\pm 1\}$ and hence $$\bm{x}\in \PeXZ$$ for all triples but 
$\{\bm{v}_{\emax}, \bm{v}_{\varphi},\bm{e}_2\}$.  
Indeed, $\det[\bm{v}_{\emax}, \bm{v}_{\varphi},\bm{e}_2] = 2$ holds. 
However, we also get the claim for this triple 
since $\bm{x}=\bm{e}_1 \in \PeXZ$ in this case. 
From the above discussion, we obtain $\PeX = \Conv_0(\PeXZ)$.

On the other hand, since $\begin{bsmallmatrix}
        2 \\ -2 \\ 1 
    \end{bsmallmatrix}\in \Conv_0(\{\emin,\kappa\})\subseteq \PeAe$, 
we have $\PeXZ\subseteq \PeAe$. 
Consequently, we have 
\[
\PeX = \Conv_{0}(\PeXZ) \subseteq \PeAe \subseteq \PeX,  
\]
showing that $\PeAe = \PeX$. 
From this description, maximal cones in $X$ give rise to a unique convex non-singular fan $\Sigma_{d(12)}$, which coincides with $\SigmaeAe$ as desired. 
See Figure \ref{fig:D2}. 

\begin{figure}[ht]
    \begin{tabular}{cccc}
    $\Sigma_{d(12)}$ \\ 
\begin{tikzpicture}[baseline=0mm, scale=1]
    \coordinate(0) at(0:0); 
    \node(x) at(215:1) {}; 
    \node(y) at(0:1) {}; 
    \node(z) at(90:1) {}; 
    \draw[gray, <-] ($1*(x)$)--($-1*(x)$); 
    \draw[gray, <-] ($1*(y)$)--($-1*(y)$); 
    \draw[gray, <-] ($1*(z)$)--($-1*(z)$); 
    \node at(x) [below]{$\bm{e}_1$}; 
    \node at(y) [below]{$\bm{e}_2$}; 
    \node at(z) [right]{$\bm{e}_3$}; 
    \def\gridwidth{0.1}
    \begin{scope}[gray, line width = \gridwidth]
    \foreach \c in {0,1,2} {
    \draw ($-\c*(y)$)--($2*(x)+ -\c*(y)$); 
    \draw ($\c*(x)$)--($-2*(y)+ \c*(x)$); 
    \draw ($-\c*(y)$)--($2*(z)+ -\c*(y)$); 
    \draw ($\c*(z)$)--($-2*(y)+ \c*(z)$); 
    \draw ($-2*(y)+\c*(z)$)--($-2*(y)+ 2*(x)+ \c*(z)$); 
    \draw ($-2*(y)+\c*(x)$)--($-2*(y)+ 2*(z)+ \c*(x)$); }
    \end{scope}
    \fill[black, opacity = 0.8] (0)circle(0.5mm);
    \coordinate(1) at($1*(x) + 0*(y) + 0*(z)$); 
    \coordinate(2) at($1*(x) + -1*(y) + 1*(z)$); 
    \coordinate(3) at($0*(x) + 0*(y) + 1*(z)$); 
    \coordinate(4) at($2*(x) + -2*(y) + 1*(z)$); 
    \coordinate(5) at($0*(x) + -1*(y) + 1*(z)$); 
    \coordinate(6) at($1*(x) + -1*(y) + 0*(z)$); 
    \coordinate(7) at($1*(x) + -2*(y) + 1*(z)$); 
    \coordinate(8) at($0*(x) + -2*(y) + 1*(z)$); 
    \coordinate(9) at($0*(x) + -1*(y) + 0*(z)$); 
\def\segwidth{0.5mm}
\def\positivecolor{green!30!red}
\def\negativecolor{green!30!orange}
\def\segcolor{green!30!blue}
    \foreach \s\t in {(8)/(9), (8)/(6), (6)/(9)} 
    {\draw[line width = 0.4mm, \negativecolor] \s--\t;}
    \foreach \s\t in {(7)/(6)} 
    {\draw[line width = 0.4mm, \segcolor] \s--\t;}
    \foreach \s\t in {(1)/(2), (1)/(3), (2)/(3)} 
    {\draw[line width = \segwidth, \positivecolor] \s--\t;}
    \foreach \s\t in {(4)/(2), (4)/(1), (2)/(5), (4)/(5), (4)/(7), (7)/(5), (4)/(6), (7)/(8), (1)/(6), (3)/(5), (5)/(8)} 
        {\draw[line width = \segwidth, \segcolor] \s--\t;} 
    \foreach \v in {(1),(2),(3),(4),(5),(6),(7),(8),(9)} 
        {\fill[gray, opacity = 0.8] \v circle(1.1mm);} 
\end{tikzpicture}
        \end{tabular}
    \caption{}
    \label{fig:D2}
\end{figure}

From now on, we show that $\kappa,\varphi$ lies in $\SigmaeAe$, where $\kappa,\varphi$ are cones in \eqref{D2 kappaphi}. 
Firstly, we study maximal cones of $\SigmaeAe$ adjacent to the maximum and minimum. 
We consider an arrow 
\begin{equation}\label{D2 arrow1}
    \tau := \left[ 
    \begin{smallmatrix}
        1 \\ -1 \\ 0
    \end{smallmatrix}\middle|
    \begin{smallmatrix}
        f_1 \\ -f_1-2f_2+1 \\ f_2 
    \end{smallmatrix}\middle|
    \begin{smallmatrix}
        0 \\ -2 \\ 1
    \end{smallmatrix}
    \right] \xrightarrow{(f_1,\bullet,f_2)}
    \emin = \left[ 
    \begin{smallmatrix}
        1 \\ -2 \\ 0
    \end{smallmatrix}\middle|
    \begin{smallmatrix}
        0 \\ -1 \\ 0 
    \end{smallmatrix}\middle|
    \begin{smallmatrix}
        0 \\ -2 \\ 1
    \end{smallmatrix}
    \right]
\end{equation}
ending at $\emin$, 
where $f_1,f_2,f_1+f_2\in \{0,1,2\}$. 
By Proposition \ref{path p1} with $(r_{12},r_{32})=(1,2)$ and $h_{12}=1$, 
the path $\underline{\alpha}_{(-\bm{e}_2)}$ in \eqref{path pq} is of the form 
\begin{equation} \label{D2 p1}
    \text{\bf (i):} \quad 
    \left(\underline{\emin} = 
    \left[ 
    \begin{smallmatrix}
        1 \\ -1 \\ 0 
    \end{smallmatrix}\middle|
    \begin{smallmatrix}
        0 \\ -2 \\ 1
    \end{smallmatrix}
    \right]
    \underset{2}{\xrightarrow{(0,\bullet)}}
    \underline{\tau_1} =
    \left[ 
    \begin{smallmatrix}
        1 \\ -1 \\ 0 
    \end{smallmatrix}\middle|
    \begin{smallmatrix}
        0 \\ 0 \\ -1
    \end{smallmatrix}
    \right]
    \underset{1}{\xrightarrow{(\bullet,0)}}
    \underline{\tau_2} = \underline{\sigma_-}\right),
\end{equation}
where $\tau_j$ can be computed from $\emin$ inductively. 
Connecting with \eqref{D2 arrow1}, we obtain the following path $p$ around $\bm{u}:= \begin{bsmallmatrix}
    1 \\ -1 \\ 0
\end{bsmallmatrix}$.
\begin{equation} \label{D2 path1}
    \underline{p}_{\bm{u}}\colon \  
    \left(
    \underline{\tau} = 
    \left[ 
    \begin{smallmatrix}
        f_1 \\ -f_1-2f_2+1 \\ f_2 
    \end{smallmatrix}\middle|
    \begin{smallmatrix}
        0 \\ -2 \\ 1
    \end{smallmatrix}
    \right] \underset{f_1}{\xrightarrow{(f_2,\bullet)}}
    \underline{\emin} = \left[ 
    \begin{smallmatrix}
        0 \\ -1 \\ 0 
    \end{smallmatrix}\middle|
    \begin{smallmatrix}
        0 \\ -2 \\ 1
    \end{smallmatrix}
    \right] \underset{0}{\xrightarrow{(\bullet,2)}}
    \underline{\tau_1} = \left[ 
    \begin{smallmatrix}
        0 \\ -1 \\ 0 
    \end{smallmatrix}\middle|
    \begin{smallmatrix}
        0 \\ 0 \\ -1
    \end{smallmatrix}
    \right]\right).  
\end{equation}
It implies that $f_2=1$ and $f_1\in \{0,1\}$ by Proposition \ref{rank2}.
If $f_1=0$, then we have $\tau = \left[
\begin{smallmatrix}
    1 \\ -1 \\ 0 
\end{smallmatrix}\middle| 
\begin{smallmatrix}
    0 \\ -1 \\ 1 
\end{smallmatrix}\middle| 
\begin{smallmatrix}
    0 \\ -2 \\ 1 
\end{smallmatrix}
\right]$ and $\bm{v}_{\tau}= \begin{bsmallmatrix}
    1 \\ 0 \\ 1
\end{bsmallmatrix}$. In this case, we have $\langle\bm{v}_{\tau}, \begin{bsmallmatrix}
    1 \\ -1 \\ 1
\end{bsmallmatrix}\rangle = 2 \not\leq 1$, which is a contradiction.
Thus, we have $(f_1,f_2)=(1,1)$ and 
\begin{equation}\label{D2 tau}
    \tau = \left[ 
    \begin{smallmatrix}
        1 \\ -1 \\ 0 
    \end{smallmatrix}\middle|
    \begin{smallmatrix}
        1 \\ -2 \\ 1
    \end{smallmatrix}\middle|
    \begin{smallmatrix}
        0 \\ -2 \\ 1
    \end{smallmatrix}
    \right] \in \SigmaeAe.
\end{equation}

Secondly, we focus on the path $\underline{\beta}_{(-\bm{e}_2)}$ in \eqref{path pq}. By Proposition \ref{path p3} with $(r_{12},r_{32})=(1,2)$ and $h_{32}=1$, 
it is either {\upshape {\bf (i')}} or {\upshape {\bf (iii')}} given as follows: 
\begin{align}
\text{\bf (i'):} \quad &
\left(\underline{\emin} =
\left[ 
    \begin{smallmatrix}
        1 \\ -1 \\ 0 
    \end{smallmatrix}\middle|
    \begin{smallmatrix}
        0 \\ -2 \\ 1
    \end{smallmatrix}
    \right]
\underset{1}{\xrightarrow{(\bullet,0)}}
    \underline{\tau_1'} = 
    \left[ 
    \begin{smallmatrix}
        -1 \\ 0 \\ 0 
    \end{smallmatrix}\middle|
    \begin{smallmatrix}
        0 \\ -2 \\ 1
    \end{smallmatrix}
    \right]
\underset{2}{\xrightarrow{(0,\bullet)}} 
    \underline{\tau_2'} = \underline{\sigma_-}\right), \label{D2 p3} 
\\    
\text{\bf (iii'):} \quad &
    \left(\underline{\emin} =
\left[ 
    \begin{smallmatrix}
        1 \\ -1 \\ 0 
    \end{smallmatrix}\middle|
    \begin{smallmatrix}
        0 \\ -2 \\ 1
    \end{smallmatrix}
    \right]
\underset{0}{\xrightarrow{(\bullet,1)}}
    \underline{\tau_1'} \underset{1}{\xrightarrow{(1,\bullet)}} 
    \underline{\tau_2'} = 
    \left[ 
    \begin{smallmatrix}
        -1 \\ -1 \\ 1 
    \end{smallmatrix}\middle|
    \begin{smallmatrix}
        -1 \\ 0 \\ 0
    \end{smallmatrix}
    \right]
    \underset{1}{\xrightarrow{(\bullet,1)}}
    \underline{\tau_3'} = \underline{\sigma_-}\right),\nonumber
\end{align}
where $\tau_j'$ can be computed from $\emin$ inductively. We show that the path {\upshape {\bf (i')}} only appears. 
In fact, the maximal cone $\tau$ in \eqref{D2 tau} provides the following path $q$ around $\bm{v}:=\begin{bsmallmatrix}
    0 \\ -2 \\ 1
\end{bsmallmatrix}$.  
\begin{equation}\label{D2 path2}
    \underline{q}_{\bm{v}} \colon\  
    \left(\underline{\tau} = \left[ 
    \begin{smallmatrix}
        1 \\ -1 \\ 0 
    \end{smallmatrix}\middle|
    \begin{smallmatrix}
        1 \\ -2 \\ 1
    \end{smallmatrix}
    \right] \underset{1}{\xrightarrow{(1,\bullet)}}
    \underline{\emin} = 
    \left[ 
    \begin{smallmatrix}
        1 \\ -1 \\ 0 
    \end{smallmatrix}\middle|
    \begin{smallmatrix}
        0 \\ -1 \\ 0
    \end{smallmatrix}
    \right] \underset{c_2}{\xrightarrow{(\bullet,c_1)}}
    \underline{\eta} := \left[ 
    \begin{smallmatrix}
        -1 \\ -c_1-2c_2+1 \\ c_2 
    \end{smallmatrix}\middle|
    \begin{smallmatrix}
        0 \\ -1 \\ 0
    \end{smallmatrix}
    \right]\right), 
\end{equation}
where $\eta$ is a maximal cone obtained from $\sigma_{\emin}$ by exchanging $\left[\begin{smallmatrix}
        1 \\ -1 \\ 0 
\end{smallmatrix}\right]$ with $c_1,c_2,c_1+c_2\in \{0,1,2\}$.  
That is, $\eta$ coincides with the cone $\tau_1'$ lying in the path $\underline{\beta}_{(-\bm{e}_2)}$. 
Therefore, the path \text{\bf (i')} (resp., \text{\bf (iii')}) appears if and only if $(c_1,c_2)=(1,0)$ (resp., $(c_1,c_2)=(0,1)$). 
However, by Proposition \ref{rank2}, we have $c_{1}>0$ and hence we have {\upshape {\bf (i')}}. 
Then, by Proposition \ref{rank2}, 
we can extend the above path \eqref{D2 path2} to the following path $\tilde{q}$ around $\bm{v}$.
\begin{equation}
    \underline{\tilde{q}}_{\bm{v}} \colon \ 
    \left(\underline{\xi} := \left[ 
    \begin{smallmatrix}
        0 \\ -2b-1 \\ b+1 
    \end{smallmatrix}\middle|
    \begin{smallmatrix}
        1 \\ -2 \\ 1
    \end{smallmatrix}
    \right] \underset{b}{\xrightarrow{(\bullet,1)}}
    \underline{\tau} = \left[ 
    \begin{smallmatrix}
        1 \\ -1 \\ 0 
    \end{smallmatrix}\middle|
    \begin{smallmatrix}
        1 \\ -2 \\ 1
    \end{smallmatrix}
    \right] \underset{1}{\xrightarrow{(1,\bullet)}}
    \underline{\emin} = 
    \left[ 
    \begin{smallmatrix}
        1 \\ -1 \\ 0 
    \end{smallmatrix}\middle|
    \begin{smallmatrix}
        0 \\ -1 \\ 0
    \end{smallmatrix}
    \right] \underset{0}{\xrightarrow{(\bullet,1)}}
    \underline{\tau_1'} = 
    \left[ 
    \begin{smallmatrix}
        -1 \\ 0 \\ 0 
    \end{smallmatrix}\middle|
    \begin{smallmatrix}
        0 \\ -1 \\ 0
    \end{smallmatrix}
    \right]\right), 
\end{equation}
where $b\in \{0,1\}$. 
If $b=1$, then we have $\begin{bsmallmatrix}
    0 \\ -3 \\ 2
\end{bsmallmatrix}\in \SigmaAe$, which is a contradiction to Proposition \ref{coord_plane}. 
Thus, we have $b=0$ and 
\begin{equation}\label{D2 xi}
    \xi = 
    \left[ 
    \begin{smallmatrix}
        0 \\ -1 \\ 1 
    \end{smallmatrix}\middle|
    \begin{smallmatrix}
        1 \\ -2 \\ 1
    \end{smallmatrix}\middle|
    \begin{smallmatrix}
        0 \\ -2 \\ 1 
    \end{smallmatrix}
    \right] \in \SigmaeAe \ \ \text{with} \ \ \bm{v}_{\xi} = \begin{bsmallmatrix}
        0 \\ 0 \\ 1
    \end{bsmallmatrix}.  
\end{equation}

Thirdly, we consider an arrow 
\begin{equation}\label{D2 arrow2}
    \emax = 
    \left[
     \begin{smallmatrix}
        1 \\ 0 \\ 0 
    \end{smallmatrix}\middle|
    \begin{smallmatrix}
        1 \\ -1 \\ 1 
    \end{smallmatrix}\middle| 
    \begin{smallmatrix}
        0 \\ 0 \\ 1 
    \end{smallmatrix}
    \right] \xrightarrow{(\bullet,a_1,a_2)}
    \rho := 
    \left[
     \begin{smallmatrix}
        a_1-1 \\ -a_1 \\ a_1+a_2 
    \end{smallmatrix}\middle|
    \begin{smallmatrix}
        1 \\ -1 \\ 1 
    \end{smallmatrix}\middle| 
    \begin{smallmatrix}
        0 \\ 0 \\ 1 
    \end{smallmatrix}
    \right]
\end{equation}
starting at $\emax$, where $a_1,a_2,a_1+a_2\in \{0,1,2\}$. If $a_1=0$, then we have $\left[\begin{smallmatrix}
    -1 \\ 0 \\ a_2
\end{smallmatrix}\middle| 
\begin{smallmatrix}
    1 \\ -1 \\ 1
\end{smallmatrix}\right]\in \SigmaeAe$, which is a contradiction to the sign-coherent property. 
In addition, if $(a_1,a_2)\in \{(1,1),(2,0)\}$, then we have   
\begin{equation}
    \begin{bsmallmatrix}
    a_1 - 1 \\ -a_1 \\ 2 
\end{bsmallmatrix}\in \SigmaeAe \quad \text{and}\quad
\langle\bm{v}_{\xi}, \begin{bsmallmatrix}
    a_1 - 1 \\ -a_1 \\ 2 
\end{bsmallmatrix}\rangle = 2 \not\leq 1, 
\end{equation}
a contradiction.
Therefore, we have $(a_1,a_2) = (1,0)$ and 
\begin{equation}\label{D2 rho}
    \rho = \left[
     \begin{smallmatrix}
        0 \\ -1 \\ 1 
    \end{smallmatrix}\middle|
    \begin{smallmatrix}
        1 \\ -1 \\ 1 
    \end{smallmatrix}\middle| 
    \begin{smallmatrix}
        0 \\ 0 \\ 1 
    \end{smallmatrix}
    \right]\in \SigmaeAe. 
\end{equation}

Now, we show that $\kappa$ in \eqref{D2 kappaphi} lies in $\SigmaeAe$. One can extend \eqref{D2 arrow2} to the following path $s$ around $\bm{w}= \begin{bsmallmatrix}
    1 \\ -1 \\ 1
\end{bsmallmatrix}$. 
\begin{equation*}
    \underline{s}_{\bm{w}} \colon \ 
    \left(
    \underline{\emax} = 
    \left[
     \begin{smallmatrix}
        1 \\ 0 \\ 0 
    \end{smallmatrix}\middle|
    \begin{smallmatrix}
        0 \\ 0 \\ 1 
    \end{smallmatrix}
    \right] \underset{1}{\xrightarrow{(\bullet,0)}} 
    \underline{\rho} = 
    \left[
     \begin{smallmatrix}
        0 \\ -1 \\ 1 
    \end{smallmatrix}\middle|
    \begin{smallmatrix}
        0 \\ 0 \\ 1 
    \end{smallmatrix}
    \right] \underset{p}{\xrightarrow{(0,\bullet)}} 
    \underline{\kappa}' := 
    \left[
    \begin{smallmatrix}
        0 \\ -1 \\ 1 
    \end{smallmatrix}\middle|
    \begin{smallmatrix}
        p \\ -p \\ p-1 
    \end{smallmatrix}
    \right]\right),
\end{equation*}
where $p\in \{0,1,2\}$. 
If $p=0$, then we have $\left[\begin{smallmatrix}
    0 \\ -1 \\ 1
\end{smallmatrix}\middle|
\begin{smallmatrix}
    0 \\ 0 \\ -1
\end{smallmatrix}\right]\in \SigmaeAe$, which is a contradiction to the sign-coherent property. In addition, if $p=1$, then we have 
$\kappa' = \left[\begin{smallmatrix}
    0 \\ -1 \\ 1
\end{smallmatrix}\middle|
\begin{smallmatrix}
    1 \\ -1 \\ 1
\end{smallmatrix}\middle| 
\begin{smallmatrix}
    1 \\ -1 \\ 0
\end{smallmatrix}\right]$ and $\bm{v}_{\kappa'}= \begin{bsmallmatrix}
    0 \\ -1 \\ 0
\end{bsmallmatrix}$. In this case, we have $\langle\bm{v}_{\kappa'}, \begin{bsmallmatrix}
    0 \\ -2 \\ 1
\end{bsmallmatrix}\rangle = 2 \not\leq 1$, which is a contradiction.
Therefore, we have $p=2$ and 
\begin{equation}\label{D2 kappa}
    \kappa' = \kappa = 
    \left[
    \begin{smallmatrix}
        0 \\ -1 \\ 1 
    \end{smallmatrix}\middle|
     \begin{smallmatrix}
        1 \\ -1 \\ 1 
    \end{smallmatrix}\middle|
    \begin{smallmatrix}
        2 \\ -2 \\ 1 
    \end{smallmatrix}
    \right]\in \SigmaeAe 
\end{equation}
as desired. 

Finally, we show that $\varphi$ in \eqref{D2 kappaphi} lies in $\SigmaeAe$. 
We can extend the path \eqref{D2 path1} to the following path $\tilde{p}$  around $\bm{u}=\begin{bsmallmatrix}
    1 \\ -1 \\ 0
\end{bsmallmatrix}$. 
\begin{equation} \label{D2 path3}
    \underline{\tilde{p}}_{\bm{u}}\colon \ 
    \left(
    \underline{\varphi}' :=
    \left[ 
    \begin{smallmatrix}
        t+1 \\ -t \\ 0 
    \end{smallmatrix}\middle|
    \begin{smallmatrix}
        2 \\ -2 \\ 1
    \end{smallmatrix}
    \right]
    \underset{t}{\xrightarrow{(\bullet,1)}}
    \left[ 
    \begin{smallmatrix}
        1 \\ -2 \\ 1 
    \end{smallmatrix}\middle|
    \begin{smallmatrix}
        2 \\ -2 \\ 1
    \end{smallmatrix}
    \right] 
    \underset{0}{\xrightarrow{(2,\bullet)}}
    \underline{\tau} = 
    \left[ 
    \begin{smallmatrix}
        1 \\ -2 \\ 1 
    \end{smallmatrix}\middle|
    \begin{smallmatrix}
        0 \\ -2 \\ 1
    \end{smallmatrix}
    \right] \underset{1}{\xrightarrow{(1,\bullet)}}
    \underline{\emin} = \left[ 
    \begin{smallmatrix}
        0 \\ -1 \\ 0 
    \end{smallmatrix}\middle|
    \begin{smallmatrix}
        0 \\ -2 \\ 1
    \end{smallmatrix}
    \right] \underset{0}{\xrightarrow{(\bullet,2)}}
    \underline{\tau_1} = \left[ 
    \begin{smallmatrix}
        0 \\ -1 \\ 0 
    \end{smallmatrix}\middle|
    \begin{smallmatrix}
        0 \\ 0 \\ -1
    \end{smallmatrix}
    \right]\right),  
\end{equation}
where $t\in \{0,1\}$. 
If $t=1$, then we have $\begin{bsmallmatrix}
    2 \\ -1 \\ 0
\end{bsmallmatrix}\in \SigmaeAe$, 
a contradiction to our assumption that $l_{12}=1$ by Proposition \ref{coord_plane}. 
Thus, we have $t=0$ and
\begin{equation}
    \varphi' = \varphi = \left[ 
    \begin{smallmatrix}
        1 \\ -1 \\ 0 
    \end{smallmatrix}\middle|
    \begin{smallmatrix}
        1 \\ 0 \\ 0 
    \end{smallmatrix}\middle|
    \begin{smallmatrix}
        2 \\ -2 \\ 1
    \end{smallmatrix}
    \right]\in \SigmaeAe 
\end{equation}
as desired. 
It completes a proof.

\subsection{Case $d(13)$} \label{case:13}
Assume that $d_{+-+}(A,e) = d(13) = ((2,1,1),(1,2,1))$. 
Our calculation is similar to the previous case $d(12)$. 
By Proposition \ref{min_max}, $\emax$ and $\emin$ are the same as \eqref{D1 mm}. 
In addition, since $(r_{12},r_{32})=(1,2)$ and $h_{12}=h_{32}=1$, one can apply the same discussion to obtain the paths $\alpha$, $\beta$ in \eqref{D2 p1}, \eqref{D2 p3} respectively and also maximal cones $\tau$, $\xi$, $\rho$, $\kappa$ in \eqref{D2 tau}, \eqref{D2 xi}, \eqref{D2 rho} and \eqref{D2 kappa} respectively.
Moreover, we obtain the sequence \eqref{D2 path3} with $t=1$ and 
\begin{equation}
    \varphi' = \left[ 
    \begin{smallmatrix}
        1 \\ -1 \\ 0 
    \end{smallmatrix}\middle|
    \begin{smallmatrix}
        2 \\ -1 \\ 0 
    \end{smallmatrix}\middle|
    \begin{smallmatrix}
        2 \\ -2 \\ 1
    \end{smallmatrix}
    \right]\in \SigmaeAe. 
\end{equation}
In fact, if $t=0$, then we have $\left[\begin{smallmatrix}
    1 \\ 0 \\ 0
\end{smallmatrix}\middle| 
\begin{smallmatrix}
    1 \\ -1 \\ 0
\end{smallmatrix}\right]\in \SigmaeAe$, which is a contradiction to our assumption that $l_{12}=2$. 
Therefore, these maximal cones give rise to one of fans described in Figure \ref{fig:D2}, where the left one is the fan $\Sigma_{d(13)}$. 

\begin{figure}[ht]
\begin{tabular}{ccccc}
    $\Sigma_{d(13)}$ & (a) \\ 
\begin{tikzpicture}[baseline=0mm, scale=1]
    \coordinate(0) at(0:0); 
    \node(x) at(215:1) {}; 
    \node(y) at(0:1) {}; 
    \node(z) at(90:1) {}; 
    \draw[gray, <-] ($1*(x)$)--($-1*(x)$); 
    \draw[gray, <-] ($1*(y)$)--($-1*(y)$); 
    \draw[gray, <-] ($1*(z)$)--($-1*(z)$); 
    \node at(x) [below]{$\bm{e}_1$}; 
    \node at(y) [below]{$\bm{e}_2$}; 
    \node at(z) [right]{$\bm{e}_3$}; 
    \def\gridwidth{0.1}
    \begin{scope}[gray, line width = \gridwidth]
    \foreach \c in {0,1,2} {
    \draw ($-\c*(y)$)--($2*(x)+ -\c*(y)$); 
    \draw ($\c*(x)$)--($-2*(y)+ \c*(x)$); 
    \draw ($-\c*(y)$)--($2*(z)+ -\c*(y)$); 
    \draw ($\c*(z)$)--($-2*(y)+ \c*(z)$); 
    \draw ($-2*(y)+\c*(z)$)--($-2*(y)+ 2*(x)+ \c*(z)$); 
    \draw ($-2*(y)+\c*(x)$)--($-2*(y)+ 2*(z)+ \c*(x)$); }
    \end{scope}
    \fill[black, opacity = 0.8] (0)circle(0.5mm);
    \coordinate(1) at($1*(x) + 0*(y) + 0*(z)$); 
    \coordinate(2) at($1*(x) + -1*(y) + 1*(z)$); 
    \coordinate(3) at($0*(x) + 0*(y) + 1*(z)$); 
    \coordinate(4) at($0*(x) + -1*(y) + 1*(z)$); 
    \coordinate(5) at($2*(x) + -2*(y) + 1*(z)$); 
    \coordinate(6) at($2*(x) + -1*(y) + 0*(z)$); 
    \coordinate(7) at($1*(x) + -2*(y) + 1*(z)$); 
    \coordinate(8) at($1*(x) + -1*(y) + 0*(z)$); 
    \coordinate(9) at($0*(x) + -2*(y) + 1*(z)$); 
    \coordinate(10) at($0*(x) + -1*(y) + 0*(z)$); 
\def\segwidth{0.5mm}
\def\positivecolor{green!30!red}
\def\negativecolor{green!30!orange}
\def\segcolor{green!30!blue}
    \foreach \s\t in {(8)/(9), (8)/(10), (10)/(9)} 
    {\draw[line width = 0.4mm, \negativecolor] \s--\t;}
    \foreach \s\t in {(8)/(6), (8)/(5), (8)/(7)} 
    {\draw[line width = 0.4mm, \segcolor] \s--\t;}
    \foreach \s\t in {(1)/(2), (1)/(3), (2)/(3)} 
    {\draw[line width = \segwidth, \positivecolor] \s--\t;}
    \foreach \s\t in {(4)/(2), (4)/(3), (1)/(5), (2)/(5), (4)/(5), (1)/(6), (6)/(5), (4)/(7), (7)/(5), (4)/(9), (7)/(9)} 
        {\draw[line width = \segwidth, \segcolor] \s--\t;} 
    \foreach \v in {(1),(2),(3),(4),(5),(6),(7),(8),(9),(10)} 
        {\fill[gray, opacity = 0.8] \v circle(1.1mm);} 
\end{tikzpicture}
& 
\begin{tikzpicture}[baseline=0mm, scale=1]
    \coordinate(0) at(0:0); 
    \node(x) at(215:1) {}; 
    \node(y) at(0:1) {}; 
    \node(z) at(90:1) {}; 
    \draw[gray, <-] ($1*(x)$)--($-1*(x)$); 
    \draw[gray, <-] ($1*(y)$)--($-1*(y)$); 
    \draw[gray, <-] ($1*(z)$)--($-1*(z)$); 
    \node at(x) [below]{$\bm{e}_1$}; 
    \node at(y) [below]{$\bm{e}_2$}; 
    \node at(z) [right]{$\bm{e}_3$}; 
    \def\gridwidth{0.1}
    \begin{scope}[gray, line width = \gridwidth]
    \foreach \c in {0,1,2} {
    \draw ($-\c*(y)$)--($2*(x)+ -\c*(y)$); 
    \draw ($\c*(x)$)--($-2*(y)+ \c*(x)$); 
    \draw ($-\c*(y)$)--($2*(z)+ -\c*(y)$); 
    \draw ($\c*(z)$)--($-2*(y)+ \c*(z)$); 
    \draw ($-2*(y)+\c*(z)$)--($-2*(y)+ 2*(x)+ \c*(z)$); 
    \draw ($-2*(y)+\c*(x)$)--($-2*(y)+ 2*(z)+ \c*(x)$); }
    \end{scope}
    \fill[black, opacity = 0.8] (0)circle(0.5mm);
    \coordinate(1) at($1*(x) + 0*(y) + 0*(z)$); 
    \coordinate(2) at($1*(x) + -1*(y) + 1*(z)$); 
    \coordinate(3) at($0*(x) + 0*(y) + 1*(z)$); 
    \coordinate(4) at($0*(x) + -1*(y) + 1*(z)$); 
    \coordinate(5) at($2*(x) + -2*(y) + 1*(z)$); 
    \coordinate(6) at($2*(x) + -1*(y) + 0*(z)$); 
    \coordinate(7) at($1*(x) + -2*(y) + 1*(z)$); 
    \coordinate(8) at($1*(x) + -1*(y) + 0*(z)$); 
    \coordinate(9) at($0*(x) + -2*(y) + 1*(z)$); 
    \coordinate(10) at($0*(x) + -1*(y) + 0*(z)$); 
\def\segwidth{0.5mm}
\def\positivecolor{green!30!red}
\def\negativecolor{green!30!orange}
\def\segcolor{green!30!blue}
    \foreach \s\t in {(8)/(9), (8)/(10), (10)/(9)} 
    {\draw[line width = 0.4mm, \negativecolor] \s--\t;}
    \foreach \s\t in {(8)/(6), (8)/(5), (8)/(7)} 
    {\draw[line width = 0.4mm, \segcolor] \s--\t;}
    \foreach \s\t in {(1)/(2), (1)/(3), (2)/(3)} 
    {\draw[line width = \segwidth, \positivecolor] \s--\t;}
    \foreach \s\t in {(4)/(2), (4)/(3), (2)/(6), (2)/(5), (4)/(5), (1)/(6), (6)/(5), (4)/(7), (7)/(5), (4)/(9), (7)/(9)} 
        {\draw[line width = \segwidth, \segcolor] \s--\t;} 
    \foreach \v in {(1),(2),(3),(4),(5),(6),(7),(8),(9),(10)} 
        {\fill[gray, opacity = 0.8] \v circle(1.1mm);} 
\end{tikzpicture}
\end{tabular}
\caption{}
\label{fig:D3}
\end{figure}

If this is (a), then $\SigmaAe$ contains the following maximal cones.
\[
    \gamma := 
    \left[
    \begin{smallmatrix}
        2 \\ -1 \\ 0
    \end{smallmatrix}\middle| 
    \begin{smallmatrix}
        1 \\ -1 \\ 1
    \end{smallmatrix}\middle| 
    \begin{smallmatrix}
        2 \\ -2 \\ 1
    \end{smallmatrix}
    \right], \ 
    \kappa = \left[
    \begin{smallmatrix}
        0 \\ -1 \\ 1
    \end{smallmatrix}\middle| 
    \begin{smallmatrix}
        1 \\ -1 \\ 1
    \end{smallmatrix}\middle| 
    \begin{smallmatrix}
        2 \\ -2 \\ 1
    \end{smallmatrix}
    \right], \ 
    \delta := \left[
    \begin{smallmatrix}
        0 \\ -1 \\ 1
    \end{smallmatrix}\middle| 
    \begin{smallmatrix}
        1 \\ -2 \\ 1
    \end{smallmatrix}\middle| 
    \begin{smallmatrix}
        2 \\ -2 \\ 1
    \end{smallmatrix}
    \right]. 
\]
By Theorem \ref{theorem:g-fan is ordered}, one can find that there is the following path $p'$ around $\bm{v}:=\begin{bsmallmatrix}
    2 \\ -2 \\ 1
\end{bsmallmatrix}$. 
\begin{equation}\label{D3 path1}
    \underline{p}'_{\bm{v}} = \left(
    \underline{\gamma} = 
    \left[
    \begin{smallmatrix}
        2 \\ -1 \\ 0
    \end{smallmatrix}\middle| 
    \begin{smallmatrix}
        1 \\ -1 \\ 1
    \end{smallmatrix}  
    \right]  \underset{1}{\xrightarrow{(\bullet,0)}}
    \underline{\kappa} = \left[
    \begin{smallmatrix}
        0 \\ -1 \\ 1
    \end{smallmatrix}\middle| 
    \begin{smallmatrix}
        1 \\ -1 \\ 1
    \end{smallmatrix}  
    \right] \underset{1}{\xrightarrow{(1,\bullet)}} 
    \underline{\delta} = 
    \left[
    \begin{smallmatrix}
        0 \\ -1 \\ 1
    \end{smallmatrix}\middle| 
    \begin{smallmatrix}
        1 \\ -2 \\ 1
    \end{smallmatrix}  
    \right]\right).    
\end{equation}
However, it contradicts to Proposition \ref{rank2}. 
Therefore, we conclude that $\SigmaeAe = \Sigma_{d(13)}$ in this case. 

\bigskip


Now, we are ready to prove Theorem \ref{thm:orthant}. 

\begin{proof}[Proof of Theorem \ref{thm:orthant}]
We get the assertion (2) by consequences of Subsections \ref{case:0}-\ref{case:13}. 

In the following, we prove (1). We need to exclude the cases indicated by "$-$" in Table \ref{tab:15list}. 

Firstly, if $h_{12} = h_{32} = 0$, then $d_{+-+}(A,e)=((0,0,0),(0,0,0))$ holds, 
as in the proof in Subsection \ref{case:0}. 
So, this case has been done. 

Secondly, we assume that 
$d_{+-+}(A,e) \in \{((1,1,1),(2,1,0)), ((1,2,1),(2,1,0))\}$. 
In this case, we have 
\begin{equation*}
    \emax = 
    \left[
    \begin{smallmatrix}
        1 \\ 0 \\ 0
    \end{smallmatrix}\middle| 
    \begin{smallmatrix}
        1 \\ -1 \\ 0
    \end{smallmatrix}\middle|
    \begin{smallmatrix}
        0 \\ 0 \\ 1
    \end{smallmatrix}
    \right]\in \SigmaeAe \ \ \text{with} \ \ \bm{v}_{\emax} = 
    \begin{bsmallmatrix}
    1 \\ 0 \\ 1    
    \end{bsmallmatrix}. 
\end{equation*}
By $l_{32}=2$, 
we have $
\begin{bsmallmatrix}
        0 \\ -1 \\ 2
\end{bsmallmatrix}\in \SigmaeAe$. 
However, we have $\langle \bm{v}_{\emax},  \begin{bsmallmatrix}
        0 \\ -1 \\ 2
    \end{bsmallmatrix}\rangle = 2\not \leq 1$, a contraction.

Thirdly, we assume that $d_{+-+}(A,e) \in \{((2,1,1),(1,1,0)), ((2,1,1),(1,2,0))\}$. 
By Proposition \ref{min_max}, we have 
\begin{equation*}
    \emax = 
    \left[
    \begin{smallmatrix}
        1 \\ 0 \\ 0
    \end{smallmatrix}\middle| 
    \begin{smallmatrix}
        2 \\ -1 \\ 0
    \end{smallmatrix}\middle|
    \begin{smallmatrix}
        0 \\ 0 \\ 1
    \end{smallmatrix}
    \right]\in \SigmaeAe  \ \  \text{with} \ \  \bm{v}_{\emax} = 
    \begin{bsmallmatrix}
    1 \\ 1 \\ 1    
    \end{bsmallmatrix}. 
\end{equation*}
Starting from the positive cone $\sigma_+$, we have the following path $p$ around $\bm{e}_3$. 
\begin{equation}\nonumber
    \underline{p}_{\bm{e}_3} \colon\ 
    \left(\underline{\sigma_{+}} = \left[
    \begin{smallmatrix}
        1 \\ 0 \\ 0
    \end{smallmatrix}\middle|
    \begin{smallmatrix}
        0 \\ 1 \\ 0
    \end{smallmatrix}
    \right] 
    \underset{0}{\xrightarrow{(\bullet,2)}}
    \underline{\emax} = \left[
    \begin{smallmatrix}
        1 \\ 0 \\ 0
    \end{smallmatrix}\middle|
    \begin{smallmatrix}
        2 \\ -1 \\ 0
    \end{smallmatrix}
    \right] 
    \underset{a_1}{\xrightarrow{(\bullet,1)}}
    \left[
    \begin{smallmatrix}
        1 \\ -1 \\ a_1
    \end{smallmatrix}\middle|
    \begin{smallmatrix}
        2 \\ -1 \\ 0
    \end{smallmatrix}
    \right]
    \underset{0}{\xrightarrow{(2,\bullet)}}
    \left[
    \begin{smallmatrix}
        1 \\ -1 \\ a_1
    \end{smallmatrix}\middle|
    \begin{smallmatrix}
        0 \\ -1 \\ 2a_1
    \end{smallmatrix}
    \right]
    \underset{a_2}{\xrightarrow{(\bullet,1)}}
    \left[
    \begin{smallmatrix}
        -1 \\ 0 \\ 2a_1+a_2
    \end{smallmatrix}\middle|
    \begin{smallmatrix}
        0 \\ -1 \\ 2a_1
    \end{smallmatrix}
    \right]\right), 
\end{equation}
where $a_1,a_2\in \{0,1\}$. Since $a_{1}\in \{0,1\}$, 
we have either 
    $\left[\begin{smallmatrix}
        0 \\ -1 \\ 0
    \end{smallmatrix}\middle|
    \begin{smallmatrix}
        0 \\ 0 \\ 1
    \end{smallmatrix} \right] \in \SigmaAe$ or 
    $\left[\begin{smallmatrix}
        0 \\ -1 \\ 2
    \end{smallmatrix}\middle|
    \begin{smallmatrix}
        0 \\ 0 \\ 1
    \end{smallmatrix}\right] \in \SigmaAe$, in other words, $l_{32}=0$ or $l_{32}=2$ respectively. 
    However, this contradicts our assumption that $l_{32}=1$. 

Finally, we consider the case 
$d_{+-+}(A,e) = ((1,2,1),(1,2,1))$. 
By Proposition \ref{min_max}, we have 
\begin{equation}
    \emax = 
    \left[
     \begin{smallmatrix}
        1 \\ 0 \\ 0 
    \end{smallmatrix}\middle|
    \begin{smallmatrix}
        1 \\ -1 \\ 1 
    \end{smallmatrix}\middle| 
    \begin{smallmatrix}
        0 \\ 0 \\ 1 
    \end{smallmatrix}
    \right] \quad \text{and} \quad 
    \emin = \left[
     \begin{smallmatrix}
        1 \\ -2 \\ 0 
    \end{smallmatrix}\middle|
    \begin{smallmatrix}
        0 \\ -1 \\ 0 
    \end{smallmatrix}\middle| 
    \begin{smallmatrix}
        0 \\ -2 \\ 1 
    \end{smallmatrix}
    \right]. 
\end{equation}
Staring at the positive cone $\sigma_+$, there is a path 
\begin{equation*}
    \sigma_+ = 
    \left[
    \begin{smallmatrix}
        1 \\ 0 \\ 0
    \end{smallmatrix}\middle|
    \begin{smallmatrix}
        0 \\ 1 \\ 0
    \end{smallmatrix}\middle|
    \begin{smallmatrix}
        0 \\ 0 \\ 1
    \end{smallmatrix}
    \right]\xrightarrow{(1,\bullet,1)} 
    \emax = 
    \left[
    \begin{smallmatrix}
        1 \\ 0 \\ 0
    \end{smallmatrix}\middle|
    \begin{smallmatrix}
        1 \\ -1 \\ 1
    \end{smallmatrix}\middle|
    \begin{smallmatrix}
        0 \\ 0 \\ 1
    \end{smallmatrix}
    \right]\xrightarrow{(a_1,a_2,\bullet)}
    \left[
    \begin{smallmatrix}
        1 \\ 0 \\ 0
    \end{smallmatrix}\middle|
    \begin{smallmatrix}
        1 \\ -1 \\ 1
    \end{smallmatrix}\middle|
    \begin{smallmatrix}
        a_1+a_2 \\ -a_2 \\ a_2-1
    \end{smallmatrix}
    \right],
\end{equation*}
where $(a_1,a_2) \in \{(0,1),(1,1),(0,2)\}$ since $a_2>0$ by Proposition \ref{rank2}. 
Similarly, we have a path 
\begin{equation*}
    \sigma_+ = 
    \left[
    \begin{smallmatrix}
        1 \\ 0 \\ 0
    \end{smallmatrix}\middle|
    \begin{smallmatrix}
        0 \\ 1 \\ 0
    \end{smallmatrix}\middle|
    \begin{smallmatrix}
        0 \\ 0 \\ 1
    \end{smallmatrix}
    \right]\xrightarrow{(1,\bullet,1)} 
    \emax = 
    \left[
    \begin{smallmatrix}
        1 \\ 0 \\ 0
    \end{smallmatrix}\middle|
    \begin{smallmatrix}
        1 \\ -1 \\ 1
    \end{smallmatrix}\middle|
    \begin{smallmatrix}
        0 \\ 0 \\ 1
    \end{smallmatrix}
    \right]\xrightarrow{(\bullet,b_2,b_1)}
    \left[
    \begin{smallmatrix}
        b_2-1 \\ -b_2 \\ b_1+b_2
    \end{smallmatrix}\middle|
    \begin{smallmatrix}
        1 \\ -1 \\ 1
    \end{smallmatrix}\middle|
    \begin{smallmatrix}
        0 \\ 0 \\ 1
    \end{smallmatrix}
    \right]
\end{equation*}
with $(b_1,b_2)\in \{(0,1),(1,1),(0,2)\}$. Now, let $\bm{u}:=\left[
    \begin{smallmatrix}
        1 \\ -1 \\ 1
    \end{smallmatrix}\right]\in \SigmaeAe$. 

\begin{enumerate}[\rm (i)]
    \item If $(a_1,a_2)=(1,1)$, then we have $\left[
    \begin{smallmatrix}
        1 \\ 0 \\ 0
    \end{smallmatrix}\middle| 
    \begin{smallmatrix}
        2 \\ -1 \\ 0
    \end{smallmatrix}
    \right]\in \SigmaAe$, that is, $l_{12}=2$. It contradicts to our assumption that $l_{12}=1$. Similarly, we have a contradiction if $(b_1,b_2) = (1,1)$. 
    \item Next, we consider the case $(a_1,a_2) = (0,2)$. 
    Starting from $\emax$, there is the following path $p$ around $\bm{u}$. 
    \begin{equation}
    \underline{p}_{\bm{u}} = \left(\underline{\emax} = 
    \left[
    \begin{smallmatrix}
        1 \\ 0 \\ 0
    \end{smallmatrix}\middle|
    \begin{smallmatrix}
        0 \\ 0 \\ 1
    \end{smallmatrix}
    \right]\underset{2}{\xrightarrow{(0,\bullet)}}
    \left[
    \begin{smallmatrix}
        1 \\ 0 \\ 0
    \end{smallmatrix}\middle|
    \begin{smallmatrix}
        2 \\ -2 \\ 1
    \end{smallmatrix}
    \right] \underset{c}{\xrightarrow{(\bullet,0)}}
    \underline{\tau} := \left[
    \begin{smallmatrix}
        c-1 \\ -c \\ c
    \end{smallmatrix}\middle|
    \begin{smallmatrix}
        2 \\ -2 \\ 1
    \end{smallmatrix}
    \right]\right),
    \end{equation}
where $c\in \{0,1,2\}$. If $c=0$, then 
we have $\left[
    \begin{smallmatrix}
        -1 \\ 0 \\ 0
    \end{smallmatrix}\middle|
    \begin{smallmatrix}
        2 \\ -2 \\ 1
    \end{smallmatrix}
    \right]\in \SigmaAe$, which is a contradiction to the sign-coherent property. 
    If $c=1$, then we have 
\begin{equation*}
    \tau = \left[
    \begin{smallmatrix}
        0 \\ -1 \\ 1
    \end{smallmatrix}\middle|
    \begin{smallmatrix}
        1 \\ -1 \\ 1
    \end{smallmatrix}\middle|
    \begin{smallmatrix}
        2 \\ -2 \\ 1
    \end{smallmatrix}
    \right]\in \SigmaeAe \ \ \text{with} \ \ \bm{v}_{\tau} = 
    \begin{bsmallmatrix}
    0 \\ 0 \\ 1    
    \end{bsmallmatrix}. 
\end{equation*}
If moreover $(b_1,b_2) = (0,2)$, then 
we have $\bm{w} := \begin{bsmallmatrix}
    1 \\ -2 \\ 2 
\end{bsmallmatrix}\in \SigmaeAe$. However, we have $\langle \bm{v}_{\tau}, \bm{w}\rangle = 2 \not\leq1$, which is a contradiction.
On the other hand, if $(b_1,b_2) = (0,1)$, then we have the following path $q$ around $\bm{u}$.
\begin{equation}
    \underline{q}_{\bm{u}} = \left( 
    \left[
    \begin{smallmatrix}
        1 \\ 0 \\ 0
    \end{smallmatrix}\middle|
    \begin{smallmatrix}
        0 \\ 0 \\ 1
    \end{smallmatrix}
    \right] \underset{1}{\xrightarrow{(\bullet,1)}}
    \left[
    \begin{smallmatrix}
        0 \\ -1 \\ 1
    \end{smallmatrix}\middle|
    \begin{smallmatrix}
        0 \\ 0 \\ 1
    \end{smallmatrix}
    \right] \underset{2}{\xrightarrow{(0,\bullet)}}
    \underline{\tau} = \left[
    \begin{smallmatrix}
        0 \\ -1 \\ 1
    \end{smallmatrix}\middle|
    \begin{smallmatrix}
        2 \\ -2 \\ 1
    \end{smallmatrix}
    \right]\right).
\end{equation}
It is a contradiction to the result in Proposition \ref{rank2}. 

Finally, we assume that $c=2$. In this case, we have 
\begin{equation}
     \tau = \left[
    \begin{smallmatrix}
        1 \\ -2 \\ 2
    \end{smallmatrix}\middle|
    \begin{smallmatrix}
        1 \\ -1 \\ 1
    \end{smallmatrix}\middle|
    \begin{smallmatrix}
        2 \\ -2 \\ 1
    \end{smallmatrix}
    \right]\in \SigmaeAe.
\end{equation}
We consider an arrow 
\begin{equation}
    \tau' := 
    \left[
    \begin{smallmatrix}
        1 \\ -2 \\ 0
    \end{smallmatrix}\middle|
    \begin{smallmatrix}
        f_1 \\ -2(f_1+f_2)+1 \\ f_2
    \end{smallmatrix}\middle|
    \begin{smallmatrix}
        0 \\ -2 \\ 1
    \end{smallmatrix}
    \right] \xrightarrow{(f_1,\bullet,f_2)} 
    \emin = \left[
    \begin{smallmatrix}
        1 \\ -2 \\ 0
    \end{smallmatrix}\middle|
    \begin{smallmatrix}
        0 \\ -1 \\ 0
    \end{smallmatrix}\middle|
    \begin{smallmatrix}
        0 \\ -2 \\ 1
    \end{smallmatrix}
    \right] 
\end{equation}
ending at $\emin$, where 
$(f_1,f_2) \in \{(0,0),(0,1),(1,0),(1,1),(0,2),(2,0)\}$. If $f_1=f_2=0$, then we have $\left[\begin{smallmatrix}
        0 \\ 1 \\ 0
    \end{smallmatrix}\middle|
    \begin{smallmatrix}
        0 \\ -2 \\ 1
    \end{smallmatrix}
    \right]\in \SigmaeAe$, which is a contradiction to the sign-coherent property. 
    If $(f_1,f_2) \in \{(0,1),(1,0)\}$, 
    then we have $\bm{v}_{\tau'} = \begin{bsmallmatrix}
        1 \\ 0 \\ 1
    \end{bsmallmatrix}$. 
    However, we have $\langle \bm{v}_{\tau'}, \bm{u}\rangle = 2\not\leq 1$, which is a contradiction.
    If $(f_1,f_2)\in \{(0,2),(2,0)\}$, then we have either 
$\begin{bsmallmatrix}
    0 \\ -3 \\ 2
\end{bsmallmatrix}\in \SigmaeAe$ or $\begin{bsmallmatrix}
    2 \\ -3 \\ 0
\end{bsmallmatrix}\in \SigmaeAe$, which is a contradiction to Proposition \ref{rank2}.
Lastly, we assume $(f_1,f_2) = (1,1)$. In this case, we have  
\begin{equation}
    \tau' = 
    \left[
    \begin{smallmatrix}
        1 \\ -2 \\ 0
    \end{smallmatrix}\middle|
    \begin{smallmatrix}
        1 \\ -3 \\ 1
    \end{smallmatrix}\middle|
    \begin{smallmatrix}
        0 \\ -2 \\ 1
    \end{smallmatrix}
    \right] \in \SigmaAe. 
\end{equation}
However, this implies that $\PeAe$ contains a non-zero integer vector 
\begin{equation}
    \bm{x}:=\begin{bsmallmatrix}
        1 \\ -2 \\ 1
    \end{bsmallmatrix}= \dfrac{1}{5}\begin{bsmallmatrix}
        2 \\ -2 \\ 1
    \end{bsmallmatrix}+\dfrac{1}{5}\begin{bsmallmatrix}
        1 \\ -2 \\ 2
    \end{bsmallmatrix}+\dfrac{2}{5}\begin{bsmallmatrix}
        1 \\ -3 \\ 1
    \end{bsmallmatrix}
    \in \PeAe
\end{equation}
as its interior point. 
This is a contradiction. 
Thus, the case $(a_1,a_2) = (0,2)$ does not appear. 

By permuting a role of $\bm{e}_1$ and $\bm{e}_3$, we also have a contradiction when $(b_1,b_2)= (0,2)$. 
\item The remaining case is $(a_1,a_2) = (b_1,b_2)=(0,1)$. 
In this case, the Hasse quiver of the interval $(\Sigma_{\bm{u}}(A,e),\leq)$ is given by 
\begin{equation}
    \xymatrix{
    & \hspace{-15mm} \emax = \text{$\left[
    \begin{smallmatrix}
        1 \\ 0 \\ 0 
    \end{smallmatrix}
    \middle| 
    \begin{smallmatrix}
        1 \\ -1 \\ 1 
    \end{smallmatrix} 
    \middle| 
    \begin{smallmatrix}
        0 \\ 0 \\ 1 
    \end{smallmatrix} 
    \right]$} 
    \ar[r]^-{(0,1,\bullet)} 
    \ar[d]^-{(\bullet,1,0)}  
    & 
    \text{$\left[
    \begin{smallmatrix}
        1 \\ 0 \\ 0 
    \end{smallmatrix}
    \middle| 
    \begin{smallmatrix}
        1 \\ -1 \\ 1 
    \end{smallmatrix} 
    \middle| 
    \begin{smallmatrix}
        1 \\ -1 \\ 0 
    \end{smallmatrix} 
    \right]$} \ar[d]^-{(\bullet,1,0)} & \\  
    & \text{$\left[
    \begin{smallmatrix}
        0 \\ -1 \\ 1 
    \end{smallmatrix}
    \middle| 
    \begin{smallmatrix}
        1 \\ -1 \\ 1 
    \end{smallmatrix} 
    \middle| \begin{smallmatrix}
        0 \\ 0 \\ 1 
    \end{smallmatrix} 
    \right]$} \ar[r]^-{(0,1,\bullet)} 
    & 
    \text{$\left[
    \begin{smallmatrix}
        0 \\ -1 \\ 1 
    \end{smallmatrix}
    \middle| \begin{smallmatrix}
        1 \\ -1 \\ 1 
    \end{smallmatrix} 
    \middle| 
    \begin{smallmatrix}
        1 \\ -1 \\ 0 
    \end{smallmatrix} 
    \right]=: \underline{\rho}$}\hspace{-9mm}
    }
\end{equation}
In this case, we have $\bm{v}_{\rho} = \begin{bsmallmatrix}
    0 \\ -1 \\ 0
\end{bsmallmatrix}$ and $\langle \bm{v}_{\rho}, \begin{bsmallmatrix}
    1 \\ -2 \\ 0
\end{bsmallmatrix} \rangle = 2\not\leq 1$.
This is a contradiction.  
\end{enumerate}
Since all the cases have been considered in the above discussion, we finish the proof.
\end{proof}

\section{Proof of Theorem \ref{main theorem}}
\label{section:Narrowing down}

In this section, we give a proof of our main result (Theorem \ref{main theorem}).

\subsection{Proof of Theorem \ref{main theorem}(1)}

Firstly, we deduce Theorem \ref{main theorem} (1) as a consequence of Theorem \ref{thm:orthant}. 
Let $G=\mathfrak{S}_3\times \{\pm1\}$. 

\begin{proof}[Proof of Theorem \ref{main theorem}(1)]
Let $(A,e = (e_1,e_2,e_3))$ be a $g$-convex algebra of rank $3$. 
Since the subfans $\Sigma_{+++}(A,e)$ and $\Sigma_{---}(A,e)$ are trivial by Proposition \ref{prop:positive_cone}, 
it suffices to determine $\Sigma_{\epsilon}(A,e)$ for each $\epsilon\in \{\pm\}^3\setminus\{(+++),(---)\}$. 
We take an element $g=(s,z)\in G$ such that $\epsilon = g^{-1}(+-+)$. 
Then, it gives an isomorphism $\Sigma_{\epsilon}(A,e)\simeq \Sigma_{+-+}(A^g,e^g)$. 
Since $\Sigma_{+-+}(A^g,e^g)$ is determined by $d(A,e)$ by Theorem \ref{thm:orthant}, so is $\Sigma_{+-+}(A,e)$. 
Thus, $\Sigma(A,e)$ is determined by $d(A,e)$. 
Finally, the fans in Table \ref{fig:15poly} are pairwise distinct. 
\end{proof}

\subsection{Proof of Theorem \ref{main theorem}(2)}
In this section, we give a proof of Theorem \ref{main theorem}(2). To do this, we need additional constraints on our numerical data.

Let $(A,e=(e_1,e_2,e_3))$ be a $g$-convex algebra of rank $3$ and $d(A,e)$ the data on the minimal number of generators.  
In addition, let $d_{+-+} := d_{+-+}(A,e)$. 

\begin{proposition} \label{Gen A}
The following statements hold. 
\begin{enumerate}
    \item[\rm $(0)$] 
    If $d_{+-+} \neq d(0)$, then we have $h_{12} = 1$ or $h_{32} = 1$. 
    \item[\rm $(1)$] 
        If $d_{+-+} = d(1)$, 
        then we have $d_{31}=(0,0,0)$ or $h_{31}=1$.
    \item[\rm $(2)$] 
        If $d_{+-+} = d(2)$, 
    then we have $d_{31}=(0,0,0)$ and $h_{13} = 0$.
    \item[\rm $(3)$] 
        If $d_{+-+} = d(3)$, 
        then we have $d_{31}=(0,0,0)$ or $h_{31}=1$. 
    \item[\rm $(4)$] 
        If $d_{+-+} = d(4)$, 
        then we have $d_{31}\in \{(2,1,1),(1,1,1)\}$.
    \item[\rm $(5)$]
        If $d_{+-+} = d(5)$, 
        then we have $d_{31}=(1,2,1)$ and $h_{13}=0$.
    \item[\rm $(6)$] 
        If $d_{+-+} = d(6)$, 
        then we have $d_{31}\in \{(2,1,1),(1,1,1)\}$.
    \item[\rm $(7)$] 
        If $d_{+-+} = d(7)$, 
        then we have $d_{31},d_{13} \not\in \{(2,1,1),(2,1,0)\}$.
    \item[\rm $(8)$] 
        If $d_{+-+} = d(8)$, 
        then we have $d_{31}\not\in \{(2,1,1),(2,1,0)\}$ and $d_{13}\neq (0,0,0)$.
    \item[\rm $(9)$]
        If $d_{+-+} = d(9)$, 
        then we have $d_{31},d_{13} \neq (0,0,0)$.
    \item[\rm $(10)$] 
        If $d_{+-+} = d(10)$ and
        \begin{itemize}
            \item $h_{13} = 0$, then we have $d_{31}=(2,1,1)$. 
            \item $h_{13} = 1$, then we have $d_{31}\in \{(2,1,1), (1,1,1)\}$.
        \end{itemize}        
    \item[\rm $(11)$] 
        If $d_{+-+} = d(11)$, 
        then we have $d_{31}=(1,2,1)$ and $h_{13}=1$.
    \item[\rm $(12)$] 
        If $d_{+-+} = d(12)$, 
        then we have $d_{31}\in \{(1,1,0),(1,2,0),(0,0,0)\}$ and $d_{13}=(0,0,0)$.
    \item[\rm $(13)$] 
        If $d_{+-+} = d(13)$, 
        then we have $d_{31}\in \{(1,1,0),(1,2,0),(0,0,0)\}$ and $d_{13}=(2,1,0)$.
    \end{enumerate}
\end{proposition}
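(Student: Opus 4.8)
The statement is a collection of compatibility constraints between the orthant data $d_{+-+}(A,e) = (d_{12},d_{32})$ governing the $(+-+)$-orthant and the data $d_{31}^e, d_{13}^e$ governing other orthants. The unifying idea is that all these data are encoded in specific fans of Table \ref{fig:15poly} (via Theorem \ref{thm:orthant}), while $d_{31}$ and $d_{13}$ are similarly constrained by the analysis of the $(-+-)$- and related orthants; since the global $g$-fan $\Sigma(A,e)$ is a single non-singular fan, its subfans $\Sigma_{+-+}(A,e)$ and, say, $\Sigma_{++-}(A,e)$ (or $\Sigma_{-++}(A,e)$, etc.) must glue consistently along the coordinate hyperplanes. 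So the plan is to work hyperplane by hyperplane: fix one of the three coordinate planes, observe that the rays of $\Sigma(A,e)$ lying in it are determined by Proposition \ref{coord_plane} (hence by the pair $(l_{ij},r_{ij})$ for the relevant indices), and then check which configurations of rays in a given orthant fan from Table \ref{fig:15poly} are compatible with which configurations in the adjacent orthant.

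Concretely, I would proceed as follows. First, fix $d_{+-+}(A,e) = d(m)$ and read off from Table \ref{fig:15poly} (equivalently, from the explicit fan $\Sigma_{d(m)}$ computed in Subsections \ref{case:0}--\ref{case:13}) the complete list of rays of $\Sigma_{+-+}(A,e)$, in particular the rays lying in the planes $\{x_2 = 0\}$, $\{x_1 = 0\}$ and the behavior near $\bm e_1$ and $\bm e_3$. Second, the rays in the plane $\{x_1=0\}$ (the $\bm e_2$--$\bm e_3$ plane) are shared with the orthant $\Sigma_{-+?}$ whose analysis controls $d_{31}$ and $d_{13}$: by Proposition \ref{prop:idemp_fan}, the subfan spanned by $\{\bm e_1,\bm e_3\}$ is the $g$-fan of $(e_1+e_3)A(e_1+e_3)$, which is $g$-convex of rank $2$, so by \cite[Theorem 5.1]{AHIKM2} its rays in each quadrant are determined by $(l_{13},r_{13})$ and $(l_{31},r_{31})$; matching this against the rays of $\Sigma_{d(m)}$ in that plane forces the asserted constraints on $d_{13}$ and $d_{31}$. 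Similarly, when the statement asserts e.g. $h_{13}=0$ (as in (2), (5)), this will follow because $\Sigma_{d(m)}$ contains no ray of the form $\begin{bsmallmatrix} * \\ 0 \\ 1\end{bsmallmatrix}$ outside $\bm e_3$, forcing $e_1Ae_3 \subseteq \rad^2 A$ via the interpretation in Proposition \ref{path p1}(1) (or its $\bm e_1 \leftrightarrow \bm e_3$ variant), and the case-by-case refinements between $(2,1,1)$ vs $(1,1,1)$ etc. come from reading whether the relevant ray is $\begin{bsmallmatrix} 2 \\ -1 \\ 0\end{bsmallmatrix}$ or $\begin{bsmallmatrix} 1 \\ -1 \\ 0\end{bsmallmatrix}$, i.e. from $l_{ij} \in \{1,2\}$.

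For the backbone claim (0): if $d_{+-+}(A,e) \neq d(0)$ then $d_{12} \neq (0,0,0)$ or $d_{32} \neq (0,0,0)$ (by definition of $d(0)$ and Theorem \ref{thm:orthant}(1)), and the argument in Subsection \ref{case:0} shows $d_{12}=d_{32}=(0,0,0)$ is equivalent to $h_{12}=h_{32}=0$; so $h_{12}=1$ or $h_{32}=1$. The remaining items (1)--(13) are then a finite enumeration: for each $m$, I would (i) recall $\emax,\emin$ and the full fan $\Sigma_{d(m)}$ from the corresponding subsection of Section \ref{sec:proof_Thm2}, (ii) extract the rays in the two relevant coordinate planes and near $\bm e_1,\bm e_3$, (iii) invoke Proposition \ref{coord_plane} / the rank-$2$ classification \cite[Theorem 5.1]{AHIKM2} for the idempotent truncation to translate these into numerical constraints on $(l_{13},r_{13},h_{13})$ and $(l_{31},r_{31},h_{31})$, and (iv) use the ordered-fan property (Theorem \ref{theorem:g-fan is ordered}) together with sign-coherence (Proposition \ref{prop:sign-coherent}) to rule out borderline configurations, exactly as done repeatedly in Section \ref{sec:proof_Thm2}. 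The main obstacle I anticipate is purely organizational rather than conceptual: there are fourteen cases, several with sub-cases depending on $h_{13}$ (notably (10)), and each requires carefully matching ray lists across a shared hyperplane while keeping track of which index pair $(i,j)$ the rank-$2$ data $(l_{ij},r_{ij})$ refers to; the risk is bookkeeping errors in the coordinate permutations, so I would set up a uniform convention (always reduce to the $\bm e_1$--$\bm e_3$ plane, always list rays with $x_1 \geq 0$) before starting, and cross-check each case against the explicit algebra in Table \ref{fig:61alg&poly} realizing it.
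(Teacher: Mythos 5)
Your item (0) is correct and matches the paper, and your general instinct to reduce each case to explicit data about $\Sigma_{d(m)}$ is sound. But the central mechanism you propose — reading off the constraints on $d_{13}$ and $d_{31}$ by matching rays of $\Sigma_{d(m)}$ against the rank-$2$ fan of $(e_1+e_3)A(e_1+e_3)$ in a shared coordinate plane — does not work, because the relevant rays are not visible from the $(+-+)$-orthant. The quadrants $\cone\{\bm e_1,-\bm e_3\}$ and $\cone\{-\bm e_1,\bm e_3\}$, which carry the data $(l_{13},r_{13})$ and $(l_{31},r_{31})$ via Proposition \ref{coord_plane}, lie in the boundary of $\mathbb{R}^3_{++-}$, $\mathbb{R}^3_{+--}$, $\mathbb{R}^3_{-++}$, $\mathbb{R}^3_{-+-}$ — not of $\mathbb{R}^3_{+-+}$. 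The only part of the $\{x_2=0\}$ plane meeting $\overline{\mathbb{R}^3_{+-+}}$ is the positive quadrant $\cone\{\bm e_1,\bm e_3\}$, where the fan is trivial; and the plane $\{x_1=0\}$, which you claim "controls $d_{31}$ and $d_{13}$", in fact controls $d_{23}$ and $d_{32}$. So the gluing of orthant fans along shared coordinate quadrants is essentially automatic (both sides are determined by the same $(l_{ij},r_{ij})$) and yields none of the fine constraints in items (1)--(13).

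What the paper actually uses, and what is missing from your plan, is threefold. First, the maximal paths $\alpha,\beta$ of the reduction at $-\bm e_2$ (Propositions \ref{path p1} and \ref{path p3}) continue past $\sigma_{+-+,\min}$ all the way to $\sigma_-$; their intermediate cones contain rays such as $\begin{bsmallmatrix}-1\\0\\1\end{bsmallmatrix}$ or $\begin{bsmallmatrix}-2\\0\\1\end{bsmallmatrix}$ lying in \emph{other} orthants, and the type of path ((i)--(v), (i')--(v')) directly encodes $h_{13}$, $h_{31}$ and pins down $r_{31}$ (this is how (4), (5), (6), (10), (11) are proved). Second, global convexity: a maximal cone $\tau$ of $\Sigma_{+-+}(A,e)$ with normal vector $\bm v_\tau$ forbids any ray $\bm w$ of the whole fan with $\langle \bm v_\tau,\bm w\rangle\geq 2$, which is how $l_{31}\neq 2$ and $l_{13}\neq 2$ are excluded in (7), (8), (12), (13). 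Third, explicit mutation paths around $\bm e_1$ or $\bm e_3$ starting at $\sigma_+$, whose exit data into adjacent orthants forces $d_{13}\neq(0,0,0)$ in (8), (9) and distinguishes $d_{13}=(0,0,0)$ from $d_{13}=(2,1,0)$ in (12) versus (13). Your step (iv) gestures at the ordered-fan property, but without the path-continuation and convexity-inequality arguments the enumeration you describe cannot produce the stated conclusions.
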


On the other hand, we exclude the following cases. 

\begin{proposition}\label{Gen X}
The following (i) and (ii) can not appear.
\begin{enumerate}
    \item[\rm (i)] 
    $d_{12} = (2,1,1)$, $d_{13} = (2,1,0)$, $d_{23} = (2,1,1)$ and $d_{32} = (1,1,1)$.
    \item[\rm (ii)] 
    $d_{12} = (2,1,1)$, $d_{13} = (1,1,0)$, $d_{21} = (1,1,1)$, $d_{23} = (1,2,1)$ and $h_{32}=0$.
\end{enumerate}
\end{proposition}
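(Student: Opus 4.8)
\textbf{Proof proposal for Proposition \ref{Gen X}.}

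The plan is to rule out each of the two data configurations (i) and (ii) by exhibiting an explicit obstruction coming from convexity. The key observation is that the datum $d(A,e)$ determines the entire fan $\Sigma(A,e)$ by Theorem \ref{main theorem}(1) (equivalently, by Theorem \ref{thm:orthant} applied in each orthant): the second orthant $\Sigma_{+-+}(A,e)$ is computed from $d_{+-+}(A,e)=(d_{12},d_{32})$, and by the group action of $G=\mathfrak{S}_3\times\{\pm 1\}$ the remaining five nontrivial orthants $\Sigma_\epsilon(A,e)$ are computed from the appropriate pairs among the $d_{ij}$'s via the isomorphism $\Sigma_\epsilon(A,e)\simeq\Sigma_{+-+}(A^g,e^g)$. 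So for a hypothetical algebra realizing (i) or (ii) I would first assemble, orthant by orthant, the full list of rays (the $g$-vectors of indecomposable $2$-term presilting complexes) that the fan would be forced to contain, reading each orthant off Table \ref{fig:15poly} (and, for the orthant corresponding to $d(10)$, choosing the correct one of $\Sigma_{d(10),0}$, $\Sigma_{d(10),1}$ according to the relevant $h$-value). Then I would test convexity directly: the $g$-polytope $\P(A,e)$ must be the convex hull of all these rays together with the origin (Theorem \ref{theorem:g-convex=pairwise_g-convex}(2)(b)), and it must contain no nonzero lattice point in its interior (Theorem \ref{theorem:g-convex=pairwise_g-convex}(1)). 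The obstruction in each case will be a nonzero integer vector that is forced into the interior of $\conv_0$ of the collected rays — exactly as in the excluded final subcase of the proof of Theorem \ref{thm:orthant}, where $\bm{x}=\begin{bsmallmatrix}1\\-2\\1\end{bsmallmatrix}=\frac15\begin{bsmallmatrix}2\\-2\\1\end{bsmallmatrix}+\frac15\begin{bsmallmatrix}1\\-2\\2\end{bsmallmatrix}+\frac25\begin{bsmallmatrix}1\\-3\\1\end{bsmallmatrix}$ produced a contradiction.

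Concretely, for case (i): from $d_{12}=(2,1,1)$ and $d_{32}=(1,1,1)$ we get $d_{+-+}(A,e)=d(8)$, so $\Sigma_{+-+}(A,e)=\Sigma_{d(8)}$; rotating by the cycle $(123)$ and using $d_{23}=(2,1,1)$, $d_{13}=(2,1,0)$ I would identify the orthant $\Sigma_{+-}{}_{+}$ in the other two positions (here one must be careful to compute which of $d(0),\dots,d(13)$ or $d'(m)$ arises from the pair $(d_{23},d_{13})$ after the permutation). Doing this for all six nontrivial orthants produces the candidate vertex set, and I expect to find a lattice point strictly inside its convex hull, contradicting $g$-finiteness/convexity. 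The same scheme applies to (ii): $d_{12}=(2,1,1)$, $d_{23}=(1,2,1)$ and $h_{32}=0$ together with $d_{13}=(1,1,0)$, $d_{21}=(1,1,1)$ pin down the three relevant orthant-fans (one of them being a $\Sigma_{d(10),\bullet}$ governed by $h_{13}$, and note $d_{32}$ has $h_{32}=0$ so $d_{+-+}(A,e)$ involves a datum with third coordinate $0$), after which an interior lattice point should again emerge. An alternative, slightly cleaner route I might prefer is to bypass the polytope and instead argue locally: collect the forced maximal cones near one shared ray $\bm{w}$, write down the two maximal paths of $(\Sigma_{\bm w}(A,e),\leq)$ as in Proposition \ref{rank2}, and show no assignment of the exchange parameters is consistent — this is the style used repeatedly in Section \ref{sec:proof_Thm2} and avoids enumerating all six orthants.

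The main obstacle will be bookkeeping: correctly translating each of the five non-second orthants of the hypothetical fan into the standard $(+-+)$ form via the right element $g=(s,z)\in G$, tracking how the pair of relevant $d_{ij}$'s transforms under $g$ (including the $d\mapsto d'$ swap and the op-duality $d_{ij}\leftrightarrow d_{ji}$ with $l\leftrightarrow r$), and then gluing the six pieces into one coherent list of rays without sign errors. Once that list is correct, producing the offending interior lattice point (and its explicit convex combination with positive rational coefficients summing to $1$) is a short finite verification. I would organize the write-up as two parallel lemmas, one for (i) and one for (ii), each ending with a displayed identity of the form $\bm{x}=\sum_i \lambda_i \bm{w}_i$ with $\lambda_i>0$, $\sum_i\lambda_i=1$, and $\bm x\in\mathbb{Z}^3\setminus\{\bm 0\}$ lying in the interior, contradicting Theorem \ref{theorem:g-convex=pairwise_g-convex}(1); this parallels, and can cite, the contradiction already obtained at the end of the proof of Theorem \ref{thm:orthant}.
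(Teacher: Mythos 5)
Your central strategy cannot work here, and this is precisely the point of the proposition. The two data configurations (i) and (ii) (together with the other three in \eqref{eq:dada X}) \emph{do} assemble, orthant by orthant, into honest complete convex sign-coherent nonsingular fans --- these are exactly the five fans displayed in Table \ref{tab:non-gfans}, whose caption states that each orthant restriction is one of the fans of Table \ref{fig:15poly} and that the whole fan is convex. Consequently the convex hull of the collected rays is a reflexive polytope with no nonzero interior lattice point, so the obstruction you are hoping to exhibit (a forced interior lattice point violating Theorem \ref{theorem:g-convex=pairwise_g-convex}(1)) simply does not exist. Your fallback of checking local consistency of the two maximal paths around a shared ray via Proposition \ref{rank2} fails for the same reason: these fans are geometrically and combinatorially consistent, which is why they survive the computer enumeration of the 66 candidates in the first place. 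The introduction flags this explicitly: unlike the rank-2 case, the different orthants of a $g$-fan of rank 3 are \emph{not} independent, and the dependence is invisible at the level of the fan.

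What is actually needed --- and what the paper does --- is a ring-theoretic argument. One fixes right/left generators $x\in e_1Ae_2$, $y\in e_2Ae_3$, etc., and the subalgebras $L_x\subseteq e_1Ae_1$, $R_x\subseteq e_2Ae_2$ measuring when multiplication by an element normalizes $x$. The presilting complexes forced by the fan structure in \emph{different} orthants translate (via Propositions \ref{prop: lr=12} and \ref{dual of prop: lr=12} and the criterion of \cite[Proposition 3.9]{AHIKM2}) into module-generation identities such as $M_{2,1}(e_1Ae_2)=\begin{bsmallmatrix}xy\\ xyv\end{bsmallmatrix}M_{1,1}(e_3Ae_2)+M_{2,1}(e_1Ax)$ and $e_2Ae_2=R_x+yy^*wR_x$. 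Playing these identities off against each other yields two mutually contradictory conclusions: in case (i), that $yy^*\varepsilon\in R_x$ for some unit $\varepsilon$ and yet $yy^*\notin R_x$ for every left-right generator $y^*$ of $e_3Ae_2$ (the latter because $yy^*\in R_x$ would force $l_{12}=1$ via a nilpotence argument); in case (ii), that $x\varepsilon y=0$ for some unit $\varepsilon$ and yet $xy\neq 0$. If you want to repair your proposal, you must abandon the polytope computation entirely and work with the bimodule generators of $A$ itself.
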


Using these results, we prove Theorem \ref{main theorem}(2). 

\begin{proof}[Proof of Theorem \ref{main theorem}(2)]
    Let $\mathcal{S}$ be the set of numerical data $d = (d_{ij})_{1\leq i\neq j \leq 3}$ with 
    \begin{equation}
        d_{ij} = (l_{ij},r_{ij},h_{ij}) \in \{(0,0,0),(1,1,0),(2,1,0),(1,2,0),(1,1,1),(2,1,1),(1,2,1)\}. 
    \end{equation}
    In addition, let $d_{+-+} := (d_{12},d_{32})$ for each $d\in \mathcal{S}$.  
    
    The group $G$ acts on the set $\mathcal{S}$ in the following way: 
    For $g = (s,z)\in G$ and $d \in \mathcal{S}$, 
    we define $d^g \in \mathcal{S}$ by 
    \begin{equation}
        (d^g)_{ij} := 
        \begin{cases}
             (l_{s^{-1}(i)s^{-1}(j)}, r_{s^{-1}(i)s^{-1}(j)}, h_{s^{-1}(i)s^{-1}(j)}) 
             &\text{if $z=1$,} \\ 
             (r_{s^{-1}(j)s^{-1}(i)}, l_{s^{-1}(j)s^{-1}(i)}, h_{s^{-1}(j)s^{-1}(i)}) 
             &\text{if $z=-1$.} 
        \end{cases}
    \end{equation}

    Theorem \ref{main theorem}(1) asserts that $d(A,e)$ belongs to $\mathcal{S}$ for any finite dimensional $k$-algebra $(A,e)$ of rank $3$ which is $g$-convex. 
    Clearly, we have $d^g(A,e) = d(A^g,e^g)$ for $g\in G$. 
    In particular, the above group action restricts to that on 
    the subset $\mathcal{S}_{\rm alg}$ consisting of $d = d(A,e)\in \mathcal{S}$ for some $g$-convex algebra $(A,e)$ of rank $3$. 

    According to Theorem \ref{main theorem}(1), 
    the claim (2) is accomplished by giving 
    a complete set of representatives of $d(A,e)$'s in $\mathcal{S}_{\rm alg}$ up to the group action of $G$. 
    To do this, we narrow down $d\in \mathcal{S}$ with satisfying the following conditions for all $g\in G$. 
    \begin{itemize}
        \item $(d^g)_{+-+} = d(m)$ for some $m\in \{0,\ldots,13\}$, 
        \item $d^g$ satisfies all constraints in Proposition \ref{Gen A}. 
    \end{itemize}
    It yields the $66$ elements up to the group action of $G$, 
    where $61$ of them are listed in Table \ref{fig:61alg&poly} 
    and the remaining $5$ elements are given by 
    \begin{equation}\label{eq:dada X} 
        {\small
        \begin{bmatrix}  
        - & 211 & 110 \\  
        111 &  -  & 121 \\
        000 & 000 &  -  
        \end{bmatrix}
        }, 
        {\small
        \begin{bmatrix}  
        - & 211 & 210 \\  
        000 &  -  & 211 \\
        000 & 111 &  -  
        \end{bmatrix}
        }, 
        {\small
        \begin{bmatrix}  
        - & 211 & 110 \\  
        111 &  -  & 121 \\
        111 & 000 &  -  
        \end{bmatrix}
        },
        {\small
        \begin{bmatrix}  
        - & 211 & 110 \\  
        111 &  -  & 121 \\
        211 & 210 &  -  
        \end{bmatrix}
        }, 
        {\small
        \begin{bmatrix}  
        - & 211 & 210 \\  
        111 &  -  & 211 \\
        000 & 111 &  -  
        \end{bmatrix}
        }. 
    \end{equation}
    Indeed, we may use a computer to obtain this list. 
    By Propositions \ref{Gen A}, 
    every $d = d(A,e) \in \mathcal{S}_{\rm alg}$ appears in this list up to the group action of $G$. 
    Conversely, for each $d$ in Table \ref{fig:61alg&poly}, 
    we can find a $g$-convex algebra $(A,e)$ of rank $3$ such that $d = d(A,e)$, as in Table \ref{fig:61alg&poly}. 
    On the other hand, each datum in \eqref{eq:dada X} determines a sign-coherent fan appearing in Table \ref{tab:non-gfans} and does not belong to $\mathcal{S}_{\rm alg}$ by Proposition \ref{Gen X}. 
    
    Consequently, Table \ref{fig:61alg&poly} gives a complete set of representatives of $\mathcal{S}_{\rm alg}$ up to group action of $G$, as desired. 
    This finishes the proof of Theorem \ref{main theorem}(2). 
\end{proof}

\subsection{Proof of Proposition \ref{Gen A}}

In this section, we prove a series of claims of Proposition \ref{Gen A}. 
We notice that assumption of each claim (1)-(13) determines the fan $\Sigma_{+-+}(A,e)$ as in Theorem \ref{thm:orthant}(2).

\vspace{3mm} \noindent (0) 
This follows from an observation in Subsection \ref{case:0} that $d_{12} = d_{32} = (0,0,0)$ if and only if $h_{12} = h_{32}=0$.

\vspace{3mm} \noindent (1) 
By assumption, we have $d_{32}=(0,0,0)$. 
Then, the assertion follows from the previous case (0). 

\vspace{3mm} \noindent (2)
In this case, we have 
\begin{equation}
    \emin = \left[
    \begin{smallmatrix}
        1 \\ -2 \\ 0 
    \end{smallmatrix}\middle|
    \begin{smallmatrix}
        0 \\ -1 \\ 0 
    \end{smallmatrix}\middle|
    \begin{smallmatrix}
        0 \\ 0 \\ 1 
    \end{smallmatrix}
    \right]\in \Sigma_{+-+}(A,e). 
\end{equation}
By Propositions \ref{path p1} and \ref{path p3} with $(r_{12}, r_{32})=(2,0)$, 
the paths $\underline{\alpha}_{(-\bm{e}_2)}$ and $\underline{\beta}_{(-\bm{e}_2)}$ in \eqref{path pq} must be
\begin{eqnarray} 
    \text{\bf (i):} \quad &&
    \left(\underline{\emin} = 
    \left[ 
    \begin{smallmatrix}
        1 \\ -2 \\ 0 
    \end{smallmatrix}\middle|
    \begin{smallmatrix}
        0 \\ 0 \\ 1
    \end{smallmatrix}
    \right]
    \underset{0}{\xrightarrow{(0,\bullet)}}
    \underline{\tau_1} =
    \left[ 
    \begin{smallmatrix}
        1 \\ -2 \\ 0 
    \end{smallmatrix}\middle|
    \begin{smallmatrix}
        0 \\ 0 \\ -1
    \end{smallmatrix}
    \right]
    \underset{2}{\xrightarrow{(\bullet,0)}}
    \underline{\tau_2} = \underline{\sigma_-}\right), 
\\
    \text{\bf (i'):} \quad &&
    \left(\underline{\emin} = 
    \left[ 
    \begin{smallmatrix}
        1 \\ -2 \\ 0 
    \end{smallmatrix}\middle|
    \begin{smallmatrix}
        0 \\ 0 \\ 1
    \end{smallmatrix}
    \right]
    \underset{2}{\xrightarrow{(\bullet,0)}}
    \underline{\tau_1'} = 
    \left[ 
    \begin{smallmatrix}
        -1 \\ 0 \\ 0 
    \end{smallmatrix}\middle|
    \begin{smallmatrix}
        0 \\ 0 \\ 1
    \end{smallmatrix}
    \right]
    \underset{0}{\xrightarrow{(0,\bullet)}}
    \underline{\tau_2'} = \underline{\sigma_-}\right),
\end{eqnarray} 
respectively. 
In fact, it follows from Proposition \ref{path p1} (2)-(5) that the cases {\bf (ii)}, {\bf (iii)}, {\bf (iv)}, and {\bf (v)} can not appear for $\alpha$ by $(r_{12},r_{32}) = (2,0)$. 
Similarly, we have the case {\bf (i')} for $\beta$.

By Propositions \ref{path p1} (1) and \ref{path p3} (1'), we have $h_{13} = h_{31} = 0$. 
Then, $d_{32}=(0,0,0)$ and $h_{31}=0$ gives $d_{31}=(0,0,0)$ by the claim (0). 
Thus, we get the assertion.

\vspace{3mm} \noindent (3) 
This is done by the same argument as the case (1) since we have $d_{32}=0$. 

\vspace{3mm} \noindent (4) 
By Proposition \ref{path p3} with $(r_{12},r_{32})=(1,1)$ and $h_{32}=0$, the path $\underline{\beta}_{(-\bm{e}_2)}$ in \eqref{path pq} is of the form 
\begin{equation} 
    \text{\bf (iii'):} \quad 
    \left(\underline{\emin} = 
    \left[ 
    \begin{smallmatrix}
        1 \\ -1 \\ 0 
    \end{smallmatrix}\middle|
    \begin{smallmatrix}
        0 \\ -1 \\ 1
    \end{smallmatrix}
    \right]
    \underset{0}{\xrightarrow{(\bullet,1)}}
    \underline{\tau_1} \underset{1}{\xrightarrow{(1,\bullet)}}
    \underline{\tau_2} = 
    \left[ 
    \begin{smallmatrix}
        -1 \\ 0 \\ 1 
    \end{smallmatrix}\middle|
    \begin{smallmatrix}
        -1 \\ 0 \\ 0
    \end{smallmatrix}
    \right]
    \underset{0}{\xrightarrow{(\bullet,1)}}
    \underline{\tau_3} = \underline{\sigma_-}\right). 
\end{equation}
Then, $\tau_2 \in \SigmaAe$ implies that $h_{31}=1$ and $r_{31}=1$. Thus, we have $d_{31}\in \{(2,1,1),(1,1,1)\}$. 

\vspace{3mm} \noindent (5) 
By Proposition \ref{path p1} with $(r_{12},r_{32})=(1,2)$ and $h_{12}=1$, the path $\underline{\alpha}_{(-\bm{e}_2)}$ in \eqref{path pq} is given by 
\begin{equation} 
    \text{\bf (i):} \quad 
    \left(\underline{\emin} = 
    \left[ 
    \begin{smallmatrix}
        1 \\ -1 \\ 0 
    \end{smallmatrix}\middle|
    \begin{smallmatrix}
        0 \\ -2 \\ 1
    \end{smallmatrix}
    \right]
    \underset{2}{\xrightarrow{(0,\bullet)}}
    \underline{\tau_1} =
    \left[ 
    \begin{smallmatrix}
        1 \\ -1 \\ 0 
    \end{smallmatrix}\middle|
    \begin{smallmatrix}
        0 \\ 0 \\ -1
    \end{smallmatrix}
    \right]
    \underset{1}{\xrightarrow{(\bullet,0)}}
    \underline{\tau_2} = \underline{\sigma_-}\right)
\end{equation}
with $h_{13}=0$. 
On the other hand, by Proposition \ref{path p3} with $r_{12}=1$, $r_{32}=2$ and $h_{32}=0$, 
the path $\underline{\beta}_{(-\bm{e}_2)}$ in \eqref{path pq} is of the form 
\begin{equation} 
    \text{\bf (v'):} \quad 
    \left(\underline{\emin} = 
    \left[ 
    \begin{smallmatrix}
        1 \\ -1 \\ 0 
    \end{smallmatrix}\middle|
    \begin{smallmatrix}
        0 \\ -2 \\ 1
    \end{smallmatrix}
    \right]
    \underset{0}{\xrightarrow{(\bullet,1)}}
    \underline{\tau_1'} \underset{0}{\xrightarrow{(2,\bullet)}}  
    \underline{\tau_2'} \underset{1}{\xrightarrow{(\bullet,1)}}  
    \underline{\tau_3'} =
    \left[ 
    \begin{smallmatrix}
        -1 \\ 0 \\ 0 
    \end{smallmatrix}\middle|
    \begin{smallmatrix}
        -2 \\ 0 \\ 1
    \end{smallmatrix}
    \right]
    \underset{0}{\xrightarrow{(2,\bullet)}}  
    \underline{\tau_4'}
    = \underline{\sigma_-}\right).
\end{equation}
Then, $\tau_3'\in \SigmaAe$ implies that $h_{31}=1$ and $r_{31}=2$. That is, $d_{31}=(1,2,1)$ as desired. 

\vspace{3mm}\noindent (6)
We get the assertion by the same argument as (4).

\vspace{3mm}\noindent (7)
In this case, we have $\tau, \tau'\in \SigmaeAe$ in \eqref{B2 tautau'} with $\bm{v}_{\tau}=\begin{bsmallmatrix}
    0 \\ 0 \\ 1
\end{bsmallmatrix}$ and $\bm{v}_{\tau'}= \begin{bsmallmatrix}
    1 \\ 0 \\ 0
\end{bsmallmatrix}$. 
They imply that 
$\begin{bsmallmatrix}
    2 \\ 0 \\ -1
\end{bsmallmatrix}, 
\begin{bsmallmatrix}
    -1 \\ 0 \\ 2
\end{bsmallmatrix}\not\in \Sigma(A,e)$
since 
$\langle \bm{v}_{\tau'}, \begin{bsmallmatrix}
    2 \\ 0 \\ -1
\end{bsmallmatrix}\rangle=\langle \bm{v}_{\tau}, \begin{bsmallmatrix}
    -1 \\ 0 \\ 2
\end{bsmallmatrix}\rangle = 2 \not \leq 1$. 
Thus, we have $l_{13}\neq 2$ and $l_{31}\neq 2$. That is, $d_{13},d_{31}\not\in \{(2,1,1),(2,1,0)\}$ as desired. 

\vspace{3mm}\noindent (8)
In this case, we have $\tau \in \SigmaeAe$ in \eqref{B2 tautau'} with $\bm{v}_{\tau}=\begin{bsmallmatrix}
    0 \\ 0 \\ 1
\end{bsmallmatrix}$. 
From the same discussion as in the previous case (7), we have 
$\begin{bsmallmatrix}
    -1 \\ 0 \\ 2
\end{bsmallmatrix} \not\in\SigmaAe$ and $d_{31}\not\in \{(2,1,1),(2,1,0)\}$. 
On the other hand, 
from a description of $\SigmaeAe=\Sigma_{d(8)}$, 
we have the following path $p$ around $\bm{e}_1=\begin{bsmallmatrix}
    1 \\ 0 \\ 0
\end{bsmallmatrix}$. 
\begin{equation} \label{path pe1}
    \underline{p}_{\bm{e}_1} \colon \ 
    \left(\underline{\sigma_{+}} = 
    \left[ 
    \begin{smallmatrix}
        0 \\ 1 \\ 0 
    \end{smallmatrix}\middle|
    \begin{smallmatrix}
        0 \\ 0 \\ 1
    \end{smallmatrix}
    \right]
    \underset{1}{\xrightarrow{(\bullet,1)}}
    \underline{\emax} = 
    \left[ 
    \begin{smallmatrix}
        1 \\ -1 \\ 1 
    \end{smallmatrix}\middle|
    \begin{smallmatrix}
        0 \\ 0 \\ 1
    \end{smallmatrix}
    \right]
    \underset{1}{\xrightarrow{(1,\bullet)}}
    \left[ 
    \begin{smallmatrix}
        1 \\ -1 \\ 1 
    \end{smallmatrix}\middle|
    \begin{smallmatrix}
        2 \\ -1 \\ 0
    \end{smallmatrix}
    \right]
    \underset{t}{\xrightarrow{(\bullet,1)}}
    \underline{\rho} := 
    \left[ 
    \begin{smallmatrix}
        t+1 \\ 0 \\ -1 
    \end{smallmatrix}\middle|
    \begin{smallmatrix}
        2 \\ -1 \\ 0
    \end{smallmatrix}
    \right]\right), 
\end{equation}
where $t\in \{0,1\}$. 
In particular, we have either $\begin{bsmallmatrix}
    1 \\ 0 \\ -1
\end{bsmallmatrix}\in \SigmaAe$ or $\begin{bsmallmatrix}
    2 \\ 0 \\ -1
\end{bsmallmatrix}\in \SigmaAe$. 
In both cases, we have $d_{13}\neq (0,0,0)$.

\vspace{3mm}\noindent (9)
In this case, we have a path \eqref{path pe1} in the fan $\SigmaeAe=\Sigma_{d(9)}$. 
Thus, we have $d_{13}\neq (0,0,0)$ by the same argument as a proof of (8) above. 
By permuting a role of $\bm{e}_1$ and $\bm{e}_3$, we also have $d_{31}\neq (0,0,0)$.

\vspace{3mm}\noindent (10)
Assume that $d_{12} = (1,2,1)$ and $d_{32}=(1,1,0)$. 
By Proposition \ref{path p3} with $(r_{12},r_{32})=(2,1)$ and $h_{32}=0$, we have $h_{31}=1$ and the path $\underline{\beta}_{(-\bm{e}_2)}$ in \eqref{path pq} is either \text{\bf (iii')} or \text{\bf (iv')} given as follows. 
\begin{eqnarray}
\text{\bf (iii'):} && 
\left(\underline{\emin} =
\left[ 
    \begin{smallmatrix}
        1 \\ -2 \\ 0 
    \end{smallmatrix}\middle|
    \begin{smallmatrix}
        0 \\ -1 \\ 1
    \end{smallmatrix}
    \right]\underset{1}{\xrightarrow{(\bullet,1)}}
    \underline{\tau_1'} \underset{1}{\xrightarrow{(1,\bullet)}} 
    \underline{\tau_2'} = 
    \left[ 
    \begin{smallmatrix}
        -1 \\ 0 \\ 1 
    \end{smallmatrix}\middle|
    \begin{smallmatrix}
        -1 \\ 0 \\ 0
    \end{smallmatrix}
    \right]
    \underset{0}{\xrightarrow{(\bullet,1)}} 
    \underline{\tau_3'} = \underline{\sigma_-} \right).   \\ 
\text{\bf (iv'):} && 
\left(\underline{\emin} =
    \left[ 
    \begin{smallmatrix}
        1 \\ -2 \\ 0 
    \end{smallmatrix}\middle|
    \begin{smallmatrix}
        0 \\ -1 \\ 1
    \end{smallmatrix}
    \right]
\underset{0}{\xrightarrow{(\bullet,2)}}
    \underline{\tau_1'} \underset{1}{\xrightarrow{(1,\bullet)}} 
    \underline{\tau_2'} \underset{0}{\xrightarrow{(\bullet,2)}}
    \underline{\tau_3'} = 
    \left[
    \begin{smallmatrix}
        -1 \\ 0 \\ 0 
    \end{smallmatrix}\middle|
    \begin{smallmatrix}
        -1 \\ 0 \\ 1
    \end{smallmatrix}
    \right]
    \underset{0}{\xrightarrow{(1,\bullet)}}
    \underline{\tau_4'} = \underline{\sigma_-}\right). 
\end{eqnarray}
In both cases, we have 
$\left[ 
    \begin{smallmatrix}
        -1 \\ 0 \\ 1 
    \end{smallmatrix}\middle|
    \begin{smallmatrix}
        -1 \\ 0 \\ 0
    \end{smallmatrix}
    \right]\in \SigmaAe$. 
This implies that $r_{31}=1$. 
Thus, we have $d_{31}\in \{(2,1,1),(1,1,1)\}$. 
In particular $h_{31} = 1$. 
Further, we show that $d_{31}=(2,1,1)$ if $h_{13}=0$, as follows. 
From a description of the fan $\SigmaeAe = \Sigma_{d(10),0}$, there is the following path $p$ around 
$\bm{u} = \begin{bsmallmatrix}
        0 \\ -1 \\ 1
    \end{bsmallmatrix}$. 
    \begin{equation*}
    \underline{p}_{\bm{u}} = 
    \left(\underline{\rho} = 
    \left[ 
    \begin{smallmatrix}
        1 \\ -2 \\ 0 
    \end{smallmatrix}\middle|
    \begin{smallmatrix}
        0 \\ 0 \\ 1
    \end{smallmatrix}
    \right] \underset{1}{\xrightarrow{(0,\bullet)}}
    \underline{\emin} = 
    \left[ 
    \begin{smallmatrix}
        1 \\ -2 \\ 0 
    \end{smallmatrix}\middle|
    \begin{smallmatrix}
        0 \\ -1 \\ 0
    \end{smallmatrix}
    \right] \underset{c_2}{\xrightarrow{(\bullet,c_1)}}
    \underline{\tau_1'} = 
    \left[ 
    \begin{smallmatrix}
        -1 \\ -c_1-c_2+2 \\ c_2 
    \end{smallmatrix}\middle|
    \begin{smallmatrix}
        0 \\ -1 \\ 0
    \end{smallmatrix}
    \right]\right). 
    \end{equation*}
    By Proposition \ref{rank2}, we have $c_1=0$. 
    If $c_2 \in \{0,1\}$, then we have 
    $\left[\begin{smallmatrix}
        -1 \\ 1 \\ 1
    \end{smallmatrix}\middle|
    \begin{smallmatrix}
        0 \\ -1 \\ 0
    \end{smallmatrix}
    \right]\in \Sigma(A,e)$, which is a contradiction by the sign-coherent property. 
    Thus, we have $c_2 =2$ and 
    \begin{equation}
    \tau_1' = 
    \left[ 
    \begin{smallmatrix}
        -1 \\ 0 \\ 2 
    \end{smallmatrix}\middle|
    \begin{smallmatrix}
        0 \\ -1 \\ 0
    \end{smallmatrix}\middle|
    \begin{smallmatrix}
        0 \\ -1 \\ 1 
    \end{smallmatrix}
    \right]\in \Sigma(A,e).
    \end{equation}
    In particular, we have $l_{31}=2$. 
    Consequently, we have $d_{31}=(2,1,1)$ as desired.

\vspace{3mm}\noindent (11)
By Proposition \ref{path p3} with $(r_{12},r_{32})=(2,2)$ and $h_{32}=0$, we have $h_{31}=1$ and the path $\underline{\beta}_{(-\bm{e}_2)}$ in \eqref{path pq} is given by  
\begin{eqnarray}
\text{\bf (v'):} &&
\left(\underline{\emin} =
    \left[ 
    \begin{smallmatrix}
        1 \\ -1 \\ 0 
    \end{smallmatrix}\middle|
    \begin{smallmatrix}
        0 \\ -1 \\ 1
    \end{smallmatrix}
    \right]
\underset{1}{\xrightarrow{(\bullet,1)}}
    \underline{\tau_1'} \underset{0}{\xrightarrow{(2,\bullet)}} 
    \underline{\tau_2'} \underset{1}{\xrightarrow{(\bullet,1)}}
    \underline{\tau_3'} = 
    \left[
    \begin{smallmatrix}
        -1 \\ 0 \\ 0 
    \end{smallmatrix}\middle|
    \begin{smallmatrix}
        -2 \\ 0 \\ 1
    \end{smallmatrix}
    \right]
    \underset{0}{\xrightarrow{(2,\bullet)}}
    \underline{\tau_4'} = \underline{\sigma_-}\right). 
\end{eqnarray}
Then, $\tau'_3\in \SigmaAe$ implies $r_{31}=2$. That is, $d_{31}=(1,2,1)$. 
Similarly, by Proposition \ref{path p1}, since $(r_{12},r_{32})=(2,2)$ and $h_{12}=1\neq 0$, the maximal path $\underline{\alpha}_{(-\bm{e}_2)}$ is one of {\bf (i)}, {\bf (iii)} or {\bf (iv)} as follows. 
\begin{eqnarray}    
\text{\bf (i):} && 
\left(\underline{\emin} =
\left[ 
    \begin{smallmatrix}
        1 \\ -2 \\ 0 
    \end{smallmatrix}\middle|
    \begin{smallmatrix}
        0 \\ -2 \\ 1
    \end{smallmatrix}
    \right]\underset{2}{\xrightarrow{(0,\bullet)}}
    \underline{\tau_1} = 
    \left[ 
    \begin{smallmatrix}
        1 \\ -2 \\ 0 
    \end{smallmatrix}\middle|
    \begin{smallmatrix}
        0 \\ 0 \\ -1
    \end{smallmatrix}
    \right]
    \underset{2}{\xrightarrow{(\bullet,0)}} 
    \underline{\tau_2} = \underline{\sigma_-}\right). \\ 
\text{\bf (iii):} && 
\left(\underline{\emin} =
\left[ 
    \begin{smallmatrix}
        1 \\ -2 \\ 0 
    \end{smallmatrix}\middle|
    \begin{smallmatrix}
        0 \\ -2 \\ 1
    \end{smallmatrix}
    \right]\underset{1}{\xrightarrow{(1,\bullet)}}
    \underline{\tau_1} = 
    \left[ 
    \begin{smallmatrix}
        1 \\ -2 \\ 0 
    \end{smallmatrix}\middle|
    \begin{smallmatrix}
        1 \\ -1 \\ -1
    \end{smallmatrix}
    \right]
    \underset{1}{\xrightarrow{(\bullet,1)}} 
    \underline{\tau_2} 
    \underset{1}{\xrightarrow{(1,\bullet)}} 
    \underline{\tau_3} = \underline{\sigma_-}\right).  \\ 
\text{\bf (iv):} && 
\left(\underline{\emin} =
    \left[ 
    \begin{smallmatrix}
        1 \\ -2 \\ 0 
    \end{smallmatrix}\middle|
    \begin{smallmatrix}
        0 \\ -2 \\ 1
    \end{smallmatrix}
    \right]
\underset{0}{\xrightarrow{(2,\bullet)}}
    \underline{\tau_1} = 
    \left[ 
    \begin{smallmatrix}
        1 \\ -2 \\ 0 
    \end{smallmatrix}\middle|
    \begin{smallmatrix}
        2 \\ -2 \\ -1
    \end{smallmatrix}
    \right]
    \underset{1}{\xrightarrow{(\bullet,1)}} 
    \underline{\tau_2} \underset{0}{\xrightarrow{(2,\bullet)}}
    \underline{\tau_3} 
    \underset{1}{\xrightarrow{(\bullet,1)}}
    \underline{\tau_4} = \underline{\sigma_-}\right). 
\end{eqnarray}
On the other hand, from the description of $\SigmaeAe=\Sigma_{d(11)}$, there is the following path $q$ around $\bm{u}:=\begin{bsmallmatrix}
    1 \\ -2 \\ 0
\end{bsmallmatrix}$.
\begin{equation*}
    \underline{q}_{\bm{u}} \colon\ 
    \left(
    \underline{\rho} = 
    \left[ 
    \begin{smallmatrix}
        1 \\ -1 \\ 0 
    \end{smallmatrix}\middle|
    \begin{smallmatrix}
        0 \\ -2 \\ 1
    \end{smallmatrix}
    \right] \underset{1}{\xrightarrow{(\bullet,0)}}
    \underline{\emin} = 
    \left[ 
    \begin{smallmatrix}
        0 \\ -1 \\ 0 
    \end{smallmatrix}\middle|
    \begin{smallmatrix}
        0 \\ -2 \\ 1
    \end{smallmatrix}
    \right] \underset{b_2}{\xrightarrow{(b_1,\bullet)}}
    \underline{\eta} := 
    \left[ 
    \begin{smallmatrix}
        0 \\ -1 \\ 0 
    \end{smallmatrix}\middle|
    \begin{smallmatrix}
        b_2 \\ -b_1-2b_2+2 \\ -1
    \end{smallmatrix}
    \right]\right). 
    \end{equation*}
where $\eta$ coincides with $\tau_1$.
Since $b_1=0$ by Proposition \ref{rank2}, 
we conclude that the path {\bf (iv)} appears with $(b_1,b_2)=(0,2)$.
Furthermore, we have
\[
\tau_2=\left[ 
    \begin{smallmatrix}
        1 \\ -1 \\ -1 
    \end{smallmatrix}\middle|
    \begin{smallmatrix}
        2 \\ -2 \\ -1
    \end{smallmatrix}\middle|
    \begin{smallmatrix}
        0 \\ -1 \\ 0
    \end{smallmatrix}
    \right],\ \tau_3=\left[ 
    \begin{smallmatrix}
        1 \\ -1 \\ -1 
    \end{smallmatrix}\middle|
    \begin{smallmatrix}
        0 \\ 0 \\ -1
    \end{smallmatrix}\middle|
    \begin{smallmatrix}
        0 \\ -1 \\ 0
    \end{smallmatrix}
    \right].\ 
\]
This shows $h_{13}=1$.


\vspace{3mm}\noindent (12) and (13) 
In our proofs in \ref{case:12} and \ref{case:13} in Section \ref{sec:proof_Thm2}, we have already shown that 
the paths $\underline{\alpha}_{(-\bm{e}_2)}$ and $\underline{\beta}_{(-\bm{e}_2)}$ in \eqref{path pq} are given by \eqref{D2 p1} and \eqref{D2 p3} respectively. 
In particular, they give $h_{13}=h_{31}=0$ by Proposition\;\ref{path p1}(1) and Proposition\;\ref{path p3}(1') respectively. 
On the other hand, we have a maximal cone $\kappa\in \SigmaAe$ in \eqref{D2 kappaphi} with $\bm{v}_{\kappa} = \begin{bsmallmatrix}
    0 \\ 0 \\ 1
\end{bsmallmatrix}$ in both cases.
Then, we have $\begin{bsmallmatrix}
    -1 \\ 0 \\ 2
\end{bsmallmatrix}\not\in \SigmaAe$ because $\langle v_{\kappa}, \begin{bsmallmatrix}
    -1 \\ 0 \\ 2
\end{bsmallmatrix} \rangle = 2 \not \leq 1$. 
Therefore, we have $l_{31}\neq 2$ and $d_{31}\in \{(1,1,0),(1,2,0),(0,0,0)\}$. 

If this is the case (12), 
then the fan $\SigmaeAe=\Sigma_{d(12)}$ has the following path $p$ around $\bm{e}_1=\begin{bsmallmatrix}
    1 \\ 0 \\ 0
\end{bsmallmatrix}$. 
\begin{eqnarray}
    \underline{p}_{\bm{e}_1} \colon\ \left(
    \underline{\sigma_{+}} = 
\left[ 
    \begin{smallmatrix}
        0 \\ 1 \\ 0 
    \end{smallmatrix}\middle|
    \begin{smallmatrix}
        0 \\ 0 \\ 1
    \end{smallmatrix}
    \right] 
\underset{1}{\xrightarrow{(\bullet,1)}}
\underline{\emax} =
\left[ 
    \begin{smallmatrix}
        1 \\ -1 \\ 1 
    \end{smallmatrix}\middle|
    \begin{smallmatrix}
        0 \\ 0 \\ 1
    \end{smallmatrix}
    \right] \underset{0}{\xrightarrow{(2,\bullet)}}
    \underline{\psi_1'} \underset{0}{\xrightarrow{(\bullet,1)}} 
    \underline{\psi_2'}
    \underset{0}{\xrightarrow{(\bullet,2)}} 
    \underline{\psi_3'} = 
    \left[
    \begin{smallmatrix}
        1 \\ -1 \\ 0 
    \end{smallmatrix}\middle|
    \begin{smallmatrix}
        0 \\ 0 \\ -1
    \end{smallmatrix}
    \right]\right). 
\end{eqnarray}
Then, $\left[
    \begin{smallmatrix}
        1 \\ 0 \\ 0 
    \end{smallmatrix}\middle|
    \begin{smallmatrix}
        0 \\ 0 \\ -1
    \end{smallmatrix}
    \right]\in \SigmaAe$, and it implies $d_{13}=0$ as desired.

On the other hand, if this is the case (13), 
then the fan $\SigmaeAe=\Sigma_{d(13)}$ has the following path $q$ around $\bm{e}_1$. 
\begin{eqnarray} 
\underline{q}_{\bm{e}_1} \colon\ 
\left(\underline{\sigma_{+}} = 
\left[ 
    \begin{smallmatrix}
        0 \\ 1 \\ 0 
    \end{smallmatrix}\middle|
    \begin{smallmatrix}
        0 \\ 0 \\ 1
    \end{smallmatrix}
    \right] 
\underset{1}{\xrightarrow{(\bullet,1)}}
\underline{\emax} =
\left[ 
    \begin{smallmatrix}
        1 \\ -1 \\ 1 
    \end{smallmatrix}\middle|
    \begin{smallmatrix}
        0 \\ 0 \\ 1
    \end{smallmatrix}
    \right] \underset{0}{\xrightarrow{(2,\bullet)}}
    \underline{\psi_1} \underset{1}{\xrightarrow{(\bullet,1)}} 
    \underline{\psi_2}
    \underset{0}{\xrightarrow{(\bullet,2)}} 
    \underline{\psi_3} = \left[
    \begin{smallmatrix}
        2 \\ -1 \\ 0 
    \end{smallmatrix}\middle|
    \begin{smallmatrix}
        2 \\ 0 \\ -1
    \end{smallmatrix}
    \right] \right)
\end{eqnarray}
Then, $\begin{bsmallmatrix}
    2 \\ 0 \\ -1
\end{bsmallmatrix}\in \SigmaAe$ implies $l_{13}=2$. 
Since $h_{13}=0$ as we shown before, we have $d_{13}=(2,1,0)$ in this case. 
We finish a proof.

\subsection{Proof of Proposition \ref{Gen X}}
Finally, we prove Proposition \ref{Gen X}. 
In Section \ref{proof X1} (resp., Section \ref{proof X2}), we will show that contradictions arise when we assume that case (i) (resp., (ii)) occurs.
Our proof will employ a ring-theoretic approach.

Let $(A,e=(e_1,e_2,e_3))$ be a $g$-convex algebra of rank $3$. 
We begin with preparation.
Fix a pair $(i,j)$ of integers with $1\leq i\neq j \leq 3$. 
Let $\mathbb{U}_{ij}:=\cone\{\bm{e}_i,-\bm{e}_j\}$. 
In Proposition\;\ref{coord_plane}, we give rays of $\SigmaeAe$ lying in $\mathbb{U}_{ij}$. 
Here, we give more detailed information on this proposition.

We denote by $M_{s,t}(e_i A e_j)$ the set of $s\times t$ matrices whose entries are elements of $e_i A e_j$. 
To each $\bm{x}\in M_{s,t}(e_i A e_j)$, we associate a two-term complex   
\[
P_{\bm{x}} := [P_j^{\oplus t}\xrightarrow{\bm{x}\cdot-} P_i^{\oplus s}]\in \Kb(\proj A). 
\]

On the other hand, we recall from \cite[Section 5]{AHIKM2} that each element $x\in e_iAe_j$ defines a subalgebra $L_x$ of $e_i A e_i$ and a subalgebra $R_x$ of $e_jAe_j$ as follows:
\[
\begin{array}{lll}
  L_x   & := & \{a \in e_i A e_i\mid ax \in x A e_j\}, \\
  R_x   & := & \{a \in e_j A e_j\mid xa \in e_i A x\}. \\
\end{array}
\]
We say that $x\in e_i A e_j$ is a \emph{left generator} (resp. \emph{right generator}) of $e_i A e_j$ 
if $e_iAx = e_iAe_j$ (resp. $xAe_j = e_iAe_j$). 
Notice that if $(l_{ij},r_{ij})=(1,1)$, then
each left (resp. right) generator is also a right (resp. left) generator.
Conversely, if there exists a left-right generator of $e_iAe_j$, then we have $(l_{ij},r_{ij})=(1,1)$.

We can deduce the following statements from \cite[Proposition 5.6]{AHIKM2} (and its proof). 

\begin{proposition}
\label{prop: lr=12}
Assume that $(l_{12},r_{12})=(1,2)$. 
For a left generator $x\in e_1Ae_2$, there is $u\in e_1Ae_1\setminus L_x$ satisfying the following equivalent conditions {\upshape (a)}, {\upshape (b)} and {\upshape (c)}.
       \begin{enumerate}[\rm (a)] 
       \item $P_x\oplus P_{[x\ ux]}$ is presilting.
       \item The following equation holds.
       \[
        e_1 A e_1= L_x+L_xu.
       \]
       \item The following equation holds.
       \[
        M_{1,2}(e_1 A e_2) = M_{1,1}(e_1 Ae_1)[x\ ux]+M_{1,2}(xAe_2).
       \]
       \end{enumerate}
\end{proposition}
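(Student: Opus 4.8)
The statement asserts that in the case $(l_{12},r_{12})=(1,2)$, for a fixed left generator $x\in e_1Ae_2$ there exists $u\in e_1Ae_1\setminus L_x$ satisfying three equivalent conditions, one of them being presilting-ness of $P_x\oplus P_{[x\ ux]}$. The plan is to first establish the equivalence of (a), (b), (c) for an arbitrary $u\in e_1Ae_1\setminus L_x$ (which is the purely formal part), and then to produce a specific $u$ realizing them, invoking the classification of convex $g$-fans of rank $2$ (i.e. \cite[Proposition 5.6]{AHIKM2} and its proof, which we are allowed to cite). The existence of $u$ is where the $g$-convexity hypothesis on the rank-$2$ algebra $(e_1+e_2)A(e_1+e_2)$ is really used: the fan $\Sigma((e_1+e_2)A(e_1+e_2))$ must contain the ray $\begin{bsmallmatrix} 1 \\ -1 \end{bsmallmatrix}$ when $(l_{12},r_{12})=(1,2)$ (Proposition \ref{coord_plane}), and the two-term complex carrying that $g$-vector is forced, up to the choices above, to be of the form $P_{[x\ ux]}$ for a suitable $u$.

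The key steps I would carry out, in order: (1) Recall that $\Hom_{\Kb(\proj A)}(P_x,P_x[1])=0$ automatically since $x$ is a left generator and $P_x$ is already known to be presilting (it is the indecomposable complex sitting on the relevant ray of the rank-$2$ subfan). (2) Compute $\Hom_{\Kb(\proj A)}(P_x\oplus P_{[x\ ux]}, (P_x\oplus P_{[x\ ux]})[1])$ explicitly in terms of the matrix representation: a morphism $P_j^{\oplus t}\to P_i^{\oplus s}$ in degree $0$ amounts to left multiplication by a matrix over $e_iAe_j$, and the $\Hom$-space in degree $1$ is identified with a quotient of a matrix space modulo the image of the relevant differentials. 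This is the standard computation for $\Hom$ of two-term complexes, and it converts the vanishing condition into the statement that $M_{1,2}(e_1Ae_2)$ is generated, as indicated, by $[x\ ux]$ together with $xAe_2$ scaled appropriately — i.e. condition (c). (3) Show $(c)\Leftrightarrow (b)$: since $e_1Ax=e_1Ae_2$ (because $x$ is a left generator), the module $M_{1,2}(e_1Ae_2)/M_{1,2}(xAe_2)$ is identified with $e_1Ae_1/L_x$ as a left $e_1Ae_1$-module (this is exactly the content of the subalgebra $L_x$ being the stabilizer), under which $[x\ ux]$ corresponds to the class of $u$; hence (c) becomes $e_1Ae_1 = L_x + L_xu$, which is (b). (4) Deduce the existence of $u$ from \cite[Proposition 5.6]{AHIKM2}: since the rank-$2$ algebra is $g$-convex with $(l_{12},r_{12})=(1,2)$, its $g$-fan has a ray at $\begin{bsmallmatrix}1\\-1\end{bsmallmatrix}$ whose corresponding indecomposable presilting complex has $g$-vector cone adjacent to both $C(e_1A)$ and the ray at $\begin{bsmallmatrix}1\\-2\end{bsmallmatrix}$; unwinding the description of that complex gives a matrix presentation $[x\ ux]$ with $u\notin L_x$, which forces one (hence all) of (a)--(c) to hold.

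The main obstacle I anticipate is step (3): making the identification $M_{1,2}(e_1Ae_2)/M_{1,2}(xAe_2)\cong e_1Ae_1/L_x$ genuinely precise, and checking that it is an isomorphism of left $e_1Ae_1$-modules sending $[x\ ux]\mapsto \bar u$, requires care about which side the algebra acts on and about the role of the second column $xAe_2$ (one must verify that a matrix $[a\,x\ a'\,x]$ with $a,a'\in e_1Ae_1$ lies in $M_{1,1}(e_1Ae_1)[x\ ux]+M_{1,2}(xAe_2)$ iff $a'-a u\cdot(\text{something})$ lands in $L_x\cdot(\cdots)$, and this bookkeeping is delicate). The equivalence $(a)\Leftrightarrow(c)$ via the $\Hom$-computation is routine but must be done carefully because one needs both $\Hom(P_x,P_{[x\ ux]}[1])=0$ and $\Hom(P_{[x\ ux]},P_x[1])=0$ and $\Hom(P_{[x\ ux]},P_{[x\ ux]}[1])=0$ simultaneously, and only after combining all three does condition (c) emerge cleanly. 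I would expect the proof to be short once the dictionary between two-term complexes over $e_iAe_j$-matrices and the module-theoretic conditions is set up, so most of the work is in laying out that dictionary correctly rather than in any deep argument.
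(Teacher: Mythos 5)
Your proposal follows essentially the same route as the paper: the existence of $u$ (and the implication (b)$\Rightarrow$(c)) is imported from \cite[Proposition 5.6]{AHIKM2}, the equivalence (a)$\Leftrightarrow$(c) is obtained by translating the four Hom-vanishing conditions for $P_x\oplus P_{[x\ ux]}$ into matrix equations exactly as you describe (the paper's key observation being that condition (c) is precisely the vanishing of $\Hom(P_{[x\ ux]},P_x[1])$ and implies the other three conditions), and (c)$\Rightarrow$(b) is the element computation you sketch, which indeed goes through via the left-module identification $e_1Ae_2/xAe_2\cong e_1Ae_1/L_x$. One minor slip in your step (4): $P_{[x\ ux]}$ sits on the ray $\begin{bsmallmatrix}1\\-2\end{bsmallmatrix}$, not $\begin{bsmallmatrix}1\\-1\end{bsmallmatrix}$ (the latter carries $P_x$ itself), but since you defer the existence of $u$ to \cite[Proposition 5.6]{AHIKM2}, as the paper does, this does not affect the argument.
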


\begin{proposition}
	\label{dual of prop: lr=12}
Assume that $(l_{12},r_{12})=(2,1)$. 
For a right generator $x\in e_1Ae_2$, there is $u\in e_2Ae_2\setminus R_x$ satisfying the following equivalent conditions {\upshape (a)}, {\upshape (b)} and {\upshape (c)}.
       \begin{enumerate}[\rm (a)] 
       \item $P_x\oplus P_{\begin{bsmallmatrix}
           x \\ xu
       \end{bsmallmatrix}}$ is presilting.
       \item The following equation holds.
       \[
        e_2 A e_2 = R_x+uR_x.
       \]
       \item The following equation holds.
       \[
        M_{2,1}(e_1 A e_2) = \begin{bsmallmatrix}
           x \\ xu
       \end{bsmallmatrix} M_{1,1}(e_2Ae_2) + M_{2,1}(e_1Ax).
       \]
       \end{enumerate}
\end{proposition}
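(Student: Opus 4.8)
The statement to prove is Proposition \ref{dual of prop: lr=12}, which is the ``right-handed'' dual of Proposition \ref{prop: lr=12}. The plan is to deduce it from Proposition \ref{prop: lr=12} by the opposite-algebra trick, rather than reproving everything from scratch.

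First I would set up the dictionary between $A$ and $A^{\op}$. If $(A,e)$ is a $g$-convex algebra of rank $3$ with $(l_{12}^e,r_{12}^e)=(2,1)$, then $(A^{\op},e)$ is again $g$-convex of rank $3$ (the $g$-fan of $A^{\op}$ is the image of $\Sigma(A,e)$ under $-1\in G$, so convexity is preserved), and one has $e_1 A^{\op} e_2 = e_2 A e_1$ as a $k$-space; more to the point, the pair $(e_2 A e_1)$ viewed inside $A^{\op}$ has left/right generation numbers swapped, so that $(l_{12}^{e}(A^{\op}), r_{12}^{e}(A^{\op})) = (r_{21}^e(A), l_{21}^e(A))$. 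The key observation is that a right generator $x\in e_1 A e_2$ of $e_1Ae_2$ becomes, under the canonical anti-isomorphism $A\to A^{\op}$, a \emph{left} generator of the corresponding bimodule for $A^{\op}$ (``$xAe_2 = e_1 Ae_2$'' turns into ``$e\cdot x = $ everything'' on the opposite side), and the subalgebra $R_x\subseteq e_2 A e_2$ corresponds to the subalgebra $L_x$ for $A^{\op}$. I would state this correspondence carefully, because getting the indices and the sides right is the only place an error can creep in.

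Next I would translate each of the three conditions (a), (b), (c) through this anti-isomorphism. For (b): $e_2 A e_2 = R_x + u R_x$ is literally $e_1 A^{\op} e_1 = L_x + L_x u$ after reversing products, so it matches condition (b) of Proposition \ref{prop: lr=12} applied to $A^{\op}$. For (c): transposing matrices converts $M_{2,1}(e_1Ae_2) = \begin{bsmallmatrix} x \\ xu\end{bsmallmatrix} M_{1,1}(e_2Ae_2) + M_{2,1}(e_1Ax)$ into $M_{1,2}(e_1A^{\op}e_2) = M_{1,1}(e_1A^{\op}e_1)[x\ ux] + M_{1,2}(xA^{\op}e_2)$, which is condition (c) for $A^{\op}$; here the matrix $\begin{bsmallmatrix} x\\ xu\end{bsmallmatrix}$ over $A$ becomes the row $[x\ ux]$ over $A^{\op}$ (duality exchanges $ux$-type and $xu$-type products). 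For (a): the duality functor $\RHom_A(-,A)\colon \Kb(\proj A)\to\Kb(\proj A^{\op})$ (or simply $(-)^* = \Hom_A(-,A)$ on projectives) is a triangle anti-equivalence sending $P_i^A\mapsto P_i^{A^{\op}}$, reversing the arrows of two-term complexes, and therefore sending $P_{\begin{bsmallmatrix}x\\ xu\end{bsmallmatrix}}$ to $P_{[x\ ux]}$ over $A^{\op}$; since $\Hom_{\Kb}(T,T[1])=0$ is self-dual under an anti-equivalence, $T$ is presilting over $A$ iff $T^*$ is presilting over $A^{\op}$. Thus condition (a) also matches.

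Having matched all three conditions under the passage to $A^{\op}$, the proposition follows immediately: Proposition \ref{prop: lr=12} applied to $(A^{\op},e)$ produces an element $u\in e_1 A^{\op} e_1 \setminus L_x = e_2 A e_2\setminus R_x$ for which (a$'$), (b$'$), (c$'$) over $A^{\op}$ hold and are equivalent, and translating back gives exactly the element $u$ and the equivalence of (a), (b), (c) as stated. The main obstacle I anticipate is purely bookkeeping: making sure the bimodule side-swaps, the matrix transposition, and the direction-reversal of the two-term complexes are all consistent, and in particular confirming that $g$-convexity of $A$ genuinely transfers to $A^{\op}$ (which it does, since $-1\in G$ acts by an isomorphism of sign-coherent fans and $g$-convexity is stated to be independent of such choices and of idempotents). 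Once that dictionary is pinned down, no further computation is needed.
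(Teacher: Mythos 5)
Your proposal is correct and matches the paper's intent: the paper gives a single combined proof of Propositions \ref{prop: lr=12} and \ref{dual of prop: lr=12}, writes out only the first, and leaves the dual to the symmetric argument, which your opposite-algebra reduction makes precise. The one bookkeeping point to fix is that $e_1Ae_2$ is the $(2,1)$-component of $A^{\op}$, so you must apply Proposition \ref{prop: lr=12} to $(A^{\op},e^g)$ with $e^g=(e_2,e_1,e_3)$ (equivalently, to the pair $(2,1)$), for which indeed $(l,r)=(r_{12}(A),l_{12}(A))=(1,2)$ --- not to the $(1,2)$-component $e_2Ae_1$ that your displayed identity $(l_{12}^{e}(A^{\op}), r_{12}^{e}(A^{\op})) = (r_{21}^e(A), l_{21}^e(A))$ actually refers to.
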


\begin{proof}[Proof of Propositions \ref{prop: lr=12} and \ref{dual of prop: lr=12}] 
We prove Proposition \ref{prop: lr=12}. 
Since $(l_{12},r_{12})=(1,2)$, we have 
$\left[\begin{smallmatrix}
    1 \\ -1 \\ 0 
\end{smallmatrix}\middle|\begin{smallmatrix}
    1 \\ -2 \\ 0 
\end{smallmatrix}\right] \in \Sigma(A,e)$.
It follows from \cite[Proposition 5.6]{AHIKM2} that there exists an element $u\in e_1Ae_1\setminus L_x$ satisfying the condition (c). Therefore, it is sufficient to check that
 conditions (a), (b) and (c) are equivalent.

 
(b)$\Rightarrow$(c). This implication holds by \cite[Proposition 5.6]{AHIKM2}.

(c)$\Rightarrow$(b). For each $a\in e_1Ae_1$, there exist $a'\in e_1 A e_1$ and $b,b'\in e_2 A e_2$ such that 
	\[
 	[0\ ax]= a'[x\ ux]+x[b\ b'].
 	\]
 	Then $a'$ and $a-a'u$ are in $L_x$, and hence $a=a'u+(a-a'u)\in L_xu+L_x$. 
  Thus the equation $e_1 A e_1 = L_x+L_x u$ holds as desired. 

(a)$\Leftrightarrow$(c). 
By \cite[Proposition\;3.9]{AHIKM2}, the following assertions hold.
\begin{itemize}
\item $P_x$ is presilting if and only if (i) $e_1Ax+xAe_2=e_1Ae_2$.
\item $P_{[x\ ux]}$ is presilting if and only if (ii) $e_1A[x\ ux]+[x\ ux]M_{2,2}(e_2Ae_2)=M_{1,2}(e_1 A e_2)$.
\item $\Hom_{\Kb(\proj A)}(P_x, P_{[x\ ux]}[1])=0$ if and only if (iii) $e_1 A x+[x\ ux]M_{2,1}(e_2Ae_2)=e_1 A e_2$.
\item $\Hom_{\Kb(\proj A)}(P_{[x\ ux]}, P_x[1])=0$ if and only if (iv) $e_1 A [x\ ux]+xM_{1,2}(e_2Ae_2)=M_{1,2}(e_1 A e_2)$.
\end{itemize}
It is clear that (iv) implies (ii), and (i) implies (iii). By looking at the first entry of the row vector, (iv) implies (i).
Thus the condition (a) is equivalent to the condition (c).
\end{proof}

\subsubsection{Proof of Proposition \ref{Gen X}(i)} \label{proof X1}
We prove Proposition \ref{Gen X}~(i). 
We recall our assumption  
\begin{enumerate}
    \item[\rm (i)] $d_{12} = (2,1,1)$, $d_{13} = (2,1,0)$, 
    $d_{23} = (2,1,1)$ and $d_{32} = (1,1,1)$ 
\end{enumerate}
in the statement. 
Since $r_{12}=r_{23} = r_{13} = 1$, 
we have right generators $x\in e_1Ae_2$, $y\in e_2Ae_3$ and $z\in e_1Ae_3$. In particular, we can choose $z = xy$ as a right generator of $e_1Ae_3$. 
By $(l_{12},r_{12})=(2,1)$, we apply Proposition \ref{dual of prop: lr=12} to a right generator $x$ of $e_1Ae_2$ and obtain $u\in e_2 A e_2\setminus R_x$ such that 
$P_x\oplus P_{\left[\begin{smallmatrix}
    x \\ xu  
\end{smallmatrix}\right]}$ 
is presilting. Similarly, by $(l_{13},r_{13})=(2,1)$, we apply Proposition \ref{dual of prop: lr=12} to a right generator $z=xy$ of $e_1Ae_3$ and obtain $v\in e_3Ae_3$ such that $P_{xy}\oplus P_{\left[\begin{smallmatrix}
    xy \\ xyv  
\end{smallmatrix}\right]}$ is presilting. 
These indecomposable two-term presilting complexes are given by 

\begin{equation*}
    P_x = \left[P_2\xrightarrow{x}P_1 \right], \ 
    P_{\left[\begin{smallmatrix}
    x \\ xu  
\end{smallmatrix}\right]} = \left[P_2\xrightarrow{\left[\begin{smallmatrix}
    x \\ xu  
\end{smallmatrix}\right]} P_1^{\oplus 2} \right] \ 
\text{and} \ 
    P_{\left[\begin{smallmatrix}
    xy \\ xyv  
\end{smallmatrix}\right]} = \left[P_3\xrightarrow{\left[\begin{smallmatrix}
    xy \\ xyv 
\end{smallmatrix}\right]} P_1^{\oplus 2} \right]
\end{equation*}
with $g$-vectors 
\begin{equation}    
g(P_x) = \left[\begin{smallmatrix}
    1 \\ -1 \\ 0
\end{smallmatrix}\right], \quad
g(P_{\left[\begin{smallmatrix}
    x \\ xu  
\end{smallmatrix}\right]}) = \left[\begin{smallmatrix}
    2 \\ -1 \\ 0
\end{smallmatrix}\right]\quad  \text{and} \quad  
g(P_{\left[\begin{smallmatrix}
    xy \\ xyv  
\end{smallmatrix}\right]}) = \left[\begin{smallmatrix}
    2 \\ 0 \\ -1
\end{smallmatrix}\right]. 
\end{equation}

\begin{lemma}\label{some presilt}
In the above,  
$P_x \oplus P_{\left[\begin{smallmatrix}
    xy \\ xyv  
\end{smallmatrix}\right]}$ is presilting. In particular, we have  
\begin{equation}\label{M21 i}
    M_{2,1}(e_1Ae_2) = 
    \begin{bmatrix}
    xy \\ xyv
\end{bmatrix}M_{1,1}(e_3Ae_2) + M_{2,1}(e_1Ax).
\end{equation}

\end{lemma}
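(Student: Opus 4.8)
The claim is that $P_x \oplus P_{\left[\begin{smallmatrix} xy \\ xyv \end{smallmatrix}\right]}$ is presilting, with the equation \eqref{M21 i} as the ring-theoretic counterpart. My plan is to verify presilting-ness directly via the four vanishing conditions recalled in the proof of Propositions \ref{prop: lr=12} and \ref{dual of prop: lr=12}, applied here to the two complexes $P_x = [P_2\xrightarrow{x}P_1]$ and $P_{\bm{w}}$ with $\bm{w} = \left[\begin{smallmatrix} xy \\ xyv \end{smallmatrix}\right] \in M_{2,1}(e_1Ae_3)$. Concretely, I would check: (1) $P_x$ is presilting — already known, since $x$ is a generator of $e_1Ae_2$; (2) $P_{\bm{w}}$ is presilting — already established above via Proposition \ref{dual of prop: lr=12} applied to the right generator $z = xy$ of $e_1Ae_3$; and (3) the two cross-terms $\Hom_{\Kb(\proj A)}(P_x, P_{\bm w}[1]) = 0$ and $\Hom_{\Kb(\proj A)}(P_{\bm w}, P_x[1]) = 0$. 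By \cite[Proposition~3.9]{AHIKM2}, these cross-terms translate into
\[
    e_1 A x + \bm{w}\, M_{1,1}(e_3 A e_2) = e_1 A e_2
    \quad\text{and}\quad
    e_1 A \bm{w} + x\, M_{1,2}(e_3 A e_2)^{\!\top} = M_{2,1}(e_1 A e_3),
\]
or rather the appropriately transposed versions matching the shapes of $P_x$ and $P_{\bm w}$; I will take care to write these in the row/column conventions consistent with Propositions \ref{prop: lr=12}--\ref{dual of prop: lr=12}. The equation \eqref{M21 i} is exactly the first of these (after renaming), so proving the cross-term vanishing simultaneously delivers the ``in particular'' clause.

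For the first cross-term, $e_1 A x + \bm{w}\,M_{1,1}(e_3 A e_2) = e_1 A e_2$: since $x$ is a \emph{right} generator, $x A e_2 = e_1 A e_2$, so it suffices to show $e_1 A x + \bm w\, M_{1,1}(e_3 A e_2) \supseteq x A e_2$. Here I would use that $d_{32} = (1,1,1)$, so $e_3 A e_2$ has a left-right generator, say $w_0$, and $y w_0$ generates $e_2 A e_2$ suitably; more precisely, I expect to exploit that any element of $xAe_2$ can be rewritten using the factorization $z = xy$ and the relation $e_3 A e_2 = e_3 A w_0 = w_0 A e_2$, pushing the ``$u$-part'' and ``$v$-part'' into the correct summand. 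The key structural input is that the generator $z$ of $e_1Ae_3$ was chosen as $z = xy$, which links the $P_x$-direction and the $P_{\bm w}$-direction. For the second cross-term, $e_1 A \bm w + x M_{1,2}(\cdots) = M_{2,1}(e_1Ae_3)$, I would argue analogously: $\bm w = \left[\begin{smallmatrix} xy \\ xyv \end{smallmatrix}\right]$ already has $xy$ in both entries, so $e_1 A \bm w$ contains $\left[\begin{smallmatrix} e_1 A xy \\ e_1 A xyv \end{smallmatrix}\right]$, and since $z = xy$ is a right generator of $e_1Ae_3$ while $v$ was produced by Proposition \ref{dual of prop: lr=12} so that $e_3 A e_3 = R_z + v R_z$, every column vector in $M_{2,1}(e_1Ae_3)$ decomposes into the span of $\bm w$ over $e_3Ae_3$ plus $M_{2,1}(e_1 A z)$; the extra $x M_{1,2}(e_3Ae_2)$ term then absorbs whatever falls outside $e_1 A z = e_1 A xy$.

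The main obstacle I anticipate is the \emph{bookkeeping of which algebra each residual factor lands in}. The hypotheses pin down $(l_{ij},r_{ij})$ for the pairs $(1,2)$, $(1,3)$, $(3,2)$, $(2,3)$, but the cross-term conditions mix all of these: an arbitrary element of $e_1Ae_2$ must be written as a combination of $e_1 A x$-terms and $\bm w = x y(\cdots)$-terms, which forces me to track how an element of $e_2 A e_2$ or $e_3 A e_2$ sits relative to the subalgebras $R_x$, $L_x$, $R_z$ and their shifts by $u$, $v$. I expect the crucial lemma to be that because $x$ and $z=xy$ are \emph{right} generators and because $d_{32}=(1,1,1)$ gives a two-sided generator $w_0$ of $e_3Ae_2$, the module $e_1 A e_2$ factors as $e_1 A z \cdot (\text{something in } e_3Ae_2) = e_1 A xy \cdot e_3 A e_2$, so that the ``new'' element $\bm w$ in the $(1,3)$-slot, together with $e_3Ae_2$, already reaches all of $e_1Ae_2$ — modulo the $e_1 A x$ part that is handled by $x$ being a generator. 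Once that factorization identity is isolated, the remaining verifications should reduce to routine substitution, and \eqref{M21 i} follows by reading off the matrix form. I would write the proof by first stating and checking this factorization identity as a displayed equation, then deducing the two cross-term vanishings, and finally invoking \cite[Proposition~3.9]{AHIKM2} to conclude presilting-ness.
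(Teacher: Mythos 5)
Your strategy (reduce presilting-ness to the Hom-vanishing conditions of \cite[Proposition~3.9]{AHIKM2} and verify them by hand from the generator data) founders on the first cross-term $\Hom_{\Kb(\proj A)}(P_x,P_{\bm w}[1])=0$, which \emph{is} equation \eqref{M21 i} and is the entire content of the lemma. Unwinding it as you propose: writing an arbitrary column $\left[\begin{smallmatrix} xc_1 \\ xc_2\end{smallmatrix}\right]$ with $c_1,c_2\in e_2Ae_2$ (using that $x$ is a right generator), the condition demands that for all $c_1,c_2$ there exist a single $d\in e_3Ae_2$ with $c_1-yd\in R_x$ and $c_2-yvd'\in R_x$ simultaneously (after commuting $v$ past a generator of $e_3Ae_2$). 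This is a simultaneous surjectivity statement onto $(e_2Ae_2/R_x)^{2}$, and it is not a formal consequence of $d_{12}=(2,1,1)$, $d_{13}=(2,1,0)$, $d_{23}=(2,1,1)$, $d_{32}=(1,1,1)$; the ``factorization identity'' you hope to isolate is precisely what needs proof, and you give no mechanism for it. That it cannot be soft is visible from the sequel: Lemma \ref{lemma:remaining case (a)} extracts from \eqref{M21 i} nontrivial memberships such as $yy^*\ell\in R_x$ and uses them to reach the contradiction, so \eqref{M21 i} genuinely carries information beyond the numerical data.

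The missing ingredient is $g$-convexity, which your proposal never invokes. The paper does not prove this lemma ring-theoretically at all: it reads the presilting complex off the $g$-fan. Applying $g=((12),-1)\in G$, one has $\Sigma_{+--}(A,e)\simeq\Sigma_{+-+}(A^g,e^g)$, and the data $d_{12}=(2,1,1)$, $d_{13}=(2,1,0)$ translate into $(d(A^g,e^g))_{+-+}=d(11)$; Theorem \ref{thm:orthant} (the place where convexity is actually consumed, via the mutation-path analysis of Section \ref{sec:proof_Thm2}) forces $\Sigma_{+-+}(A^g,e^g)=\Sigma_{d(11)}$, which contains the cone $\left[\begin{smallmatrix}1\\-1\\0\end{smallmatrix}\middle|\begin{smallmatrix}0\\-2\\1\end{smallmatrix}\right]$, transporting back to $\left[\begin{smallmatrix}1\\-1\\0\end{smallmatrix}\middle|\begin{smallmatrix}2\\0\\-1\end{smallmatrix}\right]\in\Sigma(A,e)$. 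Injectivity of the $g$-vector map on two-term presilting complexes \cite[Theorem~6.5]{DIJ} then identifies the corresponding object with $P_x\oplus P_{\left[\begin{smallmatrix}xy\\xyv\end{smallmatrix}\right]}$, and \eqref{M21 i} follows from \cite[Proposition~3.9]{AHIKM2}. To make your direct route work you would in effect have to re-prove the case-$d(11)$ classification for $A^g$, so the geometric detour is not optional here.
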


\begin{proof}
We focus on the signature $(+--)\in \{\pm\}^3$. 
An element $g=(s,z) = ((12),-1)\in G$ satisfies $g \cdot (+--) = (+-+)$. 
By our assumption $d_{12} = (2,1,1)$ and $d_{13} = (2,1,0)$, we have  
$(d(A^g,e^g))_{+-+} = d(11)$ and $\Sigma_{+--}(A,e) = g^{-1} \cdot \Sigma_{+-+}(A^g,e^g) = g^{-1}\cdot \Sigma_{d(11)}$. 
Since 
$\left[
    \begin{smallmatrix}
        1 \\ -1 \\ 0 
    \end{smallmatrix}\middle|
    \begin{smallmatrix}
        0 \\ -2 \\ 1
    \end{smallmatrix}
    \right]\in\Sigma_{d(11)}$, 
we obtain a cone 
\begin{equation}
    \left[
    \begin{smallmatrix}
        1 \\ -1 \\ 0 
    \end{smallmatrix}\middle|
    \begin{smallmatrix}
        2 \\ 0 \\ -1
    \end{smallmatrix}
    \right]\in \Sigma_{+-+}(A,e). 
\end{equation}
The corresponding two-term presilting complex for $A$ must be isomorphic to $P_x\oplus P_{\left[\begin{smallmatrix}
    xy \\ xyv  
\end{smallmatrix}\right]}$ since the map taking its $g$-vector is injective for two-term presilting complexes (see \cite[Theorem\;6.5]{DIJ}).
\end{proof}

We reach to a contradiction by the next claim.

\begin{lemma}
	\label{lemma:remaining case (a)}
Under the above settings, we obtain the following contradictory claims. 
\begin{enumerate}[\rm (a)]
	\item Let $y^*$ be a left-right generator of $e_3 A e_2$. Then there is an invertible element $\varepsilon$ of $e_2 A e_2$ satisfying
	\[
	yy^*\varepsilon\in R_x.
	\]
 In this case, $y^*\varepsilon$ is a left-right generator of $e_3Ae_2$. 
	\item For each left-right generator $y^*$ of $e_3 A e_2$, we have 
	\[
	yy^*\not \in R_x. 
	\]
\end{enumerate}
\end{lemma}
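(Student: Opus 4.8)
\textbf{Proof proposal for Lemma \ref{lemma:remaining case (a)}.}
The plan is to establish the two mutually contradictory claims (a) and (b) by a careful bookkeeping of generators in the idempotent-truncated subalgebras, using the $g$-fan structure we have already pinned down. For claim (b), I would argue by contradiction: suppose $yy^*\in R_x$ for some left-right generator $y^*$ of $e_3Ae_2$. The key observation is that the equation \eqref{M21 i} from Lemma \ref{some presilt}, namely $M_{2,1}(e_1Ae_2)=\begin{bsmallmatrix} xy \\ xyv\end{bsmallmatrix}M_{1,1}(e_3Ae_2)+M_{2,1}(e_1Ax)$, together with the hypothesis $yy^*\in R_x$, should force a collapse in the number of generators of $e_1Ae_2$. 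Concretely, since $y^*$ is a left-right generator of $e_3Ae_2$ we have $e_3Ay^*=e_3Ae_2=y^*Ae_2$, so $M_{1,1}(e_3Ae_2)=y^*M_{1,1}(e_2Ae_2)$; feeding this into \eqref{M21 i} gives $M_{2,1}(e_1Ae_2)=\begin{bsmallmatrix} xyy^* \\ xyy^*v'\end{bsmallmatrix}$-type terms (after moving $v$ past, using the definition of $v$ and $R_x$) plus $M_{2,1}(e_1Ax)$. If $yy^*\in R_x$, then $xyy^*\in e_1Ax$, which should mean the first column contribution already lies in $M_{2,1}(e_1Ax)$, contradicting the characterization in Proposition \ref{dual of prop: lr=12}(c) applied to the fact that $P_x\oplus P_{\left[\begin{smallmatrix} x \\ xu\end{smallmatrix}\right]}$ is presilting (which would then fail, as it requires $e_1Ae_2$ genuinely not to be generated by $x$ alone as a right module — i.e. $r_{12}\ne 1$, contradicting nothing directly, but combined with $l_{12}=2$ forces the two-generator structure). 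I expect the precise extraction of this contradiction to require unwinding the "$u\notin R_x$" condition and the identity $e_2Ae_2=R_x+uR_x$ from Proposition \ref{dual of prop: lr=12}(b).

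For claim (a), the strategy is different and more constructive. Since $d_{32}=(1,1,1)$, every left generator of $e_3Ae_2$ is also a right generator (as noted in the text, $(l,r)=(1,1)$ makes left and right generators coincide), so $y^*$ is a left-right generator. I would consider the product $yy^*\in e_2Ae_2$ and analyze its position relative to $R_x$. Because $d_{23}=(2,1,1)$, we know $e_2Ae_3$ is generated by $y$ on the right with a two-dimensional left structure; composing with $y^*$ gives an element $yy^*$ of $e_2Ae_2$. The claim is that after multiplying on the right by a suitable unit $\varepsilon$ of $e_2Ae_2$, we land inside $R_x$. The natural approach is to use that $e_2Ae_2$ is local (it is the endomorphism ring of an indecomposable projective), so $e_2Ae_2=R_x\oplus(\text{something in the radical})$ or $R_x$ has finite codimension controlled by $\dim e_2Ae_2/R_x$; since $R_x$ is a subalgebra containing the identity, and $yy^*$ is expressible via the generator relations, one shows $yy^*$ differs from an element of $R_x$ by a unit factor. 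The replacement $y^*\rightsquigarrow y^*\varepsilon$ remains a left-right generator because multiplying a generator by a unit of $e_2Ae_2$ on the right preserves the property $y^*\varepsilon Ae_2 = y^*Ae_2 = e_3Ae_2$ and $e_3Ay^*\varepsilon = e_3Ay^*\varepsilon$; one must check the latter equals $e_3Ae_2$, which follows since $\varepsilon$ is invertible.

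The main obstacle, and the step I would spend the most care on, is claim (a): producing the unit $\varepsilon$ with $yy^*\varepsilon\in R_x$. This is where the combinatorial constraints encoded in $d(A,e)$ — specifically the interplay of $d_{12}=(2,1,1)$, $d_{23}=(2,1,1)$, $d_{13}=(2,1,0)$ — must be converted into an algebraic statement about multiplication in $A$. I anticipate needing the structure of the two-term presilting complexes $P_{\left[\begin{smallmatrix} x \\ xu\end{smallmatrix}\right]}$ and $P_{\left[\begin{smallmatrix} xy \\ xyv\end{smallmatrix}\right]}$ and possibly a third presilting complex built from $y^*$, to force the relation $xyy^* \in (\text{right } e_2Ae_2\text{-span of } x) \cdot (\text{unit})$. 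Once (a) and (b) are both proven, they contradict each other immediately: (a) produces a left-right generator $y^*\varepsilon$ of $e_3Ae_2$ with $y(y^*\varepsilon)\in R_x$, while (b) says no such generator exists. Hence case (i) cannot occur, completing the proof of Proposition \ref{Gen X}(i); the argument for (ii) in Section \ref{proof X2} proceeds by an analogous but separate analysis using $d_{23}=(1,2,1)$ and Proposition \ref{prop: lr=12} instead.
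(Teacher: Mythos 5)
There is a genuine gap: you have correctly identified the architecture (prove (a) and (b) separately and observe they contradict each other), but neither part is actually proved, and in both cases the missing step is the substantive one.

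For (a), locality of $e_2Ae_2$ is not enough: $R_x$ is only a subalgebra, not an ideal, so there is no decomposition $e_2Ae_2=R_x\oplus(\text{radical part})$ to exploit, and "$yy^*$ differs from an element of $R_x$ by a unit factor" does not follow from dimension counting. The mechanism that actually works is to apply \eqref{M21 i} to the \emph{specific} element $\begin{bsmallmatrix}0\\ xu\end{bsmallmatrix}\in M_{2,1}(e_1Ae_2)$: writing $vy^*=y^*v'$ and $w:=v'\ell$, one extracts \emph{two} memberships, $yy^*\ell\in R_x$ and $u-yy^*w\in R_x$. The second lets you replace $xu$ by $xyy^*w$ in the presilting complex $P_{[\begin{smallmatrix}x\\ xu\end{smallmatrix}]}$, which yields $e_2Ae_2=R_x+yy^*wR_x$; then the hypothesis $l_{12}=2$ (via the presilting complex $e_1A\oplus P_{[\begin{smallmatrix}x\\ xyy^*w\end{smallmatrix}]}$) produces a third membership $yy^*(e_2-\ell_2w)\in R_x$. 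The unit $\varepsilon$ is then either $\ell$ or $e_2-\ell_2w$: if $\ell\in\rad(e_2Ae_2)$ then $\ell_2w=\ell_2v'\ell$ is in the radical and $e_2-\ell_2w$ is invertible. None of this — in particular the choice of test element $\begin{bsmallmatrix}0\\ xu\end{bsmallmatrix}$ and the two-pronged membership argument — appears in your sketch, and you flag this step yourself as unresolved.

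For (b), your proposed contradiction does not close. From $yy^*\in R_x$ you get $xyy^*\in e_1Ax$, say $xyy^*=ax$; but then $xyy^*c=axc$ for $c\in e_2Ae_2$ lies in $e_1Ax$ only when $c\in R_x$, so the column $\begin{bsmallmatrix}xy\\ xyv\end{bsmallmatrix}M_{1,1}(e_3Ae_2)$ does \emph{not} collapse into $M_{2,1}(e_1Ax)$, and as you concede the argument is "contradicting nothing directly." The correct route uses the identity $e_2Ae_2=R_x+yy^*wR_x$ obtained in the course of (a): write $w=p+yy^*wq$ with $p,q\in R_x$, show by induction that $w-(yy^*)^kwq^k\in R_x$ for all $k$, and use nilpotency of $yy^*\in\rad(e_2Ae_2)$ to conclude $w\in R_x$, hence $e_2Ae_2=R_x$, hence $e_1Ae_2=xAe_2=xR_x\subseteq e_1Ax$, i.e.\ $x$ is also a left generator and $l_{12}=1$, contradicting $d_{12}=(2,1,1)$. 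This Nakayama-style iteration, and the fact that (b) logically depends on an identity established while proving (a), are both absent from the proposal.
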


In fact, they cause a contradiction by the following reason. 
Since $l_{32}=r_{32}=1$, there exists a left-right generator, say $y^*$, of $e_3Ae_2$. 
Then, $yy^*\varepsilon \in R_x$ by (a) but $yy^*\varepsilon\not \in R_x$ by (b) for some invertible element $\varepsilon$.

\begin{proof}[Proof of Lemma \ref{lemma:remaining case (a)}]
Let $x,y,z,u,v$ be as above. 
In addition, let $y^*$ be a left-right generator of $e_3 A e_2$. 
As a right generator $y^*$, we have $e_3Ae_2 = y^*Ae_2$.
In this case, there is $v'\in e_2Ae_2$ such that $vy^*=y^*v'$. By \eqref{M21 i}, for an element $\begin{bsmallmatrix}
     0 \\ xu
 \end{bsmallmatrix}\in M_{2,1}(e_1Ae_2)$, 
 	there exists $\ell\in e_2 A e_2$ such that
 	\[
 	\begin{bmatrix}
 	    0 \\ xu
 	\end{bmatrix} - 
  \begin{bmatrix}
 	    xyy^*\ell \\ xyvy^*\ell
 	\end{bmatrix} = 
  \begin{bmatrix}
 	    x(yy^*\ell) \\ x(u-yy^*w) 
 \end{bmatrix} 
 \in M_{2,1}(e_1Ax), 
 	\]
where we set $w:=v'\ell \in e_2Ae_2$. 
By the definition of $R_x$, it implies that 
 $yy^*\ell \in R_x$ and $u-yy^*w \in R_x$. 
Now, we have 
\begin{equation}\label{Mxxu}
    M_{2,1}(e_1 A e_2) = \begin{bmatrix}
        x \\ xu 
    \end{bmatrix} M_{1,1}(e_2 A e_2) + M_{2,1}(e_1 A x)
\end{equation}
by Proposition \ref{prop: lr=12}. By $u-yy^*w\in R_x$, we can write \eqref{Mxxu} as   
        \begin{equation}
           M_{2,1}(e_1 A e_2) = \begin{bmatrix}
               x \\xyy^*w
           \end{bmatrix} M_{1,1}(e_2 A e_2) + M_{2,1}(e_1 A x).  
        \end{equation}
By Proposition \ref{prop: lr=12} again, 
it implies that 
\begin{equation} \label{eq:xyyw}
P_{\left[\begin{smallmatrix}
    x \\ xyy^*w  
\end{smallmatrix}\right]} = 
\left[P_2\xrightarrow{\left[\begin{smallmatrix}
    x \\ xyy^*w 
\end{smallmatrix}\right]} P_1^{\oplus 2} \right]
\end{equation}
is a presilting complex and 
\begin{equation}\label{e2Ae2}
    e_2 A e_2 = R_x + yy^*w R_x. 
\end{equation}
By comparing their $g$-vectors, we find that the complex \eqref{eq:xyyw} is isomorphic to 
$P_{\left[\begin{smallmatrix}
    x \\ xu 
\end{smallmatrix}\right]}
$.

On the other hand, by $l_{12}=2$, we have
$
\left[\begin{smallmatrix}
1 \\ 0 \\ 0 
    \end{smallmatrix}\middle| \begin{smallmatrix}
2 \\ -1 \\ 0 
    \end{smallmatrix}\right] \in \Sigma(A,e)$ and 
$e_1A \oplus P_{\left[\begin{smallmatrix}
    x \\ xyy^*w 
\end{smallmatrix}\right]} 
$ is a two-term presilting complex for $A$. 
In particular, the following equation holds by \cite[Proposition\;3.9]{AHIKM2}: 
\begin{equation}\label{M11 i}
    M_{1,1}(e_1Ae_2) = 
    M_{1,1}(e_1Ax)  + M_{1,1}(e_1Ae_1)xyy^*w.
\end{equation}
By \eqref{M11 i}, for an element $xyy^*\in e_1Ae_2$, there exists $\ell_1\in e_1 A e_1$ such that 
\begin{equation} \label{e1Ax}
xyy^*-\ell_1 xyy^*w \in e_1 A x.
\end{equation}
Since $xy$ is a right generator of $e_1 A e_3$ and $y^*$ is a (left-)right generator of $e_3 A e_2$, we can take an elements $\ell_2\in e_3 A e_3$ such that
\[
\ell_1 xyy^*=xyy^* \ell_2.
\]
By \eqref{e1Ax}, we have
\begin{equation}
    x(yy^* - yy^*\ell_2w) = 
    xyy^* - xyy^*\ell_2w = 
    xyy^*-\ell_1 xyy^*w \in e_1Ax
\end{equation}	
and therefore 
\begin{equation}
yy^*(e_2 - \ell_2 w)\in R_x.
\end{equation}

In the above discussion, we have already shown that  
        \begin{itemize}
        \item $yy^*\ell \in R_x$, and 
        \item $yy^*(e_2 - \ell_2 w)\in R_x$. 
        \end{itemize}  
In the statement (a), we can take $\ell$ as an invertible element $\varepsilon$ if $\ell \not \in \rad(e_2 A e_2)$. Otherwise, we can take $e_2 - \ell_2 w= e_2 - \ell_2 v'\ell \not\in \rad(e_2Ae_2)$ as $\varepsilon$. Thus, the assertion (a) holds true. 

To prove (b), we assume that $yy^*\in R_x$. 
By \eqref{e2Ae2}, we can write $w\in e_2Ae_2$ as 
 	\[
 	w = p + yy^* w q 
 	\]
for some $p,q\in R_x$. 
By the induction on $k$, we can check that 
 	\begin{equation}\label{w ind}
 	    w-(yy^*)^k w q^k\in R_x
 	\end{equation}
hold for all positive integers $k>0$. 
Since $(yy^*)^N = 0$ for some $N>0$, we deduce that $w\in R_x$. 
Then, \eqref{e2Ae2} gives  
 	\begin{equation}
 	e_2 A e_2=R_x+yy^*w R_x = R_x.   
 	\end{equation}
It implies 
   	\[
 	e_1 A e_2 = x A e_2 = xR_x \subseteq e_1 A x \subseteq e_1 A e_2 \quad \text{and hence} \quad e_1Ae_2 = e_1Ax. 
 	\]
Thus, $x$ is also a left generator of $e_1 A e_2$. 
In particular, we have $l_{12} = 1$. 
However, this contradicts to our assumption that $l_{12} = 2$.
We finish a proof of (b). 
\end{proof}

\subsubsection{Proof of Proposition \ref{Gen X}(ii)}
\label{proof X2}
Finally, we prove Proposition \ref{Gen X}(ii). 
To do this, we assume that 
\begin{enumerate}
    \item[\rm (ii)] $d_{12} = (2,1,1)$, $d_{13} = (1,1,0)$, $d_{21} = (1,1,1)$, $d_{23} = (1,2,1)$ and $h_{32}=0$.
\end{enumerate}
Since $r_{12}=1$, we have a right generator $x$ of $e_1 A e_2$. By $(l_{12},r_{12})=(2,1)$, we can apply Proposition \ref{dual of prop: lr=12} to a right generator $x$ of $e_1Ae_2$ and obtain $u\in e_2 A e_2\setminus R_x$ such that 
$P_x\oplus P_{\left[\begin{smallmatrix}
    x \\ xu  
\end{smallmatrix}\right]}$ 
is presilting. These indecomposable two-term presilting complexes are given by 
\begin{equation*}
    P_x = \left[P_2\xrightarrow{x}P_1 \right]  
    \ \text{and} \
    P_{\left[\begin{smallmatrix}
    x \\ xu  
\end{smallmatrix}\right]} = \left[P_2\xrightarrow{\left[\begin{smallmatrix}
    x \\ xu  
\end{smallmatrix}\right]} P_1^{\oplus 2} \right]
\quad \text{with} \ \ 
g(P_x) = \left[\begin{smallmatrix}
    1 \\ -1 \\ 0  
\end{smallmatrix}\right] 
\ \text{and} \ 
g(P_{\left[\begin{smallmatrix}
    x \\ xu  
\end{smallmatrix}\right]}) = \left[\begin{smallmatrix}
    2 \\ -1 \\ 0  
\end{smallmatrix}\right] 
\end{equation*}

We can reach to a contradiction by the next claim.

\begin{lemma}
	\label{lemma:remaining case (b)}
	Under the above setting, we obtain the following contradictory claims.
	\begin{enumerate}[\rm (a)]
		\item Let $y$ be a left generator of $e_2 A e_3$. There exists an invertible element $\varepsilon$ in $e_2 A e_2$ such that 
  \begin{equation}
      x\varepsilon y =0. 
  \end{equation}
  In this case, $\varepsilon y$ is a left generator of $e_2Ae_3$. 
		\item For each left generator $y$ of $e_2 A e_3$,  we have 
  \begin{equation}
      xy \neq 0.  
  \end{equation} 
  \end{enumerate}
\end{lemma}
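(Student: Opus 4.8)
<br>

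The plan is to derive a contradiction from assumption (ii) by combining the presilting complex $P_x\oplus P_{\left[\begin{smallmatrix} x \\ xu \end{smallmatrix}\right]}$ with ring-theoretic manipulations analogous to those in the proof of Proposition~\ref{Gen X}(i), but now exploiting the hypothesis $h_{32}=0$ rather than $l_{32}=r_{32}=1$. The key structural input is that assumption (ii) pins down $\Sigma_{+-+}(A,e)$ by Theorem~\ref{thm:orthant} (it is one of the fans $\Sigma_{d(m)}$), and similarly, applying a suitable $g\in G$ so that a signature involving the idempotents $e_1,e_2,e_3$ is sent to $(+-+)$, one recovers presilting complexes built from $x$, $u$, and a left generator $y$ of $e_2Ae_3$. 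I would first record, via Proposition~\ref{dual of prop: lr=12} applied to the right generator $x$ of $e_1Ae_2$ (legitimate since $(l_{12},r_{12})=(2,1)$), the equation
\[
M_{2,1}(e_1Ae_2) = \begin{bmatrix} x \\ xu \end{bmatrix} M_{1,1}(e_2Ae_2) + M_{2,1}(e_1Ax),
\]
together with $e_2Ae_2 = R_x + uR_x$.

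Next I would prove part (a). Since $l_{23}=1$, fix a left generator $y$ of $e_2Ae_3$, so $e_2Ae_3 = e_2Ay$. Because $h_{32}=0$, we have $e_3Ae_2\subseteq\rad^2 A$; this should force, through the relation $e_2Ae_3\cdot e_3Ae_2$ landing deep in the radical, that the composite $xy$ (an element of $e_1Ae_3$) together with the module structure constraints gives $x\cdot(\text{something})\cdot y = 0$ for an appropriate correction. Concretely, I expect to use the fan $\Sigma_{+--}(A,e)$ or $\Sigma_{--+}(A,e)$ (whichever the group element identifies with $\Sigma_{d(12)}$ or $\Sigma_{d(13)}$, given $d_{21}=(1,1,1)$ and $d_{23}=(1,2,1)$) to produce a two-term presilting complex whose $g$-vector forces an identity among $x$, $u$, $y$; comparing $g$-vectors and using injectivity of $g$ on presilting complexes (\cite[Theorem 6.5]{DIJ}) as in Lemma~\ref{some presilt}, I would extract that $x\varepsilon y = 0$ for an element $\varepsilon\in e_2Ae_2$, and then argue $\varepsilon$ can be taken invertible: if the naive $\varepsilon$ lies in $\rad(e_2Ae_2)$, replace it using the complementary generator from $e_2Ae_2 = R_x + uR_x$ (the same trick as in Lemma~\ref{lemma:remaining case (a)}(a)), so that $\varepsilon y$ is still a left generator of $e_2Ae_3$.

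Then I would prove part (b): for \emph{every} left generator $y$ of $e_2Ae_3$, one has $xy\neq 0$. The idea is that $xy = 0$ would mean $x$ annihilates $e_2Ae_3 = e_2Ay$ on the left after multiplication, which via $e_1Ae_2 = xAe_2$ and the presilting condition on $e_1A\oplus P_{\left[\begin{smallmatrix} x \\ xu \end{smallmatrix}\right]}$ (available since $l_{12}=2$, using \cite[Proposition 3.9]{AHIKM2}) collapses $e_1Ae_3$ or forces $l_{12}=1$, contradicting $l_{12}=2$; I would run an induction like \eqref{w ind} to propagate $xy=0$ through powers of a radical element and conclude $e_1Ae_2 = e_1Ax$, i.e. $l_{12}=1$. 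Combining (a) and (b): by (a) there is an invertible $\varepsilon$ with $x\varepsilon y = 0$ and $\varepsilon y$ a left generator of $e_2Ae_3$, while (b) applied to $\varepsilon y$ gives $x(\varepsilon y)\neq 0$ --- a contradiction. The main obstacle I anticipate is getting part (a) precise: identifying exactly which signature and which presilting complex in the determined fan yields the relation $x\varepsilon y=0$, and verifying that the radical-correction argument genuinely produces an invertible $\varepsilon$ keeping $\varepsilon y$ a generator; the rest is bookkeeping parallel to the proof of Proposition~\ref{Gen X}(i).
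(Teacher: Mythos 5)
Your high-level strategy is the right one (produce an $\varepsilon$ with $x\varepsilon y=0$, show $\varepsilon$ is invertible, and separately show $xy\neq 0$ for every left generator), but the two inputs that actually make the argument work are missing, and you flag the first one yourself as an unresolved obstacle. The identity that produces $x\varepsilon y=0$ is not the one you write down for $M_{2,1}(e_1Ae_2)$; it is $M_{2,1}(e_1Ae_3)=\left[\begin{smallmatrix}x\\ xu\end{smallmatrix}\right]M_{1,1}(e_2Ae_3)$, coming from the presilting complex $P_{\left[\begin{smallmatrix}x\\ xu\end{smallmatrix}\right]}\oplus P_3[1]$ (the cone $\left[\begin{smallmatrix}2\\ -1\\ 0\end{smallmatrix}\middle|\begin{smallmatrix}0\\ 0\\ -1\end{smallmatrix}\right]$, read off from $\Sigma_{d(10),0}$ in the $(+--)$ orthant --- this is precisely where $h_{32}=0$ enters, by selecting $\Sigma_{d(10),0}$ over $\Sigma_{d(10),1}$). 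Evaluating this identity at $\left[\begin{smallmatrix}0\\ z^*\end{smallmatrix}\right]$ for a left--right generator $z^*=xty$ of $e_1Ae_3$ gives $x\varepsilon y=0$ and $z^*=xu\varepsilon y$ in one stroke; your appeal to $e_3Ae_2\subseteq\rad^2A$ "landing deep in the radical" is not a mechanism that produces such a relation.

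The second gap is more serious: invertibility of $\varepsilon$ and the nonvanishing $xy\neq 0$ are not obtained by the two separate devices you propose. The replacement trick via $e_2Ae_2=R_x+uR_x$ does not apply here, and the radical-power induction modeled on case (i) has no starting point: assuming $xy=0$ does not yield an equation of the form $e_2Ae_2=R_x+(\text{nilpotent})R_x$ to induct on, so your route to "forces $l_{12}=1$" does not go through. What is actually needed is a second presilting complex $P_y\oplus P_{z^*}$ (read off from $\Sigma_{d(10),1}$ in the $(++-)$ orthant, using $d_{23}=(1,2,1)$ and $h_{21}=1$), which yields $M_{1,1}(e_2Ae_3)=M_{1,1}(yAe_3)+M_{1,1}(e_2Ae_1)z^*$ and hence, combined with Proposition \ref{prop: lr=12} for $y$ and the identity $e_2Ae_1Ae_2=x^*xAe_2$, the decomposition $e_2Ae_2=L_y+L_yx^*x\ell_2$. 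Feeding $u\varepsilon=p+qx^*x\ell_2$ with $p\in L_y$ into $z^*=xu\varepsilon y$ and absorbing the radical term gives $z^*=xyp'p''$ with $p''$ invertible; this single computation simultaneously shows $xy\neq 0$ and $p\notin\rad(e_2Ae_2)$, hence $u\varepsilon\notin\rad(e_2Ae_2)$ and $\varepsilon$ is invertible. Claims (a) and (b) are thus established together by one chain of identities rather than by the two independent arguments you sketch.
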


In fact, they cause a contradiction by the following reason. 
Since $l_{23}=1$, there exists a left generator, say $y$, of $e_2Ae_3$. 
Then, $x\varepsilon y =0$ by (a) but $x\varepsilon y\neq 0$ by (b) for some invertible element $\varepsilon$. 

To prove this, we need the following lemmas. 

\begin{lemma}\label{some presilt 2}
    Let $x$ and $u$ be as above. 
    Then, $P_{\left[\begin{smallmatrix}x \\ xu\end{smallmatrix}\right]}\oplus P_3[1]$ is presilting. In particular, we have 
    \begin{equation}\label{M21}
        M_{2,1}(e_1A_3) = \begin{bmatrix}
            x \\ xu
        \end{bmatrix} M_{1,1}(e_2Ae_3).
    \end{equation}
\end{lemma}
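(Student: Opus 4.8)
\textbf{Proof proposal for Lemma \ref{some presilt 2}.}
The plan is to produce the relevant maximal cone in an appropriate orthant of $\Sigma(A,e)$ by invoking Theorem \ref{thm:orthant}, exactly as in the proof of Lemma \ref{some presilt}. First I would single out the signature $\epsilon = (+--)\in\{\pm\}^3$, so that the complex $P_{\left[\begin{smallmatrix}x\\ xu\end{smallmatrix}\right]}\oplus P_3[1]$, whose $g$-vector is $\left[\begin{smallmatrix}2\\-1\\-1\end{smallmatrix}\right]$ only after adding $g(P_3[1])=-\mathbf{e}_3$, lies (together with its summands) in the orthant $\mathbb{R}^3_{+--}$. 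Concretely the two summands have $g$-vectors $\left[\begin{smallmatrix}2\\-1\\0\end{smallmatrix}\right]$ and $\left[\begin{smallmatrix}0\\0\\-1\end{smallmatrix}\right]$, both in $\mathbb{R}^3_{+--}$, so by the sign-coherent property it suffices to exhibit the cone $\left[\begin{smallmatrix}2\\-1\\0\end{smallmatrix}\middle|\begin{smallmatrix}0\\0\\-1\end{smallmatrix}\right]$ as a two-dimensional cone of $\Sigma_{+--}(A,e)$, and then complete it to a maximal cone using Lemma \ref{lemma:orthant_completion}.

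Next I would identify $\Sigma_{+--}(A,e)$ via the group action. Choosing $g=(s,z)=((12),-1)\in G$ gives $g\cdot(+--)=(+-+)$, hence $\Sigma_{+--}(A,e)\simeq \Sigma_{+-+}(A^g,e^g)$ under the linear isomorphism induced by $g$. Under the passage to $A^g = A^{\mathrm{op}}$ with idempotents $e^g=(e_2,e_1,e_3)$, the invariants transform by swapping left/right generator counts and transposing indices appropriately: from the hypothesis $d_{12}=(2,1,1)$ and $d_{13}=(1,1,0)$ we read off $d^g_{12} = (r_{12},l_{12},h_{12}) = (1,2,1)$ and $d^g_{32} = (r_{13},l_{13},h_{13}) = (1,1,0)$, so $d_{+-+}(A^g,e^g) = d(10)$. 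Since $h_{13}=0$ here, Theorem \ref{thm:orthant}(2) gives $\Sigma_{+-+}(A^g,e^g) = \Sigma_{d(10),0}$, the fan depicted on the left of Figure \ref{fig:C}, which contains the maximal cone $\left[\begin{smallmatrix}1\\-2\\0\end{smallmatrix}\middle|\begin{smallmatrix}0\\0\\1\end{smallmatrix}\middle|\begin{smallmatrix}0\\-1\\1\end{smallmatrix}\right]$. Applying $g^{-1}$ (which swaps the first two coordinates and negates everything, since $z=-1$) carries this back to a maximal cone of $\Sigma_{+--}(A,e)$; I would check by direct inspection that its face spanned by the images of $\left[\begin{smallmatrix}1\\-2\\0\end{smallmatrix}\right]$ and $\left[\begin{smallmatrix}0\\0\\1\end{smallmatrix}\right]$ is exactly $\left[\begin{smallmatrix}2\\-1\\0\end{smallmatrix}\middle|\begin{smallmatrix}0\\0\\-1\end{smallmatrix}\right]$ (the $g$-vectors matching those of $P_{\left[\begin{smallmatrix}x\\ xu\end{smallmatrix}\right]}$ and $P_3[1]$). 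By the injectivity of $T\mapsto g(T)$ on two-term presilting complexes (\cite[Theorem 6.5]{DIJ}), the presilting complex corresponding to this cone is $P_{\left[\begin{smallmatrix}x\\ xu\end{smallmatrix}\right]}\oplus P_3[1]$, proving the first assertion.

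For the ``in particular'' clause, I would translate $\Hom_{\Kb(\proj A)}(P_{\left[\begin{smallmatrix}x\\ xu\end{smallmatrix}\right]}, P_3[1][1])=0$ into the displayed module equation. Using the homomorphism formula of \cite[Proposition 3.9]{AHIKM2} (as already used verbatim in the proof of Propositions \ref{prop: lr=12} and \ref{dual of prop: lr=12}), the vanishing of $\Hom(P_{\left[\begin{smallmatrix}x\\ xu\end{smallmatrix}\right]}, P_3[1])$ in the derived category is equivalent to the statement that every map $P_2\to P_3^{\oplus 0}$... more precisely, that the composite $P_3 \to P_1^{\oplus 2}$ part forces $M_{2,1}(e_1Ae_3) = \left[\begin{smallmatrix}x\\ xu\end{smallmatrix}\right]M_{1,1}(e_2Ae_3)$; this is the cokernel-vanishing condition that $P_3 \to P_1^{\oplus 2}$ factors through $\left[\begin{smallmatrix}x\\ xu\end{smallmatrix}\right]\cdot-$ for every choice of the map, i.e. that the column space $M_{2,1}(e_1Ae_3)$ is contained in $\left[\begin{smallmatrix}x\\ xu\end{smallmatrix}\right]M_{1,1}(e_2Ae_3)$, and the reverse inclusion is automatic. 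The main obstacle I anticipate is bookkeeping: getting the transformation rule for $d(A,e)$ under the $G$-action right (which left/right count goes where, and which sign and coordinate permutation $g^{-1}$ applies), and then matching up the explicit generators of the cone in Figure \ref{fig:C} with the $g$-vectors of $P_{\left[\begin{smallmatrix}x\\ xu\end{smallmatrix}\right]}$ and $P_3[1]$ after applying $g^{-1}$. Once that dictionary is pinned down the argument is a direct citation of Theorem \ref{thm:orthant}, the $g$-vector injectivity, and \cite[Proposition 3.9]{AHIKM2}.
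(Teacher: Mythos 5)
Your proposal is correct and follows the paper's proof essentially step for step: the same element $g=((12),-1)\in G$, the same identification $\Sigma_{+--}(A,e)=g^{-1}\cdot\Sigma_{+-+}(A^g,e^g)=g^{-1}\cdot\Sigma_{d(10),0}$, and the same appeal to the injectivity of the $g$-vector map on two-term presilting complexes. Two small bookkeeping slips, neither fatal: the invariant that selects $\Sigma_{d(10),0}$ rather than $\Sigma_{d(10),1}$ in Theorem \ref{thm:orthant}(2) is $h_{13}^{e^g}=h_{32}^{e}$, i.e.\ the hypothesis $h_{32}=0$ of case (ii), not the $h_{13}$ of the original algebra (which happens to vanish here as well); and the equation \eqref{M21} expresses the vanishing of $\Hom_{\Kb(\proj A)}\bigl(P_3[1],P_{\left[\begin{smallmatrix}x\\ xu\end{smallmatrix}\right]}[1]\bigr)\cong\Hom_{\Kb(\proj A)}\bigl(P_3,P_{\left[\begin{smallmatrix}x\\ xu\end{smallmatrix}\right]}\bigr)$, whereas the Hom you cite, $\Hom\bigl(P_{\left[\begin{smallmatrix}x\\ xu\end{smallmatrix}\right]},P_3[2]\bigr)$, vanishes for degree reasons and carries no information.
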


\begin{proof}
Let $g=(s,z) = ((12),-1)\in G$. 
Then, it satisfies $g \cdot (+--) = (+-+)$. 
By our assumption $d_{12} = (2,1,1)$ and $d_{13} = (1,1,0)$ with $h_{32}=0$, 
we obtain $(d(A^g,e^g))_{+-+} = d(10)$ and 
$\Sigma_{+--}(A,e) = g^{-1} \cdot \Sigma_{+-+}(A^g,e^g) = g^{-1} \cdot \Sigma_{d(10),0}$. 
Since 
$\left[
    \begin{smallmatrix}
        1 \\ -2 \\ 0 
    \end{smallmatrix}\middle|
    \begin{smallmatrix}
        0 \\ 0 \\ 1
    \end{smallmatrix}
    \right]\in \Sigma_{d(10),0}$, 
there is a cone 
\begin{equation}
    \left[
    \begin{smallmatrix}
        2 \\ -1 \\ 0 
    \end{smallmatrix}\middle|
    \begin{smallmatrix}
        0 \\ 0 \\ -1
    \end{smallmatrix}
    \right]\in \Sigma(A,e). 
\end{equation}
The corresponding two-term presilting complex for $A$ must be isomorphic to $P_{\left[\begin{smallmatrix}
    xy \\ xyv  
\end{smallmatrix}\right]}\oplus P_3[1]$.
\end{proof}

Next, let $y$ be a left generator of $e_2Ae_3$. 
Since $d_{13}=(1,1,0)$, we have $e_1Ae_3 = e_1Ae_2Ae_3 = xAy$. In particular, a left-right generator $z^*\in e_1Ae_3$ can be written as $z^*=xty$ for some $t\in e_2Ae_2$. 
Notice that $P_y$ and $P_{z^*}=P_{xty}$ are indecomposable two-term presilting complexes given by 
\begin{equation*}
    P_y := \left[P_3\xrightarrow{y}P_2 \right] 
    \ \text{and} \ 
    P_{z^*} := \left[P_3\xrightarrow{z^*}P_1 \right] 
    \quad \text{with} \ \ 
g(P_y) = \left[\begin{smallmatrix}
    0 \\ 1 \\ -1  
\end{smallmatrix}\right] 
\ \text{and} \ 
g(P_{z^*}) = \left[\begin{smallmatrix}
    1 \\ 0 \\ -1  
\end{smallmatrix}\right] 
\end{equation*}

\begin{lemma}\label{some presilt 3}
    Let $x,y,t,z^*$ be as above. 
    Then, $P_y \oplus P_{z^*}$ is presilting. In particular, we have 
    \begin{equation}\label{M11 ii}
        M_{1,1}(e_2Ae_3) = M_{1,1}(yAe_3) + M_{1,1}(e_2Ae_1)z^*. 
    \end{equation}
\end{lemma}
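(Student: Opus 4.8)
\textbf{Plan for the proof of Lemma \ref{some presilt 3}.}
The strategy is to exhibit the cone $\left[\begin{smallmatrix} 0 \\ 1 \\ -1\end{smallmatrix}\middle|\begin{smallmatrix} 1 \\ 0 \\ -1\end{smallmatrix}\right]$ as a cone of $\Sigma(A,e)$ by identifying the relevant orthant and invoking Theorem \ref{thm:orthant}, exactly as in the proofs of Lemmas \ref{some presilt}, \ref{some presilt 2}. The $g$-vectors $g(P_y)=\begin{bsmallmatrix} 0\\1\\-1\end{bsmallmatrix}$ and $g(P_{z^*})=\begin{bsmallmatrix} 1\\0\\-1\end{bsmallmatrix}$ both lie in the orthant $\mathbb{R}^3_{(+ + -)}$, so I would work with the signature $\epsilon=(++-)$. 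First I would pick $g=(s,z)\in G=\mathfrak{S}_3\times\{\pm1\}$ with $g\cdot(++-)=(+-+)$; concretely $g=((23),-1)$ sends $\bm{e}_1\mapsto\bm{e}_1$, $\bm{e}_2\mapsto -\bm{e}_3$, $\bm{e}_3\mapsto -\bm{e}_2$ (up to the precise convention for $f_s$ and the sign twist), so that $\Sigma_{(++-)}(A,e)\simeq\Sigma_{+-+}(A^g,e^g)$ via the isomorphism \eqref{g iso}.

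Next I would compute $d_{+-+}(A^g,e^g)=(d^g_{12},d^g_{32})$ from the hypothesis (ii): since $z=-1$ the entries of $d^g$ are obtained from those of $d$ by transposing the index pair and swapping $l\leftrightarrow r$, then permuting indices by $s=(23)$. Tracking this through gives $d^g_{12}$ and $d^g_{32}$ in terms of $d_{13}=(1,1,0)$, $d_{31}$, $d_{23}=(1,2,1)$, $d_{32}=(1,1,0)$ (recall $h_{32}=0$ and $l_{32}=r_{32}=1$ since a left-right generator $y^*$ exists, though here I only need what (ii) directly gives); the relevant pair should come out to be $d(10)$ with $h_{13}=1$ — matching that in the companion Lemma \ref{some presilt} the corresponding orthant gave $\Sigma_{d(11)}$ and in Lemma \ref{some presilt 2} it gave $\Sigma_{d(10),0}$, so here a third combination appears. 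I would then read off from Table \ref{fig:15poly} (the fan $\Sigma_{d(10),1}$, or whichever applies) that the cone $\left[\begin{smallmatrix} 1\\-1\\0\end{smallmatrix}\middle|\begin{smallmatrix} 1\\-2\\0\end{smallmatrix}\right]$ or the appropriate $2$-dimensional cone lies in $\Sigma_{+-+}(A^g,e^g)$; pulling it back along $g^{-1}$ yields $\left[\begin{smallmatrix} 0\\1\\-1\end{smallmatrix}\middle|\begin{smallmatrix} 1\\0\\-1\end{smallmatrix}\right]\in\Sigma(A,e)$.

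Having this cone, the corresponding basic $2$-term presilting complex $T$ satisfies $g(T)$-columns equal to those two vectors; since the map $T\mapsto g(T)$ is injective on $2$-term presilting complexes \cite[Theorem 6.5]{DIJ}, $T\cong P_y\oplus P_{z^*}$, so $P_y\oplus P_{z^*}$ is presilting. The equation \eqref{M11 ii} then follows by unwinding the presilting condition via \cite[Proposition 3.9]{AHIKM2}: $\Hom_{\Kb(\proj A)}(P_y\oplus P_{z^*}, (P_y\oplus P_{z^*})[1])=0$ forces, among its four components, the vanishing $\Hom(P_{z^*},P_y[1])=0$, which translates to $M_{1,1}(e_2Ae_3)=M_{1,1}(yAe_3)+M_{1,1}(e_2Ae_1)z^*$ after noting $z^*=xty\in e_1Ae_3$.

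The main obstacle I anticipate is purely bookkeeping: getting the $G$-action on the data $d(A,e)$ right, i.e. correctly determining which of $d(0),\dots,d(13)$ (and, in the $d(10)$ case, which of the two refinements indexed by $h_{13}$) the pair $d_{+-+}(A^g,e^g)$ equals, and then matching the resulting fan in Table \ref{fig:15poly} to the specific $2$-dimensional cone whose rays are $g^{-1}\begin{bsmallmatrix}0\\1\\-1\end{bsmallmatrix}$-image vectors. This is the same mechanism already used twice in Section \ref{proof X2}, so once the element $g$ and the induced datum are pinned down, the rest is immediate; the care needed is only in the index arithmetic and the sign conventions for $A^g$ versus $A^{\mathrm{op}}$.
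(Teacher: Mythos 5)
Your proposal follows exactly the paper's route: pass to the orthant $(++-)$, transport it to $(+-+)$ by an element of $G$, identify the transported datum as $d(10)$ with $h_{13}=1$ so that the orthant fan is $\Sigma_{d(10),1}$, pull back a $2$-dimensional cone to get $\left[\begin{smallmatrix}0\\1\\-1\end{smallmatrix}\middle|\begin{smallmatrix}1\\0\\-1\end{smallmatrix}\right]\in\Sigma(A,e)$, invoke injectivity of $g$-vectors on presilting complexes, and read off \eqref{M11 ii} from the vanishing of $\Hom(P_{z^*},P_y[1])$. The one concrete slip is in the bookkeeping you flagged: $g=((23),-1)$ sends the vector $(1,1,-1)$ to $(-1,1,-1)$, i.e.\ to the orthant $(-+-)$, not $(+-+)$ (no element with $z=-1$ can work, since a signed permutation with global sign $-1$ cannot carry the multiset of signs $\{+,+,-\}$ to $\{+,-,+\}$). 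The paper takes $g=((321),1)$, which sends $(++-)$ to $(+-+)$ and yields $d_{+-+}(A^g,e^g)=(d_{23},d_{13})=((1,2,1),(1,1,0))=d(10)$ with $h^{e^g}_{13}=h_{21}=1$; the image cone is $\left[\begin{smallmatrix}1\\-1\\0\end{smallmatrix}\middle|\begin{smallmatrix}0\\-1\\1\end{smallmatrix}\right]\in\Sigma_{d(10),1}$ rather than the cone you guessed. With that correction the argument is the paper's.
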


\begin{proof}
For an element $g=(s,z) = ((321),1)\in G$, we have $g\cdot (++-) = (+-+)$. 
By our assumption $d_{23}=(1,2,1)$ and $d_{13} = (1,1,0)$ with $h_{21}=1$, we have $(d(A^g,e^g))_{+-+} = d(10)$ and $\Sigma_{++-}(A,e) = g^{-1}\cdot \Sigma_{+-+}(A^g,e^g) = g^{-1}\cdot \Sigma_{d(10),1}$.
Since 
$\left[
    \begin{smallmatrix}
        1 \\ -1 \\ 0 
    \end{smallmatrix}\middle|
    \begin{smallmatrix}
        0  \\ -1 \\ 1
    \end{smallmatrix}
\right]\in \Sigma_{d(10),1}$, 
we have a cone 
\begin{equation}
    \left[
    \begin{smallmatrix}
        0 \\ 1 \\ -1 
    \end{smallmatrix}\middle|
    \begin{smallmatrix}
        1  \\ 0 \\ -1
    \end{smallmatrix}
    \right]\in \Sigma(A,e). 
\end{equation}
The corresponding two-term presilting complex for $A$ must be isomorphic to $P_{y}\oplus P_{z^*}$. 
\end{proof}

Now, we take a left-right generator $x^*$ of $e_2Ae_1$ (It exists by $r_{21}=l_{21}=1$). 

\begin{lemma}\label{lem:ell}
Let $x,y,u,t,x^*$ and $z^*=xty$ be as above. Then, there exists $\ell_2\in e_2 A e_2$ satisfying 
\begin{equation}\label{e2 and ell2}
        e_2 A e_2=L_y+L_y x^*x\ell_2.    
\end{equation}
\end{lemma}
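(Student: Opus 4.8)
The plan is to translate the desired identity into a statement about the subring $L_y\subseteq e_2Ae_2$ and the single element $w:=x^*xt\in e_2Ae_2$, and then to ``straighten'' a generator of $e_2Ae_2/L_y$ using that $x$ and $x^*$ are right generators. First I would feed Lemma~\ref{some presilt 3} into the generator hypotheses: equation~\eqref{M11 ii} says $e_2Ae_3 = yAe_3 + (e_2Ae_1)z^*$. Since $x^*$ is a left generator of $e_2Ae_1$ we have $e_2Ae_1 = (e_2Ae_2)x^*$, and since $z^*=xty$ this gives
\[
e_2Ae_3 = yAe_3 + (e_2Ae_2)\,w\,y, \qquad w:=x^*xt.
\]
Then for any $a\in e_2Ae_2$ I would write $ay = yb + cwy$ with $b\in e_3Ae_3$, $c\in e_2Ae_2$; hence $(a-cw)y = yb\in yAe_3$, so $a-cw\in L_y$ by the definition of $L_y$. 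This yields the key reduction
\[
e_2Ae_2 = L_y + (e_2Ae_2)\,w .
\]

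Next I would iterate this identity, which forces $w$ to be nilpotent. Since $l_{12}=2\neq 1 = r_{21}$ we have $e_1A\not\cong e_2A$, so left multiplication by $x^*x$ is not an automorphism of $e_2A$; as $e_2Ae_2 = \End_A(e_2A)$ is local, $x^*x\in\rad(e_2Ae_2)$, and therefore $w=x^*xt$ is nilpotent, say $w^n=0$. Substituting $e_2Ae_2 = L_y + (e_2Ae_2)w$ into itself repeatedly then gives
\[
e_2Ae_2 = L_y + L_yw + \cdots + L_yw^{\,n-1}.
\]
On the other hand, since $(l_{23},r_{23})=(1,2)$, Proposition~\ref{prop: lr=12} applied to the left generator $y$ produces $u'\in e_2Ae_2$ with $e_2Ae_2 = L_y + L_y u'$; that is, $M:=e_2Ae_2/L_y$ is a cyclic left $L_y$-module, and I may write $\bar u' = \sum_{i=1}^{n-1}\lambda_i\bar w^{\,i}$ with $\lambda_i\in L_y$.

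It then remains to put $\sum_{i}\lambda_i w^{\,i}$ into the form $x^*x\ell_2$. For this I would use $w^{\,i} = x^*x\,(t\,w^{\,i-1})$ together with the right-generator properties: since $x^*$ is a right generator, $e_2Ae_1 = x^*(e_1Ae_1)$, so $\lambda_i x^* = x^*\eta_i$ for some $\eta_i\in e_1Ae_1$; since $x$ is a right generator, $e_1Ae_2 = x(e_2Ae_2)$, so $\eta_i x = x\xi_i$ for some $\xi_i\in e_2Ae_2$. Thus $\lambda_i w^{\,i} = \lambda_i x^*x\,(tw^{\,i-1}) = x^*x\,\xi_i t w^{\,i-1}$, and with $\ell_2 := \sum_{i=1}^{n-1}\xi_i t w^{\,i-1}\in e_2Ae_2$ one gets $\sum_i\lambda_i w^{\,i} = x^*x\ell_2$. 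Hence $\bar u' = \overline{x^*x\ell_2}$, so $x^*x\ell_2$ generates $M$ and $e_2Ae_2 = L_y + L_y\,u' = L_y + L_y\,x^*x\ell_2$, as required.

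The step I expect to require the most care is the first one: correctly passing from the matrix/bimodule identity of Lemma~\ref{some presilt 3} (and the precise meaning of ``left'' versus ``right'' generator) to the clean module equality $e_2Ae_2 = L_y + (e_2Ae_2)w$, and then the small but essential observation that $d_{12}$ and $d_{21}$ differ, so that $e_1A\not\cong e_2A$ and $w$ is nilpotent --- this is exactly what makes the iteration terminate and reduces the problem to the purely formal straightening computation in the last paragraph.
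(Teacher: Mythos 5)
Your proof is correct, but it takes a more roundabout route than the paper's. Both arguments rest on exactly the same ingredients: the identity $e_2Ae_3=yAe_3+(e_2Ae_1)z^*$ from Lemma~\ref{some presilt 3}, the cyclic generator of $e_2Ae_2/L_y$ supplied by Proposition~\ref{prop: lr=12}, and the right-generator properties of $x$ and $x^*$ used to move coefficients past $x^*x$. The difference is in execution. The paper applies \eqref{M11 ii} once, to the single element $vy$ where $v$ is the generator from Proposition~\ref{prop: lr=12}: this gives $v-wxt\in L_y$ for some $w\in e_2Ae_1$, hence $e_2Ae_2=L_y+L_y(wxt)$ immediately, and then the two-sided identity $e_2Ae_1Ae_2=x^*xAe_2$ converts $wxt$ into $x^*x\ell_2$ in one line. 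You instead prove the blanket identity $e_2Ae_2=L_y+(e_2Ae_2)w$ with $w=x^*xt$ fixed in advance, which forces you to establish nilpotence of $w$ (your argument via $e_1A\not\cong e_2A$ is valid, and indeed this non-isomorphism is already implicit in the rank-$3$ setup) and to iterate the identity before you can compare with the cyclic generator $u'$ and straighten term by term; the final step $e_2Ae_2=L_y+L_yu'=L_y+L_yx^*x\ell_2$ correctly uses that $L_y$ is a subring. The detour is harmless but avoidable: applying your own first reduction to the specific element $u'y$ rather than to all of $e_2Ae_2$ recovers the paper's shorter proof. What each approach buys: yours makes explicit the slightly stronger structural fact that $e_2Ae_2/L_y$ is generated over $L_y$ by the powers of $w$; the paper's is shorter because it never needs nilpotence or iteration.
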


\begin{proof}
By $(l_{23},r_{23})=(1,2)$, we can apply Proposition \ref{prop: lr=12} to a left generator $y$ of $e_2Ae_3$ and obtain $v\in e_2 A e_2\setminus L_y$ such that 
\begin{equation}\label{e2 to v ii}
    e_2 A e_2=L_y+L_y v. 
\end{equation} 
By \eqref{M11 ii}, for an element $vy\in e_2Ae_3$, there exists $w\in e_2 A e_1$ such that
	\begin{equation}
	(v-wxt)y = vy-w z^*\in y A e_3,    
	\end{equation}
where we use that $z^* = xty$. 
By the definition of $L_y$, it implies that $v-wxt \in L_y$. 
Using this, we can write \eqref{e2 to v ii} as 
\begin{equation}\label{e2 wxt}
    e_2 A e_2=L_y+L_y wxt.    
\end{equation}
To prove the claim, it suffices to show that there is $\ell_2 \in e_2Ae_2$ such that $wxt = x^*x\ell_2$. 
However, it is immediate from 
\begin{equation} \label{eq:x*x_right}
    wtx \in e_2 A e_1 A e_2 = x^*x Ae_2. 
\end{equation} 
 Indeed, we have the above equality by 
\begin{equation}
    e_2 A e_1 A e_2 = x^*Ae_1Ae_2 = x^*e_1(Ae_1A)e_2\subseteq x^* e_1Ae_2 =  x^*x Ae_2 \subseteq e_2 A e_1 A e_2, 
\end{equation}
where we use facts that $x$ and $x^*$ are right generators of $e_1Ae_2$ and $e_2Ae_1$ respectively.

It finishes a proof.
\end{proof}

Now, we are ready to prove Lemma \ref{lemma:remaining case (b)}. 

\begin{proof}[Proof of Lemma \ref{lemma:remaining case (b)}]
Let $x,y,u,t,x^*, z^*$ be as above. 
As a left generator $y$ of $e_2Ae_3$, we have $e_2Ae_3 = e_2Ay$. 
Then, the equation \eqref{M21} can be written as 
\begin{equation}
    M_{2,1}(e_1Ae_3) = \begin{bmatrix}
        x \\ xu
    \end{bmatrix} M_{1,1}(e_2Ay). 
\end{equation}
In particular, for $\begin{bsmallmatrix}
    0 \\ z^*
\end{bsmallmatrix}\in M_{2,1}(e_1Ae_3)$, there exists $\varepsilon\in e_2 A e_2$ such that 
\begin{equation}\label{z star}
    \begin{bmatrix}
        0 \\ z^*
    \end{bmatrix} = \begin{bmatrix}
        x\varepsilon y \\ xu \varepsilon y
    \end{bmatrix}.
\end{equation}
In this situation, $x\varepsilon y = 0$ and $z^* = xu\varepsilon y$ holds. 

To prove the assertions (a) and (b) in the statement, it is enough to check that $\varepsilon$ is invertible in $e_2Ae_2$ and 
$xy\ne 0$. 
Now, let $\ell_2\in e_2Ae_2$ be an element satisfying 
\begin{equation}\label{e2Ae2 ii}
    e_2 A e_2=L_y + L_y x^*x \ell_2
\end{equation} given in Lemma \ref{lem:ell}. 
Using this, for $u\varepsilon \in e_2Ae_2$, there are $p, q\in L_y$ such that
	\begin{equation}\label{uep}
	u \varepsilon = p +q x^*x \ell_2.
	\end{equation}
	By definition, $p\in L_y$ implies that there is an element $p'\in e_2 A e_2$ such that
	\begin{equation}\label{py}
	p y=yp'.
	\end{equation}
	In addition, since $z^*$ is a right generator of $e_1 A e_3$, for $x\ell_{2}y, xq x^* z^* \in e_1Ae_3$, we can take $\ell_3,\ell_3'\in e_3 A e_3$ such that
	\begin{equation}\label{left z}
	x\ell_2 y = z^* \ell_3, \quad  
    xq x^* z^* = z^*\ell_3'
	\end{equation}
    respectively.
	Then, we have
	\[
	\begin{array}{lll}
		z^* &\overset{\eqref{z star}}{=}& xu\varepsilon y\\
		&\overset{\eqref{uep}}{=}&xp y+xqx^*x\ell_2 y\\
		&\overset{\eqref{py}}{=}&xyp' + xqx^*x\ell_2 y\\
        &\overset{\eqref{left z}}{=}&xyp' + xqx^*z^*\ell_3 \\
		&\overset{\eqref{left z}}{=}&xyp' + z^*\ell'_3\ell_3,  
	\end{array}
    \]
	and therefore
 \begin{equation}
     z^*(e_3-\ell'_3\ell_3) = xyp'. 
 \end{equation}
 However, since $e_3-\ell'_3\ell_3$ is invertible in $e_3 A e_3$ by $\ell'_3\ell_3\in \rad(e_3 A e_3)$, we can write 
	\begin{equation}\label{z*}
	z^*=xyp'p'' \quad \text{with} \quad p'':= (e_3-\ell'_3\ell_3)^{-1}. 
	\end{equation}
	In particular, we obtain $xy\ne 0$. 
    In addition, \eqref{z*} gives 
    \begin{equation}
        z^* = xyp'p'' = z^*\ell_3'' p'p'' 
    \end{equation}
    for some $\ell_3''\in e_3Ae_3$ since $xy\in e_1Ae_3 = z^*Ae_3$ for a right generator $z^*$ of $e_1Ae_3$. 
    It implies that $p'\not \in \rad(e_3 A e_3)$, and $p\not \in \rad(e_2 A e_2)$ by $py = yp'$. 
    From \eqref{uep}, we conclude that 
	\begin{equation}
	u \varepsilon = p +q x^*x \ell_2\not \in \rad(e_2 A e_2).
	\end{equation}
	Therefore, $\varepsilon\not\in \rad(e_2 A e_2)$ and it is invertible in $e_2 A e_2$.
\end{proof}


\section*{Acknowledgments} 
T.A is supported by JSPS Grants-in-Aid for Scientific Research JP19J11408 and JSPS Grant-in-Aid for Transformative Research Areas (A) 22H05105. 
A.H is supported by JSPS Grant-in-Aid for Scientists Research (C) 20K03513.
O.I is supported by JSPS Grant-in-Aid for Scientific Research (B) 16H03923, (C) 18K3209 and (S) 15H05738. 
R.K is supported by JSPS Grant-in-Aid for Young Scientists (B) 17K14169. 
Y.M is supported by 
Grant-in-Aid for Scientific Research (C) 20K03539.


\begin{thebibliography}{A}

\bibitem[AIR]{AIR} T. Adachi, O. Iyama, I. Reiten, \emph{$\tau$-tilting theory}, Compos. Math. 150 (2014), no. 3, 415--452. 




\bibitem[AiI]{AiI} T. Aihara, O. Iyama, \emph{Silting mutation in triangulated categories},
J. Lond. Math. Soc. (2) 85 (2012), no. 3, 633--668.

\bibitem[AMN]{AMN} H. Asashiba, Y. Mizuno, K. Nakashima, \emph{Simplicial complexes and tilting theory for Brauer tree algebras}, J. Algebra 551 (2020), 119--153.

\bibitem[Ao]{Ao} 
T.~Aoki, \emph{Classifying torsion classes for algebras with radical square zero via sign decomposition}, J. Algebra 610 (2022), 167--198.

\bibitem[AHIKM1]{AHIKM1} T. Aoki, A. Higashitani, O. Iyama, R. Kase, Y. Mizuno, \emph{Fans and polytopes in tilting theory I: Foundations}, arXiv:2203.15213.

\bibitem[AHIKM2]{AHIKM2} T. Aoki, A. Higashitani, O. Iyama, R. Kase, Y. Mizuno, \emph{Fans and polytopes in tilting theory II: g-fans of rank 2}, arXiv:2301.01498.

\bibitem[AY]{AY} 
T.~Aoki, T.~Yurikusa, \emph{Complete gentle and special biserial algebras are $g$-tame}, J. Algebraic Combin. 57 (2023), no. 5, 1103--1137. 

\bibitem[As]{As} S. Asai, \emph{The wall-chamber structures of the real Grothendieck groups}, Adv. Math. 381 (2021), 107615.

\bibitem[AsI]{AsI} S. Asai, O.Iyama, \emph{Semistable torsion classes and canonical decompositions in Grothendieck groups},  arXiv:2112.14908.




\bibitem[BP]{BP} V.~Buchstaber, T.~Panov, \emph{Toric topology}, Mathematical Surveys and Monographs, 204. American Mathematical Society, Providence, RI, 2015.

\bibitem[BR]{BR} M. Beck, S. Robins, \emph{Computing the continuous discretely. Integer-point enumeration in polyhedra}, Second edition. With illustrations by David Austin. Undergraduate Texts in Mathematics. Springer, New York, 2015.

\bibitem[B]{B} T. Bridgeland, \emph{Scattering diagrams, Hall algebras and stability conditions}, Algebr. Geom. 4 (2017), no. 5, 
523--561.


\bibitem[BST]{BST} T. Br\"ustle, D. Smith, H. Treffinger, \emph{Wall and chamber structure for finite-dimensional algebras}, Adv. Math. 354 (2019), 106746.




\bibitem[DIJ]{DIJ} L. Demonet, O. Iyama, G. Jasso, \emph{$\tau$-tilting finite algebras, bricks, and $g$-vectors}, Int. Math. Res. Not. IMRN 2019, no. 3, 852--892.

\bibitem[DF]{DF} H. Derksen, J. Fei, \emph{General presentations of algebras}, Adv. Math. 278 (2015), 210--237.





\bibitem[F]{F} W. Fulton, \emph{Introduction to Toric Varieties}, Ann of Math. Studies {\bf 131}, Princeton Univ. Press, 1993. 

\bibitem[H1]{H1} L. Hille, \emph{On the volume of a tilting module}, Abh. Math. Sem. Univ. Hamburg 76 (2006), 261--277. 

\bibitem[H2]{H2} L. Hille, \emph{Tilting Modules over the Path Algebra of Type A, Polytopes, and Catalan Numbers}, Lie

\bibitem[J]{J}
G.~Jasso, \emph{Reduction of $\tau$-tilting modules and torsion pairs}, Int. Math. Res. Not. IMRN 2015, no. 16, 7190--7237.

\bibitem[M]{M} Y. Mizuno, \emph{Shard theory for $g$-fans}, arXiv:2212.10745.
\bibitem[KV]{KV} B. Keller, D. Vossieck, \emph{Aisles in derived categories}, Deuxième Contact Franco-Belge en Algèbre (Faulx-les-Tombes, 1987). Bull. Soc. Math. Belg. Sér. A 40 (1988), no. 2, 239--253.



\bibitem[P]{P} P. Plamondon, \emph{Generic bases for cluster algebras from the cluster category}, Int. Math. Res. Not. IMRN(2013), no. 10, 2368--2420.

\bibitem[PY]{PY} P. Plamondon, T. Yurikusa, \emph{Tame algebras have dense g-vector fans}, Int. Math. Res. Not. IMRN(2023), no. 4, 2701--2747.






\end{thebibliography}
\end{document}